\theoremstyle{definition}
\newtheorem{theo}{Theorem}[section]
\newtheorem{de}[theo]{Definition}
\newtheorem{prop}[theo]{Proposition}
\newtheorem{propde}[theo]{Proposition-Definition}
\newtheorem{lem}[theo]{Lemma}
\newtheorem{cor}[theo]{Corollary}
\newtheorem{ex}[theo]{Example}
\newtheorem{exs}[theo]{Examples}
\newtheorem{rem}[theo]{Remark}
\newtheorem{rems}[theo]{Remarks}
\numberwithin{equation}{section}
\newcommand{\A}{\mathscr{A}}			
\newcommand{\B}{\mathscr{B}}			
\newcommand{\C}{\mathscr{C}}			
\newcommand{\D}{\mathscr{D}}			
\newcommand{\qd}{\mathfrak{d}}			
\newcommand{\E}{\mathcal{E}}			
\newcommand{\F}{\mathscr{F}}			
\newcommand{\G}{\mathscr{G}}			
\newcommand{\N}{\mathbf{N}}				
\newcommand{\bigO}{\mathcal{O}}			
\newcommand{\PP}{\mathscr{P}}			
\newcommand{\myprob}{\mathbf{P}}		
\newcommand{\Q}{\mathbf{Q}}				
\newcommand{\R}{\mathbf{R}}				
\newcommand{\Sph}{\mathbf{S}}			
\newcommand{\T}{\mathcal{T}}			
\newcommand{\V}{\mathcal{V}}			
\newcommand{\Z}{\mathbf{Z}}				
\newcommand{\myesp}{\mathbf{E}}			
\newcommand{\eps}{\varepsilon}			
\newcommand{\lp}{\left(}
\newcommand{\rp}{\right)}
\newcommand{\ls}{\leqslant}
\newcommand{\gs}{\geqslant}
\newcommand{\pre}{\preccurlyeq}
\newcommand{\myindic}[1]{\mathds{1}_{#1}}	
\newcommand{\prob}[1]{\myprob\lp #1\rp}		
\newcommand{\esp}[1]{\myesp\left[ #1\right]}		
\newcommand\restr[2]{\ensuremath{\left.#1\right|_{#2}}}
\newcommand{\ds}{\displaystyle}
\newcommand{\ie}{{\it i.e. }}			
\newcommand{\speq}{~=~}						
\newcommand{\+}{\,+\,}						
\newcommand{\spm}{\,-\,}					
\newcommand{\spand}{\quad\text{and}\quad}	
\newcommand{\spas}{\quad\text{as}\quad}		
\begin{document}

\title{\textbf{Hölder regularity of a Wiener integral in abstract space}}
\date{\today}

\author{\textbf{Brice Hannebicque} \\ Université Paris-Saclay, CentraleSupélec, MICS and CNRS FR-3487. \\ \href{mailto:brice.hannebicque@centralesupelec.fr}{\texttt{brice.hannebicque@centralesupelec.fr}} 
   \and \textbf{Érick Herbin} \\ Université Paris-Saclay, CentraleSupélec, MICS and CNRS FR-3487. \\ \href{mailto:erick.herbin@centralesupelec.fr}{\texttt{erick.herbin@centralesupelec.fr}} }

\maketitle

\begin{abstract}
	In this article, we propose a way to consider processes indexed by a collection $\A$ of subsets of a general set $\T$. A large class of vector spaces, manifolds and continuous $\R$-trees are particular cases.  
	Lattice-theoretic and topological assumptions are considered separately with a view to clarifying the exposition. 
	We then define a Wiener-type integral $Y_A=\ds\int_A f\,\text dX$ for all $A\in\A$ for a deterministic function $f:\T\rightarrow\R$ and a set-indexed Lévy process $X$. 
	It is a particular case of Raput and Rosinski \cite{rajput_spectral_1989}, but our setting enables a quicker construction and yields more properties about the sample paths of $Y.$
	Finally, bounds for the Hölder regularity of $Y$ are given which indicate how the regularities of $f$ and $X$ contributes to that of $Y$. This follows the works of Jaffard \cite{jaffard_multifractal_1999} and Balança and Herbin \cite{balanca_2-microlocal_2012}.
\end{abstract}

\tableofcontents

\bigskip

The regularity of sample paths of stochastic processes have been an object of deep interest as early as the 1960s.
This field, far from having dried out, still is nowadays the place of a thriving research as diverse as the study of random fractals (see e.g. the survey of Xiao \cite{lapidus_random_2004} or Khoshnevian \emph{et al.} \cite{khoshnevisan_intermittency_2015}), wavelet theory (e.g. Ayache \emph{et al.} \cite{ayache_wavelet_2020}), regular parametrization of random curves (e.g. Lawler and Zhou \cite{lawler_sle_2013}) and many more. All those works have the common point that the regularity of sample paths is used to better understand the processes at play.

\medskip

Let us consider a Lévy process $\mathtt X=\{ \mathtt X_t:t\in\R_+ \}$. 
As expressed above, there are a lot of ways one can talk about the regularity of $\mathtt X.$ We are interested in the Hölder regularity of the sample paths of $\mathtt X.$
Jaffard \cite{jaffard_multifractal_1999}, followed by Balança \cite{balanca_fine_2014}, studied the process $\mathtt X$ in an extremely refined fashion.
In particular, they extended a classical result from Blumenthal and Getoor \cite{blumenthal_sample_1961} and Pruitt \cite{pruitt_growth_1981} which gives the pointwise Hölder exponent $\alpha_{\mathtt X}(t)$ at any $t\in\R_+$ depending on the Lévy-Khintchine triplet of $\mathtt X.$ 

\medskip

For a given, regular enough, function $f:\R_+\rightarrow\R$, we can define the `primitive process' $\mathtt Y$ given by
\begin{equation}	\label{intro:Y_1d}
	\forall t\in\R_+,\quad \mathtt Y_t \speq \int_{[0,t]} f\,\text d\mathtt X.
\end{equation} 

In the continuation of the previous works, we study the regularity of $\mathtt Y$ based on those of $\mathtt X$ and $f.$ Namely, under some additional hypotheses (see Corollary \ref{cor:regularity_Y_1d}), we prove in particular that, for all $t\in\R_+,$ the pointwise exponent of $\mathtt Y$ satisfies with probability one:
\begin{equation}	\label{intro:main_result} 
	\alpha_{\mathtt Y}(t) \speq \alpha_{\mathtt X}(t) \+ \alpha_f(t)\myindic{f(t)=0}.
\end{equation}
A result of this kind has already been established by Balança and Herbin \cite{balanca_2-microlocal_2012} for a stochastic integral with respect to a continuous semimartingale. 
Since $\mathtt X$ has in general a dense set of discontinuities owing to its Poissonian part, we had to take a different approach.
However, having a lot of discontinuities is not always a disadvantage when one wants to understand Hölder regularity. When they are plenty, they provide bounds on the Hölder exponent. 
Jaffard developed this idea in \cite{jaffard_old_1997} to determine the Hölder regularity of deterministic functions and successfully applied it in \cite{jaffard_multifractal_1999} to characterize the full multifractal spectrum of $\mathtt X$. We shall adapt those techniques to obtain (\ref{intro:main_result}) and related results. 

\medskip
	
Actually, we took the opportunity to study more general processes than $\R_+$-indexed ones. Several advantages of this approach will be discussed as we proceed.
Let us consider for now a generic set $\T$ as well as a function $f:\T\rightarrow\R.$ In order to be able to define an expression similar to (\ref{intro:Y_1d}), it is more natural to consider a Lévy process $X=\{X_A:A\in\A\}$ indexed by a collection $\A$ of subsets of $\T$ rather than by $\T$ itself. 
The existence and properties of $X$ have been studied by Herbin and Merzbach \cite{herbin_set-indexed_2013}, extending a work from Bass and Pyke \cite{bass_existence_1984}.
This enables to view $X$ as a stationary random measure, with respect to which it is possible to define an integral.
Hence, the goal of this article is to study the Hölder regularity of the sample paths of the set-indexed process $Y$ given by:
\begin{equation}	\label{intro:Y_A}
	\forall A\in\A,\quad Y_A \speq \int_A f\,\text dX
\end{equation}
and express it in terms of the regularity of $f$ and $X.$
When $\T=\R_+,$ $\A$ may be chosen as the collection of all segment $[0,t]$ for $t\in\R_+,$ so (\ref{intro:Y_1d}) is a particular case.

\medskip

The theory of set-indexed processes is exposed in \cite{capasso_set-indexed_2003, ivanoff_set-indexed_1999, merzbach_introduction_2003} by Ivanoff and Merzbach.
The original need was to generalize the theory of processes \emph{à deux indices} (\ie $\R_+^2$-indexed processes) pioneered by \cite{cairoli_stochastic_1975, merzbach_extension_1981, korezlioglu_theorie_1981, azema_differents_1983, walsh_convergence_1979, wong_extension_1977, wong_sample_1977} and many others.
Maybe unexpectedly, it turns out that this setting enables the study of $\T$-indexed processes where $\T$ is quite a general partially ordered set (see Proposition \ref{prop:lattice_correspondance} for the exact correspondance and \cite{ivanoff_compensator_2007} for a related approach).
For instance, it encompasses the multiparameter case $\T=\R_+^p$ (see Khoshnevisan \cite{khoshnevisan_multiparameter_2002} for a modern exposition) and processes indexed by continuous trees (see Evans \cite{evans_probability_2008}, Le Gall \cite{le_gall_random_2005}, Lin \cite{lin_tree-indexed_2014} and references therein), which both constitute important subjects in modern-day probability.

\medskip

In Section \ref{sec:set-indexed}, we expose the framework of set-indexed theory. 
The approach taken is a bit different from the classical one where the definition of the \emph{indexing collection} $\A$ gathers in one place both lattice-related and topological assumptions. We aim to simplify the exposition by splitting the properties between several subsections and separately investigating their consequences.

\medskip

In Section \ref{sec:construction_integral}, we rigorously define the integral (\ref{intro:Y_A}) in the same fashion as the classical Wiener integral. This appears as a particular case of the construction of Rajput and Rosinski \cite{rajput_spectral_1989}, but the stationarity of the increments of $X$ enables a quicker construction.
Thanks to the set-indexed framework, a Lévy-Itô decomposition for the sample paths of $Y$ is proven in the same spirit as Herbin and Merzbach \cite[Theorem 7.9]{herbin_set-indexed_2013}. 

\medskip

In Section \ref{sec:regularity_criterion}, we discuss several notions of Hölder regularity following Herbin and Richard \cite{herbin_local_2016}. 
Usually, determining the Hölder regularity of some function $h:\R_+\rightarrow\R$ at $t\in\R_+$ is to find the best $\alpha\gs0$ such that the estimate $|h(s)-h(t)| \ls |s-t|^\alpha$ holds true for $s$ sufficiently close to $t.$
So the focus is on bounding the increments of $h.$ 
However, when the partial order on $\T$ is not total, there is a variety of increments that one could want to bound. For $\T=\R_+^2$, two notions of increments are usually considered for $h:\R_+^2\rightarrow\R$ and $(s_1,s_2),(t_1,t_2)\in\R_+^2$: the `naive' increment $h(s_1,s_2)-h(t_1,t_2)$ and the \emph{rectangular increment} $h(s_1,s_2)-h(s_1,t_2)-h(t_1,s_2)+h(t_1,t_2)$. 
We provide corresponding definitions to tackle those different situations. 
We then further push ideas from \cite{jaffard_multifractal_1999, jaffard_old_1997} to obtain (deterministic) upper bounds for the Hölder regularity of a deterministic function $h:\A\rightarrow\R$ based on its pointwise jumps.

\medskip

In Section \ref{sec:regularity_thm}, we prove a 0-1 law (Theorem \ref{theo:0-1_law}) and then proceed to our main results (Theorems \ref{theo:regularity_Y_poissonian} and \ref{theo:loc_regularity_Y_poissonian}). 
They give upper and lower bounds on the Hölder regularities of $Y$ in terms of the Lévy-Khintchine triplet of $X$ and estimates on $f$. 
Simple examples are considered, already showing the crucial influence of the geometry of $\T$ on the regularity of $Y.$
Several phenomenons in the multiparameter case invisible in dimension one are investigated.

\medskip

Notations: 
$(\Omega,\F,\myprob)$ is a complete probability space,
$\T$ a non-empty measurable space 
and
$m$ a measure on $\T.$
As soon as their respective definitions are given, 
$\A$ will denote an indexing collection on $\T,$
$d_\A$ a pseudo-metric on $\A$ (abusively called metric in the sequel),
$X$ a set-indexed Lévy process
and
$Y$ will be given by (\ref{intro:Y_A}).
By $A\subseteq A'$ (resp. $A\subset A'$), we mean that $A$ is included (resp. strictly included) in $A'$.
$\bigO$ (resp. $o$) will denote the usual Landau's `big O' (resp. `small o') notation.
By $x\rightarrow a^{-}$ (resp. $x\rightarrow a^+$) for $a\in\R$, we mean that we take the limit as $x$ increases (resp. decreases) to $a.$

\section{Set-indexed framework} \label{sec:set-indexed}

When one wants to study general $\T$-indexed processes $\mathtt Z=\{ \mathtt Z_t:t\in\T \}$ while retaining the intuition that $\T$ represents a kind of `time', it becomes natural to endow it with an order relation $\pre$.
Stating that $s\pre t$ for $s,t\in\T$ would then mean that `$s$ happens before $t$'. 
As annouced above, this setting turns out to be equivalent to considering processes $Z=\{ Z_A:A\in\A \}$ for a specific collection $\A$ of subsets of $\T$ (Proposition \ref{prop:lattice_correspondance}).
The advantage is that it becomes easier and more natural to define the increment process $\Delta Z$ over bigger collections than $\A$, extending the previous notion of rectangular increments for two-parameter processes. This framework is presented in Section \ref{subsection:indexing_collection_poset}.

Since our goal is to deal with the regularity of the sample paths of $\A$-indexed processes, we require a topology on $\A$, or equivalently on $\T$ (see (\ref{eq:d_T})), by means of a metric $d_\A$ whose properties are exposed in Section \ref{subsection:indexing_collection_geodesic}.

At last, we present in Section \ref{subsection:indexing_collection_finite_dim} restrictions on $\A$ to be finite-dimensional in some sense. This will later help to establish much needed sample paths properties (Section \ref{subsection:levy-ito_decomposition}) and martingale inequalities (Section \ref{subsection:ptw_regularity}).

\subsection{Indexing collection as a poset} \label{subsection:indexing_collection_poset}

\subsubsection{$\A$ and other classes}

In the following, $\PP(\T)$ denotes the collection of all subsets of $\T$ and for any subcollection $\D\subseteq\PP(\T),\,$ $\D(u)$ denotes the collection of all finite unions of elements in $\D.$
The following definition is inspired from \cite[Definition 2.1]{herbin_local_2016}, which is itself a careful selection of the required properties of \cite[Definition 1.1.1]{ivanoff_set-indexed_1999}.

\begin{de}[Indexing collection] \label{de:indexing_collection}
	A class $\A\subseteq\PP(\T)$ is an \emph{indexing collection on $\T$} if the following properties hold:
	\begin{enumerate}
		\item (Stability under intersections).
		$\varnothing\in\A,\,$ $\A$ and $\A\setminus\{\varnothing\}$ are closed under countable intersections.
		
		\item (Separability from above). 
		There exists an increasing sequence of finite subcollections $\A_n=\{ A_1^n,...,A_{k_n}^n \}\subseteq\A$ ($n\in\N$) closed under intersections such that given the functions $g_n:\A\rightarrow\A_n\cup\{\T\}$ defined by
		\[ \forall A\in\A,\quad g_n(A) \speq \bigcap_{\substack{ A'\in\A_n\cup\{\T\}: \\ A\subseteq A' }}A', \]
		the elements of $\A$ may be approximated as follows: for all $A\in\A,$\, $A = \ds\bigcap_{n\in\N}g_n(A).$ 
		
		\item (SHAPE condition). For any positive integer $n$ and $A,A_1,...,A_n\in\A,\,$ if $A\subseteq\bigcup_{i\ls n}A_i,\,$ then $A\subseteq A_j$ for some $j\ls n.$
		
		\item (TIP assumption).
		The map
		\[\begin{array}{ccc}
			\T 	& \longrightarrow 	& \A\setminus\{\varnothing\}	\\
			t 	& \longmapsto 		& A(t)=\bigcap_{n\in\N}A_n(t),
		\end{array}\]
		where $A_n(t) = \ds\bigcap_{\substack{A\in\A_n\cup\{\T\}: \\ t\in A}}A,$ is one-to-one. 
		
		For all $A\in\A,$ the unique point $t\in\T$ such that $A=A(t)$ is called the \emph{tip} of $A.$
	\end{enumerate}	
\end{de}

First of all, let us state that such a definition is actually equivalent to a couple of simple axioms. 
The proof is just straightforward abstract nonsense.
Whenever convenient, we will switch from one point of view to the other (see Examples \ref{exs:fundamental_discrete} and \ref{exs:fundamental_continuous}).

\begin{prop}[Correspondance $(\T,\A)\leftrightarrow(\T,\pre)$]	\label{prop:lattice_correspondance}
	Suppose that $\pre$ is a partial order on $\T$ satisfying the following two conditions:
	\begin{enumerate}
		\item (Countable subsemilattice).
		Any countable subset $\{t_n:n\in\N\}$ of $\T$ admits a joint minimum denoted by $\bigwedge_{n\in\N}t_n\in\T.$
		\item (Separability from above).
		There exists a countable subset $\mathcal D\subseteq\T$ such that for any $t\in\T$, there exists a non-increasing sequence $(t_n)_{n\in\N}$ in $\mathcal D$ such that $t=\bigwedge_{n\in\N}t_n.$
	\end{enumerate}	
	Denote for all $t\in\T,\,$ $A(t)=\{s\in\T:s\pre t\}.$
	Then $\A=\{ A(t):t\in\T \}\cup\{\varnothing\}$ is an indexing collection.
	Conversely, if one considers an indexing collection $\A$ on $\T,$ then the order relation $\pre$ given by
	\[ \forall s,t\in\T,\quad s\pre t ~\Longleftrightarrow~ A(s)\subseteq A(t) \]
	satisfies conditions 1 and 2 above.
\end{prop}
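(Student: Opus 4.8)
The plan is to verify the two directions of the correspondence separately, treating the forward direction (building an indexing collection from a poset) in detail and the converse as a routine unwinding of definitions.

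\textbf{Forward direction.} Assume $\pre$ satisfies the countable-subsemilattice and separability-from-above conditions, and set $A(t)=\{s\in\T:s\pre t\}$, $\A=\{A(t):t\in\T\}\cup\{\varnothing\}$. I would check the four axioms of Definition \ref{de:indexing_collection} in turn. For stability under countable intersections: given $(t_n)_{n}$, the element $t=\bigwedge_n t_n$ satisfies $A(t)=\bigcap_n A(t_n)$ because $s\pre t$ iff $s\pre t_n$ for all $n$ (this uses that $\bigwedge_n t_n$ is the \emph{greatest} lower bound, so $s$ is a lower bound of $\{t_n\}$ exactly when $s\pre\bigwedge_n t_n$); adjoining $\varnothing$ handles the empty-intersection and the $\A\setminus\{\varnothing\}$ bookkeeping. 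For separability from above: take $\mathcal D$ as given, let $\A_n$ be generated by $\{A(d):d\in\mathcal D_n\}$ for an exhaustion $\mathcal D_n\uparrow\mathcal D$, closed under the (finite) intersections just shown to land back in the family; then for $A=A(t)$ pick $(t_n)\subseteq\mathcal D$ non-increasing with $t=\bigwedge_n t_n$, and argue $g_n(A(t))\supseteq A(t_n)$ eventually so that $\bigcap_n g_n(A(t))\subseteq\bigcap_n A(t_n)=A(t)$, the reverse inclusion being automatic since $A\subseteq g_n(A)$ always. For the SHAPE condition: if $A(t)\subseteq\bigcup_{i\ls n}A(t_i)$, then in particular $t\in A(t_i)$ for some $i$, i.e. $t\pre t_i$, hence $A(t)\subseteq A(t_i)$ by transitivity — this is where the specific "lower set" shape of the $A(t)$ is essential. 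For the TIP assumption: one checks $A_n(t)$ is the intersection of the generators of $\A_n$ (together with $\T$) containing $t$, and that $\bigcap_n A_n(t)=A(t)$ by the same separability argument; injectivity of $t\mapsto A(t)$ follows from antisymmetry of $\pre$ (if $A(s)=A(t)$ then $s\pre t$ and $t\pre s$).

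\textbf{Converse direction.} Given an indexing collection $\A$, define $s\pre t\iff A(s)\subseteq A(t)$, where $A(\cdot)$ is the bijection from the TIP assumption. Reflexivity and transitivity are immediate from inclusion; antisymmetry uses that $t\mapsto A(t)$ is one-to-one, so $A(s)=A(t)\Rightarrow s=t$. For the countable subsemilattice: given $\{t_n\}$, the set $\bigcap_n A(t_n)$ lies in $\A$ by stability under countable intersections, and it is nonempty only if it equals some $A(u)$ — but here I need to be a little careful, since a priori $\bigcap_n A(t_n)$ could be $\varnothing$; in that case the claimed joint minimum does not exist in $\T$, so I would point out that condition 1 of the proposition should be read as asserting existence of $\bigwedge$ whenever a lower bound exists, or note that under the TIP assumption every nonempty element of $\A$ is of the form $A(u)$ and that $u=\bigwedge_n t_n$ works because $s\pre u\iff A(s)\subseteq A(u)=\bigcap_n A(t_n)\iff \forall n,\ s\pre t_n$. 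Separability from above for $\pre$ comes from taking $\mathcal D=\{\text{tips of }A:A\in\bigcup_n\A_n\}$ and translating $A=\bigcap_n g_n(A)$ into $t=\bigwedge_n(\text{tip of }g_n(A))$, checking the sequence of tips is non-increasing because $g_{n+1}(A)\subseteq g_n(A)$.

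\textbf{Expected main obstacle.} The genuinely delicate point is the interface between the "abstract nonsense" and the edge cases around $\varnothing$ and around elements whose intersection collapses to $\varnothing$: matching the convention that $g_n$ and $A_n(\cdot)$ take values in $\A_n\cup\{\T\}$ (with $\T$ possibly \emph{not} in $\A$) against the poset-side statement, and making sure the joint-minimum claim in the converse is stated with the right quantifier. Everything else — reflexivity/transitivity/antisymmetry, the SHAPE$\leftrightarrow$lower-set translation, and the two separability arguments — is bookkeeping of the kind the authors rightly call straightforward, so I would present those compactly and spend the written detail on the $\varnothing$/$\T$ conventions and on the SHAPE step, which is the one place the \emph{specific geometry} of the sets $A(t)$ (rather than just the lattice structure) enters.
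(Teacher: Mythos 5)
The paper itself offers no proof of this proposition --- it is dismissed as ``straightforward abstract nonsense'' --- so there is nothing to compare against except the definitions, and your overall plan (verify the four axioms of Definition \ref{de:indexing_collection} from the two poset conditions, then unwind the converse) is exactly the right one. Two points need fixing, though. First, in your separability-from-above step the inclusion is written backwards: from ``$g_n(A(t))\supseteq A(t_n)$ eventually'' one gets $\bigcap_n g_n(A(t))\supseteq\bigcap_n A(t_n)$, not $\subseteq$, so as written the step does not yield the conclusion. The correct observation is the opposite one: since $t\pre t_m$ gives $A(t)\subseteq A(t_m)$, once $n$ is large enough that $A(t_m)\in\A_n$ the set $A(t_m)$ appears in the intersection defining $g_n(A(t))$, whence $g_n(A(t))\subseteq A(t_m)$; letting $m$ vary gives $\bigcap_n g_n(A(t))\subseteq\bigcap_m A(t_m)=A(t)$, and the reverse inclusion is automatic. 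The same repair applies where you invoke ``the same separability argument'' for $\bigcap_n A_n(t)=A(t)$ in the TIP step.

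Second, your hedge in the converse direction about $\bigcap_n A(t_n)$ possibly being empty is unnecessary, and the proposed workaround (re-reading condition 1 as conditional on the existence of a lower bound) would actually weaken the statement being proved. Axiom 1 of Definition \ref{de:indexing_collection} explicitly requires $\A\setminus\{\varnothing\}$ to be closed under countable intersections --- the paper even remarks that this is what makes the TIP assumption well posed --- so $\bigcap_n A(t_n)$ is a nonempty element of $\A$, hence equals $A(u)$ for a unique $u$, and that $u$ is the joint minimum with no caveat. Your identification of the $\varnothing$/$\T$ conventions and the SHAPE step as the places where care is needed is otherwise sound; in particular your treatment of SHAPE (pick $j$ with $t\in A(t_j)$ and conclude $A(t)\subseteq A(t_j)$ by transitivity) and of antisymmetry via injectivity of the TIP map is correct.
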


In the sequel, $\A$ stands for such an indexing collection and $\B=\sigma(\A)$ for the $\sigma$-algebra it generates.

\medskip

The previous definition of $\A$ will be made clearer as properties are further derived from those four assumptions, but let us still make some preliminary remarks about each of them.
\begin{enumerate}
	\item 
	Being closed under countable intersections (\ie $\A$ is a $\delta$-ring) ensures that a lot of measure-theoretic constructions apply.
	 
	Also having $\A\setminus\{\varnothing\}$ as a $\delta$-ring ensures the well-posedness of the TIP assumption. Moreover, it is necessary in order to prove that the cartesian product of indexing collections is still an indexing collection.
	
	Another reason to require stability under intersections would be that filtrations play a crucial role while studying processes such as martingales or Markov processes. In order to impose intuitive 'time consistent' relations between the $\sigma$-algebras in a set-indexed filtration, stability under intersections is required. For more details, see \cite{ivanoff_set-indexed_1999}.
	
	\item 
	One key element arising in the study of $\R_+$-indexed \emph{càdlàg} (right continuous with left limits) stochastic processes is the use of dyadics. 
	They are extremely useful to get results in the continuous case from their discrete alter egos. 
	
	In the second assumption, the class $\A_n$ indeed plays a role similar to the dyadics of order $n$ in that endeavour.
	In the litterature of set-indexed processes, one usually imposes some topological structure on $\T$ so that $A$ lies in the interior of $g_n(A)$ for all $n\in\N$ instead. This implies a `separability strictly from above', but we chose against it here since $\A$ will be endowed with a metric in Section \ref{subsection:indexing_collection_geodesic} so that there is no competition with another topology.
	
	\item 
	The SHAPE condition has been first introduced in \cite[Assumption 1.1.5]{ivanoff_set-indexed_1999} as a sufficient condition to ensure the existence of \emph{increment maps} (see Proposition-Definition \ref{propde:increment_map_linear_extension}). Since it turns out that such a condition is also necessary (even though the proof is not detailed here, Lemma \ref{lem:shape_equivalent} is actually an equivalent formulation of the SHAPE condition), it has been added to the definition. In lattice-theoretic vocabulary, this condition is known as \emph{join irreducibility}. 
	
	\item 
	The TIP assumption has also been introduced in \cite[Assumption 2.4.2]{ivanoff_set-indexed_1999} and draws a clear correspondance between general processes $\{\texttt Z_t:t\in\T\}$ and set-indexed processes $\{ Z_A:A\in\A\}$ such that $Z_\varnothing=0$ through the relation $\texttt Z_t = Z_{A(t)}$ for all $t\in\T.$ 
	
	This bijection is the key element that enables the correspondance of Proposition \ref{prop:lattice_correspondance}.
\end{enumerate}

\begin{rem}	\label{rem:changes_not_issue}
	Although what we mean by `indexing collection' differs from parts of the litterature, we will stress each time a result from the litterature is used and why the conclusions still hold in our case.	
	
	For instance, an indexing collection is usually supposed to be closed under arbitrary intersections, but this property also holds in our framework.
	Indeed, consider a subcollection $\A'\subseteq\A.$ 
	Then, separability from above tells that $\bigcap_{A\in\A'}A = \bigcap_{n\in\N}\bigcap_{A\in\A'\cap\A_n}A$ so it still belongs to $\A$ by stability under countable intersections.
	Likewise, for all $t\in\T,$\, $\bigcap_{A\in\A:t\in A}A=\bigcap_{n\in\N}\bigcap_{A\in\A_n:t\in A}A=A(t)$, which links back to the usual meaning of $A(t)$ in the litterature.
\end{rem}

In particular, $\T$ and $\A$ both have a global minimum:
\begin{equation}	\label{eq:global_minimum}
	0_\T \speq \bigwedge_{t\in\T}t \qquad\text{ and }\qquad \varnothing' \speq A(0_\T) \speq \bigcap_{A\in\A:A\neq\varnothing}A.
\end{equation}
The notation will be consistent with the usual $0$ whenever $\T$ has one.
Without loss of generality, we suppose that both $\varnothing$ and $\varnothing'$ belong to $\A_n$ for all $n\in\N.$

\medskip

Let us present several examples in order to better understand what Definition \ref{de:indexing_collection} entails.
Since it has been explained in Proposition \ref{prop:lattice_correspondance} that it is equivalent to consider an indexing collection $\A$ or an order relation $\pre$ on $\T$ verifying certain properties, we will make full use of it depending on which is the most pratical in the actual context. 

\begin{exs}[Discrete indexing collections] 	\label{exs:fundamental_discrete}
	Contrary to the litterature, both discrete and continuous indexing collections are still available at this level of generality. It is the topological assumptions added in Section \ref{subsection:indexing_collection_geodesic} that will put the discrete case aside. 
	Since the following examples are at most countable, any increasing sequence $(\A_n)_{n\in\N}$ of finite subcollections of $\A$ closed under intersections such that $\bigcup_{n\in\N}\A_n=\A$ works out for the separability from above condition in Definition \ref{de:indexing_collection}. So we just need to specify the partial order $\pre$ of Proposition \ref{prop:lattice_correspondance}.
	\begin{enumerate}[$\diamond$]
		\item $\T = \N^p$ (for an integer $p\gs1$) endowed with the ususal componentwise partial order yields an indexing collection where
		\[ \forall t\in\N^p,\quad A(t) = \llbracket0,t\rrbracket = \big\{ s\in\N^p : \forall i\in\llbracket1,N\rrbracket,~s_i\ls t_i \big\}. \]
		
		\item $\T = \Z^p,$ once divided into $2^p$ 'quadrants', may be endowed with an order relation as follows:
		\[ \forall s,t\in\Z^p,\quad s\pre t ~\Longleftrightarrow~ \Big\{ \forall i\in\llbracket1,p\rrbracket,~s_it_i\gs0 ~\text{ and }~ s_i\ls t_i \Big\}. \] 
		Remark that using the usual componentwise partial order on $\Z^p$ would not yield an indexing collection since
		\[ \bigcap_{n\in\N} \rrbracket-\infty,-n\rrbracket^p \speq \varnothing \]
		which is not in accordance with the fact that $\A\setminus\{\varnothing\}$ should be closed under countable intersections.
		
		\item A tree $\T\subseteq \{\varnothing\}\cup\bigcup_{p=1}^\infty(\N^*)^p$ (see Neveu's convention for trees introduced in \cite{neveu_arbres_nodate}) is endowed with a natural partial order for which
		\[ \forall t = t_1...t_p\in\T,\quad A(t) \speq \big\{ \varnothing \big\} \cup \big\{ t_1...t_i : 1\ls i\ls p \big\} \speq \big\{ \text{ancestors of } t \big\} \cup \big\{ t \big\}. \]
		One may also adapt this idea to finite graphs through the choice of a spanning tree.
		
	\end{enumerate}	 
\end{exs}

\begin{exs}[Continuous indexing collections]	\label{exs:fundamental_continuous}
	\ 
	\begin{enumerate}[$\diamond$]
		\item $\T=\R_+^p$ (or $[0,1]^p$) endowed with the usual componentwise partial order goes the same way as $\N^p.$ 
		To check the separability condition, one may set $\A_n$ as the collection of $A(t)$'s where each component $t_i$ of $t$ is a dyadic of order $n$ between $0$ and $n,$ \ie $t_i=k_i\,2^{-n}$ where $k_i\in\llbracket0,n2^n\rrbracket.$ \\
		More generally, the positive cone $\T=E_+$ of a separated Riesz space $(E,\pre)$ such that $\C^0([0,1])$ or $L^q(0,1)$ (see \cite{aliprantis_infinite_2006} for more details and examples) may be endowed with an indexing collection. 
		
		\item $\T=\R^p$ is easily endowed with an indexing collection by combining the ideas from $\Z^p$ and $\R_+^p.$
		
		\item The space of $[0,1]$-valued sequences $\T=[0,1]^\N$ endowed with the usual componentwise partial order also yields an indexing collection where the elements of $\A_n$ are the sets of the form 
		\[ \prod_{i=1}^n[0,d_i] \times\prod_{i=n+1}^\infty[0,1]  \]
		where $d_1,...,d_n$ are dyadics of order $n$ in $[0,1],$ \ie $d_i=k_i\,2^{-n}$ where $k_i\in\llbracket0,2^n\rrbracket$ for all $i\ls n.$
		
		\item The hypersphere $\T=\Sph^p$ may be endowed with an indexing collection through the spherical coordinates map $\Sph^p\ni t\mapsto\varphi(t)=(\varphi_1(t),...,\varphi_p(t))\in[0,\pi]^{p-1}\times[0,2\pi)$ and the usual componentwise order $\pre$ on $\R_+^p$ by
		\[ \forall t\in\Sph^p,\quad A(t) = \big\{ s\in\Sph^p : \varphi(s)\pre\varphi(t) \big\}. \]
		Using coordinate maps (and partially ordering them), one may also endow more general manifolds with indexing collections.
		
		\item An $\R$-tree $\T$ (see \cite[Definition 3.15]{evans_probability_2008}) rooted at some $\rho\in\T$ may be endowed with an indexing collection $\A$ whose elements are the \emph{geodesic segments} $\llbracket\rho,t\rrbracket$ for all $t\in\T$ as long as it is separated from above.
	\end{enumerate}
\end{exs}

\medskip

Let us now recall for the sake of further reference some additional classes of subsets of $\T$ based on the definition of $\A.$ Definitions \ref{de:other_class_C} through \ref{de:other_class_C^ell} are taken from \cite{ivanoff_set-indexed_1999} apart from the notation $C_n(t)$ which comes from \cite{herbin_set-indexed_2013} and the classes $\C_k$ which are introduced here for the first time.
 
\begin{de}[Increment class $\C$] \label{de:other_class_C}
	The class of \emph{increment sets} is given by
	\[ \C \speq \big\{ A\setminus U : A\in\A, U\in\A(u) \big\}. \]
	For any $k\in\N,$ the subclass $\C_k$ of \emph{$k$-increments} of $\C$ is given by
	\[ \C_k \speq \left\{ A_0\setminus\bigcup_{i=1}^{k+1}A_i : A_0,...,A_{k+1}\in\A \right\}. \]
\end{de}

One obviously has $ \A \subseteq \C \subseteq \C(u) \subseteq \B $
where each inclusion is strict in general. 
The classes $\C$ and $\C(u)$ are a semiring of sets and
a ring of sets respectively (see \cite[Definitions 1.8 and 1.9]{klenke_probability_2014}). Thus they are well-adapted to measure-theoretic constructions, which will be made clearer in Section \ref{subsubsec:increment_map}.
The class $\C$ is also a natural extension of the rectangular increments of $\R_+^2$-indexed processes.

The subclasses $\C_k$ will play an important role to characterize a dimensional property of $\A$ (see Section \ref{subsection:indexing_collection_finite_dim}). They also are used to define regularity criterion for set-indexed maps (see Section \ref{sec:regularity_criterion}). 
The particular case of $\C_0$ has been used in \cite{herbin_set-indexed_2013} to characterize increment stationarity for set-indexed Lévy processes (see Section \ref{subsection:construction_integral} for a definition).

\begin{de}[Extremal representation]	\label{de:extremal_representation_C}
	Any $C\in\C$ may be written as
	\[ C \speq A_0\setminus\bigcup_{i=1}^nA_i \]
	where $n\in\N$ and $A_0,...,A_n\in\A$ are such that for all $i,j\in\llbracket1,n\rrbracket,$ $A_i\subseteq A_0$ and $A_i\subseteq A_j$ implies $i=j.$
	This representation, called \emph{extremal representation of $C$}, is unique up to relabelling $A_1,...,A_n.$
\end{de}

This representation is mainly a consequence of the SHAPE condition. 
We refer to \cite[Assumption 1.1.5]{ivanoff_set-indexed_1999} and the following comments for more details.

\medskip

The last class and its usefulness have been remarked early and may be found in \cite[Assumption 1.1.7]{ivanoff_set-indexed_1999}.

\begin{de}[Left neighborhoods $\C^\ell$] \label{de:other_class_C^ell}	
	The class of \emph{left neighborhoods} is given by
	\[ \C^\ell = \bigcup_{n\in\N}\C^\ell(\A_n) \]
	where for all integer $n,$ $\C^\ell(\A_n)$ is the collection of all sets of the form
	$ A\setminus\bigcup_{\substack{A'\in\A_n: A'\subsetneq A }} A' $
	where $A\in\A_n.$
	
	For all $t\in\bigcup_{C\in\C^\ell(\A_n)}C,~$ let $C_n(t)$ denote the unique element of $\C^\ell(\A_n)$ containing $t.$ 
	
	For $t\notin\bigcup_{C\in\C^\ell(\A_n)}C,$ set $C_n(t)=\T.$
\end{de}

The class $\C^\ell(\A_n)$ is made of the 'indivisible' (\ie smallest for the inclusion), pairwise disjoint elements of $\C$ that one can create from $\A_n.$
$\C^\ell$ is a subclass of $\C$ with the nice feature of being able to 'zoom in' on any point $t\in\T,$ which will prove to be crucial to define the pointwise jumps of $X$ (see Definition \ref{de:well-defined_jumps}) and derive a Lévy-Itô representation out of it (see (\ref{eq:levy-ito_decomposition_X})). 
For that, remark that the surjectivity in the TIP assumption ensures that $C_n(t)\subseteq A_n(t)$ for all $n\in\N$ big enough, so $C_n(t)$ belongs to $\C^\ell(\A_n)$. In turn, injectivity says that $\lp C_n(t)\rp_{n\in\N}$ decreases to $\{t\}.$

Actually, we can say a bit more than this.
Namely, the class $\C^\ell$ is a \emph{dissecting system}, meaning that for any $t\neq t'$ in $\T,$ there are $C,C'\in\C^\ell$ such that $t\in C,$ $t'\in C'$ and $C\cap C'=\varnothing.$
To check it, one just needs to take $C=C_n(t)$ and $C'=C_n(t')$ for a big enough $n\in\N.$

\begin{ex}
	Consider $\T=\R_+^p$ and its indexing collection given in Examples \ref{exs:fundamental_continuous}. Then the elements of $\C^\ell(\A_n)$ are the 'hypercubes' 
	\[ \R_+^p\cap\prod_{i=1}^p(d_i-2^{-n},d_i] \]
	where the $d_i$'s are dyadics of order $n$ in $[0,n].$
\end{ex}

\subsubsection{Increment map and linear functional}	\label{subsubsec:increment_map}

We are on our way to define the integral $\ds\int_\T f\,\text dX$ where $f:\T\rightarrow\R$ is a deterministic map and $X$ is a specific set-indexed process.
As it has already be explained earlier, one advantage of the set-indexed setting is that $X$ may already be considered as a kind of cumulative distribution function of a an additive map $\Delta X = \{ \Delta X_U:U\in\C(u) \}.$ 
The goal of this section is to reach Proposition-Definition \ref{propde:increment_map_linear_extension} where $\Delta X$ is properly defined and its link with $\ds\int_\T f\,\text dX$ clarified.

\begin{de}[Simple functions] \label{de:simple_functions}
	The space of \emph{simple functions} is the linear subspace $\E$ of $\R^\T$ spanned by the indicator functions $\myindic{A}$ where $A\in\A.$
\end{de}

\begin{rem} \label{rem:1_C(u)_in_E}
	By the usual inclusion-exclusion formula, we know that for all $C=A_0\setminus \bigcup_{i=1}^nA_i\in\C,$
	\begin{equation}	\label{eq:inclusion-exclusion_formula}
	\myindic{C} \speq \myindic{A_0} - \sum_{i=1}^n (-1)^i \sum_{j_1<...<j_i} \myindic{A_0\cap A_{j_1}\cap...\cap A_{j_i}},
	\end{equation}
	hence $\myindic{C}$ belongs to $\E.$
	Since any element of $U\in\C(u)$ may be written as a disjoint union of elements in $\C,$ its corresponding indicator function will also belong to $\E.$
	Hence $\big\{ \myindic{U}:U\in\C(u) \big\} \subseteq \E.$
\end{rem}

The following straightforward result highlights an important aspect of $\C$: it enables to write simple functions as sums of pairwise disjoint indicators.
The writing is not unique, but it could be made so using the class $\C(u).$

\begin{prop}[$\C$-representation of simple functions] 	\label{prop:C-representation}
	Any simple function $f\in\E$ may be written as
	\[ f \speq \sum_{i=1}^n c_i\myindic{C_i} \]
	where $n\in\N,$ $c_1,...,c_n\in\R^*$ and $C_1,...,C_n\in\C$ are pairwise disjoint.
\end{prop}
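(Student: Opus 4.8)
The plan is to proceed by induction on the number $k$ of generators $A\in\A$ needed to write $f$, i.e. to write $f = \sum_{j=1}^k a_j\myindic{A_j}$ with $a_j\in\R$ and $A_j\in\A$, and induct on $k$. The base case $k=0$ (the zero function) is trivial with an empty sum, and $k=1$ is immediate since a single $A\in\A$ already lies in $\C$ (take $U=\varnothing$). For the inductive step I would isolate the generator $A_k$, set $U_k=\bigcup_{j<k}A_j\cap A_k$ — but more efficiently, I would instead argue via a common refinement: given finitely many $A_1,\dots,A_k\in\A$, the atoms of the finite Boolean algebra they generate inside $\PP(\T)$ are exactly the nonempty sets of the form $\bigcap_{j\in S}A_j\setminus\bigcup_{j\notin S}A_j$ for $S\subseteq\{1,\dots,k\}$, and each such atom, when nonempty, is an increment set: $\bigcap_{j\in S}A_j$ is again in $\A$ by stability under (countable, hence finite) intersections from Definition \ref{de:indexing_collection}(1), and subtracting the finite union $\bigcup_{j\notin S}A_j$ of elements of $\A$ lands in $\C$ by its very definition. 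The function $f$ is constant on each atom, the atoms are pairwise disjoint, and discarding the atoms on which $f$ vanishes gives the claimed representation with coefficients in $\R^*$.

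Concretely the key steps, in order, are: (i) fix a finite family $A_1,\dots,A_k\in\A$ and real coefficients with $f=\sum_j a_j\myindic{A_j}$, which exists by Definition \ref{de:simple_functions}; (ii) form the $2^k$ candidate atoms $E_S=\bigcap_{j\in S}A_j\cap\bigcap_{j\notin S}(\T\setminus A_j)$ for $S\subseteq\{1,\dots,k\}$, observe they are pairwise disjoint and that $\T=\bigsqcup_S E_S$ (with $E_\varnothing$ possibly being the complement of $\bigcup_j A_j$, which we will not need); (iii) check that every nonempty $E_S$ with $S\neq\varnothing$ equals $\big(\bigcap_{j\in S}A_j\big)\setminus\big(\bigcup_{j\notin S}A_j\big)$, hence lies in $\C$; (iv) note $f$ is constant on each $E_S$, equal to $c_S:=\sum_{j\in S}a_j$; (v) since $f$ vanishes off $\bigcup_j A_j$ (as $\myindic{A_j}$ does), write $f=\sum_{S\neq\varnothing,\,E_S\neq\varnothing}c_S\myindic{E_S}$ and then drop the indices with $c_S=0$, relabelling the survivors as $C_1,\dots,C_n$ with coefficients $c_1,\dots,c_n\in\R^*$.

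The only genuine point requiring care — the "main obstacle," though it is minor — is step (iii): verifying that a nonempty atom $E_S$ with $S$ nonempty is actually of the form "element of $\A$ minus a finite union of elements of $\A$," which in particular needs $\bigcap_{j\in S}A_j\in\A$. This is exactly what stability under intersections in Definition \ref{de:indexing_collection}(1) provides, and the empty intersection case is sidestepped because we only keep atoms with $S\neq\varnothing$ (those are precisely the ones that can contribute, since $f$ is supported on $\bigcup_j A_j$). Note that no use of the SHAPE condition, separability, or the TIP assumption is needed here — only the $\delta$-ring property of $\A$ and the definition of $\C$ — and the membership $\myindic{E_S}\in\E$ is already recorded in Remark \ref{rem:1_C(u)_in_E}. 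Finally, the non-uniqueness is clear (e.g. one may refine any $C_i$ further), and the parenthetical remark in the statement that it could be made unique via $\C(u)$ needs no proof.
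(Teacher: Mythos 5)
Your proof is correct, and since the paper states this proposition without proof (calling it a "straightforward result"), your atomization argument — decomposing $\bigcup_j A_j$ into the Boolean atoms $E_S=\bigl(\bigcap_{j\in S}A_j\bigr)\setminus\bigl(\bigcup_{j\notin S}A_j\bigr)$, each of which lies in $\C$ by stability of $\A$ under finite intersections — is exactly the standard argument the authors leave implicit. Your observation that only the $\delta$-ring property and the definition of $\C$ are used (no SHAPE, separability or TIP) is also accurate.
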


\begin{lem}	\label{lem:shape_equivalent}
	The family $\big\{ \myindic{A}:A\in\A\setminus\{\varnothing\} \big\}$ is linearly independent in $\R^\T,$ and thus forms a basis of $\E.$
\end{lem}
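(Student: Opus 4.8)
The plan is to prove linear independence directly, exploiting the TIP assumption to single out a ``maximal'' index at which a nontrivial linear combination cannot vanish. Suppose we have finitely many distinct sets $A_1,\dots,A_n\in\A\setminus\{\varnothing\}$ and scalars $c_1,\dots,c_n\in\R$ with $\sum_{i=1}^n c_i\myindic{A_i}=0$ as a function on $\T$. I want to show all $c_i=0$. The key observation is that each $A_i$ has a tip $t_i\in\T$ with $A_i=A(t_i)$, and the order relation from Proposition \ref{prop:lattice_correspondance} gives $s\pre t_i \Longleftrightarrow A(s)\subseteq A(t_i)$; in particular $t_i\in A_i$ and, crucially, if $t_i\in A_j$ then $A_i=A(t_i)\subseteq A_j$.

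First I would pick an index $i_0$ such that $A_{i_0}$ is maximal among $A_1,\dots,A_n$ for inclusion (such an index exists since the family is finite). Then I would evaluate the identity $\sum_i c_i\myindic{A_i}=0$ at the tip $t_{i_0}$ of $A_{i_0}$. By the remark above, $\myindic{A_j}(t_{i_0})=1$ forces $A_{i_0}\subseteq A_j$, which by maximality of $A_{i_0}$ and distinctness of the sets forces $A_j=A_{i_0}$, i.e. $j=i_0$. Hence the only surviving term is $c_{i_0}$, so $c_{i_0}=0$. To finish, I would iterate: remove $A_{i_0}$ from the family (now knowing its coefficient is zero) and repeat the argument with a maximal element of the remaining sets. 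Since there are finitely many sets, after at most $n$ steps all coefficients are shown to be zero. This proves that $\{\myindic{A}:A\in\A\setminus\{\varnothing\}\}$ is linearly independent in $\R^\T$.

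Finally, since $\E$ is by Definition \ref{de:simple_functions} the linear span of $\{\myindic{A}:A\in\A\}$ and $\myindic{\varnothing}=0$ contributes nothing, $\E$ is already spanned by $\{\myindic{A}:A\in\A\setminus\{\varnothing\}\}$; combined with linear independence, this family is a basis of $\E$.

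I expect the only genuinely delicate point to be justifying that $\myindic{A_j}(t_{i_0})=1$ implies $A_{i_0}\subseteq A_j$, which is exactly where the TIP assumption enters: $t_{i_0}\in A_j$ means $A_j\in\{A\in\A:t_{i_0}\in A\}$, and since $A(t_{i_0})=\bigcap_{A\in\A:t_{i_0}\in A}A$ (see Remark \ref{rem:changes_not_issue}) equals $A_{i_0}$, we get $A_{i_0}\subseteq A_j$. Everything else is a routine finite induction, so no serious obstacle is anticipated.
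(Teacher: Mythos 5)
Your proof is correct, but it follows a genuinely different route from the paper's. The paper argues by contradiction from the SHAPE condition: a nontrivial dependence relation forces each $A_j$ to be covered by the union of the others, SHAPE then yields a strict inclusion $A_j\subset A_{i_j}$ for every $j$, and a relabelling argument turns this into an impossible strictly increasing chain $A_1\subset\dots\subset A_n$ whose top element must still be contained in a distinct member of the family. You instead never invoke SHAPE: you use the surjectivity in the TIP assumption to produce a tip $t_{i_0}$ of a maximal element $A_{i_0}$, the identity $A(t)=\bigcap_{A\in\A:\,t\in A}A$ from Remark \ref{rem:changes_not_issue} (which rests on separability from above and stability under intersections) to see that $t_{i_0}\in A_j$ forces $A_{i_0}\subseteq A_j$, and then evaluation at $t_{i_0}$ kills the coefficient $c_{i_0}$; finite descent does the rest. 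Both arguments are sound, and yours is arguably more elementary since it avoids the relabelling step. What the paper's approach buys is conceptual: it isolates SHAPE as the exact combinatorial hypothesis responsible for linear independence, which matters for the surrounding discussion that SHAPE is necessary and sufficient for the existence of additive extensions. Your approach instead reveals a redundancy in the axioms: the same evaluation-at-the-tip argument shows that SHAPE is automatic once TIP and separability hold (if $A\subseteq\bigcup_i A_i$, its tip $t$ lies in some $A_j$, whence $A=A(t)\subseteq A_j$), so within the full Definition \ref{de:indexing_collection} your proof is perfectly legitimate, but it would not survive in a setting where the tip map is not assumed bijective, whereas the paper's would.
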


\begin{proof}
	Assume the family is linearly dependent.
	We can write a dependence relation 
	$ \sum_{i=1}^n \alpha_i\myindic{A_i}=0 $
	where $n\in\N^*,$ $\alpha_1,...,\alpha_n\in\R^*$ and $A_1,...,A_n\in\A\setminus\{\varnothing\}$ are pairwise distinct.
	
	By writing for all $j\in\llbracket1,n\rrbracket,$
	\[ \myindic{A_j} \speq \sum_{\substack{i=1 \\ i\neq j}}^n \frac{\alpha_i}{\alpha_j}\myindic{A_i}, \]
	we deduce that
	\[ A_j ~\subseteq~ \bigcup_{\substack{i=1 \\ i\neq j}}^n A_i. \]
	By the SHAPE condition (see Definition \ref{de:indexing_collection}), we get that for all $j\in\llbracket1,n\rrbracket,$ there exists $i_j\in\llbracket1,n\rrbracket\setminus\{j\}$ such that $A_j\subset A_{i_j}$. Notice that the inclusion is strict since the $A_i$'s are pairwise distinct.
	
	Let us show that this brings a contradiction.
	Since $i_1\neq1,$ we might as well suppose that $i_1=2$ so that $A_1\subset A_2.$
	Suppose that $A_1\subset...\subset A_j,$ then $i_j>j$ since $i_j\neq j$ by definition and $i_j<j$ would yield the contradiction $A_j=A_{i_j}.$ Hence, up to relabelling, we may suppose that $i_j=j+1$ so that $A_1\subset...\subset A_{j+1}.$
	By iteration, we get $A_1\subset...\subset A_n,$ but this is a contradiction since $A_n$ must also be included in some distinct $A_i$ for $i<n.$
\end{proof}

From this lemma, we deduce the existence of an \emph{additive extension} $\Delta h$ to $\C(u)$ of any map $h:\A\rightarrow\R$ in the sense of \cite{ivanoff_set-indexed_1999}.
Previously, it was known that SHAPE is a sufficient condition to the existence of such extensions, but it is also necessary to ensure the existence of all such extensions (the proof will be omitted here, but relies mainly on Lemma \ref{lem:shape_equivalent}).
It is one reason why SHAPE has been included in Definition \ref{de:indexing_collection} contrary to \cite{ivanoff_set-indexed_1999} where the existence of an additive extension is supposed whenever needed, which so turns out to be an equivalent point of view.

\begin{propde}[Increment map and linear functional] \label{propde:increment_map_linear_extension}
	Consider a map $h:\A\rightarrow\R$ such that $h(\varnothing)=0.$
	\begin{enumerate}[$\diamond$]
		\item There exists a unique additive extension $\Delta h : \C(u)\rightarrow\R$ of $h,$ \ie such that $\restr{\Delta h}{\A}=h$ and for all pairwise disjoint $U_1,U_2\in\C(u),\,$ $\Delta h(U_1\sqcup U_2)=\Delta h(U_1)+\Delta h(U_2).$

		The map $\Delta h$ is called the \emph{increment map of $h.$}
		
		\item There exists a unique linear map $\mathbf h:\E\rightarrow\R$ such that $\mathbf h(\myindic{A})=h(A)$ for all $A\in\A.$ 
		Moreover, $\mathbf h(\myindic{U})=\Delta h(U)$ for all $U\in\C(u).$
		
		The map $\mathbf h$ is called the \emph{linear functional associated with $h.$}
	\end{enumerate}
\end{propde}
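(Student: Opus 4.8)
The plan is to build the linear functional first and then extract the increment map as a restriction. By Lemma~\ref{lem:shape_equivalent}, the family $\{\myindic{A}:A\in\A\setminus\{\varnothing\}\}$ is a basis of $\E$. A linear map out of $\E$ is therefore uniquely determined by prescribing its values on this basis, so I would define $\mathbf h$ to be the unique linear extension of $A\mapsto h(A)$ on $\A\setminus\{\varnothing\}$, and then observe that $\mathbf h(\myindic{\varnothing})=\mathbf h(0)=0=h(\varnothing)$ by hypothesis, so in fact $\mathbf h(\myindic{A})=h(A)$ holds for \emph{all} $A\in\A$. Uniqueness of $\mathbf h$ is immediate from the basis property: any two linear maps agreeing on a basis agree everywhere. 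This disposes of the existence and uniqueness assertions in the second bullet, modulo the final claim that $\mathbf h(\myindic{U})=\Delta h(U)$ for $U\in\C(u)$, which I would postpone until $\Delta h$ has been constructed.

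Next I would construct $\Delta h:\C(u)\to\R$ by setting $\Delta h(U)=\mathbf h(\myindic{U})$; this is legitimate since $\myindic{U}\in\E$ for every $U\in\C(u)$ by Remark~\ref{rem:1_C(u)_in_E}. Restricted to $\A$ this recovers $h$ because $\mathbf h(\myindic{A})=h(A)$. Additivity on disjoint unions is then automatic: if $U_1,U_2\in\C(u)$ are disjoint, then $\myindic{U_1\sqcup U_2}=\myindic{U_1}+\myindic{U_2}$ pointwise, and applying the linear map $\mathbf h$ gives $\Delta h(U_1\sqcup U_2)=\Delta h(U_1)+\Delta h(U_2)$. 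The identity $\mathbf h(\myindic{U})=\Delta h(U)$ then holds by the very definition of $\Delta h$, closing the loop in the second bullet. For uniqueness of $\Delta h$ among additive extensions of $h$: given any additive $\Delta' $ with $\restr{\Delta'}{\A}=h$, I would use that every $U\in\C(u)$ decomposes as a finite disjoint union of increment sets $C_i\in\C$, each $C_i$ expands via inclusion–exclusion (\ref{eq:inclusion-exclusion_formula}) into a signed combination of indicators of sets in $\A$ (all intersections of elements of $\A$, hence in $\A$ by stability under intersections), and additivity plus the prescribed values on $\A$ force $\Delta'(U)$ to equal the same expression as $\Delta h(U)$.

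One subtlety worth being careful about is that the decomposition of $U\in\C(u)$ into disjoint increment sets, and of each increment set via inclusion–exclusion, is not canonical, so the uniqueness argument should be phrased as: any additive extension must agree with $\mathbf h\circ\myindic{\cdot}$ because the latter is well-defined (it does not depend on a choice of representation, being a genuine function of the set $U$ through its indicator), and additivity pins down $\Delta'$ to be a function of $\myindic{U}$ alone. I do not anticipate a serious obstacle here — as the authors themselves note, this is "straightforward abstract nonsense" — but the one place to tread carefully is making sure the passage "$\myindic{U}\in\E$, hence $\mathbf h(\myindic{U})$ makes sense" is justified cleanly from Remark~\ref{rem:1_C(u)_in_E}, and that the inclusion–exclusion expansion only ever invokes intersections of $\A$-sets (so Definition~\ref{de:indexing_collection}(1) applies) rather than unions. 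Everything else is bookkeeping.
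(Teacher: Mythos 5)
Your proof is correct, and the existence half is exactly the paper's: both use Lemma~\ref{lem:shape_equivalent} to get $\mathbf h$ as the unique linear extension from the basis $\{\myindic{A}:A\in\A\setminus\{\varnothing\}\}$ (with $\mathbf h(\myindic{\varnothing})=\mathbf h(0)=0=h(\varnothing)$ handled as you do), and both then set $\Delta h(U)=\mathbf h(\myindic{U})$, which is legitimate by Remark~\ref{rem:1_C(u)_in_E} and is trivially additive. Where you diverge is the uniqueness of $\Delta h$. The paper runs an induction over the classes $\C_k$: writing $C_k=A_0\setminus\bigcup_{i=1}^{k+1}A_i$ and $C_{k-1}=A_0\setminus\bigcup_{i=1}^{k}A_i$, the disjoint decomposition $C_{k-1}=C_k\sqcup(A_{k+1}\cap C_{k-1})$ forces $\Delta h(C_k)=\Delta h(C_{k-1})-\Delta h(A_{k+1}\cap C_{k-1})$ with both right-hand terms in $\C_{k-1}$, so only the raw two-set disjoint additivity is ever invoked. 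You instead appeal to the full inclusion--exclusion identity for a finitely additive set function on the ring $\C(u)$, reducing $\Delta'(C)$ to a signed sum of values of $h$ on intersections of $\A$-sets. That route is valid --- $\C(u)$ is a ring, so $\Delta'(U\cup V)=\Delta'(U)+\Delta'(V)-\Delta'(U\cap V)$ follows from the two disjoint splittings $U\cup V=U\sqcup(V\setminus U)$ and $V=(V\cap U)\sqcup(V\setminus U)$, and induction gives the general formula --- but it does require you to actually prove that intermediate lemma (additivity on disjoint unions does not hand you the signed alternating sum for free), which is the one step you currently gloss over with ``additivity plus the prescribed values on $\A$ force $\Delta'(U)$ to equal the same expression.'' The paper's peeling-one-set-at-a-time induction sidesteps that lemma entirely at the cost of organizing the argument around the filtration $\A=\C_{-1}\subseteq\C_0\subseteq\C_1\subseteq\cdots$; your version is more self-contained conceptually (everything reduces to the indicator identity~(\ref{eq:inclusion-exclusion_formula})) once the set-function inclusion--exclusion is written out.
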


\begin{proof}
	The existence and unicity of $\mathbf h$ are but a direct consequence of Lemma \ref{lem:shape_equivalent}.
	According to Remark \ref{rem:1_C(u)_in_E}, we may define $\Delta h(U)=\mathbf h(\myindic{U})$ for all $U\in\C(u),$ which is obviously additive. 
	
	It remains to prove uniqueness by induction.
	Suppose that such $\Delta h$ exists.
	First, remark that $\Delta h$ is uniquely determined on $\A$ since we must have $\restr{\Delta h}{\A}=h.$
	Suppose now that for a fixed integer $k\in\N,$ $\Delta h$ is uniquely determined on the class $\C_{k-1}$ (given in Definition \ref{de:other_class_C}) where we set $\C_{-1}=\A.$
	Let us consider an element $C_k=A_0\setminus\bigcup_{i=1}^{k+1}A_i\in\C_k$ and show that the value $\Delta h(C_k)$ is determined by $\restr{\Delta h}{\C_{k-1}}.$
	
	Denote $C_{k-1}=A_0\setminus\bigcup_{i=1}^kA_i.$
	Since $C_{k-1} = C_k\sqcup(A_{k+1}\cap C_{k-1}),$ the additivity of $\Delta h$ tells us that
	\[ \Delta h(C_k) \speq \Delta h(C_{k-1}) - \Delta h(A_{k+1}\cap C_{k-1}) \]
	where both $C_{k-1}$ and $A_{k+1}\cap C_{k-1}$ actually belong to $\C_{k-1}.$
	Hence $\Delta h(C_k)$ is uniquely determined by the induction hypothesis.
	
	Thus $\Delta h$ is uniquely determined on $\C=\A\cup\bigcup_{k\in\N}\C_k,$ but since any element of $\C(u)$ may be written as a disjoint union of elements of $\C,$ $\Delta h$ is uniquely determined on $\C(u)$ by additivity. 
\end{proof}

\subsection{Indexing collection as a metric space}	\label{subsection:indexing_collection_geodesic}

\subsubsection{Metric $d_\A$ on $\A$}

Since we want to have a look at the regularity of set-indexed processes, we require a (pseudo-)metric $d_\A$ on $\A.$ 
Moreover, we want $d_\A$ to interact well with the already existing order structure of the indexing collection $\A.$
A similar approach has been undertaken in \cite{herbin_local_2016} to obtain a set-indexed version of Kolmogorov-Chentsov's regularity theorem, but here we do not require any quantitative hypothesis.

\medskip

In the following, for any $A\subseteq A'$ in $\A,\,$ $[A,A']$ will denote the set $\big\{ A''\in\A:A\subseteq A''\subseteq A' \big\}.$

Moreover, whenever a metric $d_\A$ is given on $\A$, the TIP bijection of Definition \ref{de:indexing_collection} automatically induces a metric $d_\T$ on $\T$ by
\begin{equation}	\label{eq:d_T}
	\forall s,t\in\T,\quad d_\T(s,t) \speq d_\A(A(s),A(t)) 
\end{equation}
Conversely, whenever $d_\T$ is given, $d_\A$ is also characterized by (\ref{eq:d_T}). This pushes further the correspondance established by Proposition \ref{prop:lattice_correspondance}.
We underline the fact that even though $\T$ might sometimes by endowed with a natural metric, if $d_\A$ is given, then $d_\T$ will always be defined by (\ref{eq:d_T}) and vice versa.

\begin{de}[Set-indexed compatible metric] \label{de:compatible_metric}
	A metric $d_\A$ on $\A$ is said to be \emph{(set-indexed) compatible} if the following properties hold:
	\begin{enumerate}
		\item (Contractivity). 
		For any $A,A',A''\in\A,\,$ $d_\A(A\cap A'',A'\cap A'') \ls d_\A(A,A').$
		
		\item (Outer continuity).
		For any $(A_n)_{n\in\N}$ non-increasing sequence in $\A,\,$ $d_\A(A_n,A)\rightarrow0$ as $n\rightarrow\infty$ where $A=\bigcap_{n\in\N}A_n.$
		
		\item (Shrinking mesh property).
		The diameter of the left-neighborhoods tends to 0, \ie
		\[ \max_{C\in\C^\ell(\A_n)} \text{diam}(C) ~\longrightarrow~ 0 \spas n\rightarrow\infty \]
		where $\C^\ell(\A_n)$ has been given in Definition \ref{de:other_class_C^ell} and $\text{diam}(C)=\sup\{ d_\T(s,s'):s,s'\in C \}.$
				
		\item (Midpoint property).
		For any $A\subseteq A'$ in $\A,$ there exists $A''\in[A,A']$ such that $d_\A(A,A'')=d_\A(A'',A')=d_\A(A,A')/2.$
		
	\end{enumerate}
\end{de}

In the following, let us consider such a compatible metric $d_\A$.  
In particular, the metric $d_\T$ given by (\ref{eq:d_T}) endows $\T$ with a topology, relating back to the usual definition of indexing collection given in \cite{ivanoff_set-indexed_1999} where $\T$ is supposed to be a topological space from the start.

\medskip

Open balls for $d_\A$ and $d_\T$ will be denoted by $B_\A(A,\rho)$ and  $B_\T(t,\rho)$ respectively. 

\medskip

Let us briefly comment on this definition.
Assumptions 1 and 2 may be found in \cite[Definition 2.2]{herbin_local_2016} and ensure that $d_\A$ is compatible with the order structure on $\T.$
Assumption 3 is a way to further push that compatibility to the `meshes' $\A_n$. In particular, it is a qualitative counterpart to the quantitative hypothesis \cite[Assumption $\mathcal H_{\underline\A}$]{herbin_local_2016}. Remark that closely related assumptions may also be found in \cite{ivanoff_compensator_2007} where a lattice-indexed Poisson process is studied.
As for Assumption 4, it is an hypothesis reminiscent of the standard setting of geodesic spaces and its relevance will be further discussed after Proposition \ref{prop:geodesic}. 

\begin{exs} \label{exs:compatible_metric} \ 
	\begin{enumerate}[$\diamond$]
		\item In the case where $\T$ itself is endowed with a pseudo-metric $d,$ instead of directly defining $d_\A$ through $d_\T=d,$ we may also consider the induced Hausdorff metric given by
		\[ \forall A,A'\in\A\setminus\{\varnothing\},\quad d_{\mathcal H}(A,A') \speq \inf\big\{ \eps>0 : A'\subseteq A^\eps \text{ and } A\subseteq(A')^\eps \big\} \]
		where $A^\eps=\{ t\in\T : d(t,A)\ls\eps \}$ and $d_{\mathcal H}(\varnothing,\varnothing')=0$ by convention.
		
		$d_{\mathcal H}$ is always contractive, outer continuous if $d$ is with respect to the sub-semilattice $(\T,\pre).$
		Similarly, the shrinking mesh and midpoint properties may be formulated in terms of $d.$
		
		\item If $m$ is a measure on $(\T,\B)$ (recall that $\B=\sigma(\A)$) such that $m(A)<\infty$ for all $A\in\A,$ then we may also consider 
		\[ \forall A,A'\in\A,\quad d_m(A,A') \speq m(A\triangle A') \]
		where $A\triangle A'=(A\setminus A')\cup(A'\setminus A)$ is the symmetric set difference.
		$d_m$ is always contractive and outer continuous.
		A local version of the shrinking mesh property (which is what we truly need, but is harder to state) is true as long as the midpoint property is by mimicking the proof of \cite[Lemma 5.1.6]{ivanoff_set-indexed_1999}.
		The midpoint property works for all Examples \ref{exs:fundamental_continuous} as long as $m$ is absolutely continuous with respect to the `Lebesgue measure'.
	\end{enumerate}
\end{exs}

\subsubsection{Geodesics in $\A$}

Let us now delve into the consequences of the midpoint property of $d_\A $ (Definition \ref{de:compatible_metric}). They will prove to be important when establishing the Lévy-Itô decomposition in Section \ref{subsection:levy-ito_decomposition} and when giving upper bounds on the Hölder regularity in Section \ref{subsection:oji_function}.

\medskip

A \emph{(constant speed) geodesic} is a map $\gamma:I\rightarrow\A$ where $I$ is an interval of $\R$ such that
	\[ \forall x,y\in I,\quad d_\A(\gamma(x),\gamma(y)) \speq v\,|x-y| \]
where $v>0$ is a constant, called the \emph{speed} of the geodesic.	

\medskip

The following proposition should be compared to \cite[Lemma 5.1.6]{ivanoff_set-indexed_1999} which ensures the existence of so-called \emph{flows}. 
Those flows embody the intrisic `continuous' quality of classical indexing collections since they constitute continuous paths between elements of $\A.$ 
Flows are also a precious link with the usual one-dimensional theory: from a set-indexed process $Z$ and a flow $\gamma:\R_+\rightarrow\A,$ one may define a process $\left\{ \texttt Z^\gamma_t : t\in\R_+ \right\},$ called the \emph{projection of $Z$ along $\gamma$}, by setting $\texttt Z^\gamma_t = Z_{\gamma(t)}$ for all $t\in\R_+$ (see \cite{herbin_local_2016, ivanoff_set-indexed_1999} for more details and applications).
 
Here, our hypotheses on $d_\A$ enable a similar approach through increasing geodesics which share a lot of properties with those flows.
Compared to \cite{ivanoff_set-indexed_1999}, our approach seemed simpler to expose since we separated order properties (Section \ref{subsection:indexing_collection_poset}) from topological ones (Section \ref{subsection:indexing_collection_geodesic}). 
Moreover, as a consequence of the previously mentioned Lemma 5.1.6, the midpoint property (which is the key to Proposition \ref{prop:geodesic}) is true in the classical set-indexed setting for $d_\A=d_m$ and a $m$ a Radon measure, so our exposition is also more general.

\begin{prop}[Existence of increasing geodesics] \label{prop:geodesic}
	For any $A_0,A_1\in\A$ such that $A_0\subset A_1$ and $d_\A(A_0,A_1)>0,$ there exists an increasing geodesic $\gamma:[0,1]\rightarrow\A$ such that $\gamma(0)=A_0$ and $\gamma(1)=A_1.$
\end{prop}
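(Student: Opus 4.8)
The plan is to build the geodesic by the classical "midpoint bisection" argument, but carried out inside the order interval $[A_0,A_1]$ so as to preserve monotonicity, and then to pass to the completion. First I would set $D = d_\A(A_0,A_1) > 0$ and define $\gamma$ on the dyadic rationals of $[0,1]$ by recursion on the dyadic level. Start with $\gamma(0)=A_0$ and $\gamma(1)=A_1$. Given $\gamma$ already defined on dyadics of level $n$ with the properties that $\gamma(k2^{-n})\subseteq\gamma((k+1)2^{-n})$ and $d_\A(\gamma(k2^{-n}),\gamma((k+1)2^{-n}))=2^{-n}D$ for all $k$, apply the midpoint property (Assumption 4 of Definition \ref{de:compatible_metric}) to each consecutive pair: there exists $A''\in[\gamma(k2^{-n}),\gamma((k+1)2^{-n})]$ with $d_\A$ to each endpoint equal to $2^{-n-1}D$; set $\gamma((2k+1)2^{-n-1})=A''$. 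The key point is that the midpoint automatically lies in the order interval, so monotonicity along the dyadic chain is preserved at every level, and the distance-halving is exactly what the midpoint property gives. A short triangle-inequality check (using that the two half-distances sum to the whole) shows the resulting map on all dyadics of $[0,1]$ satisfies $d_\A(\gamma(x),\gamma(y)) = D\,|x-y|$ for all dyadic $x,y$, i.e. $\gamma$ is $D$-Lipschitz-isometric on a dense set.

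Next I would extend $\gamma$ to all of $[0,1]$ by continuity. For $t\in[0,1]$ pick dyadics $x_n\to t$; then $(\gamma(x_n))_n$ is Cauchy in $\A$ for $d_\A$, so I need $(\A,d_\A)$ to be complete for the limit to exist in $\A$. This is the one place I expect to need an argument rather than a quotation: I would obtain completeness (or at least completeness of the relevant order intervals) from outer continuity (Assumption 2) together with the fact that $\A$ is closed under countable intersections — a decreasing-intersection argument shows a monotone Cauchy sequence converges in $\A$, and for a general dyadic-indexed Cauchy sequence one can sandwich it between monotone subsequences living in $[A_0,A_1]$. Alternatively, if the paper intends $d_\A$-completeness as part of the standing setup (it endows $\T$ with a metric topology and works with $\C^\ell$ dissecting systems), I would simply invoke that. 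Once the limit $\gamma(t)$ exists, the isometry identity $d_\A(\gamma(x),\gamma(y))=D|x-y|$ passes to all $x,y\in[0,1]$ by continuity of $d_\A$, and reparametrising time by $D$ (or just taking speed $v=D$) gives a constant-speed geodesic.

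Finally I would check that the extended $\gamma$ is genuinely increasing, not merely non-decreasing: for $s<t$ in $[0,1]$, choose dyadics $x_n\uparrow s$ and $y_n\downarrow t$; then $\gamma(x_n)\subseteq\gamma(s')\subseteq\gamma(t')\subseteq\gamma(y_n)$ for dyadic $s'<t'$ between them, so in the limit $\gamma(s)\subseteq\gamma(t)$, and $d_\A(\gamma(s),\gamma(t))=D(t-s)>0$ forces $\gamma(s)\neq\gamma(t)$, hence $\gamma(s)\subset\gamma(t)$. Taking $x\to y$ also shows $\bigcap$ of a decreasing family of $\gamma$-values is the right endpoint, so one could even note $\gamma(t)=\bigcap_{s>t}\gamma(s)$, tying back to outer continuity. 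The main obstacle, as indicated, is purely the completeness issue needed to define $\gamma$ off the dyadics; everything else is the standard midpoint-bisection construction adapted to respect the lattice order, which the midpoint property was evidently designed to make work.
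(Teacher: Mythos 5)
Your proposal is correct and follows essentially the same route as the paper: midpoint bisection on the dyadics inside the order interval, then extension off the dyadics. The paper sidesteps the completeness question you raise by simply defining $\gamma(x)=\bigcap_{d\in\mathcal D,\,x\ls d}A_d$ (a countable intersection, hence in $\A$) and invoking outer continuity to pass the isometry identity to the limit --- which is exactly the decreasing-intersection argument you sketch, so no general completeness of $(\A,d_\A)$ is needed.
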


\begin{proof}
	Denote for all non-negative integer $n$ the set $\mathcal D_n=\{ k\,2^{-n}:0\ls k\ls 2^n \}$ of dyadics of order $n$ in $[0,1]$ as well as $\mathcal D=\bigcup_{n\in\N}\mathcal D.$
	Directly iterating on the midpoint property in Definition \ref{de:compatible_metric} yields a collection $\{ A_d : d\in\mathcal D \}$ such that $d<d'$ implies $A_d\subset A_{d'}$ and
	\[ \forall d\in\mathcal D_{n+1}\setminus\mathcal D_n,\quad d_\A(A_d,A_{d\pm2^{-n}}) \speq \frac{1}{2}d_\A(A_{d-2^{-n}},A_{d+2^{-n}}). \]
	Then we define a map $\gamma:[0,1]\rightarrow\A$ as follows:
	\[ \forall x\in[0,1],\quad \gamma(x) \speq \bigcap_{\substack{d\in\mathcal D: \\ x\ls d}}A_d. \]
	
	The function $\gamma$ is increasing.
	Moreover, the iteration directly proves that for all $d,d'\in\mathcal D,\,$ $d_\A(\gamma(d),\gamma(d'))=d_\A(A_0,A_1)|d-d'|.$ 
	Outer continuity allows to extend this relation for all $x,y\in[0,1].$
\end{proof}

Before moving on, let us prove a useful application of increasing geodesics to the shrinking mesh property.

\begin{prop}[Weak shrinking mesh property]	\label{prop:weak_shrinking_mesh}
	The `mesh size' 
	\[ \delta_n \speq \max\big\{ d_\A(A,A') : A,A'\in\A_n,~ A \text{ maximal proper subset of } A' \big\} \longrightarrow0 \spas n\rightarrow\infty \] 
	where by `$A$ being a maximal proper subset of $A'$', we mean that $A\subset A'$ and there is no $A''\in\A_n$ such that $A\subset A''\subset A'$ (recall that $\subset$ stands for the strict inclusion).
\end{prop}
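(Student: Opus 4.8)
The plan is to deduce the weak shrinking mesh property from the (strong) shrinking mesh property of Definition \ref{de:compatible_metric} by relating each pair $A\subset A'$ of consecutive elements of $\A_n$ to a left-neighborhood in $\C^\ell(\A_n)$, and then to control $d_\A(A,A')$ by the diameter of that left-neighborhood using the contractivity axiom together with an increasing geodesic from Proposition \ref{prop:geodesic}.

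\textbf{Step 1: Attach a left-neighborhood to each consecutive pair.} Fix $n$ and let $A\subset A'$ be such that $A$ is a maximal proper subset of $A'$ in $\A_n$. Consider the left-neighborhood $C=A'\setminus\bigcup_{A''\in\A_n:\,A''\subsetneq A'}A''\in\C^\ell(\A_n)$. By maximality, every $A''\in\A_n$ with $A''\subsetneq A'$ is already contained in $A$ (indeed $A\cap A''\subsetneq A'$ lies strictly below $A'$, and if it equalled $A$ we would be done, otherwise $A\cup A''$... — more carefully, one argues via the SHAPE condition and Definition \ref{de:extremal_representation_C} that the extremal representation of $C$ is $A'\setminus A$ when $A$ is the unique maximal proper subset, or in general that $A'\setminus A\supseteq C$). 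In any case $C\subseteq A'\setminus A$, so $A'=A\sqcup(A'\setminus A)\supseteq A\sqcup C$ and the tip of $A'$ lies in $C$.

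\textbf{Step 2: Bound $d_\A(A,A')$ by $\mathrm{diam}(C)$.} This is the crux. If $d_\A(A,A')=0$ there is nothing to prove, so assume $d_\A(A,A')>0$ and invoke Proposition \ref{prop:geodesic} to get an increasing geodesic $\gamma:[0,1]\to\A$ with $\gamma(0)=A$, $\gamma(1)=A'$. The idea is that $d_\A(A,A')=\mathrm{length}(\gamma)$, and one can bound the distance between nearby $\gamma(x)$'s in terms of how the tips of $\gamma(x)$ sweep through the region $A'\setminus A$, all of whose points lie in a left-neighborhood. Concretely, for $s,t$ the tips of $\gamma(x),\gamma(y)$ with $x<y$, contractivity gives $d_\A(\gamma(x),\gamma(y))=d_\A(\gamma(y)\cap\gamma(x),\gamma(y))$, and one relates this to $d_\T(s,t)\le\mathrm{diam}(C)$ since both $s,t\in A'\setminus A\subseteq C$ (using Step 1 and that $\gamma$ is increasing with $\gamma(0)=A$). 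Summing along a fine subdivision of $[0,1]$ and using that $\gamma$ is a constant-speed geodesic, one obtains $d_\A(A,A')\le\mathrm{diam}(C)$, hence $\delta_n\le\max_{C\in\C^\ell(\A_n)}\mathrm{diam}(C)$.

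\textbf{Step 3: Conclude.} The right-hand side tends to $0$ as $n\to\infty$ by the shrinking mesh property (Definition \ref{de:compatible_metric}, item 3), so $\delta_n\to0$.

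\textbf{Main obstacle.} The delicate point is Step 2: turning the set-theoretic containment "the tips of $\gamma(x),\gamma(y)$ lie in the left-neighborhood $C$'' into a genuine metric inequality $d_\A(\gamma(x),\gamma(y))\le d_\T(s,t)$ (or a comparable quantity). This requires combining contractivity with a careful argument that $\gamma(y)\cap\gamma(x)=\gamma(x)$ and that the "difference'' $\gamma(y)\setminus\gamma(x)$ stays inside $A'\setminus A\subseteq C$, together with the midpoint/geodesic structure to avoid any loss when summing along the subdivision. One also needs the purely order-theoretic Step 1, which leans on the SHAPE condition and the extremal representation to guarantee $C\subseteq A'\setminus A$; this is routine but must be stated correctly.
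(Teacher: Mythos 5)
Your overall strategy — run the increasing geodesic of Proposition \ref{prop:geodesic} from $A$ to $A'$ and compare with the left-neighborhood of $A'$ in $\C^\ell(\A_n)$, then invoke the shrinking mesh property — is exactly the paper's, but two points in your execution are off, and the second one is a genuine gap.

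First, Step 2 is much simpler than you make it, and the version you describe does not close. By the TIP bijection and the definition (\ref{eq:d_T}) of $d_\T$, if $s$ and $t$ are the tips of $\gamma(x)$ and $\gamma(y)$ then $d_\A(\gamma(x),\gamma(y))=d_\T(s,t)$ is an \emph{equality by definition}; no contractivity and no subdivision is needed. Moreover, summing per-interval bounds of the form $d_\A(\gamma(x_i),\gamma(x_{i+1}))\ls\mathrm{diam}(C)$ over $k$ subintervals only yields $d_\A(A,A')\ls k\,\mathrm{diam}(C)$, which is useless. The correct move is a single comparison: for every $x>0$, $d_\A(\gamma(x),A')\ls\mathrm{diam}(C)$, and then let $x\rightarrow0^+$ using the continuity of $\gamma$; this is how the bound reaches the endpoint $A=\gamma(0)$, whose own tip lies in $A$ and hence \emph{not} in $C$.

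Second, and more seriously, you never actually establish that the tips of the intermediate sets $\gamma(x)$, $x>0$, lie in $C=A'\setminus\bigcup_{A''\in\A_n:\,A''\subsetneq A'}A''$. Your Step 1 only gives $C\subseteq A'\setminus A$, and in Step 2 you quietly use the reversed inclusion ``$s,t\in A'\setminus A\subseteq C$'', which is false in general: when $A'$ has several maximal proper subsets in $\A_n$ (e.g.\ $A'=A((2,2))$ in $\R_+^2$ with maximal proper subsets $A((1,2))$ and $A((2,1))$), the set $A'\setminus A$ is strictly larger than $C$, so membership in $A'\setminus A$ does not put a point in $C$. The missing argument is where the maximality of $A$ is really used: if the tip $t_x$ of $\gamma(x)$ belonged to some $A''\in\A_n$ with $A''\subsetneq A'$, then $\gamma(x)=A(t_x)\subseteq A''$ (since $A(t_x)$ is the smallest element of $\A$ containing $t_x$), hence $A\subseteq\gamma(x)\subseteq A''\subsetneq A'$, and maximality forces $A''=A$ and thus $\gamma(x)=A$, contradicting $d_\A(A,\gamma(x))=x\,d_\A(A,A')>0$. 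The same reasoning places the tip of $A'$ in $C$. Without this step your inequality $d_\T(s,t)\ls\mathrm{diam}(C)$ is unjustified.
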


\begin{proof}
	Fix $n\in\N$ and consider $A,A'\in\A_n$ such that $A$ is a maximal propert subset of $A'$ and $\delta_n=d_\A(A,A').$ Then consider an increasing geodesic $\gamma:[0,1]\rightarrow\A$ such that $\gamma(0)=A$ and $\gamma(1)=A'.$
	
	Since for all $x>0,$ the tips (Definition \ref{de:indexing_collection}) of $\gamma(x) $ and $A'$ both belong to the same element in $\C^\ell(\A_n)$, we have
	\[ \forall x>0,\quad d_\A(\gamma(x),A') ~\ls~ \max_{C\in\C^\ell(\A_n)} \text{diam}(C). \] 
	Since $\gamma$ is continuous, letting $x\rightarrow0^+$ yields 
	\[ d_\A(A,A') ~\ls~ \max_{C\in\C^\ell(\A_n)} \text{diam}(C). \]
	The result then follows by letting $n\rightarrow\infty.$
\end{proof}

\subsubsection{Metric $d_\C$ on $\C$}

The metric $d_\A$ on $\A$ may be naturally extended to a metric $d_\C$ on $\C$ with corresponding interesting properties. In Section \ref{sec:regularity_criterion}, this metric is used to give an equivalent definition of the set-indexed Hölder exponents defined in \cite{herbin_local_2016}. 

\begin{de}[Metric $d_\C$ on $\C$]	\label{de:d_C}
	For any $C=A_0\setminus\bigcup_{i=1}^nA_i$ and $C'=A'_0\setminus\bigcup_{j=1}^{n'}A'_j$ in $\C$ written with their extremal representations given in Definition \ref{de:extremal_representation_C}, denote by $d_\C(C,C')$ the Hausdorff distance between the sets $\{ A_0,...,A_n \}$ and $\{ A'_0,...,A'_{n'} \},$ \ie
	\[ d_\C(C,C') \speq \max\left\{ \max_{0\ls i\ls n}\min_{0\ls j\ls n'}d_\A(A_i,A'_j),\, \max_{0\ls j\ls n'}\min_{0\ls i\ls n}d_\A(A_i,A'_j) \right\}. \]
	$(\C,d_\C)$ is a (pseudo-)metric space for which the canonical injection $(\A,d_\A)\hookrightarrow(\C,d_\C)$ is an isometry, \ie ${d_\C}(A,A')=d_\A(A,A')$ for any $A,A'\in\A.$	
\end{de}

In order to comprehend what is going on for $d_\C,$ the minimum is here in order to 'match' $A_i$ with the closest $A'_j$ and vice versa while the maximum takes the total error into account for the best matching.
$d_\C$ is well-defined since the extremal representation of an element of $\C$ is unique due to Definition \ref{de:extremal_representation_C}.

\medskip

The next lemma basically tells that all constitutive elements of $C_n(t)$ (Definition \ref{de:other_class_C^ell}) converge to $A(t)$ as $n$ tends to infinity and sheds a new light on the metric $d_\T$ given by (\ref{eq:d_T}).

\begin{lem} \label{lem:d_C}
	For all $t\in\T,\,$ $d_\C(C_n(t),A(t))\rightarrow0$ as $n\rightarrow\infty.$
	Moreover, 
	\begin{equation}	\label{eq:d_T-d_C_link}
		\forall s,t\in\T,\quad d_\T(s,t) \speq \lim_{n\rightarrow\infty}d_\C( C_n(s),C_n(t) ).
	\end{equation}
\end{lem}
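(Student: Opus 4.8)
The plan is to prove the two claims in turn, the first being essentially a special case of the machinery needed for the second. For the first claim, fix $t\in\T$. By the TIP assumption and the surjectivity discussed after Definition \ref{de:other_class_C^ell}, for all $n$ large enough the set $C_n(t)$ belongs to $\C^\ell(\A_n)$, so it has an extremal representation $C_n(t)=A_n(t)\setminus\bigcup_{A'\in\A_n:A'\subsetneq A_n(t)}A'$. To bound $d_\C(C_n(t),A(t))$ I need to match each constitutive element of this representation with $A(t)$ (which is the sole element of the extremal representation of $A(t)\in\A\subseteq\C$) and conversely. The ``$A(t)$ against the $A_n(t)$-side'' direction amounts to $\min$ over the constitutive elements, which is at most $d_\A(A_n(t),A(t))$; this tends to $0$ by outer continuity (Definition \ref{de:compatible_metric}, assumption 2) since $(A_n(t))_{n}$ is a non-increasing sequence with intersection $A(t)$. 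The other direction requires that \emph{every} constitutive element $A'$ appearing in $C_n(t)$ — i.e. $A_n(t)$ itself and each maximal proper $\A_n$-subset $A'\subsetneq A_n(t)$ — is within $o(1)$ of $A(t)$. For $A_n(t)$ this is again outer continuity; for the maximal proper subsets $A'$, I would note that $t\in C_n(t)=A_n(t)\setminus\bigcup A'$, hence $d_\A(A',A(t))\ls d_\A(A',A_n(t))+d_\A(A_n(t),A(t))$, and $d_\A(A',A_n(t))\ls\delta_n$ by the weak shrinking mesh property (Proposition \ref{prop:weak_shrinking_mesh}), since $A'$ is a maximal proper subset of $A_n(t)$ in $\A_n$. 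Both terms vanish as $n\to\infty$, giving $d_\C(C_n(t),A(t))\to0$.

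For the second claim, fix $s,t\in\T$. The triangle inequality for $d_\C$ together with the first claim and the isometry $d_\C(A(s),A(t))=d_\A(A(s),A(t))=d_\T(s,t)$ gives
\[
\big| d_\C(C_n(s),C_n(t)) - d_\T(s,t) \big| \ls d_\C(C_n(s),A(s)) + d_\C(C_n(t),A(t)) \longrightarrow 0,
\]
which is exactly \eqref{eq:d_T-d_C_link}.

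\textbf{Main obstacle.} The delicate point is the second matching direction in the first claim: controlling \emph{all} the constitutive elements of $C_n(t)$, not merely $A_n(t)$. One must be careful that $C_n(t)\in\C^\ell(\A_n)$ really does have an extremal representation of the stated form (so that $d_\C$ is computed against the right finite set), and that the maximal-proper-subset elements are exactly the ones controlled by $\delta_n$ in Proposition \ref{prop:weak_shrinking_mesh} — this is where the interplay between the combinatorial structure of $\C^\ell(\A_n)$, the shrinking mesh property (assumption 3 of Definition \ref{de:compatible_metric}), and outer continuity is genuinely used. Everything else is a routine triangle-inequality argument.
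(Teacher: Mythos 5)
Your proposal is correct and follows essentially the same route as the paper: outer continuity controls $d_\A(A_n(t),A(t))$, the weak shrinking mesh property (Proposition \ref{prop:weak_shrinking_mesh}) controls the distance from $A_n(t)$ to the maximal proper subsets appearing in the extremal representation of $C_n(t)$, and the second claim is the triangle inequality for $d_\C$ combined with the first claim. The paper phrases the final step as two separate $\liminf$/$\limsup$ bounds rather than a single absolute-value inequality, but this is only cosmetic.
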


\begin{proof}
	Let $t\in\T$ and $\eps>0.$
	By outer continuity of $d_\A,$ there exists an integer $n_0$ such that
	\begin{equation} 	\label{eq:d_C_bound1}
		\forall n\gs n_0,\quad A_n(t)\in\A_n \spand d_\A(A_n(t),A(t)) ~\ls~ \eps/2.
	\end{equation}
	According to Proposition \ref{prop:weak_shrinking_mesh}, there exists $n_1\gs n_0$ such that
	\begin{equation}	\label{eq:d_C_bound2}
		\forall n\gs n_1,\quad \delta_n\ls\eps/2
	\end{equation}
	where $\delta_n$ is the one from that very proposition.
	
	By (\ref{eq:d_C_bound1}), for all $n\gs n_0,\,$ $C_n(t)\in\C^\ell.$
	In particular, we may write its extremal representation $C_n(t)=A_n^0\setminus\bigcup_{i=1}^{j_n}A_n^i.$
	By (\ref{eq:d_C_bound2}), we get
	\begin{equation} 	\label{eq:d_C_bound3} 
		\forall n\gs n_1,\,\forall i\ls j_n,\quad d_\A(A_n(t),A_n^i) ~\ls~ \eps/2. 
	\end{equation}
	Hence it follows that for all $n\gs n_1,$
	
	\vspace{-0.3cm}
	\begin{spacing}{1.3}
	\[\begin{array}{rcll}
		d_\C(C_n(t),A(t))
		& ~\ls~	& d_\C(C_n(t),A_n(t)) \+ d_\A(A_n(t),A(t)) 	
		& ~\text{ by triangle inequality,}					\\
		& ~\ls~ & \delta_n \+ d_\A(A_n(t),A(t))		
		& ~\text{ by definition of $d_\C$ and } \delta_n, 	\\
		& ~\ls~	& \eps 
		& ~\text{ by (\ref{eq:d_C_bound1}) and (\ref{eq:d_C_bound3}).}
	\end{array}\]
	\end{spacing}
	\vspace{-0.2cm}
	
	Hence $d_\C(C_n(t),A(t))\rightarrow0$ as $n\rightarrow\infty.$
	
	\medskip
	
	Only (\ref{eq:d_T-d_C_link}) remains to prove.
	Let $s,t\in\T.$ Remark that $C_n(s)$ and $C_n(t)$ both belong to $\C$ for all $n$ big enough, say $n\gs n_2.$ 
	Then we can write for all $n\gs n_2,$
	\[ d_\T(s,t) \speq d_\C(A(s),A(t)) ~\ls~ d_\C(A(s),C_n(s)) \+ d_\C(C_n(s),C_n(t)) \+ d_\C(C_n(t),A(t)). \]
	Taking lower limits yields $d_\T(s,t) \ls \liminf_{n\rightarrow\infty}d_\C(C_n(s),C_n(t)).$ 
	Conversely, for all $n\gs n_2,$
	\[ d_\C(C_n(s),C_n(t)) ~\ls~ d_\C(C_n(s),A(s)) \+ d_\T(s,t) \+ d_\C(C_n(t),A(t)). \]
	Taking upper limits yields $\limsup_{n\rightarrow\infty}d_\C(C_n(s),C_n(t)) \ls d_\T(s,t).$ 
	The limit (\ref{eq:d_T-d_C_link}) follows.
\end{proof}

\subsubsection{Divergence $\qd$ on $\T\times\A$}	\label{subsubsec:divergence}

Our main focus being the study of the integral $Y_A=\ds\int_A f\,\text dX$ of a function $f$, defined on $\T$, against a set-indexed process $X$, defined on $\A$, it introduces an interaction between points $t\in\T$ and sets $A\in\A$ that the pseudo-metric $d_\A$, or equivalently $d_\T,$ fails to capture.
The goal of this part is to shed some light and put words on such phenomenon, which we illustrate in the case where $\T=\R_+^2$ from Examples \ref{exs:fundamental_continuous}. 

\medskip

\begin{figure}[!h]
\centering
	\begin{tikzpicture}[scale=.85]	
		\draw (5,2)		node[above right] 	{$A$};
		\draw (4,3)		node[above right] 	{$A'$};
		\draw [->]		(-.5,0)--(5.5,0);
		\draw [->]		(0,-.5)--(0,3.5);	
		\draw [dashed]	(0,2.3)--(1,2.3)--(1,0);
		\draw [thick]	(0,2)--(5,2)--(5,0);
		\draw [thick]	(0,3)--(4,3)--(4,0);
		\fill [pattern=north east lines,opacity=.2] (0,2)--(0,3)--(4,3)--(4,2)--(0,2);
		\fill [pattern=north east lines,opacity=.2] (4,0)--(4,2)--(5,2)--(5,0)--(4,0);
		\draw (1,2.3)	node[above right, fill=white] 	{$t$};
	\end{tikzpicture}
	
	\caption{$t\in A\triangle A'$ where $d_\A(A,A')$ is much smaller than $d_\A(A,A(t))$.}
	\label{fig:symmetric_difference}
\end{figure}
	
Even if more details will be given in Section \ref{sec:regularity_criterion}, let us say for now that we will be interested in bounding the increments of $Y$ around some $A\in\A$ taking the form $Y_A-Y_{A'}$ where $A'$ is close to $A$ for $d_\A.$
Using the additive extension from Proposition \ref{propde:increment_map_linear_extension}, we have
\[ Y_A-Y_{A'} \speq \lp \Delta Y_{A\setminus A'}+Y_{A\cap A'} \rp - \lp \Delta Y_{A'\setminus A}+Y_{A'\cap A} \rp \speq \Delta Y_{A\setminus A'} - \Delta Y_{A'\setminus A}. \]

This tends to indicate that points $t\in A\triangle A'$ (\ie the hatched region in Figure \ref{fig:symmetric_difference}) do have an influence on the increment $Y_A-Y_{A'}$ where $d_\A(A,A')$ is small but $d_\A(A,A(t))$ might not be.
Hence the need to express `how close' such $t$ are to $A.$

\begin{de}[Victiny $\V$ and divergence $\qd$]	\label{de:divergence}
	For all $A\in\A,$ $t\in\T$ and $\rho>0,$ define
	\[ \V(A,\rho) \speq \bigcup_{A'\in B_\A(A,\rho)} \hspace{-.2cm} \big(A\triangle A'\big) \qquad\text{and}\qquad \qd(t,A) \speq \inf\Big\{ \rho>0 : t\in\V(A,\rho) \Big\}  \]
	with the convention $\V(A,\rho)=\varnothing$ for $\rho\ls0.$ 
	
	$\V(A,\rho)$ is called the \emph{victiny of $A$ of size $\rho$} and $\qd(t,A)$ the \emph{divergence between $t$ and $A$}.
\end{de}

This definition naturally yields two notions of `open balls' for $t\in\T,\,$ $A\in\A$ and $\rho>0$:

\vspace{-1cm}
\begin{spacing}{1.5}
\begin{eqnarray}
	\label{eq:victiny}
	\V(A,\rho) 	& = & \big\{ s\in\T : \qd(s,A)<\rho \big\},		\\
	\label{eq:dual_victiny}
	\V'(t,\rho)	& = & \big\{ A'\in\A : \qd(t,A')<\rho \big\}. 
\end{eqnarray}
\end{spacing}
\vspace{-.4cm}

where we check that (\ref{eq:victiny}) is coherent with Definition \ref{de:divergence}. $\V'(t,\rho)$ will be called \emph{dual victiny of $t$ of size $\rho$}.

\begin{ex} 	\label{ex:balls_qd}
	Although $\qd(t,A)=d_\A(A(t),A)$ in the case where $\T=\R_+$ (or a tree more generally, see Examples \ref{exs:fundamental_continuous}), other behaviors start to appear in higher-dimensional examples.

	When $\T=\R_+^2$ and $d_\T=d_2$ is the usual euclidean distance, the victinies are illustrated in Figure \ref{fig:victiny}.
	
\newpage

\begin{figure}[!h]
\centering
	\begin{tikzpicture}[scale=.85]	
		\draw [->]				(-.5,0)--(5.5,0);
		\draw [->]				(0,-.5)--(0,3.5);
		\draw [<->]				(2.7,2.9)--(4.3,2.9);
		\draw (3.5,2.9) 		node[above]				{$2\rho$};
		\draw [densely dotted]	(2.7,1.2)--(2.7,2.9);
		\draw [densely dotted]	(4.3,2)--(4.3,2.9);
		\draw [<->]				(4.4,1.2)--(4.4,2.8);
		\draw (4.4,2) 			node[right]				{$2\rho$};
		\draw [densely dotted]	(2.7,1.2)--(4.4,1.2);
		\draw [densely dotted]	(3.5,2.8)--(4.4,2.8);
		\fill [pattern=north east lines,opacity=.2] (4.3,0)--(4.3,2)--(4.3,2) arc (0:90:.8)--(3.5,2.8)--(0,2.8)--(0,1.2)--(2.7,1.2)--(2.7,0)--cycle;
		\fill [pattern=north west lines,opacity=.2] (3.5,2) circle (.8);
		\draw [thick]			(0,2.8)--(3.5,2.8);
		\draw [thick]			(4.3,2)--(4.3,0);
		\draw [thick]			(4.3,2) arc (0:90:.8);
		\draw [thick]			(0,1.2)--(2.7,1.2)--(2.7,0);
		\draw (3.5,2)			node[fill=white,above]	{$t$} node {$\times$};
		\draw [dashed,thick]	(0,2)--(3.5,2)--(3.5,0);
		\draw (2.5,-1)			node					{$\V(A(t),\rho)$ (hatched) and $B_\T(t,\rho)$ (crossed)};
		
		\draw [->]				(8.3,0)--(14.3,0);
		\draw [->]				(8.8,-.5)--(8.8,3.5);
		\draw[<->]				(11.4,.6)--(9.8,.6);
		\draw (10.6,.6) 		node[below]	{$2\rho$};
		\draw [densely dotted]	(11.4,2.3)--(11.4,.6);
		\draw [densely dotted]	(9.8,1.5)--(9.8,.6);
		
		\draw[<->]				(9.7,2.3)--(9.7,.7);
		\draw (9.7,1.5) 		node[left]	{$2\rho$};
		\draw [densely dotted]	(9.7,2.3)--(11.4,2.3);
		\draw [densely dotted]	(11.4,.7)--(9.7,.7);
		
		\fill [pattern=north west lines,opacity=.2] (10.6,1.5) circle (.8);
		\fill [pattern=north east lines,opacity=.2] (14.3,.7)--(10.6,.7) arc (270:180:.8)--(9.8,3.5)--(11.4,3.5)--(11.4,2.3)--(14.3,2.3)--cycle;
		\draw [thick]			(13.3,.7)--(10.6,.7) arc (270:180:.8)--(9.8,3);
		\draw [thick,dashed]	(14.3,.7)--(13.3,.7);
		\draw [thick,dashed]	(9.8,3.5)--(9.8,3);
		\draw [thick]			(13.3,2.3)--(11.4,2.3)--(11.4,3);
		\draw [thick,dashed]	(14.3,2.3)--(13.3,2.3);
		\draw [thick,dashed]	(11.4,3.5)--(11.4,3);
		\draw (10.6,1.5)		node[fill=white,above]	{$t$} node {$\times$};
		\draw (11.3,-1)			node					{$\V'(t,\rho)$ (hatched) and $B_\T(t,\rho)$ (crossed)};
	\end{tikzpicture}
	
	\caption{Victiny and dual victiny for $(\T,d_\T)=(\R_+^2,d_2)$.}
	\label{fig:victiny}
\end{figure}
\end{ex}

\vspace{-.5cm}

Some might wonder whether through the TIP bijection, one could obtain a metric on $\T$ with the formula $(s,t)\mapsto\qd(s,A(t)).$
However that is not the case since both symmetry and triangle inequality fail in general.
Indeed, if symmetry was true, then we would have $\V(A(t),\rho)=\V'(t,\rho)$ but Figure \ref{fig:victiny} strongly suggests that is generally not the case.
As for triangle inequality, Figure \ref{fig:qd_no_distance} illustrates a case where while both $\qd(s,A(t))$ and $\qd(t,A(u))$ are small --- both $s$ and $t$ are contained in corresponding small victinies --- $\qd(s,A(u))$ is big, so $\qd(s,A(u))\ls\qd(s,A(t))+\qd(t,A(u))$ cannot hold.

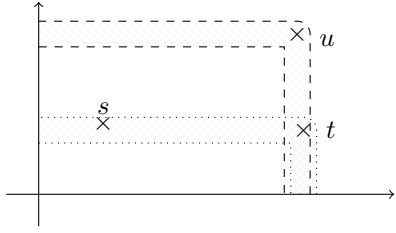
\begin{figure}[!h]
\centering
	\begin{tikzpicture}[scale=.85]	
		\draw [->]		(-.5,0)--(5.5,0);
		\draw [->]		(0,-.5)--(0,3);
		
		\draw (1,1.1)	node {$\times$} node[above] 	{$s$};	

		\draw (4.3,1)	node[right] 	{$t$};
		\draw (4.1,1)	node {$\times$};

		\draw [dotted]	(4.3,0)--(4.3,1)--(4.3,1) arc (0:90:.2)--(0,1.2);
		\draw [dotted]	(0,0.8)--(3.9,0.8)--(3.9,0);
		\fill [pattern=north east lines,opacity=.2] (4.3,0)--(4.3,1)--(4.3,1) arc (0:90:.2)--(0,1.2)--(0,0.8)--(3.9,0.8)--(3.9,0)--cycle;
		
		\draw (4.2,2.4)	node[right] 	{$u$};
		\draw (4,2.5)	node {$\times$};

		\draw [dashed] 	(4.2,0)--(4.2,2.5)--(4.2,2.5) arc (0:90:.2)--(0,2.7);
		\draw [dashed]	(0,2.3)--(3.8,2.3)--(3.8,0);
		\fill [pattern=north west lines,opacity=.2] (4.2,0)--(4.2,2.5)--(4.2,2.5) arc (0:90:.2)--(0,2.7)--(0,2.3)--(3.8,2.3)--(3.8,0)--cycle;
	\end{tikzpicture}

	\caption{The hatched regions represent victinies of $A(t)$ and $A(u)$.}
	\label{fig:qd_no_distance}
\end{figure}

Before moving on, let us briefly study $\V$ and $\qd$ in order to see that, even though there is no metric structure in general, those objects still retains some related nice geometric properties.

\begin{prop}[Properties of $\V$]	\label{prop:property_V}
	The following properties hold:
	\begin{enumerate}
		\item (Victinies behave like open balls).
		For all $A,A'\in\A$ and $\rho>0,$
		\begin{equation} 	\label{eq:prop_V_open_ball_1}
		\bigcup_{\rho'<\rho}\V(A,\rho') \speq \V(A,\rho) ~\subseteq~ \bigcap_{\rho'>\rho}\V(A,\rho')
		\end{equation}
		and 
		\begin{equation}	\label{eq:prop_V_open_ball_2}
			\V(A',\rho-d_\A(A,A')) ~\subseteq~ \V(A,\rho) ~\subseteq~ \V(A',\rho+d_\A(A,A')).
		\end{equation}
	
		\item (Discretization of the victiny).
		For all $A\in\A,$ $\rho>0$ and $n\in\N,$ denote
		
		\vspace{-.2cm}
		\begin{equation}	\label{eq:V_n}
			\V_n(A,\rho) \speq \bigcup_{\substack{\overline A,\underline A\in \A_n\cap B_\A(A,\rho): \\ \underline A\subset\overline A }} \hspace{-.2cm} \big(\overline A\setminus\underline A\big) \speq \overline V_n(A,\rho) \setminus \underline V_n(A,\rho)
		\end{equation}						
		\vspace{-.4cm}
		
		where $\overline V_n(A,\rho)$ (resp. $\underline V_n(A,\rho)$) is the union (resp. intersection) of all maximal (resp. minimal) elements for $\subseteq$ in $\A_n\cap B_\A(A,\rho).$
		Then
		\begin{equation}	\label{eq:V=U_nV_n}
			\V(A,\rho) ~=~ \bigcup_{n\in\N}\V_n(A,\rho). 
		\end{equation}
	\end{enumerate}	
\end{prop}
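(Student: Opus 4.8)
\textbf{Overall approach.} Both parts are essentially set-theoretic bookkeeping built on the definitions of $\V$, $\qd$ and (for part 2) the discretization $g_n$ from Definition \ref{de:indexing_collection}, combined with the metric axioms of Definition \ref{de:compatible_metric} (outer continuity and contractivity). I would handle the two parts independently, and within part 1 the two displayed statements independently.

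\textbf{Part 1, first display (\ref{eq:prop_V_open_ball_1}).} The inclusion $\bigcup_{\rho'<\rho}\V(A,\rho')\subseteq\V(A,\rho)$ is immediate from monotonicity of $\rho\mapsto\V(A,\rho)$. For the reverse inclusion, take $t\in\V(A,\rho)$; by definition there is $A'\in B_\A(A,\rho)$ with $t\in A\triangle A'$, and since $d_\A(A,A')<\rho$ we may pick $\rho'$ with $d_\A(A,A')<\rho'<\rho$, giving $t\in\V(A,\rho')$. So the first equality holds, and the inclusion into $\bigcap_{\rho'>\rho}\V(A,\rho')$ is again monotonicity. (This is exactly the statement that $\qd(\cdot,A)$ is the ``radius function'' of a nested family behaving like open balls; I may phrase it via $\V(A,\rho)=\{s:\qd(s,A)<\rho\}$, i.e. (\ref{eq:victiny}), which should itself be checked here as a one-line consequence of the infimum definition of $\qd$.)

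\textbf{Part 1, second display (\ref{eq:prop_V_open_ball_2}).} This is a triangle-inequality argument at the level of the witnessing sets. For the left inclusion: if $t\in\V(A',\rho-d_\A(A,A'))$, there is $A''\in B_\A(A',\rho-d_\A(A,A'))$ with $t\in A'\triangle A''$; but then $d_\A(A,A'')\le d_\A(A,A')+d_\A(A',A'')<\rho$, so $A''\in B_\A(A,\rho)$, and it remains to note $A'\triangle A''\subseteq (A\triangle A')\cup(A\triangle A'')\subseteq\V(A,\rho)$ — wait, one must be slightly careful: $t\in A'\triangle A''$ gives $t\in A\triangle A''$ or $t\in A\triangle A'$, and in either case $t$ lies in a symmetric difference with an element of $B_\A(A,\rho)$ (namely $A''$, or $A'$ itself which is in $B_\A(A,\rho)$ since $d_\A(A,A')\le\rho-d_\A(A,A')+d_\A(A,A')$... actually $A'\in B_\A(A,\rho)$ iff $d_\A(A,A')<\rho$, which holds as $d_\A(A,A')<\rho-d_\A(A,A')\le\rho$ when $d_\A(A,A')<\rho/2$; the edge cases where $\rho-d_\A(A,A')\le0$ are vacuous by the convention $\V(A,\rho)=\varnothing$ for $\rho\le0$). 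The right inclusion is symmetric, swapping the roles of $A$ and $A'$. The only subtlety — and the one place I'd slow down — is the elementary inclusion $A'\triangle A''\subseteq(A\triangle A')\cup(A\triangle A'')$ and tracking which radii are strict.

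\textbf{Part 2 (discretization).} First check $\V_n(A,\rho)\subseteq\V(A,\rho)$ for every $n$: for $\overline A,\underline A\in\A_n\cap B_\A(A,\rho)$ with $\underline A\subset\overline A$, any $t\in\overline A\setminus\underline A$ lies in $\overline A\triangle\underline A$, but more to the point $t\in A\triangle\overline A$ or $t\in A\triangle\underline A$ (since $t\in\overline A$, $t\notin\underline A$, so $t$ sits on exactly one side relative to $A$), hence $t\in\V(A,\rho)$; this gives ``$\supseteq$'' in (\ref{eq:V=U_nV_n}). For ``$\subseteq$'', take $t\in\V(A,\rho)$, so there is $A'\in B_\A(A,\rho)$ with, say (the other case is symmetric), $t\in A\setminus A'$; set $\eps=\rho-d_\A(A,A')>0$. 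Apply outer continuity and Remark \ref{rem:changes_not_issue}: $g_n(A)\downarrow A$ and $g_n(A')\downarrow A'$ as $n\to\infty$, so for $n$ large $d_\A(g_n(A),A)<\eps/2$ and $d_\A(g_n(A'),A')<\eps/2$, whence by the triangle inequality $g_n(A),g_n(A')\in B_\A(A,\rho)$. Moreover for $n$ large enough $g_n(A')\ne\T$ (it stabilizes to approximate $A'\subsetneq$ something in $\A$; here one uses that $A'\subseteq A_0$ for some fixed element of $\A_n$ for $n$ large, which holds since... ) and $t\in g_n(A)\setminus g_n(A')$ eventually: indeed $t\in A\subseteq g_n(A)$ always, and $t\notin A'=\bigcap_m g_n(A')$ forces $t\notin g_n(A')$ for some — hence, by nesting, all large — $n$. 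Taking $\overline A=g_n(A)$ and $\underline A=g_n(A)\cap g_n(A')$ (which is in $\A_n$ by closure under intersections, sits strictly inside $\overline A$ since it misses $t$, and is in $B_\A(A,\rho)$ by contractivity) exhibits $t\in\V_n(A,\rho)$.

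\textbf{Main obstacle.} The genuinely delicate point is the ``$\subseteq$'' direction of (\ref{eq:V=U_nV_n}): one must produce, from an arbitrary witnessing pair $(A,A')$ at continuous scale, a \emph{discrete} witnessing pair $(\overline A,\underline A)$ in $\A_n$ with $\underline A\subsetneq\overline A$, both still in $B_\A(A,\rho)$, and with $t$ in the difference. This needs (i) outer continuity to bring $g_n(A),g_n(A')$ within the ball $B_\A(A,\rho)$, (ii) contractivity to keep their intersection in the ball, and (iii) the TIP/separability machinery to guarantee $t$ is separated from the lower set at some finite stage $n$ and that $g_n(A')\ne\T$. Handling the case split $t\in A\setminus A'$ versus $t\in A'\setminus A$ symmetrically (in the latter case one takes $\overline A=g_n(A)\cup g_n(A')$, which lies in $\A_n(u)$ not $\A_n$, so one must instead reverse the roles and realize $t\in\overline A\setminus\underline A$ with $\overline A=g_n(A')$, $\underline A=g_n(A)\cap g_n(A')$) is where I expect to spend the most care; everything else is routine triangle-inequality and monotonicity arguments.
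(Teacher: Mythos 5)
Your proposal is correct and takes essentially the same route as the paper: part 1 rests on the elementary inclusion $A\triangle A''\subseteq(A\triangle A')\cup(A'\triangle A'')$ together with the convention for non-positive radii, and part 2 on discretizing a witnessing pair via $g_n$, using outer continuity to bring $g_n(A),g_n(A')$ into $B_\A(A,\rho)$ and contractivity to keep the intersection there (the paper works with $g_n(A\cap A')$ where you use $g_n(A)\cap g_n(A')$, and proves the reverse inclusion of (\ref{eq:V=U_nV_n}) by a symmetric-difference decomposition rather than your two-case argument $t\in A$ versus $t\notin A$; both are fine, yours being slightly more direct). The only piece of the statement you do not address is the second equality in (\ref{eq:V_n}), i.e.\ that the union over comparable pairs equals $\overline V_n(A,\rho)\setminus\underline V_n(A,\rho)$; the paper spends a short paragraph verifying this consistency of notation by reducing $\overline V_n(A,\rho)\setminus\underline V_n(A,\rho)$ to $\bigcup_{i,j}\big(\overline A_i\setminus\underline A_j\big)$ over maximal and minimal elements, and you should add the analogous check.
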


$\V_n(A,\rho)$ should be seen as a discretized version of $\V(A,\rho)$ and will prove to be useful twice in this article: for Lemma \ref{lem:generic_configuration_line} which is a step to deduce an upper bound on the Hölder regularity of $Y$ and for a set-indexed version of Doob's maximal inequality (Theorem \ref{theo:cairoli_inequality}) which will be used to give a lower bound for the same regularity.

\begin{proof}
	Let us fix $A,A'\in\A,$ $n\in\N$ and $\rho>0.$
	\begin{enumerate}  
		
		\item The relation (\ref{eq:prop_V_open_ball_1}) is a straightforward consequence of the definition.
		
		Let us prove the second inclusion of (\ref{eq:prop_V_open_ball_2}).
		Consider $A''\in B_\A(A,\rho).$
		Then
		
		\vspace{-.7cm}
		\begin{spacing}{1.5}
		\[\begin{array}{rcc}
			A\triangle A''
			& =		 		& \big( A\setminus A'' \big) \cup \big( A''\setminus A\big) 			\\
			& ~\subseteq~	& \big[\big(A\setminus A'\big) \cup \big(A'\setminus A''\big)\big] \cup \big[\big(A''\setminus A'\big) \cup \big(A'\setminus A\big)\big] 	\\
			& =				& \hspace{.3cm} \big(A\triangle A'\big)\cup\big(A'\triangle A''\big).
		\end{array}\]
		\end{spacing}
		\vspace{-.5cm}
		
		Since both $d_\A(A',A)$ and $d_\A(A',A'')$ are smaller than $\rho+d_\A(A,A'),$ we get
		\[ A\triangle A'' ~\subseteq~ \V(A',\rho+d_\A(A,A')). \]
		Hence $\V(A,\rho) \subseteq \V(A',\rho+d_\A(A,A')).$
		The first inclusion follows from this one by permuting $A$ and $A'$ as well as replacing $\rho$ by $\rho-d_\A(A,A').$

		\item Let us prove that the definition (\ref{eq:V_n}) of $\V_n(A,\rho)$ is consistent.
		
		Denote
		\[\begin{array}{rcl}
			\overline V_n(A,\rho)
			& \speq & \overline A_1\cup...\cup\overline A_k 	\\
			\underline V_n(A,\rho)
			& \speq & \underline A_1\cap...\cap\underline A_\ell
		\end{array}\]
		where the $\overline A_i$'s (resp. $\underline A_j$'s) are the maximal (resp. minimal) elements in $\A_n\cap B_\A(A,\rho)$.
		Then
		\[ \overline V_n(A,\rho)\setminus\underline V_n(A,\rho) ~=~ \bigcup_{\substack{1\ls i\ls k \\ 1\ls j\ls\ell}} \big(\overline A_i\setminus \underline A_j\big). \]
		From this expression, the converse inclusion in (\ref{eq:V_n}) is straightforward whereas the direct inclusion comes from the fact that any $\overline A\setminus \underline A$ is included in some $\overline A_i\setminus \underline A_j.$

		Let us prove the direct inclusion in (\ref{eq:V=U_nV_n}).
		For $A'\in B_\A(A,\rho),$ we have $A\triangle A' = \big[ A\setminus(A\cap A') \big] \cup \big[ A'\setminus(A\cap A') \big]$ by definition. 
		Hence, by separability from above (Definition \ref{de:indexing_collection}), we get
		\begin{equation}	\label{eq:symmetric_difference_discretized}
			\forall n_0\in\N,\quad A\triangle A' ~\subseteq~ \bigcup_{n\gs n_0} \big[ g_n(A)\setminus g_n(A\cap A') \big] \cup \big[ g_n(A')\setminus g_n(A\cap A') \big]
		\end{equation}
		which would then be included in $\bigcup_{n\in\N}\V_n(A,\rho)$ as long as there exists $n_0\in\N$ such that
		\begin{equation}	\label{eq:victiny_discretization_bound}
			\forall n\gs n_0,\quad \max\Big\{ d_\A(A,g_n(A)),\,  d_\A(A,g_n(A')),\,  d_\A(A,g_n(A\cap A')) \Big\} ~<~ \rho.
		\end{equation}
		So let us find such $n_0.$
		Using outer continuity (cf. Definition \ref{de:compatible_metric}) and contractivity for the last inequality, we get the following:
		\[\begin{array}{lclcll}
			d_\A(A,g_n(A)) 			& \underset{n\rightarrow\infty}{\longrightarrow}	
			& d_\A(A,A)			& =		& 0 			& ~<~\rho \\ 	
			d_\A(A,g_n(A'))			& \underset{n\rightarrow\infty}{\longrightarrow}	
			& d_\A(A,A')		&  		&  				& ~<~\rho \\
			d_\A(A,g_n(A\cap A')) 	& \underset{n\rightarrow\infty}{\longrightarrow}	
			& d_\A(A,A\cap A')	& ~\ls~	& d_\A(A,A')	& ~<~\rho.	 	
		\end{array}\]
		Thus (\ref{eq:victiny_discretization_bound}) is true for a big enough $n_0\in\N.$ Hence
		
		\vspace{-.1cm}	
		\[ \V(A,\rho) ~\subseteq~ \bigcup_{n\in\N}\V_n(A,\rho). \]
		\vspace{-.1cm}		
		
		For the converse inclusion in (\ref{eq:V=U_nV_n}), fix $\overline A,\underline A\in B_\A(A,\rho)$ such that $\underline A\subset\overline A.$
		Then,
		
		\vspace{-.7cm}
		\begin{spacing}{1.5}
		\[\begin{array}{rcl}
			\overline A\setminus\underline A
			& = 			& \big[ \big( A\cap\overline A\big)\setminus\underline A \big] \cup \big[ \overline A\setminus\big(A\cap\underline A\big) \big]	\\
			& ~\subseteq~	& \big[ \big( A\cap \overline A \big)\triangle\underline A \big] \cup \big[ \overline A\triangle\big(A\cap\underline A\big) \big].
		\end{array}\]
		\end{spacing}
		\vspace{-.3cm}
		
		Contractivity then shows that both $A\cap\overline A$ and $A\cap\underline A$ belong to $B_\A(A,\rho).$
		The result follows.  
	\end{enumerate}
\end{proof} 

\begin{prop}[Properties of $\qd$]	\label{prop:property_qd}
	The following properties hold:
	\begin{enumerate}
		\item For all $t\in\T,A\in A,~$ $\qd(t,A) ~\ls~ d_\A(A(t),A).$

		\item (Ersatz of triangle inequality). 
		For all $t\in\T,$ the map $\qd(t,.)$ is 1-Lipschitz, \ie
		\[ \forall A,A'\in\A,\quad |\qd(t,A)-\qd(t,A')| ~\ls~ d_\A(A,A'). \]
	\end{enumerate}
\end{prop}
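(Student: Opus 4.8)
The plan is to handle the two items separately: item 1 by a case split on whether $t$ lies in $A$, and item 2 almost directly from the ``open-ball-like'' inclusions for victinies recorded in Proposition \ref{prop:property_V}. For item 1, it is enough to show that $t\in\V(A,\rho)$ for every $\rho>d_\A(A(t),A)$, since then $\qd(t,A)=\inf\{\rho>0:t\in\V(A,\rho)\}\ls d_\A(A(t),A)$. When $t\notin A$, I would take $A'=A(t)$: since $t\in A(t)$ we get $t\in A(t)\setminus A\subseteq A\triangle A(t)$, and $d_\A(A,A(t))=d_\A(A(t),A)<\rho$, so $A(t)$ witnesses $t\in\V(A,\rho)$.

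When $t\in A$, so that $A(t)\subseteq A$, the difficulty is that neither $A(t)$ nor any intermediate set of an increasing geodesic from $A(t)$ up to $A$ is usable, all of them containing $t$; the idea is instead to approximate $A(t)$ from strictly below by a set avoiding $t$. Put $\delta=\rho-d_\A(A(t),A)>0$. Since $t\in A(t)$ we have $\varnothing\subsetneq A(t)$, so: if $d_\A(\varnothing,A(t))<\delta$, take $A'=\varnothing$; otherwise $d_\A(\varnothing,A(t))>0$ and Proposition \ref{prop:geodesic} yields an increasing constant-speed geodesic $\gamma:[0,1]\rightarrow\A$ with $\gamma(0)=\varnothing$, $\gamma(1)=A(t)$, for which $\gamma(x)\subsetneq A(t)$ whenever $x<1$ (inclusion because $\gamma$ is increasing, strictness because $d_\A(\gamma(x),\gamma(1))=(1-x)\,d_\A(\varnothing,A(t))>0$), hence $t\notin\gamma(x)$ (otherwise $A(t)\subseteq\gamma(x)$), while $d_\A(\gamma(x),A(t))=(1-x)\,d_\A(\varnothing,A(t))\rightarrow0$ as $x\rightarrow1$; choosing $x$ close enough to $1$ and $A'=\gamma(x)$ then works. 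In both sub-cases $t\notin A'$, so $t\in A\triangle A'$ (recall $t\in A$), and by the triangle inequality $d_\A(A,A')\ls d_\A(A,A(t))+d_\A(A(t),A')<d_\A(A(t),A)+\delta=\rho$; therefore $t\in\V(A,\rho)$. As a by-product $\qd(t,A)<\infty$ always, so the difference in item 2 is meaningful.

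For item 2, fix $t\in\T$ and $A,A'\in\A$; by symmetry it suffices to prove $\qd(t,A)\ls\qd(t,A')+d_\A(A,A')$. Given any $r>\qd(t,A')$ we have $t\in\V(A',r)$, and the left inclusion of (\ref{eq:prop_V_open_ball_2}), applied with radius $r+d_\A(A,A')$, reads $\V(A',r)\subseteq\V(A,r+d_\A(A,A'))$; hence $t\in\V(A,r+d_\A(A,A'))$ and $\qd(t,A)\ls r+d_\A(A,A')$. Letting $r\downarrow\qd(t,A')$ yields the inequality, and exchanging $A$ and $A'$ finishes the Lipschitz estimate.

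The step I expect to be the genuine obstacle is the case $t\in A$ of item 1: the obvious candidates all contain $t$, and one must realise that a strict descent below $A(t)$ is always available --- since $\varnothing\subsetneq A(t)$ --- along an increasing geodesic whose distance to $A$ stays controlled by the triangle inequality. Everything else is a direct unwinding of the definitions of $\V$ and $\qd$ together with Propositions \ref{prop:geodesic} and \ref{prop:property_V}.
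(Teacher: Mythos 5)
Your proof is correct, and for item 2 it is essentially the paper's argument: both rest on the inclusions (\ref{eq:prop_V_open_ball_2}), the only cosmetic difference being that you prove one inequality and invoke symmetry while the paper sandwiches $\qd(t,A')$ between $\qd(t,A)\pm\rho$ in a single display. For item 1 your treatment is actually finer than the paper's. The paper asserts that ``$t\in A\triangle A(t)$ unless $A=A(t)$'' and only runs a geodesic argument in the case $A=A(t)$; but when $t\in A$ and $A(t)\subsetneq A$ one has $A\triangle A(t)=A\setminus A(t)$, which does \emph{not} contain $t$, so the witness $A'=A(t)$ fails there --- exactly the obstacle you identify. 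Your remedy, descending strictly below $A(t)$ along an increasing geodesic from $\varnothing$ to $A(t)$ (or taking $A'=\varnothing$ outright when $d_\A(\varnothing,A(t))<\delta$), so that $t\notin A'$ while $d_\A(A,A')<\rho$ by the triangle inequality, is the same mechanism the paper deploys for $A=A(t)$, applied where it is genuinely needed; the verification that $\gamma(x)\subsetneq A(t)$ forces $t\notin\gamma(x)$ by minimality of $A(t)$ is the right justification. In short: same toolbox and same overall strategy, but your case split on whether $t\in A$ closes a small gap in the paper's one-line justification of item 1.
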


\begin{proof} \ 
	\begin{enumerate}
		\item This property is just a consequence of the fact that, by the TIP bijection, $t\in A\triangle A(t)$ unless $A=A(t).$ 
		If $A=A(t),$ consider an increasing geodesic $\gamma:[0,1]\rightarrow\A$ from $\varnothing$ to $A.$ 
		Then $t\in A\setminus\gamma(x)$ for all $x<1,$ so $\qd(t,A)\ls d_\A(\gamma(x),A).$ Letting $x$ tends to $1^-$ yields $\qd(t,A)=0=d_\A(A(t),A).$ 
		
		\item 
		Let us fix $t\in\T$, $A,A'\in\A$ and $\rho>d_\A(A,A').$
		Denoting $\eps=\rho-d_\A(A,A')$ and using (\ref{eq:prop_V_open_ball_2}), we obtain 

		\vspace{-.7cm}
		\begin{spacing}{1.3}
		\[\begin{array}{rcccl}
			t 
			& ~\in~ 		& \V(A,\qd(t,A)+\eps)	& ~\subseteq~ & 
			\V(A',\qd(t,A)+\rho),	\\
			\V(A',\qd(t,A)-\rho)
			& ~\subseteq~	& \V(A,\qd(t,A)-\eps)	& ~\not\ni~ 	& 
			t. 
		\end{array}\]
		\end{spacing}
		\vspace{-.3cm}		
		
		Hence, by definition of $\qd(t,A'),$ for all $\rho>d_\A(A,A'),$
		\[ \qd(t,A)-\rho  ~\ls~ \qd(t,A') ~\ls~ \qd(t,A)+\rho. \]
		The result follows from taking $\rho\rightarrow d_\A(A,A')^+$ in the previous inequality.
	\end{enumerate}
\end{proof}

\subsection{Finite-dimensional hypotheses}
\label{subsection:indexing_collection_finite_dim}

Here, we make further assumptions on $(\T,\A)$ that have a finite-dimensional flavour. Those will be used in particular to establish a Lévy-Itô decomposition (Section \ref{subsection:levy-ito_decomposition}) and combined with the concepts of the previous section to prove some martingale related results (Section \ref{sec:regularity_thm}).

\medskip

If $(E,\pre_E)$ and $(F,\pre_F)$ are two partially ordered sets, an \emph{order embedding} $\phi:E\hookrightarrow F$ is a map such that
	\[ \forall x,y\in E,\quad x\pre_E y ~\Longleftrightarrow~ \phi(x)\pre_F\phi(y). \]
A partially ordered set $(E,\pre)$ has \emph{poset} (or \emph{order}) \emph{dimension} $\ls p$ where $p\in\N$ if there exists an \emph{order embedding} $\phi:E\hookrightarrow\N^p$ where $\N^p$ is endowed with the usual componentwise partial order.
Some authors prefer another definition based on the intersection of linear orders. We refer to \cite[Theorem 10.4.2]{ore_theory_1962} to see they are equivalent.

\begin{de}[Indexing collection of finite dimension]	\label{de:indexing_collection_finite_dimension}
	The indexing collection $\A$ is said to have \emph{finite dimension} if there exist $p\in\N^*$ such that the following properties hold:
	\begin{enumerate}
		\item The $\A_n$'s all have poset dimension $\ls p.$
		\item $\C^\ell\subseteq\C_{p-1}$, \ie all left-neighborhood $C\in\C^\ell$ may be written $C=A_0\setminus\bigcup_{i=1}^{p}A_i$ where $A_0,...,A_p\in\A.$
	\end{enumerate}
	The smallest of such integers will be called the \emph{dimension} of $\A$ and denoted $\dim\A.$
\end{de}

In the following, $\A$ will be an indexing collection of finite dimension $p.$

Let us briefly comment on those properties.

\begin{enumerate}
	\item The second property is required to bound the local number of elements in the `mesh' $\A_n$ as $n$ goes to infinity and so follows a similar philosophy as \cite[Assumption $\mathcal H_{\underline\A}$]{herbin_local_2016} even if neither of them implies the other.
	
	\item One might think that if the $\A_n$'s have poset dimension $\ls p$, then $\C^\ell\subseteq\C_{p'}$ for a possibly greater $p'.$
	Unfortunately, that is not the case as illustrated in Figure \ref{fig:finite_dimension_counter-example} where the elements of $\A_1,$ $\A_2$ and $\A_3$ are drawn. 
	Even though their poset dimension is $2,$ one may complete this construction such that the extremal representation of $C_n((1,1))$ is made from an ever increasing number of elements in $\A_n,$ and so cannot belong to some $\C_{p'}$ where $p'$ is independent from $n.$
	
	\vspace{-0.2cm}
	\begin{center}
	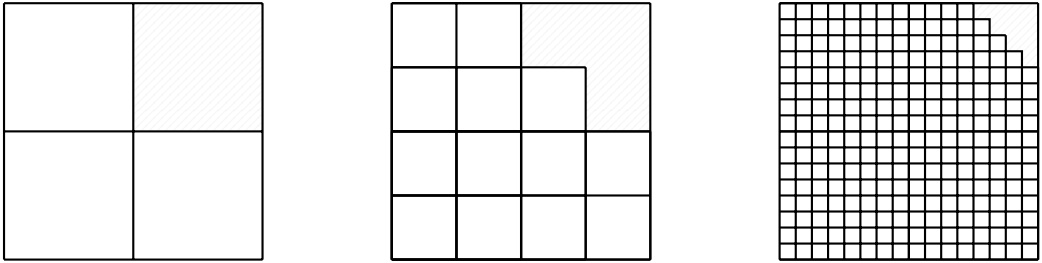
\begin{figure}[h!]
	
	\hspace{1.3cm}
	\begin{tikzpicture}[scale=.85]		
		\draw [thick] (0,0)	grid[step=2]	(4,4);
		\fill [pattern=north east lines,opacity=.2] (2,2)--(2,4)--(4,4)--(4,2)--(2,2);
		
		\draw [thick] (6,0)	grid[step=2]	(10,4);
		\draw [thick] (6,0)	grid[step=1]	(9,3);
		\draw [thick] (6,0)	grid[step=1]	(10,2);
		\draw [thick] (6,0)	grid[step=1]	(8,4);
		\fill [pattern=north east lines,opacity=.2] (8,4)--(8,3)--(9,3)--(9,2)--(10,2)--(10,4)--(8,4);
		
		\draw [thick] (12,0)	grid[step=2]	(16,4);
		\draw [thick] (12,0)	grid[step=0.25]	(15,4);
		\draw [thick] (15,0)	grid[step=0.25]	(16,3);
		\draw [thick] (15,3)	grid[step=0.25]	(15.5,3.5);
		\draw [thick] (15,3.75)--(15.25,3.75)--(15.25,3.5);
		\draw [thick] (15.5,3.25)--(15.75,3.25)--(15.75,3);
		\fill [pattern=north east lines,opacity=.2] (15,4)--(15,3.75)--(15.25,3.75)--(15.25,3.5)--(15.5,3.5)--(15.5,3.25)--(15.75,3.25)--(15.75,3)--(16,3)--(16,4)--(15,4);
	\end{tikzpicture}
	
	\caption{$C_n(1,1)$ (hatched region) for $n\in\{1,2,3\}$ and $\T=[0,1]^2$ endowed with a peculiar indexing collection.}
	\label{fig:finite_dimension_counter-example}
	\end{figure}
	\end{center}
	\vspace{-1cm}

	\item Conversely, we have not been able to prove that $\C^\ell\subseteq\C_{p-1}$ implies an upper bound on the poset dimension on the $\A_n$'s. To the best of our knowledge of the litterature on the subject, it seems to be a hard problem (see \cite{trotter_combinatorics_1992} and references therein for related results which do not quite fit our setting).
\end{enumerate}

\section{Integration with respect to a set-indexed Lévy process} \label{sec:construction_integral}

This section is devoted to give a sense to the integral $\ds\int_\T f\,\text dX$. 
According to \cite[Theorem 4.3]{herbin_set-indexed_2013}, the process $X=\{X_A:A\in\A\}$ against which we are going to integrate will end up being a particular case of \emph{Independently Scattered Random Measure} (ISRM) as defined by Rajput and Rosinski \cite{rajput_spectral_1989}. 
So why not use the integral built in \cite{rajput_spectral_1989} since those two agree whenever they are both defined?
As mentioned in the introduction, there are mainly two reasons: first, the stationarity of the increments of the set-indexed Lévy process enables a simpler construction and second, the set-indexed setting will prove to be instrumental in getting a Lévy-Itô decomposition for both processes $X$ and $Y$ that the more general setting of \cite{rajput_spectral_1989} does not provide.

\subsection{Density of simple functions}

Let us consider a $\sigma$-finite Borel measure $m$ defined on $(\T,\B)$ and such that $m(A)<\infty$ for all $A\in\A.$  
Since $\A\neq\B$ in general, it is not entirely obvious that $\E$ (Definition \ref{de:simple_functions}) is dense in $L^p(m)$ for $p\gs1.$ 
This part explains why it is nonetheless true in this case.

\begin{lem}[$\sigma$-finiteness]	\label{lem:sigma-finite}
	The increasing sequence $(\T_n)_{n\in\N}\in\A(u)^\N$ given by $\T_n = \bigcup_{A\in\A_n}A$ is such that $\T=\bigcup_{n\in\N}\T_n$ and $m(\T_n)<\infty$ for all $n\in\N.$
	In particular, $m$ is $\sigma$-finite.
\end{lem}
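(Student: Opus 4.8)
The plan is to show that $(\T_n)_{n\in\N}$ is increasing, covers $\T$, and consists of sets of finite $m$-measure.

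First I would check that $\T_n \in \A(u)$ for each $n$: this is immediate from the definition $\T_n = \bigcup_{A\in\A_n}A$ since $\A_n$ is a finite subcollection of $\A$. Monotonicity of $(\T_n)$ follows from the fact that the sequence $(\A_n)_{n\in\N}$ is increasing (Definition \ref{de:indexing_collection}, separability from above): if $\A_n \subseteq \A_{n+1}$ then $\T_n \subseteq \T_{n+1}$.

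Next, to see that $\T = \bigcup_{n\in\N}\T_n$: the inclusion $\bigcup_n \T_n \subseteq \T$ is trivial. For the converse, fix $t\in\T$. By the TIP assumption we have $A(t) = \bigcap_{n\in\N}A_n(t)$ where $A_n(t)\in\A_n\cup\{\T\}$, and by surjectivity of the TIP map, $A(t)\in\A\setminus\{\varnothing\}$ is a nonempty set containing $t$. Actually the cleanest route is: for $n$ large enough $A_n(t)\in\A_n$ (this is exactly the observation made after Definition \ref{de:other_class_C^ell}, using surjectivity of the TIP map to guarantee $C_n(t)\subseteq A_n(t)$ eventually, hence $A_n(t)$ is a genuine element of $\A_n$), and $t\in A_n(t)\subseteq\T_n$. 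Therefore $t\in\bigcup_n\T_n$. Alternatively one can argue directly from $g_n$: $t\in A(t)\subseteq g_n(A(t))$, and for $n$ large $g_n(A(t))\in\A_n$, so $t\in\T_n$.

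Finally, finiteness of $m(\T_n)$: since $\A_n = \{A_1^n,\dots,A_{k_n}^n\}$ is finite and $m(A_i^n)<\infty$ for each $i$ by hypothesis on $m$, subadditivity gives $m(\T_n) = m\big(\bigcup_{i=1}^{k_n}A_i^n\big) \ls \sum_{i=1}^{k_n}m(A_i^n) < \infty$. Combining the three points, $(\T_n)$ is an increasing sequence in $\A(u)$ with union $\T$ and each $m(\T_n)<\infty$, so $m$ is $\sigma$-finite.

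I do not expect any genuine obstacle here; the only point requiring a little care is justifying that $A_n(t)$ (or $g_n(A(t))$) eventually lands in $\A_n$ rather than equalling the ambient set $\T$, and this is precisely where the surjectivity half of the TIP assumption (or the separability-from-above approximation $A = \bigcap_n g_n(A)$ together with the compatible-metric outer continuity, if one wanted a metric-based argument) is used. Everything else is routine set manipulation and finite subadditivity of $m$.
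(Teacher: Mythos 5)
Your proof is correct and follows essentially the same route as the paper's (which simply invokes the TIP bijection for the covering $\T=\bigcup_n\T_n$ and finiteness of $\A_n$ together with $m$ being finite on $\A$ for the measure bound). Your extra care in justifying that $A_n(t)$ eventually lies in $\A_n$ rather than being the ambient set $\T$ is a sound elaboration of the point the paper leaves implicit.
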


\begin{proof}
	$\T=\bigcup_{n\in\N}\T_n$ directly follows from the TIP bijection. 
	The $\sigma$-finiteness directly follows from the fact that $m$ is finite on $\A$ and that $\A_n$ is also finite.
\end{proof}

Just as in Examples \ref{exs:compatible_metric}, $d_m : (B,B')\mapsto m(B\triangle B')$ defines a metric on $\B_m = \{ B\in\B : m(B)<\infty \}.$  
That metric is natural in the sense that it is the restriction to indicator functions of the usual $L^1(m)$ metric, \ie
\begin{equation}	\label{eq:isometry_L^1_C(u)}
	\forall B,B'\in\B,\quad \| \myindic{B}-\myindic{B'} \|_{L^1(m)} \speq d_m(B,B').
\end{equation}
Littlewood's principles \cite{littlewood_lectures_1944} acts as guides to intuition with regards to measure theory and how can one apprehend some of its hardest concepts. In particular, the first principle tells that any Borel set $B\in\B(\R)$ such that $\mathbf{Leb}(B)<\infty$ --- where $\mathbf{Leb}$ is the Lebesgue measure --- can be approximated as a finite union of intervals with respect to the metric $d_{\mathbf{Leb}}.$ This principle is much more general and still holds in our setting when finite unions of segments are replaced by the elements of $\C(u).$

\begin{lem} \label{lem:distance_inequality}
	For any $(B_n)_{n\in\N},$ $(B'_n)_{n\in\N}\in\B^\N,$ 
	\[ d_m\Big( \bigcap_{n\in\N}B_n, \bigcap_{n\in\N}B'_n \Big) ~\ls~\sum_{n=0}^\infty\, d_m(B_n,B'_n).\]
\end{lem}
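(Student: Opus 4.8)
The plan is to reduce the claimed inequality to the single set-theoretic fact that the symmetric difference of countable intersections is contained in the countable union of the pairwise symmetric differences, and then to invoke monotonicity and countable subadditivity of $m.$

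\textbf{Step 1: the set inclusion.} First I would prove that
\[ \Big( \bigcap_{n\in\N}B_n \Big) \triangle \Big( \bigcap_{n\in\N}B'_n \Big) ~\subseteq~ \bigcup_{n\in\N}\big( B_n\triangle B'_n \big). \]
Take $x$ in the left-hand side. Without loss of generality $x\in\bigcap_n B_n$ and $x\notin\bigcap_n B'_n$; then $x\in B_n$ for every $n,$ while there is some $k$ with $x\notin B'_k,$ so $x\in B_k\setminus B'_k\subseteq B_k\triangle B'_k.$ The symmetric case is identical after swapping the roles of $B_n$ and $B'_n.$

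\textbf{Step 2: conclude.} If $\sum_n d_m(B_n,B'_n)=\infty$ there is nothing to prove, so assume the sum is finite. Applying $m$ to the inclusion of Step 1, using monotonicity and then countable subadditivity of the measure $m,$ gives
\[ d_m\Big( \bigcap_{n\in\N}B_n, \bigcap_{n\in\N}B'_n \Big) \speq m\Big( \big(\textstyle\bigcap_n B_n\big)\triangle\big(\bigcap_n B'_n\big) \Big) ~\ls~ m\Big( \bigcup_{n\in\N}\big(B_n\triangle B'_n\big) \Big) ~\ls~ \sum_{n=0}^\infty m\big(B_n\triangle B'_n\big) \speq \sum_{n=0}^\infty d_m(B_n,B'_n), \]
which is the desired bound. (One could equivalently phrase Step 2 via \eqref{eq:isometry_L^1_C(u)} and the triangle inequality for $\|\cdot\|_{L^1(m)}$ applied to a telescoping of partial intersections, but the direct subadditivity argument is shortest.)

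\textbf{Main obstacle.} There is essentially no obstacle: the only content is the elementary inclusion in Step 1, and the rest is a routine application of standard measure-theoretic properties. The one point worth a word is the dichotomy on whether the right-hand side is finite, which makes the statement meaningful even when some $B_n$ has infinite measure.
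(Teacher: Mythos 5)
Your proof is correct and follows essentially the same route as the paper: the paper also reduces the claim to the containment of $\big(\bigcap_n B_n\big)\triangle\big(\bigcap_n B'_n\big)$ in the union of the $B_n\triangle B'_n$ (written there via complements) and then applies countable subadditivity and monotonicity of $m.$ No gap.
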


\begin{proof}
	Let $(B_n)_{n\in\N},$ $(B'_n)_{n\in\N}\in\B^\N.$
	Then, 
	\vspace{-0.5cm}
	\begin{spacing}{1.5}
	\[\begin{array}{rcl}
		d_m \Big( \bigcap_n B_n,~\bigcap_n B_n' \Big)
			& \speq 	& m \Big( \big( \bigcup_n B_n^\complement \big) ~\cap~ \big( \bigcap_n B_n' \big) \Big) \+ m \Big( \big( \bigcap_n B_n \big) ~\cap~ \big( \bigcup_n B_n'^\complement \big) \Big)	\\
			& ~\ls~	& {\ds\sum_j}\, m\lp B_j^\complement \cap \big( \bigcap_nB'_n \big) \rp \+ {\ds\sum_j}\, m\lp \big( \bigcap_n B_n \big) \cap  B_j'^\complement \rp	\\
			& \ls	& {\ds\sum_j}\, m(B_j' \setminus B_j) \+ {\ds\sum_j}\, m(B_j \setminus B_j')  						\\
			& =  	& {\ds\sum_j}\, d_m(B_j,B_j').
	\end{array}\]
	\end{spacing}
	\vspace{-0.7cm}
\end{proof}

\begin{prop} \label{prop:littlewood}
	The metric space $(\B_m,d_m)$ is complete and $\C(u)$ is a dense subset.
\end{prop}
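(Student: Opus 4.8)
The plan is to prove the two assertions separately, in the order: first completeness of $(\B_m,d_m)$, then density of $\C(u)$.

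\emph{Completeness.} The natural move is to transport the problem to $L^1(m)$ via the isometry (\ref{eq:isometry_L^1_C(u)}), which identifies $(\B_m,d_m)$ with the subset $\{\myindic{B}:B\in\B_m\}$ of $L^1(m)$. Since $L^1(m)$ is complete, it suffices to show this subset is closed. So take a $d_m$-Cauchy sequence $(B_n)_{n\in\N}$ in $\B_m$; then $(\myindic{B_n})_{n\in\N}$ converges in $L^1(m)$ to some $g\in L^1(m)$, and passing to a subsequence we may assume $\myindic{B_n}\to g$ $m$-a.e. A pointwise a.e.\ limit of $\{0,1\}$-valued functions is $\{0,1\}$-valued a.e., so $g=\myindic{B}$ $m$-a.e.\ for $B=\{g=1\}\in\B$; moreover $m(B)=\|g\|_{L^1(m)}<\infty$, so $B\in\B_m$. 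Then $d_m(B_n,B)=\|\myindic{B_n}-g\|_{L^1(m)}\to0$ along the subsequence, and a Cauchy sequence with a convergent subsequence converges, giving $d_m(B_n,B)\to0$. Alternatively, one can avoid subsequences entirely by using Lemma \ref{lem:distance_inequality}: extract a fast subsequence with $\sum_n d_m(B_{n_{k+1}},B_{n_k})<\infty$, set $B=\liminf_k B_{n_k}=\bigcup_j\bigcap_{k\ge j}B_{n_k}$, and bound $d_m(B,B_{n_j})$ by the tail of the series via Lemma \ref{lem:distance_inequality} (applied to the nested intersections, together with the elementary fact that $d_m(\bigcup_k E_k,\bigcup_k F_k)\le\sum_k d_m(E_k,F_k)$). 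Either way the key input is the $L^1$-isometry plus $\sigma$-finiteness from Lemma \ref{lem:sigma-finite}.

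\emph{Density of $\C(u)$.} Here the standard strategy is a good-sets (Dynkin / monotone class) argument. Let $\mathcal G=\{B\in\B_m : B \text{ lies in the }d_m\text{-closure of }\C(u)\}$; we want $\mathcal G=\B_m$. Since $\C(u)$ is a ring of sets containing $\A$ and generating $\B=\sigma(\A)$, and since the $d_m$-closure of a ring is again stable under the ring operations (finite unions, intersections, set differences — each of these operations is $d_m$-continuous, because e.g.\ $d_m(B\cup C, B'\cup C')\le d_m(B,B')+d_m(C,C')$ and similarly for $\cap$ and $\setminus$), the closure $\overline{\C(u)}$ of $\C(u)$ in $(\B_m,d_m)$ is itself a ring. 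One then upgrades it to a $\sigma$-ring: if $B_1\subseteq B_2\subseteq\cdots$ are in $\overline{\C(u)}$ with $B=\bigcup_n B_n\in\B_m$, then $d_m(B_n,B)=m(B\setminus B_n)\to0$ by dominated/monotone convergence (using $m(B)<\infty$), so $B\in\overline{\C(u)}$. A $\sigma$-ring containing $\A$ contains $\sigma(\A)=\B$, hence contains $\B_m$. Restricting attention throughout to sets of finite measure is legitimate because $m$ is $\sigma$-finite (Lemma \ref{lem:sigma-finite}): any $B\in\B_m$ is the increasing union of $B\cap\T_n$, and each $B\cap\T_n$ has finite measure.

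\emph{Main obstacle.} The genuinely delicate point is the closure-under-countable-unions step, i.e.\ bootstrapping from "ring" to "$\sigma$-ring" within $\B_m$ while keeping everything finite-measure; this is where $\sigma$-finiteness (Lemma \ref{lem:sigma-finite}) and the completeness just proved are both used, and one must be careful that the monotone limit of finite-measure sets still has finite measure before invoking it. Everything else — the $d_m$-continuity of the Boolean operations, the isometry with $L^1$ — is routine and follows from elementary inclusion–exclusion estimates on symmetric differences of the type already carried out in the proof of Lemma \ref{lem:distance_inequality}.
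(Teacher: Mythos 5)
Your proof is correct and follows essentially the same route as the paper: completeness is obtained by transporting a $d_m$-Cauchy sequence to $L^1(m)$ via the isometry (\ref{eq:isometry_L^1_C(u)}), extracting an a.e.\ convergent subsequence, and identifying the limit as an indicator $\myindic{B}$ with $m(B)<\infty$; density is a monotone-class argument showing that the closure $\overline{\C(u)}$ in $(\B_m,d_m)$ is stable under the relevant set operations. The paper's density step first reduces to $m(\T)<\infty$ using Lemma \ref{lem:sigma-finite} and then verifies that $\overline{\C(u)}$ is a $\sigma$-algebra (complements and countable intersections, the latter via Lemma \ref{lem:distance_inequality} and monotone continuity of $m$); your version checks ring operations plus increasing unions instead, which is equivalent bookkeeping. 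One phrase does need tightening: when $m(\T)=\infty$, the class $\overline{\C(u)}\subseteq\B_m$ is not literally a $\sigma$-ring (a countable union of finite-measure sets may leave $\B_m$), so the inference ``a $\sigma$-ring containing $\A$ contains $\sigma(\A)=\B$, hence $\B_m$'' should be run on the localized class $\big\{B\in\B:\forall n\in\N,\ B\cap\T_n\in\overline{\C(u)}\big\}$, which genuinely is a $\sigma$-algebra containing $\A$; one then recovers $B\in\overline{\C(u)}$ for $B\in\B_m$ from $d_m(B\cap\T_n,B)=m(B\setminus\T_n)\to0$. This is exactly the restriction to the $\T_n$ that you describe in your final sentence, so the repair is already implicit in what you wrote.
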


\begin{proof}
	Let us show that $\B_m$ is complete. 
	Let $(B_n)_{n\in\N}$ be a Cauchy sequence in $\B_m$.
	According to (\ref{eq:isometry_L^1_C(u)}), $\lp\myindic{B_n}\rp_{n\in\N}$ is a Cauchy sequence in $L^1(m).$ Since $L^1(m)$ is complete, this sequence has a limit.
	We claim that this limit is necessarily of the form $\myindic{B}.$
	We only prove it for $m(\T)<\infty$ since $m$ is $\sigma$-finite (cf. Lemma \ref{lem:sigma-finite}). 
	Then apply Borel-Cantelli's lemma to obtain subsequence of $\lp\myindic{B_n}\rp_{n\in\N}$ converging $m$-a.e.
	Its limit must thus take its values in $\{0,1\}$ $m$-a.e., \ie be equal to $\myindic{B}$ for some $B\in\B.$
	
	Applying (\ref{eq:isometry_L^1_C(u)}) once more yields that $m(B)<\infty$ and $B_n\rightarrow B$ in $\B_m$ as $n\rightarrow\infty.$
	The completeness follows.
	
	\medskip
	
	Let us prove that $\C(u)$ is a dense subset of $\B_m,$ \ie $\overline{\C(u)}=\B_m$ where $\overline{\C(u)}$ is the closure in $\B_m$ of $\C(u).$
	Since $m$ is finite on $\A,$ $\C(u)\subseteq\B_m.$
	According to Lemma \ref{lem:sigma-finite}, we may assume $\T\in\A(u).$
	In this case, since $\B_m=\B=\sigma(\overline{\C(u)}),$ it is equivalent to show that $\overline{\C(u)}$ is a $\sigma$-algebra. 
	
	$\varnothing\in\overline{\C(u)}$ is trivial whereas stability by complement directly follows from the relation $m(B^\complement\triangle U^\complement) = m(B\triangle U)$ and the fact that $\C(u)$ is closed under complement since $\T\in\C(u).$
	
	It remains to prove stability under countable intersection.
	Let $(B_n)_{n\in\N}$ be a sequence in $\overline{\C(u)}$ and $\eps >0.$
	For any $n\in\N,$ consider $U_n\in\C(u)$ such that $d_m(B_n,U_n)\ls\eps\,2^{-(n+1)}.$ Then, according to Lemma \ref{lem:distance_inequality},
	\begin{equation} 	\label{eq:d_m_1} 	 
		d_m\Big( \bigcap_{n\in\N}B_n, \bigcap_{n\in\N}U_n \Big) ~\ls~ \sum_{n=0}^\infty \eps2^{-(n+1)} \speq \frac{\eps}{2}. 
	\end{equation}
	Moreover, since $m(\T)<\infty,$ the monotone continuity of $m$ implies that
	\begin{equation}	\label{eq:d_m_2}
		d_m\Big( \bigcap_{n\in\N}U_n, \bigcap_{n\ls k}U_n \Big) \speq m\Big( \bigcap_{n\ls k}U_n \setminus \bigcap_{n\in\N}U_n \Big) ~\underset{k\rightarrow\infty}{\longrightarrow}~ 0. 
	\end{equation}
	Hence (\ref{eq:d_m_1}), (\ref{eq:d_m_2}) and the triangle inequality imply that for $k$ big enough,
	\[ d_m\Big( \bigcap_{n\in\N}B_n, \bigcap_{n\ls k}U_n \Big) ~\ls~ \eps. \]
	Since $\C(u)$ is closed under finite intersection, $\bigcap_{n\ls k}U_n \in\C(u).$ Hence $\bigcap_{n\in\N}B_n \in \overline{\C(u)}.$ 
\end{proof}

\begin{rem}
	Actually, $(\B_m,d_m)$ is separable since one may prove that $\C^\ell(u)$ is countable and dense.
\end{rem}

\begin{cor}[Density of simple functions] \label{cor:density_simple_functions}
	For any $p\gs1,\,$ $\E=\text{Span}\{ \myindic{A}:A\in\A \}$ is dense in $L^p(m).$
\end{cor}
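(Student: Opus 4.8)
The plan is to bootstrap from the density of $\C(u)$ in $\B_m$ (Proposition \ref{prop:littlewood}) and the isometry (\ref{eq:isometry_L^1_C(u)}) up to the full statement for arbitrary $p\gs1$. First I would handle the case of a bounded function and $p=1$: given $f\in L^1(m)$ which is also bounded, say $|f|\ls M$, I would approximate $f$ in $L^1(m)$ by a simple Borel function $\sum_k c_k\myindic{B_k}$ with $B_k\in\B_m$ (the standard measure-theoretic fact that Borel simple functions are dense), and then approximate each $\myindic{B_k}$ by $\myindic{U_k}$ for $U_k\in\C(u)$ using Proposition \ref{prop:littlewood} together with the isometry (\ref{eq:isometry_L^1_C(u)}). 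Since Remark \ref{rem:1_C(u)_in_E} gives $\myindic{U_k}\in\E$, the resulting linear combination lies in $\E$, so $\E$ is dense in the bounded functions for the $L^1(m)$ norm.

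Next I would upgrade to general $p\gs1$ and general $f\in L^p(m)$. The key observation is that truncation reduces to the bounded case: for $f\in L^p(m)$, the truncations $f_M = (f\wedge M)\vee(-M)$ converge to $f$ in $L^p(m)$ by dominated convergence, so it suffices to approximate a bounded $f\in L^p(m)$ in the $L^p$ norm. For bounded $f$ supported (up to small $L^p$ error) on a set of finite measure — here I would use the $\sigma$-finiteness from Lemma \ref{lem:sigma-finite}, writing $f\myindic{\T_n}\to f$ in $L^p(m)$ — one reduces to $f$ bounded with $m(\mathrm{supp}\,f)<\infty$. Then an $L^1$-approximant $g=\sum_k c_k\myindic{U_k}\in\E$ with $\|f-g\|_{L^1(m)}$ small can be taken bounded (replace $g$ by its truncation at level $\|f\|_\infty$, which only decreases the $L^1$ distance and keeps $g$ in $\E$ — or more carefully, note the approximants from Proposition \ref{prop:littlewood} can be chosen so that $g$ takes finitely many values bounded by $\|f\|_\infty$), whence $\|f-g\|_{L^p(m)}^p \ls (2\|f\|_\infty)^{p-1}\|f-g\|_{L^1(m)}$, which is small.

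The main obstacle I anticipate is purely bookkeeping rather than conceptual: one must be careful that the simple function produced actually lies in $\E=\mathrm{Span}\{\myindic{A}:A\in\A\}$ and not merely in $\mathrm{Span}\{\myindic{B}:B\in\B_m\}$; this is exactly where Proposition \ref{prop:littlewood} (density of $\C(u)$) combined with Remark \ref{rem:1_C(u)_in_E} ($\{\myindic{U}:U\in\C(u)\}\subseteq\E$) does the work, so the argument hinges on chaining these together and controlling the interplay between the $L^1$ and $L^p$ norms via truncation. No genuinely hard analysis is needed — everything is a standard $\eps/2^k$ argument once the density of $\C(u)$ in the finite-measure Borel sets is in hand — so I would keep the write-up short, citing Proposition \ref{prop:littlewood}, Lemma \ref{lem:sigma-finite} and Remark \ref{rem:1_C(u)_in_E}, and spending at most a few lines on the truncation/$L^p$-versus-$L^1$ comparison.
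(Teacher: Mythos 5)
Your proposal is correct, and it rests on exactly the same pillars as the paper's proof: density of Borel simple functions, Proposition \ref{prop:littlewood} for the density of $\C(u)$ in $(\B_m,d_m)$, and Remark \ref{rem:1_C(u)_in_E} to land in $\E$. The difference is that you route the case $p>1$ through an $L^1$ approximation followed by truncation and the interpolation bound $\|f-g\|_{L^p(m)}^p\ls(2\|f\|_\infty)^{p-1}\|f-g\|_{L^1(m)}$, whereas the paper works directly in $L^p$: once $f$ is reduced to a finite linear combination of indicators $\myindic{B}$ with $B\in\B_m$, the identity $\|\myindic{U}-\myindic{B}\|_{L^p(m)}=m(U\triangle B)^{1/p}=d_m(U,B)^{1/p}$ shows that $d_m$-approximation of $B$ by $U\in\C(u)$ is already $L^p$-approximation of the indicators, for every $p$ simultaneously. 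This makes your entire second paragraph (truncation of $f$, restriction to $\T_n$, truncation of $g$ at level $\|f\|_\infty$) unnecessary; it is also the only place where your write-up gets delicate, since you must justify that the truncation of $g=\sum_kc_k\myindic{U_k}$ stays in $\E$ (it does, because the cells of the partition generated by the $U_k$ inside $\bigcup_kU_k$ remain in the ring $\C(u)$, but this deserves a line rather than a parenthesis). In short: correct, but the $L^1$-to-$L^p$ bootstrap buys you nothing here --- apply the $d_m$-approximation at the level of indicators and measure the error directly in $L^p$.
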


\begin{proof}
	Since the linear space $\text{Span}\{ \myindic{B}:B\in\B_m \}$ is dense in $L^p(m),$ we just need to check that $\myindic{B}$ belongs to the closure of $\E$ in $L^p(m)$ for any $B\in\B_m.$ 
	Given such $B\in\B_m,$ Proposition \ref{prop:littlewood} implies the existence of a sequence $(U_n)_{n\in\N}$ in $\C(u)$ such that $d_m(U_n,B)\rightarrow0$ as $n\rightarrow\infty.$
	Thus
	\[ \| \myindic{U_n}-\myindic{B} \|_{L^p(m)} \speq m(U_n\triangle B)^{1/p} ~\longrightarrow~ 0 \spas n\rightarrow\infty. \]
	Hence $\E$ is dense in $L^p(m).$
\end{proof}

\subsection{Construction of the integral}	\label{subsection:construction_integral}

In the following, let us consider a measure $m$ on $(\T,\B)$ such that $d_m$ is set-indexed compatible (Definition \ref{de:compatible_metric}) as well as a $L^2(\Omega)$ \emph{set-indexed Lévy process} (siLévy) $X$ as defined and studied in \cite{herbin_set-indexed_2013}. 
By that, we mean that $X$ checks the following properties:
\begin{enumerate}
	\item[0.] For all $A\in\A,~\esp{X_A^2}<\infty.$
	 
	\item (Independent increments). 
		For any positive integer $n$ and any pairwise disjoint $C_1,...,C_n\in\C$, the random variables $\Delta X_{C_1},...,\Delta X_{C_n}$ are independent where $\Delta X$ stands for the increment process from Proposition-Definition \ref{propde:increment_map_linear_extension}.
		
	\item (Stationary increments).
		For any $C_1,C_2\in\C$ such that $m(C_1)=m(C_2),$ $\Delta X_{C_1}$ and $\Delta X_{C_2}$ have the same distribution.
		
	\item (Outer continuity in distribution).
		For any non-increasing sequence $(A_n)_{n\in\N}$ in $\A,$ $X_{A_n}$ converges in distribution to $X_A$ where $A=\bigcap_{n\in\N}A_n.$
\end{enumerate}

This definition varies slightly from the one given in \cite{herbin_set-indexed_2013}.
First, the independence and stationary properties of the increments have been formulated as in \cite[Corollary 4.5]{herbin_set-indexed_2013} which is an equivalent definition.
Secondly, outer continuity in distribution replaces stochastic continuity to fit our setting with regards to the flows vs increasing geodesics discussion of Section \ref{subsection:indexing_collection_geodesic}. 
Theorem \ref{theo:integral_wrt_levy} will prove that this condition is actually equivalent to stochastic continuity.
Let us also mention that our regularity results will not depend on the behavior of the `big jumps', so imposing to have second-order moment is not truly a restriction per say.

\medskip

A nice feature is that $X$ shares a lot of common properties with the usual one-dimensional case, in particular with respect to its Fourier transform $\phi_{\Delta X_C}(\xi)=\esp{e^{i\xi\Delta X_C}}$ for which we can write a Lévy-Khintchine decomposition (see \cite{herbin_set-indexed_2013}), meaning that there exists a triplet $(b,\sigma^2,\nu)$ where $b\in\R,$ $\sigma^2\in\R_+$ and $\nu$ is a Borel measure on $\R$ such that $\nu(\{0\})=0$ and $\ds\int_\R(1\wedge x^2)\nu(\text dx)<\infty$ such that for all $C\in\C,$ $\phi_{\Delta X_C}=\exp[m(C)\psi]$ where $\psi$ is the \emph{Lévy-Khintchine exponent} of $X$ given by 
\begin{equation} \label{eq:levy-khintchine}
	\forall\xi\in\R,\quad \psi(\xi) \speq ib\xi \spm \frac{1}{2}\sigma^2\xi^2 \+ \int_\R \lp e^{i\xi x}-1-i\xi x\myindic{|x|\ls1} \rp \nu(\text dx).
\end{equation}

\begin{rem}
	The Lévy-Khintchine decomposition proved in \cite{herbin_set-indexed_2013} remains true in our modified setting since the argument relies on properties shared both by flows (namely \cite[Proposition 4.1]{herbin_set-indexed_2013}) and increasing geodesics (as used here).
\end{rem}

\begin{lem} \label{lem:propto_m}
	For all $C\in\C,$ we have
	\[ \esp{\Delta X_C} \speq \lp b+\int_{|x|>1} x\nu(\text dx) \rp m(C) \quad\text{and}\quad \text{Var}(\Delta X_C) \speq \lp \sigma^2 + \int_\R x^2\nu(\text dx) \rp m(C). \]
\end{lem}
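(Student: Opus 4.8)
The plan is to recover these moment identities from the Lévy–Khintchine exponent $\psi$ in \eqref{eq:levy-khintchine} by the usual characteristic-function-differentiation trick, using the fact that $\phi_{\Delta X_C}=\exp[m(C)\psi]$. First I would argue that $\Delta X_C$ has a finite second moment. This follows from assumption 0 (each $X_A\in L^2(\Omega)$) together with Proposition-Definition \ref{propde:increment_map_linear_extension}: $\Delta X_C$ is a finite integer combination of the $X_A$'s appearing in the inclusion-exclusion formula \eqref{eq:inclusion-exclusion_formula} for $\myindic{C}$, hence lies in $L^2(\Omega)$ as well. Consequently $\xi\mapsto\phi_{\Delta X_C}(\xi)$ is twice continuously differentiable at $0$, and
\[
	\esp{\Delta X_C} \speq \frac{1}{i}\,\phi_{\Delta X_C}'(0), \qquad \esp{\Delta X_C^2} \speq -\phi_{\Delta X_C}''(0).
\]

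**Key steps.** Having finite second moments of $\Delta X_C$ also forces $\int_{|x|>1}x^2\,\nu(\text dx)<\infty$ (otherwise $\psi$ would fail to be twice differentiable at $0$), so all the integrals in the statement make sense. Then I would differentiate $\psi$ under the integral sign in \eqref{eq:levy-khintchine}, which is justified by dominated convergence using $\int_\R(1\wedge x^2)\,\nu(\text dx)<\infty$ and the newly-established $\int_{|x|>1}x^2\,\nu(\text dx)<\infty$. A direct computation gives
\[
	\psi'(0) \speq i\lp b+\int_{|x|>1}x\,\nu(\text dx)\rp, \qquad \psi''(0) \speq -\lp\sigma^2+\int_\R x^2\,\nu(\text dx)\rp,
\]
where in $\psi'(0)$ the term $\int_\R(e^{i\xi x}-1-i\xi x\myindic{|x|\ls1})\nu(\text dx)$ contributes $i\int_{|x|>1}x\,\nu(\text dx)$ after differentiating (the indicator cutting out the compensator on $\{|x|\ls 1\}$), and in $\psi''(0)$ the same integral term contributes $-\int_\R x^2\,\nu(\text dx)$. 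Finally, from $\phi_{\Delta X_C}=\exp[m(C)\psi]$ I get $\phi_{\Delta X_C}'(0)=m(C)\psi'(0)$ and $\phi_{\Delta X_C}''(0)=m(C)\psi''(0)+m(C)^2\psi'(0)^2$, whence $\esp{\Delta X_C}=\frac1i\,m(C)\psi'(0)=(b+\int_{|x|>1}x\,\nu(\text dx))\,m(C)$ and
\[
	\text{Var}(\Delta X_C) \speq \esp{\Delta X_C^2}-\esp{\Delta X_C}^2 \speq -m(C)\psi''(0) \speq \lp\sigma^2+\int_\R x^2\,\nu(\text dx)\rp m(C).
\]

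**Main obstacle.** The only genuinely delicate point is the finiteness of $\int_{|x|>1}x^2\,\nu(\text dx)$ and the justification of differentiating under the integral — I would handle this by noting that $\Delta X_C\in L^2(\Omega)$ for any nonempty $C$ (there is at least one such $C$ with $m(C)>0$, e.g. obtained from an increasing geodesic via Proposition \ref{prop:geodesic} and the midpoint property), so its characteristic function is $C^2$ near $0$; writing this out via \eqref{eq:levy-khintchine} and the local central limit expansion of $\psi$ at $0$ pins down the tail integrability of $\nu$. Everything else is a routine differentiation already standard in the one-dimensional Lévy-process theory, transported verbatim since $\phi_{\Delta X_C}$ has exactly the classical form $\exp[m(C)\psi]$.
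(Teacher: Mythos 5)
Your proposal is correct and follows essentially the same route as the paper: the paper differentiates the cumulant $\psi_C=m(C)\psi$ twice at $0$ (using $\psi_C'(0)=i\esp{\Delta X_C}$, $\psi_C''(0)=-\text{Var}(\Delta X_C)$, and the fact that $\Delta X_C\in L^2(\Omega)$ forces $\int_\R x^2\nu(\text dx)<\infty$ so one may differentiate under the integral sign), which is the same computation you perform via $\phi_{\Delta X_C}=\exp[m(C)\psi]$ after cancelling the $\psi'(0)^2$ terms in the variance.
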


\begin{proof}
	Let $C\in\C$ and denote $\psi_C = m(C)\psi.$ 
	Since $\Delta X_C\in L^2(\Omega),$ $\psi_C$ is twice differentiable and
	\[	\psi_C'(0)=i\esp{\Delta X_C} \quad\text{and}\quad \psi''_C(0)=-\text{Var}(\Delta X_C).	\]
	Since $\Delta X_C\in L^2(\Omega),$ we also know that $\ds\int_\R x^2\nu(\text dx)<\infty,$ so we are able to differentiate twice under the integral sign to get for all $\xi\in\R,$
	\[ \psi'_C(\xi) = \lp ib - \sigma^2\xi + \int_\R \lp ixe^{i\xi x}-ix\myindic{|x|\ls1} \rp\nu(\text dx)\rp m(C) \]
	and
	\[ \psi''_C(\xi) = \lp -\sigma^2 + \int_\R \lp -x^2e^{i\xi x} \rp\nu(\text dx) \rp m(C). \]
	The result follows from taking $\xi=0.$ 
\end{proof}

This lemma tells us that the expectation and variance of $\Delta X$ behave exactly as the `reference' measure $m$ that dictates the stationarity of the increments. 
It is a key step to define the integral with respect to $X$ through a Wiener type of construction.

\begin{de}[$X$-integrable functions]
	Assume that $X$ is a siLévy with Lévy-Khintchine triplet $(b,\sigma^2,\nu)$ given by (\ref{eq:levy-khintchine}).
	Let $L(X)$ denote the linear space of Borel functions $f:\T\rightarrow\R$ where $m$-a.e. equal maps are identified and such that
	\[ \|f\|_{L(X)} \speq \sqrt{ \lp b+\int_{|x|>1} x\nu(\text dx) \rp^2\|f\|_{L^1(m)}^2 + \lp \sigma^2 + \int_\R x^2\nu(\text dx) \rp\|f\|_{L^2(m)}^2 } \]
	is finite (with the convention $0\times\infty=0$). 
	
	Measurable functions $f:\T\rightarrow\R$ such that $f\myindic{A}\in L(X)$ for all $A\in\A$ are called \emph{locally $X$-integrable functions}.	
\end{de}

We remark that depending on whether the proportionality constants are zero or not, $L(X)$ is either $L^1(m),$ $L^2(m)$ or $L^1(m)\cap L^2(m)$ as long as $X$ is non zero.
It is then a classical result that $\lp L(X),\|.\|_{L(X)} \rp$ is a Banach space. Moreover, as a consequence of Corollary \ref{cor:density_simple_functions}, $\E$ is a dense subspace of $L(X).$

\begin{theo}[Integration with respect to $X$] \label{theo:integral_wrt_levy}
	There exists a unique continuous linear map 
	\[\begin{array}{cccc}
		\mathbf X : 	
		& L(X) 	& \longrightarrow 	& L^2(\Omega) 	\\
		& f		& \longmapsto 		& \mathbf X(f) = \ds\int_\T f\,\text dX
	\end{array}\]
	such that $\mathbf X(\myindic{A})=X_A$ for all $A\in\A.$
	$\mathbf X(f)$ is called the \emph{(stochastic) integral of $f$ with respect to $X$}.
\end{theo}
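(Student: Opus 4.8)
The plan is to build the map $\mathbf X$ by the standard procedure: define it on simple functions, verify it is a well-defined isometry (up to constants) into $L^2(\Omega)$, and then extend it by density and continuity. First I would define $\mathbf X$ on $\E$: given $f\in\E$, use the $\C$-representation of Proposition \ref{prop:C-representation} to write $f=\sum_{i=1}^n c_i\myindic{C_i}$ with the $C_i\in\C$ pairwise disjoint, and set $\mathbf X(f)=\sum_{i=1}^n c_i\,\Delta X_{C_i}$. One must check this does not depend on the chosen representation; since any two $\C$-representations of the same simple function can be refined to a common one by intersecting the pieces (each $C_i\cap C'_j\in\C$ by the semiring structure of $\C$) and using the additivity of $\Delta X$ from Proposition-Definition \ref{propde:increment_map_linear_extension}, well-definedness follows, as does linearity on $\E$. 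In particular $\mathbf X(\myindic A)=X_A$ for $A\in\A$, since $A\in\C$ and $\Delta X_A=X_A$.

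Next I would compute the $L^2(\Omega)$-norm of $\mathbf X(f)$ for $f=\sum_{i=1}^n c_i\myindic{C_i}\in\E$ with pairwise disjoint $C_i$. By independence of the increments (property 1 of the siLévy) and Lemma \ref{lem:propto_m},
\[
	\esp{\mathbf X(f)} \speq \lp b+\int_{|x|>1}x\,\nu(\text dx)\rp \sum_{i=1}^n c_i\,m(C_i) \speq \lp b+\int_{|x|>1}x\,\nu(\text dx)\rp \int_\T f\,\text dm,
\]
and, using that the $\Delta X_{C_i}$ are independent hence uncorrelated,
\[
	\text{Var}\big(\mathbf X(f)\big) \speq \sum_{i=1}^n c_i^2\,\text{Var}(\Delta X_{C_i}) \speq \lp \sigma^2+\int_\R x^2\,\nu(\text dx) \rp \sum_{i=1}^n c_i^2\,m(C_i) \speq \lp \sigma^2+\int_\R x^2\,\nu(\text dx) \rp \|f\|_{L^2(m)}^2.
\]
Combining $\esp{\mathbf X(f)^2}=\text{Var}(\mathbf X(f))+\esp{\mathbf X(f)}^2$ and bounding $\big(\int_\T f\,\text dm\big)^2\ls\|f\|_{L^1(m)}^2$ (or keeping it as an equality), one gets $\|\mathbf X(f)\|_{L^2(\Omega)}\ls\|f\|_{L(X)}$, so $\mathbf X:\E\to L^2(\Omega)$ is continuous for the $\|\cdot\|_{L(X)}$ norm. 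Since $\E$ is dense in $L(X)$ (Corollary \ref{cor:density_simple_functions} together with the definition of $\|\cdot\|_{L(X)}$ as a combination of the $L^1(m)$ and $L^2(m)$ norms, so density in each factor transfers) and $L^2(\Omega)$ is complete, $\mathbf X$ extends uniquely to a continuous linear map on all of $L(X)$, still satisfying $\mathbf X(\myindic A)=X_A$; uniqueness of the extension is automatic from density and continuity.

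Finally, the claimed uniqueness of $\mathbf X$ among all continuous linear maps $L(X)\to L^2(\Omega)$ sending $\myindic A\mapsto X_A$ follows because such a map is already determined on $\E=\text{Span}\{\myindic A:A\in\A\}$ by linearity, and then on all of $L(X)$ by density and continuity. The main obstacle I anticipate is the careful handling of the norm estimate when one or both of the proportionality constants $b+\int_{|x|>1}x\,\nu(\text dx)$ and $\sigma^2+\int_\R x^2\,\nu(\text dx)$ vanish: in that degenerate case $\|\cdot\|_{L(X)}$ reduces to a pure $L^1(m)$ or $L^2(m)$ norm (or $X\equiv0$), and one must invoke the convention $0\times\infty=0$ and check that density of $\E$ still holds in the relevant single space — here Corollary \ref{cor:density_simple_functions} covers every $L^p(m)$, $p\gs1$, so no real difficulty remains, only bookkeeping. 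A secondary point requiring a line of justification is that the extension is genuinely $L(X)$-valued-independent of the approximating sequence, which is the usual Cauchy-sequence argument.
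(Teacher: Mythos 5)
Your proposal is correct and follows essentially the same route as the paper: define $\mathbf X$ on $\E$ as the linear functional associated with $X$ (Proposition-Definition \ref{propde:increment_map_linear_extension}), use the $\C$-representation together with Lemma \ref{lem:propto_m} and independence of disjoint increments to get $\|\mathbf X(f)\|_{L^2(\Omega)}\ls\|f\|_{L(X)}$, and extend by density of $\E$ in $L(X)$. The only cosmetic difference is that you re-derive well-definedness on $\E$ by refining representations, whereas the paper gets this for free from Lemma \ref{lem:shape_equivalent}.
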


\begin{proof}
	We know by Proposition \ref{propde:increment_map_linear_extension} that $\restr{\mathbf X}{\E}$ must be the linear functional associated with $X.$ 
	Let $f=\sum_{i=1}^nc_i\myindic{C_i}$ be a simple function written with its $\C$-representation given by Proposition \ref{prop:C-representation}.
	Then, by Lemma \ref{lem:propto_m},
	

	\vspace{-1.7cm}
	\begin{spacing}{2.5}
	\[\begin{array}{rcl}
		|\esp{\mathbf X(f)}|
		& ~\ls~	& \ds\sum_{i=1}^n |c_i||\esp{\Delta X_{C_i}}|	\\
		& = 	& \ds\left| b+\int_{|x|>1} x\nu(\text dx) \right| \sum_{i=1}^n |c_i|m(C_i)	\\
		& = 	& \ds\left| b+\int_{|x|>1} x\nu(\text dx) \right| \|f\|_{L^1(m)}.
	\end{array}\]
	\end{spacing}
	\vspace{-1.1cm}
	
	and 
	
	\vspace{-1.7cm}
	\begin{spacing}{2.5}
	\[\begin{array}{rcl}
		\text{Var}(\mathbf X(f))
		& \speq	& \ds\sum_{i=1}^n c_i^2\text{Var}(\Delta X_{C_i}) 	\\
		& = 	& \ds\lp \sigma^2 + \int_\R x^2\nu(\text dx) \rp  \sum_{i=1}^n c_i^2m(C_i) 	\\
		& = 	& \ds\lp \sigma^2 + \int_\R x^2\nu(\text dx) \rp \|f\|_{L^2(m)}^2.
	\end{array}\]
	\end{spacing}
	\vspace{-0.8cm}	\ 
	
	Hence $\|\mathbf X(f)\|_{L^2(\Omega)}^2 = \text{Var}(\mathbf X(f)) + \esp{\mathbf X(f)}^2 \ls \|f\|_{L(X)}^2.$
	So $\mathbf X$ may be uniquely extended as intended.
\end{proof}

We remark that this construction enables to extend the process $\Delta X$ from $\C(u)$ to $\B_m$ by setting
\begin{equation}	\label{eq:increment_map_on_borel_sets}
	\forall B\in\B_m,\quad \Delta X_B \speq \mathbf X(f\myindic{B}) \speq \int_\T\myindic{B}\,\text dX.
\end{equation}

From Theorem \ref{theo:integral_wrt_levy}, we may finally define the process $Y$ by setting
\begin{equation}	\label{eq:definition_of_Y}
	\forall A\in\A,\quad Y_A \speq \mathbf X(f\myindic{A}) \speq \int_A f\,\text dX
\end{equation}
where $f$ is a given locally $X$-integrable function.

Likewise, we extend the increment process $\Delta Y$ from $\C(u)$ to $\B_m$ by setting 
\begin{equation}	\label{eq:Delta_Y_on_borelian}
	\forall B\in\B_m,\quad \Delta Y_B \speq \mathbf X(f\myindic{B}) \speq \int_\T f\myindic{B}\,\text dX
\end{equation}
for which we state and prove the natural property that pairwise disjoint increments of $\Delta Y$ remain independent.

\begin{prop}[Y has independent increments]	\label{prop:independent_increments}
	For all $k\gs1$ and pairwise disjoint $B_1,...,B_k\in\B_m$, the random variables $\Delta Y_{B_1},...,\Delta Y_{B_k}$ are independent.
\end{prop}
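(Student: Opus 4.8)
The statement asserts that pairwise disjoint increments $\Delta Y_{B_1},\dots,\Delta Y_{B_k}$ are independent, where $\Delta Y_B = \mathbf X(f\myindic{B})$ for $B\in\B_m$. The natural strategy is to first establish the analogous statement for the driving process, namely that $\Delta X_{B_1},\dots,\Delta X_{B_k}$ are independent for pairwise disjoint $B_i\in\B_m$ (this extends the defining independence property of $X$ from $\C$ to all of $\B_m$), and then deduce the claim for $Y$ by an approximation argument using simple functions.

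First I would prove independence of the $\Delta X_{B_i}$'s. By definition $\Delta X_B = \mathbf X(\myindic{B})$, and $\mathbf X$ is the $L^2(\Omega)$-continuous linear extension from $\E$. Each $\myindic{B_i}$ can be approximated in $L^1(m)\cap L^2(m)$ (hence in $L(X)$) by simple functions; using that $\C^\ell(u)$ is dense in $\B_m$ (the remark after Proposition \ref{prop:littlewood}) together with the fact that $\C^\ell$ is a dissecting system, one can arrange the approximating simple functions $f_i^{(n)}$ to be supported on \emph{pairwise disjoint} sets in $\C(u)$ for each fixed $n$ — concretely, refine each $B_i$ by intersecting with the left-neighborhoods $C_m(\cdot)$ at level $m$ large enough and discard overlaps, which are negligible since the $B_i$ are disjoint. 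Then $\mathbf X(f_i^{(n)})$ is a finite linear combination of increments $\Delta X_{C}$ over pairwise disjoint $C\in\C$, so for each fixed $n$ the vector $(\mathbf X(f_1^{(n)}),\dots,\mathbf X(f_k^{(n)}))$ has independent components by the defining independence property of $X$. Independence is preserved under convergence in probability (hence $L^2(\Omega)$-convergence), so passing to the limit gives independence of $(\Delta X_{B_1},\dots,\Delta X_{B_k})$.

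Next I would transfer this to $Y$. The key observation is that $\Delta Y_{B_i} = \mathbf X(f\myindic{B_i})$, and since the $B_i$ are pairwise disjoint, the functions $f\myindic{B_i}$ have disjoint supports. Each $f\myindic{B_i}\in L(X)$ can again be approximated in $L(X)$ by simple functions $g_i^{(n)}$ which we may take supported in $B_i$ (multiply the approximants by $\myindic{B_i}$, which only decreases the $L(X)$-distance by the contractivity of truncation to a measurable set in $L^1$ and $L^2$ norms). Then $g_i^{(n)}$ is a linear combination of indicators of sets contained in $B_i$, so by the first step $(\mathbf X(g_1^{(n)}),\dots,\mathbf X(g_k^{(n)}))$ has independent components; taking $n\to\infty$ in $L^2(\Omega)$ again preserves independence and yields the result.

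The main obstacle is the bookkeeping in the approximation step: one must produce, for each $B_i\in\B_m$, a sequence of simple functions converging to $\myindic{B_i}$ in $L(X)$ \emph{while keeping their supports pairwise disjoint across $i$} (or at least making the overlap vanish in the limit). This is where the density of $\C^\ell(u)$ and the dissecting-system property of $\C^\ell$ are essential — they let one carve $B_i$ into small left-neighborhoods and discard the ones straddling two different $B_j$'s, whose total $m$-measure tends to zero because $B_i\cap B_j=\varnothing$. Once the disjoint-support approximants are in hand, everything else is routine: linearity of $\mathbf X$, the defining independence over disjoint $\C$-sets, and stability of independence under convergence in probability.
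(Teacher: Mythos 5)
Your strategy is sound and ultimately rests on the same three ingredients as the paper's proof: the density of $\C(u)$ in $(\B_m,d_m)$ (Proposition \ref{prop:littlewood}), the $L^2(\Omega)$-continuity of $\mathbf X$, and the stability of independence under limits in probability. The organization differs, though, and the difference matters precisely at the step you single out as the main obstacle. The paper never constructs disjoint approximants: it considers the set
\[ \G \speq \Big\{ (B_1,\dots,B_k)\in(\B_m)^k : \Delta Y_{B_1},\,\Delta Y_{B_2\setminus B_1},\dots,\Delta Y_{B_k\setminus\cup_{j<k}B_j} \text{ are independent} \Big\}, \]
i.e. it disjointifies \emph{inside the statement}, so that membership makes sense for arbitrary tuples. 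Then $\G$ contains the dense subset $\C(u)^k$ by the independence of the increments of $X$, and $\G$ is closed for the metric $\sum_j d_m(B_j,B'_j)$ by continuity of $\mathbf X$ (the maps $B_j\mapsto B_j\setminus\cup_{i<j}B_i$ being $d_m$-continuous), whence $\G=(\B_m)^k$; for pairwise disjoint $B_j$ the disjointified sets are the $B_j$ themselves. This sidesteps your bookkeeping problem entirely. Your two-stage route ($X$ first, then $Y$) also quietly uses, in the second stage, independence of the whole families $\{\Delta X_C: C\in\B_m,\ C\subseteq B_i\}$ across $i$, not merely of the single variables $\Delta X_{B_i}$; that does follow from your first step applied simultaneously to all the disjointified pieces, but it deserves a sentence.

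One concrete sub-step as written would fail: intersecting with left-neighborhoods and discarding the cells that straddle two different $B_j$'s does not work in general, because two disjoint Borel sets of positive measure can both meet essentially every cell of every level (a fat Cantor set and its complement, say), so the straddling cells need not have small total measure. The correct implementation of your idea bypasses the dissecting system altogether: take $U_i\in\C(u)$ with $d_m(U_i,B_i)<\eps$ and replace $U_i$ by $U_i\setminus\bigcup_{j<i}U_j$, which still lies in the ring $\C(u)$ and satisfies $d_m\big(U_i\setminus\bigcup_{j<i}U_j,\,B_i\big)\ls(2k-1)\eps$, since $m(U_i\cap U_j)\ls m(U_i\setminus B_i)+m(U_j\setminus B_j)$ for $i\neq j$ by disjointness of the $B$'s. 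With that repair your argument goes through.
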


\begin{proof}
	Fix an integer $k\gs1$ and consider
	\[ \G \speq \Big\{ (B_1,...,B_k)\in(\B_m)^k : \Delta Y_{B_1},\Delta Y_{B_2\setminus B_1},...,\Delta Y_{B_k\setminus \cup_{j<k}B_j} \text{ are independent} \Big\} \]
	Let us show that $\G=(\B_m)^k$.
	Endow $(\B_m)^k$ with the metric
	\[ \big( (B_1,...,B_k),\, (B'_1,...,B'_k) \big) ~\longmapsto~ \sum_{j=1}^k d_m(B_j,B'_j). \]
	According to Proposition \ref{prop:littlewood}, $\C(u)^k$ is a dense subset of $(\B_m)^k$.
	Using the independence of the increments of $X$, $\G$ is easily shown to contain $\C(u)^k$.
	
	Moreover, by the continuity of $\mathbf X$ (Theorem \ref{theo:integral_wrt_levy}), $\G$ is also closed for such a metric. 
	
	Hence $\G=(\B_m)^k.$
	The result follows.
\end{proof}

\subsection{Lévy-Itô decomposition}	\label{subsection:levy-ito_decomposition}

Since our goal is to study the regularity of the trajectories of the set-indexed process $Y$ as defined in (\ref{eq:definition_of_Y}), we must first find a version with well-defined, nice sample paths.
In the one-dimensional case of $X,$ this hurdle has already been dealt with a long time ago with the Lévy-Itô decomposition, which states that any Lévy process $X$ may be written as the independent sum of a drift, a Brownian motion and a compensated Poisson process. 
This classical result has been adapted to the set-indexed case in \cite{herbin_set-indexed_2013}. 

\medskip

The Gaussian part being the easiest, let us start with it. 
A \emph{set-indexed Brownian motion (siBm)} is a centered Gaussian process $B=\{B_A:A\in\A\}$ with covariance function given by
\begin{equation} 	\label{eq:gaussian_part} 
	\forall A_1,A_2\in\A,\quad \esp{B_{A_1}B_{A_2}} \speq m(A_1\cap A_2). 
\end{equation}
Such processes and their fractional generalization have been studied in \cite{herbin_set-indexed_2006} in the set-indexed setting.

\medskip

The Poissonian part requires a bigger setup.
Since the universe $\Omega$ was supposed rich enough to support the existence of the process $X,$ it supports the following construction whose details may be found in \cite{kingman_poisson_1992}.

The measure $m\otimes\nu$ being $\sigma$-finite, we can consider a Poisson random set $\widetilde\Pi$ on $\T\times\R$ of intensity $m\otimes\nu.$ 

Define then
\begin{equation}	\label{eq:poisson_part_0}
	\forall\eps>0,\,\forall A\in\A,\quad N_A^\eps \speq \sum_{\substack{(t,J)\in\widetilde\Pi: \\ t\in A,~|J|\gs\eps}}J \spm m(A)\int_{\eps\ls|x|\ls1}x\nu(\text dx).
\end{equation}
where the sum is finite by construction of $\widetilde\Pi.$

According to \cite[Theorem 7.9]{herbin_set-indexed_2013}, the set-indexed process $N^\eps$ converges almost surely uniformly in $A\subseteq A_\text{max}$ (for any given $A_\text{max}\in\A$) to a set-indexed Lévy process $N$ with Lévy-Khintchine triplet $(0,0,\nu).$
So if we also consider the siBm $B$ from (\ref{eq:gaussian_part}) independent from $N,$ the set-indexed process $bm(.)+\sigma B+N$ actually is a siLévy with Lévy-Khintchine triplet $(b,\sigma^2,\nu).$
Since such a triplet characterizes the finite-dimensional distributions of $X,$ we might as well suppose that
\begin{equation}	\label{eq:levy-ito_decomposition_X}
	\forall A\in\A,\quad X_A \speq bm(A) \+ \sigma B_A \+ N_A.
\end{equation}

The biggest advantage of this representation lies in the fact that we are now able to talk about the `jump structure' of $X$ as follows.
Recall that the \emph{point-mass jump at $t\in\T$} of a map $h:\A\rightarrow\R$ is given by $J_t(h)=\lim_{n\rightarrow\infty}\Delta h(C_n(t))$ as long as the limit is defined.

\begin{de}[Well-defined jumps] \label{de:well-defined_jumps}
	A map $h:\E\rightarrow\R$ has \emph{well-defined jumps} if $J_t(h)$ is well-defined for all $t\in\T$ and $\{t\in A:|J_t(h)|>\eps\}$ is finite for all $A\in\A$ and $\eps>0.$
\end{de}

For any map $h:\A\rightarrow\R$ with well-defined jumps, denote its \emph{jump set} by
\begin{equation}	\label{eq:jump_set_h}
	\Pi(h) \speq \big\{ t\in\T : J_t(h)\neq0 \big\}.
\end{equation}
According to \cite[Theorem 7.3]{herbin_set-indexed_2013}, $X$ has well-defined jumps and $\Pi(X)=\Pi$ almost surely where $\Pi$ the projection of $\widetilde\Pi$ onto $\T.$
In particular, (\ref{eq:poisson_part_0}) has the `nicer' writing:
\begin{equation} 	\label{eq:poisson_part}
	\forall\eps>0,\,\forall A\in\A,\quad N_A^\eps \speq \sum_{\substack{t\in\Pi\cap A: \\ |J_t(X)|\gs\eps}} J_t(X) \spm m(A)\int_{\eps\ls|x|\ls1}x\nu(\text dx)
\end{equation}
from which it is possible to derive a `nicer' version of the integral defined in Theorem \ref{theo:integral_wrt_levy}:
\begin{equation}	\label{eq:integral_N^eps} 
	\forall\eps>0,\, \forall A\in\A,\quad \int_Af\,\text dN^\eps \speq \sum_{\substack{t\in\Pi\cap A: \\ |J_t(X)|\gs\eps}} f(t)J_t(X) \,-\, \int_A f\,\text dm\int_{\eps\ls|x|\ls1}x\nu(\text dx).
\end{equation}

The right-hand side constitutes a set-indexed process with well-defined jumps. Fortunately, this property may be kept when taking $\eps\rightarrow0^+$.


\begin{theo}[Lévy-Itô decomposition] 	\label{theo:levy-ito_decomposition}
	The process $Y$ may be written as a sum of three independent set-indexed processes:
	\begin{equation}	\label{eq:levy-ito_decomposition}
		\forall A\in\A,\quad Y_A \speq b\int_A f\,\text dm \+ \sigma\int_A f\,\text dB \+ \int_A f\,\text dN \quad\text{a.s.}
	\end{equation}
	where $\ds\int_.f\,\text dN = \lim_{\eps\rightarrow0^+}\ds\int_.f\,\text dN^\eps$. The limit as $\eps\rightarrow0^+$ happens almost surely uniformly in $[\varnothing,A]$ for all $A\in\A.$
%
\end{theo}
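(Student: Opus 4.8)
The strategy is to handle each of the three summands separately and then glue them together, since independence of the three pieces is inherited from the Lévy–Itô decomposition (\ref{eq:levy-ito_decomposition_X}) of $X$ together with the fact that $\mathbf X$ is linear and continuous: applying $\mathbf X(f\myindic{A}\,\cdot\,)$ to (\ref{eq:levy-ito_decomposition_X}) immediately gives the identity $Y_A = b\int_A f\,\text dm + \sigma\int_A f\,\text dB + \int_A f\,\text dN^\eps$-type decomposition in $L^2(\Omega)$ for each fixed $A$, and the independence of the three processes $\{bm(A)\}$, $\{\sigma B_A\}$, $\{N_A\}$ carries over to the integrated versions by the same density argument as in Proposition \ref{prop:independent_increments} (approximate $f$ by simple functions and pass to the limit). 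So the real content is (i) that each summand admits a version with well-defined jumps and nice sample paths on every $[\varnothing,A]$, and (ii) that the Poissonian piece $\int_\cdot f\,\text dN^\eps$ converges, as $\eps\to0^+$, almost surely uniformly on each order interval $[\varnothing,A]$.

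First I would dispatch the drift term: $A\mapsto b\int_A f\,\text dm$ is, up to the constant $b$, just the restriction to $\A$ of the finite signed measure $B\mapsto\int_B f\,\text dm$ (finite on each $[\varnothing,A_{\max}]$ since $f\myindic{A_{\max}}\in L(X)\subseteq L^1(m)$), so it is trivially additive, outer-continuous, and has identically zero jumps — nothing to prove beyond invoking the construction of Section \ref{subsection:construction_integral}. For the Gaussian term $\sigma\int_\cdot f\,\text dB$, the point is that it is a centered Gaussian set-indexed process whose increments $\Delta(\int f\,\text dB)_C$ have variance $\int_C f^2\,\text dm$; along an increasing geodesic $\gamma:[0,1]\to\A$ (Proposition \ref{prop:geodesic}) the projected process is a time-changed Brownian motion with continuous clock $x\mapsto\int_{\gamma(x)}f^2\,\text dm$, hence has continuous sample paths, so all its pointwise jumps vanish; a continuity/separability argument over the countable family $\A_n$ and the shrinking-mesh property (Definition \ref{de:compatible_metric}) upgrades this to a version continuous on each $[\varnothing,A]$. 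This is essentially the same reasoning used for the siBm itself and can be cited from \cite{herbin_set-indexed_2006, herbin_set-indexed_2013}.

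The heart of the matter — and the main obstacle — is the Poissonian piece. Here I would adapt the proof of \cite[Theorem 7.9]{herbin_set-indexed_2013} verbatim in spirit, replacing $J_t(X)$ by $f(t)J_t(X)$ throughout (\ref{eq:integral_N^eps}). For fixed $A_{\max}\in\A$, write $M_\eps = \int_\cdot f\,\text dN^\eps$ restricted to $[\varnothing,A_{\max}]$; for $0<\eps'<\eps$ the difference $M_{\eps'}-M_\eps$ is a compensated sum over the jumps with $\eps'\le|J_t(X)|<\eps$, which is a set-indexed $L^2$-martingale (along any increasing geodesic, and more generally as a strong martingale in the set-indexed filtration) with second moment controlled by $\|f\myindic{A_{\max}}\|_{L^2(m)}^2\int_{\eps'\le|x|<\eps}x^2\,\nu(\text dx)\to0$. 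A set-indexed Doob/Cairoli-type maximal inequality — the paper announces exactly such a tool (Theorem \ref{theo:cairoli_inequality}) and the finite-dimensionality hypotheses of Section \ref{subsection:indexing_collection_finite_dim} are in force — then gives $\myprob\big(\sup_{A\subseteq A_{\max}}|M_{\eps'}(A)-M_\eps(A)|>\lambda\big)\le\lambda^{-2}\|f\myindic{A_{\max}}\|_{L^2(m)}^2\int_{|x|<\eps}x^2\,\nu(\text dx)$. Passing along a sequence $\eps_k\downarrow0$ and using Borel–Cantelli produces an a.s. uniform Cauchy sequence on $[\varnothing,A_{\max}]$, hence an a.s. uniform limit $\int_\cdot f\,\text dN$; the limit inherits well-defined jumps because on the event that $\sup|M_{\eps_k}-M_{\eps_{k+1}}|$ is summable, the jump sets stabilize (each $J_t(\int f\,\text dN^{\eps_k})$ equals $f(t)J_t(X)$ once $\eps_k<|J_t(X)|$, and $\{|f(t)J_t(X)|>\delta\}\cap A_{\max}$ is finite for each $\delta>0$ by $X$ having well-defined jumps and $f$ being bounded on the relevant finite jump set — or, if $f$ is unbounded, by a separate integrability argument splitting small and large values of $|f|$). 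Finally, exhausting $\T=\bigcup_n\T_n$ by the countably many $A_{\max}\in\bigcup_n\A_n$ and taking a common null set yields the decomposition (\ref{eq:levy-ito_decomposition}) simultaneously on every $[\varnothing,A]$. The delicate point I expect to have to be careful about is justifying the set-indexed martingale structure of $M_{\eps'}-M_\eps$ and the applicability of the maximal inequality of Theorem \ref{theo:cairoli_inequality} in the form needed here (uniformity over a genuinely two-parameter-like index set rather than over a single flow), which is precisely where the finite-dimensional hypotheses and the machinery of Section \ref{subsection:indexing_collection_finite_dim} do the work.
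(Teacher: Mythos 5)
Your overall strategy is the same as the paper's: the paper's proof of Theorem \ref{theo:levy-ito_decomposition} is a three-line citation of \cite[Theorem 7.9]{herbin_set-indexed_2013}, whose argument is exactly what you reconstruct --- split off the drift and Gaussian parts, write $\ds\int_.f\,\text dN^{\eps'}-\int_.f\,\text dN^{\eps}$ as a compensated sum with $L^2$-norm controlled by $\|f\myindic{A_{\max}}\|_{L^2(m)}^2\int_{\eps'\ls|x|<\eps}x^2\nu(\text dx)$, and conclude by a maximal inequality plus Borel--Cantelli along $\eps_k\downarrow0$; the paper's only added content is that the maximal-inequality step needs the $\A_n$ to have bounded poset dimension, which is precisely the finite-dimensionality hypothesis you also identify as doing the work.

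The one point to fix is your choice of maximal inequality. The paper (following \cite{herbin_set-indexed_2013} and \cite{adler_representations_1983}) uses Wichura's inequality for sums of independent increments, whereas you invoke Theorem \ref{theo:cairoli_inequality}. As stated, that would be circular: the paper derives the continuous form of Theorem \ref{theo:cairoli_inequality} from the discrete Lemma \ref{lem:cairoli_inequality} \emph{together with} outer-continuity of the sample paths of $Y$, which is itself ``ensured by the L\'evy--It\^o decomposition (\ref{eq:levy-ito_decomposition})'' --- the very statement being proved. The repair is easy and you half-anticipate it when you flag the applicability of the maximal inequality as the delicate point: either quote Wichura's inequality directly for the independent summands, or use only the discrete Lemma \ref{lem:cairoli_inequality} (whose proof via orthosubmartingales and \cite[Theorem 2.3.1]{khoshnevisan_multiparameter_2002} does not use the L\'evy--It\^o decomposition) applied to $M_{\eps'}-M_{\eps}$, and pass from the supremum over $\A_n\cap[\varnothing,A_{\max}]$ to the full supremum using the fact that each approximant $\int_.f\,\text dN^{\eps}$ is outer-continuous by construction (a finite sum of jumps minus an $m$-continuous compensator). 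With that substitution your argument is complete and matches the intended proof.
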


\begin{proof}
	The proof goes just as in \cite[Theorem 7.9]{herbin_set-indexed_2013} apart from a mistake that has been made in the application of Wichura's maximal inequality from \cite{wichura_inequalities_1969}.
	Indeed, we need to have the $\A_n$ of bounded poset dimension in order to have a correct upper bound similar to the proof of \cite[Theorem 4.6]{adler_representations_1983}.
	But since $\A$ has been supposed to have finite dimension (Definition \ref{de:indexing_collection_finite_dimension}), it is indeed the case.
\end{proof}

In the case when $\ds\int_{|x|\ls1}|x|\nu(\text dx)<\infty,$ we define
\begin{equation}
	\forall\eps>0,\,\forall A\in\A,\quad \widetilde N^\eps_A \speq \sum_{\substack{t\in\Pi\cap A: \\ |J_t(X)|\gs\eps}} f(t)J_t(X) \spand \int_A f\,\text d\widetilde N^\eps \speq \sum_{\substack{t\in\Pi\cap A: \\ |J_t(X)|\gs\eps}} f(t)J_t(X)
\end{equation}
for which another Lévy-Itô decomposition can be stated.

\begin{cor}[Lévy-Itô decomposition for integrable jumps]	\label{cor:levy-ito_decomposition_integrable_case}
	If $\ds\int_{|x|\ls1}|x|\nu(\text dx)<\infty,$ (\ref{eq:levy-ito_decomposition}) may be rewritten as:
	\begin{equation}	\label{eq:levy-ito_decomposition_integrable_case}
		\forall A\in\A,\quad Y_A \speq \widetilde b\int_A f\,\text dm \+ \sigma\int_A f\,\text dB \+ \int_A f\,\text d\widetilde N
	\end{equation}
	where $\widetilde b = b-\ds\int_{|x|\ls1}x\nu(\text dx),$ and $\ds\int_.f\,\text d\widetilde N = \lim_{\eps\rightarrow0^+}\ds\int_.f\,\text d\widetilde N^\eps$.
	The limit as $\eps\rightarrow0^+$ happens almost surely uniformly in $[\varnothing,A]$ for all $A\in\A.$ 
%
\end{cor}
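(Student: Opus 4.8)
The plan is to deduce this corollary directly from Theorem \ref{theo:levy-ito_decomposition} by re-absorbing the compensating drift term that appears inside $\int_A f\,\text dN^\eps$ into the drift part of the decomposition. The key observation is that, under the extra hypothesis $\int_{|x|\ls1}|x|\,\nu(\text dx)<\infty$, the constant $c_\eps = \int_{\eps\ls|x|\ls1}x\,\nu(\text dx)$ that is subtracted in (\ref{eq:integral_N^eps}) converges, as $\eps\to0^+$, to the finite limit $c_0 = \int_{|x|\ls1}x\,\nu(\text dx)$ by dominated convergence. So the first step is to rewrite, for each fixed $\eps>0$ and each $A\in\A$,
\[
	\int_A f\,\text dN^\eps \speq \int_A f\,\text d\widetilde N^\eps \spm c_\eps\int_A f\,\text dm,
\]
which is immediate from comparing (\ref{eq:integral_N^eps}) with the definition of $\int_A f\,\text d\widetilde N^\eps$. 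Adding back the drift $b\int_A f\,\text dm$ from (\ref{eq:levy-ito_decomposition}) groups the two deterministic terms into $(b-c_\eps)\int_A f\,\text dm$.

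The second step is to pass to the limit $\eps\to0^+$ on both sides. Theorem \ref{theo:levy-ito_decomposition} already guarantees that $\int_. f\,\text dN^\eps$ converges almost surely, uniformly on $[\varnothing,A]$, to $\int_. f\,\text dN$. Since $c_\eps\to c_0$ deterministically and $\int_A f\,\text dm$ is a fixed (finite) number for each $A$, the identity $\int_A f\,\text d\widetilde N^\eps = \int_A f\,\text dN^\eps + c_\eps\int_A f\,\text dm$ shows that $\int_. f\,\text d\widetilde N^\eps$ also converges almost surely, uniformly on $[\varnothing,A]$, and its limit $\int_. f\,\text d\widetilde N$ satisfies $\int_A f\,\text d\widetilde N = \int_A f\,\text dN + c_0\int_A f\,\text dm$ for all $A\in\A$ almost surely. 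Substituting this into (\ref{eq:levy-ito_decomposition}) yields
\[
	Y_A \speq b\int_A f\,\text dm \+ \sigma\int_A f\,\text dB \+ \Big(\int_A f\,\text d\widetilde N \spm c_0\int_A f\,\text dm\Big) \speq \widetilde b\int_A f\,\text dm \+ \sigma\int_A f\,\text dB \+ \int_A f\,\text d\widetilde N
\]
with $\widetilde b = b - c_0 = b - \int_{|x|\ls1}x\,\nu(\text dx)$, which is (\ref{eq:levy-ito_decomposition_integrable_case}). Independence of the three summands is inherited from Theorem \ref{theo:levy-ito_decomposition}, since the new drift term is deterministic and the Gaussian and jump parts are unchanged.

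The only point requiring a little care — and the main (mild) obstacle — is the interchange of the two limiting procedures: one must check that subtracting the convergent deterministic sequence $c_\eps\int_A f\,\text dm$ does not destroy the \emph{uniform-on-$[\varnothing,A]$} mode of convergence. This is handled by noting that the discrepancy between $\int_{A'} f\,\text d\widetilde N^\eps$ and $\int_{A'} f\,\text dN^\eps$ for $A'\in[\varnothing,A]$ is exactly $c_\eps\int_{A'} f\,\text dm$, whose absolute value is bounded by $|c_\eps|\,\|f\myindic{A}\|_{L^1(m)}$ uniformly in $A'$; hence, since $|c_\eps - c_0|\to0$, the sequence $\int_. f\,\text d\widetilde N^\eps$ is, uniformly on $[\varnothing,A]$, within $o(1)$ of $\int_. f\,\text dN^\eps + c_0\int_. f\,\text dm$, and the latter converges uniformly by Theorem \ref{theo:levy-ito_decomposition}. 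Everything else is bookkeeping.
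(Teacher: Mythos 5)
Your argument is correct and is exactly the (implicit) reasoning behind the corollary, which the paper states without a separate proof: under $\int_{|x|\ls1}|x|\nu(\text dx)<\infty$ the compensator constant $c_\eps=\int_{\eps\ls|x|\ls1}x\nu(\text dx)$ converges to a finite limit, so it can be absorbed into the drift, and the uniform convergence on $[\varnothing,A]$ is preserved since the correction $|c_\eps-c_0|\int_{A'}|f|\,\text dm$ is bounded uniformly in $A'\subseteq A$. Nothing further is needed.
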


So far, it is not clear to us whether one can obtain such a representation for the whole process $\mathbf X,$ \ie if
	\[ \forall f\in L(X),\quad \mathbf X(f) \speq b\int_\T f\,\text dm + \sigma\int_\T f\,\text dB + \lim_{\eps\rightarrow0^+}\int_\T f\,\text dN^\eps \]
where the convergence as $\eps\rightarrow0^+$ happens almost surely in some pathwise sense. 
However, one may easily establish this representation in $L^2(\Omega).$

\medskip

As we did for $X$ in (\ref{eq:levy-ito_decomposition_X}), we will only consider the versions of $Y$ given by (\ref{eq:levy-ito_decomposition}) or (\ref{eq:levy-ito_decomposition_integrable_case}) depending on the context.
A first important consequence of the Lévy-Itô decomposition is that it gives the jump structure of $Y.$
	
\begin{cor}[Jump structure]	\label{cor:jump_structure_Y}
	The following holds with probability one: $Y$ has well-defined jumps and for all $t\in\T,$ $J_t(Y)=f(t)J_t(X).$
	In particular, $\Pi(Y) = \Pi\cap\{ f\neq0 \}$ where $\{f\neq0\} = \{ t\in\T:f(t)\neq0 \}.$
\end{cor}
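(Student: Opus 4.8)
The plan is to deduce the jump structure of $Y$ directly from the Lévy-Itô decomposition (Theorem \ref{theo:levy-ito_decomposition}) by identifying the pointwise jumps of each of the three summands. Since $J_t$ is, by its very definition as $\lim_{n\to\infty}\Delta h(C_n(t))$, additive in $h$ whenever all the relevant limits exist, it suffices to compute the jumps of the drift part, the Gaussian part and the Poissonian part separately and check that only the last one contributes.

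\textbf{Step 1: the drift part.} For $A\mapsto b\int_A f\,\text dm$, the increment over $C_n(t)$ equals $b\int_{C_n(t)}f\,\text dm$, whose absolute value is bounded by $|b|\,\|f\myindic{A_n(t)}\|_\infty\, m(C_n(t))$ on a fixed $A_n(t)$ — or, without a local boundedness assumption on $f$, by $|b|\int_{C_n(t)}|f|\,\text dm$, which tends to $0$ as $n\to\infty$ because $(C_n(t))_{n\in\N}$ decreases to $\{t\}$, which is $m$-null, and $f\myindic{A}\in L^1(m)$ so dominated convergence applies. Hence $J_t\big(b\int_. f\,\text dm\big)=0$ for every $t\in\T$, almost surely (in fact surely).

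\textbf{Step 2: the Gaussian part.} For $A\mapsto \sigma\int_A f\,\text dB$, one uses that $\int_. f\,\text dB$ is a set-indexed Brownian-type process; indeed $\Delta\big(\int_. f\,\text dB\big)_{C_n(t)}$ is centered Gaussian with variance $\sigma^2\int_{C_n(t)}f^2\,\text dm\to0$ by the same dominated-convergence argument (using $f\myindic A\in L^2(m)$), so $\Delta\big(\int_. f\,\text dB\big)_{C_n(t)}\to0$ in $L^2(\Omega)$, hence in probability; to upgrade this to an almost-sure statement simultaneously for all $t$ one can invoke that $\int_. f\,\text dB$ has a continuous version (a siBm-type process with the variance control above, via the set-indexed Kolmogorov-type theorem alluded to in Section \ref{subsection:indexing_collection_geodesic}), or simply note that the continuity of $\int_. f\,\text dN^\eps$ minus $Y$ already forces it. The cleanest route, and the one I would take, is: the existence of well-defined jumps for $Y$ and the identification $J_t(Y)=f(t)J_t(X)$ will follow once we show the jump of $Y$ coincides with the jump of $\int_. f\,\text dN$, so we only really need the drift and Gaussian parts to be jump-free, which is what Steps 1--2 give.

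\textbf{Step 3: the Poissonian part.} This is where the main content lies. By Theorem \ref{theo:levy-ito_decomposition} the process $\int_. f\,\text dN$ is the a.s.\ locally uniform limit of $\int_. f\,\text dN^\eps$, and by \eqref{eq:integral_N^eps} the latter is, up to the deterministic drift $-\int_. f\,\text dm\int_{\eps\ls|x|\ls1}x\nu(\text dx)$ (whose jumps vanish by Step 1), the pure-sum process $A\mapsto\sum_{t\in\Pi\cap A,\,|J_t(X)|\gs\eps}f(t)J_t(X)$. For a fixed $t$, eventually $C_n(t)$ meets $\Pi$ in at most the single point $t$ (since $\Pi$ is locally finite and $(C_n(t))_n$ shrinks to $\{t\}$, using the dissecting property of $\C^\ell$), so $\Delta\big(\int_. f\,\text dN^\eps\big)_{C_n(t)}\to f(t)J_t(X)\myindic{|J_t(X)|\gs\eps}$. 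One then needs to interchange the limit in $n$ with the limit in $\eps$; the locally uniform convergence in $\eps$ from Theorem \ref{theo:levy-ito_decomposition} lets one transfer the jump computation to the limit, giving $J_t\big(\int_. f\,\text dN\big)=f(t)J_t(X)$ for all $t$, a.s. Finiteness of $\{t\in A:|J_t(Y)|>\eps\}$ follows from $\{t\in A:|f(t)J_t(X)|>\eps\}\subseteq\{t\in A:|J_t(X)|>\eps/\|f\myindic A\|_\infty\}$ when $f$ is locally bounded, or more robustly from the fact that $\int_. f\,\text dN^\eps$ already has well-defined jumps for each $\eps$ and the tail contributions are uniformly controlled. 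Combining Steps 1--3 via additivity of $J_t$ yields $J_t(Y)=f(t)J_t(X)$ a.s.\ for every $t$, and $\Pi(Y)=\{t:J_t(Y)\neq0\}=\{t\in\Pi(X):f(t)\neq0\}=\Pi\cap\{f\neq0\}$ since $\Pi(X)=\Pi$ a.s.

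The main obstacle I anticipate is the justification of exchanging the two limiting operations ($n\to\infty$ in the definition of $J_t$, and $\eps\to0^+$ in the Lévy-Itô sum) uniformly in $t$, i.e.\ obtaining the jump identity \emph{simultaneously for all $t\in\T$} on a single almost-sure event rather than $t$ by $t$; this is exactly where the locally uniform convergence statement of Theorem \ref{theo:levy-ito_decomposition} — itself resting on the finite-dimensionality hypothesis and Wichura's inequality — is indispensable, mirroring the analogous step in \cite[Theorem 7.3]{herbin_set-indexed_2013}.
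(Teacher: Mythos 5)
Your proposal is correct and follows essentially the same route as the paper: decompose $Y$ via the Lévy-Itô theorem, kill the drift and Gaussian jumps, and identify the Poissonian jumps using the a.s.\ uniform convergence of $\int_. f\,\text dN^\eps$ together with the dissecting property of $\C^\ell$ and the finite-dimensionality of $\A$ (which the paper uses precisely to pass from uniform convergence over $\A$ to uniform convergence over increments on $\C^\ell$). The only cosmetic difference is that the paper first localizes to the sets $\T_n$ from Lemma \ref{lem:sigma-finite} and defers the Gaussian part to the method of \cite[Theorem 7.3]{herbin_set-indexed_2013}, exactly as you suggest.
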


\begin{proof}
	According to Lemma \ref{lem:sigma-finite}, it is enough to prove that for any $n\in\N,$ the following holds with probability one:
	\[ \forall t\in\T_n,\quad J_t(Y) \speq f(t)J_t(X) \]
	where $\T_n$ is the one from that very lemma.
	Let $n\in\N.$
	We write the Lévy-Itô decomposition of $Y$ and determine de point-mass jump of its components.
	Since $d_m$ is compatible, from the shrinking mesh property and $f\in L(X)$ follows that
	\begin{equation}	\label{eq:jump_drift}
		\forall t\in\T_n,\quad J_t\lp b\int_. f\,\text dm\rp \speq 0.
	\end{equation}
	
	According to Theorem \ref{theo:levy-ito_decomposition}, there exists an event $\Omega_N$ of probability one such that for all $\omega\in\Omega_N$, $\ds\int_A f\,\text dN^\eps(\omega)$ converges to $\ds\int_A f\text dN(\omega)$ uniformly in $A\subseteq\T_n$ as $\eps\rightarrow0^+.$
	Since $\A$ has finite dimension, this convergence also happens uniformly in $C\in\C^\ell$ such that $C\subseteq\T_n.$ In particular, since $\C^\ell$ is a dissecting system,
	\begin{equation}	\label{eq:jump_poisson}
		\forall\omega\in\Omega_N,\,\forall t\in\T_n,\quad J_t\lp\int_. f\,\text dN(\omega)\rp \speq f(t)J_t(X(\omega)).
	\end{equation}
	
	Only the Gaussian part remains, but it may be proven by the same method as \cite[Theorem 7.3]{herbin_set-indexed_2013}, so there exists an event $\Omega_B$ of probability one such that for all $\omega\in\Omega_B,\,$ $\sigma\ds\int_.f\,\text dB(\omega)$ has well-defined jumps and 
	\begin{equation}	\label{eq:jump_gaussian}
		\forall\omega\in\Omega_B,\,\forall t\in\T_n,\quad J_t\lp \sigma\int_.f\,\text dB(\omega)\rp \speq 0.
	\end{equation}
	The result follows from (\ref{eq:jump_drift}), (\ref{eq:jump_poisson}) and (\ref{eq:jump_gaussian}).
\end{proof}

\section{Regularity criteria} \label{sec:regularity_criterion}

Hölder regularity is expressed in terms of exponents and may vary depending on the context and the behavior one wishes to capture. 
In Section \ref{subsection:ptw_exponent}, we provide the necessary definitions to this effect.

In Section \ref{subsection:oji_function}, we further push ideas from \cite{jaffard_multifractal_1999, jaffard_old_1997} to obtain (deterministic) upper bounds for the Hölder regularity of a deterministic function $h:\A\rightarrow\R$ based on its pointwise jumps. The notion of \emph{victiny} developed in Section \ref{subsubsec:divergence} is the key concept to improve on the `naive' upper bound.

\subsection{Set-indexed pointwise Hölder exponents} \label{subsection:ptw_exponent}

In \cite{herbin_local_2016}, Herbin and Richard defined a number of Hölder exponents which localize different visions of the continuity property in the set-indexed case.
In a general fashion, those Hölder exponents are used to finely study the regularity of maps $h:\A\rightarrow\R$ around a given $A\in\A$ or equivalently through the TIP bijection, around a given $t\in\T.$
We will be using the convention $\sup\varnothing=0$ which is usual for regularity exponents.

\subsubsection{Hölder exponent}

First, the \emph{(pointwise) Hölder exponent} constitutes the natural generalization of its one-dimensional analog to the metric space $(\A,d_\A)$:
\begin{equation} 	\label{eq:holder_exponent}
	\forall A\in\A,\quad \alpha_h(A) \speq \sup\left\{ \alpha\gs0 \,:\, \limsup_{\rho\rightarrow0^+} \sup_{A'\in B_\A(A,\rho)}\frac{\big| h(A)-h(A') \big|}{\rho^\alpha} <\infty\right\}.
\end{equation}

If positive, for any $\alpha\in(0,\alpha_h(A)),$ the pointwise Hölder exponent yields the following control of $h$ in the neighborhood of $t$:
\begin{equation} 	\label{eq:holder_exponent_estimate}
	\exists\rho_\alpha>0 : \forall A'\in B_\A(A,\rho_\alpha),\quad \big| h(A)-h(A') \big| ~\ls~ d_\A(A,A')^\alpha.
\end{equation}
Conversely, the estimate (\ref{eq:holder_exponent_estimate}) implies $\alpha\ls\alpha_h(A).$

\medskip

In modern litterature, one usually uses a slight modification of the above definition where one substracts the smooth part of the function --- its Taylor expansion --- before comparing it to a power of the radius (see \cite{balanca_2-microlocal_2012} for an in-depth comparison in the case $\T=\R_+$). 
However, the set-indexed setting does not seem to have any natural substitutes for polynomials, hence the definition. 
Moreover, keeping the polynomial part has even proven to be useful to study stochastic processes when $\T=\R_+$ (see \cite{balanca_2-microlocal_2012}).

\medskip

\cite{herbin_local_2016} also introduced the \emph{(pointwise) Hölder $\C$-exponent} in order to look at the variation of $h$ in terms of the class $\C$ and the associated increment map $\Delta h$ indexed by $\C.$
\cite[Proposition 3.2]{herbin_local_2016} actually proves that the following definition does not depend on the choice of $k\in\N$:
\begin{equation}	\label{eq:holder_C-exponent}
	\forall A\in\A,\quad \alpha_{h,\C}(A) \speq \sup\left\{ \alpha\gs0 \,:\, \limsup_{\rho\rightarrow0^+} \sup_{C\in \C_k\cap B_\C(A,\rho)}\frac{\big| \Delta h(C) \big|}{\rho^\alpha} <\infty \right\}
\end{equation}

where the class $\C_k$ has been given in Definition \ref{de:other_class_C}.
Such a definition leads to the apparently stronger than (\ref{eq:holder_exponent_estimate}) corresponding estimate for $\alpha\in(0,\alpha_{h,\C}(A))$: 
\begin{equation}	\label{eq:holder_C-exponent_estimate}
	\forall k\in\N,\,\exists\rho_{\alpha,k}>0 : \forall C\in\C_k\cap B_\C(A,\rho_{\alpha,k}),\quad \big| \Delta h(C) \big| ~\ls~ d_\C(A,C)^\alpha.
\end{equation}

A reason why it is preferred over a more natural definition on $\C$ is that $\C$ is not a Vapnick-\v Cervonenkis class since $\C^\ell$ is a dissecting system, so $\C$-indexed processes are far from having continuous sample paths in general (see \cite{adler_random_2007} for more details). 

Finally, it has not been seen in \cite{herbin_local_2016} that the usual Hölder exponent and the $\C$-exponent actually coincide.

\begin{prop}	\label{prop:holder_exponents_equivalence}
	For all $A\in\A,\,$ $\alpha_h(A) \,=\, \alpha_{h,\C}(A).$
\end{prop}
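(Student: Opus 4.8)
The plan is to show the two exponents coincide by proving each inequality separately, using the isometry $(\A,d_\A)\hookrightarrow(\C,d_\C)$ from Definition \ref{de:d_C} together with the estimates (\ref{eq:holder_exponent_estimate}) and (\ref{eq:holder_C-exponent_estimate}). The inequality $\alpha_{h,\C}(A)\ls\alpha_h(A)$ should be the easy direction: given $\alpha\in(0,\alpha_{h,\C}(A))$, apply (\ref{eq:holder_C-exponent_estimate}) with $k=0$. For any $A'\in B_\A(A,\rho)$ with $\rho$ small, write $h(A)-h(A')=\Delta h(A\setminus A')-\Delta h(A'\setminus A)$ via the increment map (Proposition-Definition \ref{propde:increment_map_linear_extension}), and observe that $A\setminus A'$ and $A'\setminus A$ are elements of $\C_0$ (indeed of $\C$). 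Since the isometry gives $d_\C(A,A')=d_\A(A,A')$ and, more generally, $d_\C$ between $A$ and the one-set-difference increments $A\setminus A'$, $A'\setminus A$ is controlled by $d_\A(A,A')$ (their extremal representations involve only $A,A'$), the estimate (\ref{eq:holder_C-exponent_estimate}) yields $|h(A)-h(A')|\ls 2\,d_\A(A,A')^\alpha$, hence $\alpha\ls\alpha_h(A)$.

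For the reverse inequality $\alpha_h(A)\ls\alpha_{h,\C}(A)$, fix $\alpha\in(0,\alpha_h(A))$ so that (\ref{eq:holder_exponent_estimate}) holds: $|h(A)-h(A')|\ls d_\A(A,A')^\alpha$ for all $A'$ in a small ball $B_\A(A,\rho_\alpha)$. I would fix $k\in\N$ — using that (\ref{eq:holder_C-exponent}) is independent of $k$ by \cite[Proposition 3.2]{herbin_local_2016}, it suffices to handle, say, $k$ equal to $\dim\A-1$ or whatever is convenient — and take an arbitrary $C=A_0\setminus\bigcup_{i=1}^{k+1}A_i\in\C_k\cap B_\C(A,\rho)$ with $\rho$ small. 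By definition of $d_\C$ as a Hausdorff distance, every constitutive set $A_0,\dots,A_{k+1}$ lies within $d_\A$-distance $\rho$ of $A$ (each is matched to its nearest neighbour in $\{A\}$, which is $A$ itself), so all of them — and all their pairwise intersections, by contractivity (Definition \ref{de:compatible_metric}) — lie in $B_\A(A,O(\rho))$. Then expand $\Delta h(C)$ by the inclusion–exclusion formula (\ref{eq:inclusion-exclusion_formula}): $\Delta h(C)$ is a fixed $\Z$-linear combination (with a bounded number of terms depending only on $k$) of values $h(A_0\cap A_{j_1}\cap\dots\cap A_{j_i})$. Crucially the coefficients sum to $0$ when we include the $h(A_0)$ term, so we may rewrite $\Delta h(C)$ as a combination of \emph{differences} $h(B)-h(A)$ with each $B$ in $B_\A(A,O(\rho))$; applying (\ref{eq:holder_exponent_estimate}) to each gives $|\Delta h(C)|\ls c_k\,\rho^\alpha$ for a constant $c_k$ depending only on $k$. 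Since $d_\C(A,C)$ is comparable to $\rho$, this gives the estimate (\ref{eq:holder_C-exponent_estimate}) and hence $\alpha\ls\alpha_{h,\C}(A)$.

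The main obstacle I anticipate is the bookkeeping in the reverse direction: one must check carefully that $d_\C(A,C)$ being small genuinely forces \emph{every} set appearing in the extremal representation of $C$ (not just $A_0$) to be $d_\A$-close to $A$, and that after inclusion–exclusion the resulting combination of $h$-values can be reorganized into differences $h(\cdot)-h(A)$ with the number of summands and the size of the coefficients bounded independently of the particular $C$ (so that the constant $c_k$ is uniform). The contractivity axiom handles the intersections, and the extremal representation (Definition \ref{de:extremal_representation_C}) guarantees the $A_i\subseteq A_0$ structure that keeps the $\C_k$-membership under control; once these are in place the argument is routine. A minor point to state cleanly is the comparison between $d_\C(A,C)$ and the radius $\rho$ used in the ball $B_\C(A,\rho)$, but this is immediate from the definition of $d_\C$. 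Taking suprema over admissible $\alpha$ in both inequalities yields $\alpha_h(A)=\alpha_{h,\C}(A)$.
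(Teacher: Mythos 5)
Your proof is correct and follows essentially the same route as the paper's: both directions hinge on splitting increments via the additivity of $\Delta h$ and controlling the relevant $d_\C$-distances by contractivity and the Hausdorff-distance formula. The only difference is that for the inequality $\alpha_h(A)\ls\alpha_{h,\C}(A)$ the paper simply takes $k=0$ --- where a $\C_0$-increment in $B_\C(A,\rho)$ is literally $h(A_1)-h(A_0)$ with $A_0\subseteq A_1$ both in $B_\A(A,\rho)$ --- and relies on the $k$-independence of (\ref{eq:holder_C-exponent}), so your general-$k$ inclusion--exclusion bookkeeping, while valid, is not needed.
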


In particular, we will only mention $\alpha_h(A)$ in the following and still use both estimates (\ref{eq:holder_exponent_estimate}) and (\ref{eq:holder_C-exponent_estimate}).

\begin{proof}
	Let $A\in\A.$
	Then
	
	\vspace{-0.5cm}
	\begin{spacing}{1.5}
	\[\begin{array}{rcl}
		\ds\sup_{C\in \C_0\cap B_\C(A,\rho)} \big| \Delta h(C) \big|\rho^{-\alpha}
		& \speq	&
		\ds\sup_{\substack{A_0,A_1\in B_\A(A,\rho): \\ A_0\subseteq A_1}} \big| \Delta h(A_1\setminus A_0) \big|\rho^{-\alpha} 	\\
		& =   	& 
		\ds\sup_{\substack{A_0,A_1\in B_\A(A,\rho): \\ A_0\subseteq A_1}}\big| h(A_1)-h(A_0) \big|\rho^{-\alpha} 				\\
		& ~\ls~	& 
		\ds\sup_{\substack{A_0,A_1\in B_\A(A,\rho): \\ A_0\subseteq A_1}} \Big[\big| h(A_1)-h(A) \big|+\big| h(A)-h(A_0) \big|\Big] \rho^{-\alpha} 													\\
		& \ls 	&
		2 \ds\sup_{A'\in B_\A(A,\rho)} \big| h(A)-h(A') \big|\rho^{-\alpha}.
	\end{array}\]
	\end{spacing}
	\vspace{-0.2cm}
	
	Hence taking $k=0$ in (\ref{eq:holder_C-exponent}) immediately yields that $\alpha_h(A)\ls\alpha_{h,\C}(A).$
	
	Conversely, if $\alpha_{h,\C}(A)=0,$ then equality immediately holds. 
	Otherwise, take $\alpha\in(0,\alpha_{h,\C}(A)).$
	Fix $\rho_{\alpha,0}>0$ just as in the estimate (\ref{eq:holder_C-exponent_estimate}) for $k=0.$
	Let $A'\in B_\A(A,\rho_{\alpha,0}).$ 
	Then,
	\begin{equation}	\label{eq:increment_bound}
		\big| h(A)-h(A') \big| ~\ls~ \big| \Delta h(A\setminus A') \big| \+ \big| \Delta h(A'\setminus A) \big|.
	\end{equation}
	Since the extremal representation of $A\setminus A'$ is $A\setminus (A\cap A'),$ it follows by definition of $d_\C$ that
	\[ d_\C(A\setminus A',A) \speq \max\Big\{ d_\A(A,A'),\, d_\A(A\cap A',A) \Big\}. \]
	By contractivity, it follows that $d_\A(A\cap A',A)\ls d_\A(A,A'),$ so
	$d_\C(A\setminus A',A) = d_\A(A,A').$
	Similarly, $d_\C(A'\setminus A,A) = d_\A(A,A').$
	In particular, we may apply the estimate (\ref{eq:holder_C-exponent_estimate}) to (\ref{eq:increment_bound}) and get
	\[ \forall A'\in B_\A(A,\rho_{\alpha,0}),\quad |h(A)-h(A')| ~\ls~ 2d_\A(A,A')^\alpha. \]
	Hence $\alpha\ls\alpha_h(A),$ the result follows.
\end{proof}

\begin{rem}		\label{rem:local_exponent_no_need}
	Taking $\alpha\in(0,\widetilde\alpha_h(A))$ where $\widetilde\alpha_h(A)$ is the local analog of $\alpha_h(A)$ (see \cite{herbin_local_2016} and references therein for a precise definition) would yield an estimate similar to (\ref{eq:holder_exponent_estimate}) for all $\alpha\in(0,\widetilde\alpha_h(A))$
	\[ \forall A_0,A_1\in B_\A(A,\rho_\alpha),\quad \big| h(A_0)-h(A_1) \big| ~\ls~ d_\A(A_0,A_1)^\alpha \]
	from which one could deduce an estimate similar to (\ref{eq:holder_C-exponent_estimate}) of the form
	\[ \forall C,C'\in \C_k\cap B_\C(A,\rho_{\alpha,k}),\quad \big| \Delta h(C)-\Delta h(C') \big| ~\ls~ d_\C(C,C')^\alpha. \]
	In particular, if $h$ has well-defined jumps (Definition \ref{de:well-defined_jumps}) and $\A$ has finite dimension, it follows from the previous estimate and Lemma \ref{lem:d_C} that $s\mapsto J_s(h)=\lim_{n\rightarrow\infty}\Delta h(C_n(s))$ is Hölder-continuous in a neighborhood of $A$ (\ie for points $s$ such that $A(s)$ is in a neighborhood of $A$ for $d_\A$).
	However, from Corollary \ref{cor:jump_structure_Y}, $Y$ has well-defined jumps, so $s\mapsto J_s(Y)$ cannot be continuous unless it is null everywhere. 
	This happens only if $\nu=0,$ in which case $Y$ is Gaussian and \cite{herbin_local_2016} shows that it generally implies that $\alpha_Y=\widetilde\alpha_Y.$
	So local exponents do not constitute the right tools here.
\end{rem}

\subsubsection{$d_\T$-localized exponent}

In Section \ref{subsection:ptw_exponent}, we talked about the problem that characterizing the regularity of $Y$ through increments of the form $Y_A-Y_{A'}$ --- which is the case for $\alpha_Y(A)$ --- requires to take non-local information into account. 
We introduced in Definition \ref{de:divergence} the notion of \emph{victiny} especially to tackle this issue.
Another way to solve the problem is to swap the increment $Y_A-Y_{A'}$ by $\Delta Y_C$ for $C\in\C$ close to $A$ and of small diameter.
Not only the shrinking mesh property (Definition \ref{de:compatible_metric}) will ensure that this definition is well-posed, but also $\Delta Y_C$ actually constitutes the right notion of increment in the set-indexed setting. This fact has already been noted in the study of two-parameter processes (see the introduction of Section \ref{sec:set-indexed}). 

\medskip

With that in mind, we define the \emph{$d_\T$-localized exponent} of $h:\A\rightarrow\R$:
\begin{equation}	\label{eq:localized_exponent}
	\forall A\in\A,\quad \alpha_{h,d_\T}(A) \speq \sup\left\{ \alpha\gs0 \,:\, \limsup_{\rho\rightarrow0^+}\sup_{\substack{C\in\C_{p-1}\cap B_\C(A,\rho): \\ \text{diam}(C)<\rho}}\frac{|\Delta h(C)|}{\rho^\alpha}<\infty \right\}.
\end{equation}
where we recall that $p=\dim\A$ (Definition \ref{de:indexing_collection_finite_dimension}) and $\text{diam}(C) = \sup\{ d_\T(s,s'):s,s'\in C \}.$
Remark that, contrary to (\ref{eq:holder_C-exponent}) which defines the $\C$-exponent, this definition does depend on $p$ due to the condition on the diameter.
Indeed, elements of $\C_{k+1}$ with diameter smaller than $\rho$ cannot in general be only expressed using elements of $\C_k$ with diameter smaller than $\rho$.

\medskip

The following proposition gives some properties about the $d_\T$-localized exponent in order to get a better feel for it.

\begin{prop}	\label{prop:localized_exponent_properties}
	Let $A=A(t)\in\A.$ The following properties hold:
	\begin{enumerate}
		\item (Equivalent definition using the TIP bijection).
			\begin{equation}	\label{eq:localized_exponent_equivalent_definition}
				\alpha_{h,d_\T}(A(t)) \speq \sup\left\{ \alpha\gs0 : \limsup_{\rho\rightarrow0^+}\sup_{\substack{C\in\C_{p-1}\cap B_\C(A(t),\rho): \\ C\subseteq B_\T(t,\rho)}}\frac{|\Delta h(C)|}{\rho^\alpha}<\infty \right\}.
			\end{equation}		
		\item (Corresponding estimate).
			For all $\alpha\in(0,\alpha_{h,d_\T}(A))$, there exists $\rho_\alpha>0$ such that for all $C\in\C_{p-1}\cap B_\C(A,\rho_{\alpha}),$
			\begin{equation}	\label{eq:localized_exponent_estimate}
				C\subseteq B_\T(t,\rho_{\alpha}) \quad\Longrightarrow\quad \big| \Delta h(C) \big| ~\ls~ \Big( \max\big\{ d_\C(A,C),~ \text{diam}(C) \big\} \Big)^\alpha.
			\end{equation}
			Conversely, (\ref{eq:localized_exponent_estimate}) implies $\alpha\ls\alpha_{h,d_\T}(A).$

		\item (Comparison to the Hölder exponent).
		$\alpha_h(A) \ls \alpha_{h,d_\T}(A).$
	\end{enumerate}
\end{prop}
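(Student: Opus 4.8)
The three items are of increasing difficulty; I would treat them in order. For item 1, the strategy is to show that the two families of increments $\{C \in \C_{p-1} \cap B_\C(A(t),\rho) : \operatorname{diam}(C) < \rho\}$ and $\{C \in \C_{p-1} \cap B_\C(A(t),\rho) : C \subseteq B_\T(t,\rho)\}$ give the same $\limsup$ condition up to a harmless change of radius. One inclusion is trivial: if $C \subseteq B_\T(t,\rho)$ then every pair of points of $C$ is within $2\rho$ of each other, so $\operatorname{diam}(C) \le 2\rho$, and replacing $\rho$ by $\rho/2$ (which does not affect the supremum of $\alpha$) handles it. For the reverse inclusion, suppose $C \in \C_{p-1} \cap B_\C(A(t),\rho)$ with $\operatorname{diam}(C) < \rho$. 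Writing the extremal representation $C = A_0 \setminus \bigcup_i A_i$, the condition $d_\C(A(t),C) < \rho$ forces $d_\A(A(t), A_0) < \rho$, i.e.\ $d_\T(t, \text{tip}(A_0)) < \rho$; since $t$ and the tip of $A_0$ both lie in (the closure of) $C$ when $C \ni t$, and otherwise one argues via the triangle inequality combining $d_\T(t,\text{tip}(A_0)) < \rho$ with $\operatorname{diam}(C) < \rho$, one gets $C \subseteq B_\T(t, 2\rho)$ (or $3\rho$). Again the constant is absorbed by rescaling $\rho$, so the two suprema over $\alpha$ coincide. I would write this carefully but it is routine.

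For item 2, the estimate \eqref{eq:localized_exponent_estimate} and its converse follow the now-standard pattern already used for \eqref{eq:holder_exponent}--\eqref{eq:holder_exponent_estimate} and \eqref{eq:holder_C-exponent}--\eqref{eq:holder_C-exponent_estimate}. Fix $\alpha \in (0, \alpha_{h,d_\T}(A))$ and pick $\alpha' \in (\alpha, \alpha_{h,d_\T}(A))$; by definition there is $M < \infty$ and $\rho_0 > 0$ with $|\Delta h(C)| \le M \rho^{\alpha'}$ for all $\rho < \rho_0$ and all $C \in \C_{p-1} \cap B_\C(A,\rho)$ with $\operatorname{diam}(C) < \rho$. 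Given such a $C$ with $C \subseteq B_\T(t,\rho_\alpha)$, apply this with $\rho = \max\{d_\C(A,C), \operatorname{diam}(C)\} + \eta$ for small $\eta > 0$ and let $\eta \to 0^+$; choosing $\rho_\alpha$ small enough that $M \rho_\alpha^{\alpha' - \alpha} \le 1$ gives \eqref{eq:localized_exponent_estimate}. For the converse, \eqref{eq:localized_exponent_estimate} directly bounds $|\Delta h(C)| \rho^{-\alpha} \le 1$ whenever $\max\{d_\C(A,C),\operatorname{diam}(C)\} \le \rho \le \rho_\alpha$ and $C \subseteq B_\T(t,\rho)$, so the $\limsup$ in \eqref{eq:localized_exponent_equivalent_definition} is finite and hence $\alpha \le \alpha_{h,d_\T}(A)$; combining with item 1 finishes it.

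For item 3, the inequality $\alpha_h(A) \le \alpha_{h,d_\T}(A)$, I would exploit Proposition \ref{prop:holder_exponents_equivalence}: since $\alpha_h(A) = \alpha_{h,\C}(A)$, it suffices to show $\alpha_{h,\C}(A) \le \alpha_{h,d_\T}(A)$. If $\alpha_{h,\C}(A) = 0$ there is nothing to prove, so take $\alpha \in (0, \alpha_{h,\C}(A))$ and invoke the estimate \eqref{eq:holder_C-exponent_estimate} with $k = p - 1$: there is $\rho_{\alpha,p-1} > 0$ with $|\Delta h(C)| \le d_\C(A,C)^\alpha$ for all $C \in \C_{p-1} \cap B_\C(A,\rho_{\alpha,p-1})$. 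Now for $C$ additionally satisfying $\operatorname{diam}(C) < \rho$ and $d_\C(A,C) < \rho$ with $\rho \le \rho_{\alpha,p-1}$, we get $|\Delta h(C)| \le d_\C(A,C)^\alpha \le \rho^\alpha$, which is exactly the bound needed for \eqref{eq:localized_exponent} (the diameter condition is only an extra restriction, so it makes the supremum smaller). Hence $\alpha \le \alpha_{h,d_\T}(A)$, and letting $\alpha \uparrow \alpha_{h,\C}(A) = \alpha_h(A)$ gives the claim. The only mild subtlety — and the one place I would be most careful — is in item 1, making sure that the passage from the diameter condition to the ball condition does not secretly change the geometry of which increments are admissible beyond a constant rescaling of $\rho$; once that is pinned down, items 2 and 3 are essentially bookkeeping with the already-established estimates.
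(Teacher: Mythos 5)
Your proposal is correct and follows essentially the same route as the paper: the paper's proof of item 1 is exactly the pair of implications $\mathrm{diam}(C)<\rho \Rightarrow C\subseteq B_\T(t,2\rho)$ (via the tip of $A_0$ in the extremal representation) and $C\subseteq B_\T(t,\rho)\Rightarrow \mathrm{diam}(C)\ls 2\rho$, item 2 is dismissed as a straightforward consequence, and item 3 is the same comparison of \eqref{eq:holder_C-exponent} with $k=p-1$, Proposition \ref{prop:holder_exponents_equivalence} and \eqref{eq:localized_exponent} that you give. The only difference is that you spell out details the paper leaves implicit.
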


\begin{proof}
	Let us fix $A=A(t)\in\A.$
	\begin{enumerate}
		\item It is just a consequence of the fact that for all $\rho>0$ and $C\in B_\C(A,\rho),$
		\[\begin{array}{rcl}
			\text{diam}(C)<\rho 	
			& ~\Longrightarrow~ & C\subseteq B_\T(t,2\rho)	\\
			C\subseteq B_\T(t,\rho)	
			& ~\Longrightarrow~	& \text{diam}(C) \ls2\rho
		\end{array}\]
		
		\item It is straightforward consequence of (\ref{eq:localized_exponent_equivalent_definition}). 
		
		\item Since $\C^\ell\subseteq\C_{p-1},$ it is a but a simple comparison between (\ref{eq:holder_C-exponent}) for $k=p-1$, Proposition \ref{prop:holder_exponents_equivalence} and (\ref{eq:localized_exponent}).
	\end{enumerate}
\end{proof}

This exponent should also be compared to the \emph{pointwise continuity exponent} introduced in \cite[Definition 3.4]{herbin_local_2016}:

\begin{equation} 	\label{eq:ptw_continuity_exponent}
	\forall t\in\T,\quad \alpha_h^{pc}(t) \speq \sup\left\{ \alpha\gs0 \,:\, \limsup_{n\rightarrow\infty} \frac{|\Delta h(C_n(t))|}{m(C_n(t))^\alpha}<\infty \right\}.
\end{equation}

However, we found our definition easier to work with since it is more closely linked to a metric, does not directly rely on the countable class $\C^\ell$ and yields a more powerful estimate at the end while still answering our need to replace $Y_A-Y_{A'}$ with a better, more local, notion of increment.

\subsection{Regularity of generic OJI functions}	\label{subsection:oji_function}

In \cite{jaffard_old_1997}, Jaffard used the discontinuities of a càdlàg function $h:\R_+\rightarrow\R$ to obtain the following upper bound on its Hölder exponent:

\begin{equation}	\label{eq:jaffard_lemma_1d}
	\forall t\in\R_+\setminus\Pi(h),\quad \alpha_h(t) ~\ls~ \liminf_{s\in\Pi(h)\rightarrow t} \frac{\log|J_s(h)|}{\log|s-t|}
\end{equation}

where $J_s(h)=h(s)-h(s^-)$ for all $s\in\R_+.$
In this section, we strive to generalize this approach in order to use it in a similar fashion as \cite{jaffard_multifractal_1999}.
We start by treating the Hölder exponent only. We explain how to do a similar (and simpler) study of the $d_\T$-localized exponent in Section \ref{subsection:localized_regularity}.

\subsubsection{Jump sets in generic configuration}

In order to adapt (\ref{eq:jaffard_lemma_1d}) to a more general setting, we could consider the point-mass jumps $J_s(h)=\lim_{n\rightarrow\infty}\Delta h(C_n(t))$ of a map $h:\A\rightarrow\R$ with well-defined jumps (Definition \ref{de:well-defined_jumps}) in a neighborhood of $t$ and reproduce Jaffard's proof.
This would yield the following bound:

\begin{equation} 	\label{eq:jaffard_lemma_naive}
	\forall t\in\T,\quad \alpha_h(t) ~\ls~ \liminf_{\substack{s\in\Pi(h): \\ d_\T(s,t)\rightarrow0^+}} \frac{\log|J_s(h)|}{\log d_\T(s,t)}  \vee0.
\end{equation}

Recall that $\alpha_h(A)$ and $\alpha_h(t)$ are the same --- provided that $A=A(t)$ --- due to the correspondance (\ref{eq:d_T}) between $d_\A$ and $d_\T$.
Moreover, the `$\vee\,0$' ensures that this inequality holds for all $t\in\T$. 

\medskip

Alas, unless $\T$ is one-dimensional, this upper bound turns out not to be so sharp.
The reason is that we failed to consider the majority of the point-mass jumps contributing to lessen the regularity, \ie the ones in the \emph{victiny} of $A=A(t)$ (Definition \ref{de:divergence}).
Continuing our illustration with $(\T,d_\T)=(\R_+^2,d_2)$ from Example \ref{ex:balls_qd}, the area of $B_\T(t,\rho)$ in Figure \ref{fig:victiny} is of order $\rho^2$ whereas the area of $\V(A,\rho)$ is of order $\rho$ as $\rho\rightarrow0,$ so not taking the jumps in $\V(A,\rho)\setminus B_\T(t,\rho)$ into account incurs severe losses in the sharpness of the argument.

So in order to obtain a better upper bound on $\alpha_h(A),$ we need to be able to `fetch' the jumps of $h$ in the victiny of $A$ while only using elements in $B_\A(A,\rho)$ for small $\rho.$

\begin{de}[OJI function]
	A map $h:\A\rightarrow\R$ with well-defined jumps is said to be \emph{only jump-irregular (OJI)} if it can be written in the form:
	\[ \forall A\in\A,\quad h(A)\speq \lim_{\eps\rightarrow0^+} \Bigg[ \sum_{\substack{s\in\Pi(h)\cap A: \\ |J_s(h)|\gs\eps  }} J_s(h) - a(A,\eps) \Bigg] \]
	where $a:\A\times(0,1)\rightarrow\R$ is a continuous function (where $\A$ is endowed with the metric $d_m=m(.\triangle.)$) and the limit as $\eps\rightarrow0^+$ happens almost surely uniformly in $[\varnothing,A]$ for all $A\in\A.$
\end{de}

The term `only jump-irregular' is meant to indicate that OJI functions are only allowed to have discontinuities in the form of point-mass jumps.
 
Using the uniform convergence as $\eps\rightarrow0^+$ and the fact that $\A$ is finite-dimensional (Definition \ref{de:indexing_collection_finite_dimension}), one readily checks that for any OJI function $h$ and $s\in\T,\,$ $\Delta h(C_n(s))\rightarrow J_s(h)$ as $n\rightarrow\infty.$

\medskip

In order to improve on (\ref{eq:jaffard_lemma_naive}) and take all the jumps of $h$ in the victiny of $A\in\A$ into account, it may happen that some jumps cannot be picked separately from the viewpoint of a given $A$.
It is the case for instance whenever $J_s(h)$ and $J_{s'}(h)$ are both non-zero in Figure \ref{fig:aligned_jumps}.

%

\begin{figure}[!h]
\centering
	\begin{tikzpicture}[scale=.85]	
		\draw [->]				(-.5,0)--(5.5,0);
		\draw [->]				(0,-.5)--(0,3.5);
		\draw [<->]				(2.7,2.9)--(4.3,2.9);
		\draw (3.5,2.9) 		node[above]				{$2\rho$};
		\draw [densely dotted]	(2.7,1.2)--(2.7,2.9);
		\draw [densely dotted]	(4.3,2)--(4.3,2.9);
		\draw [<->]				(4.4,1.2)--(4.4,2.8);
		\draw (4.4,2) 			node[right]				{$2\rho$};
		\draw [densely dotted]	(2.7,1.2)--(4.4,1.2);
		\draw [densely dotted]	(3.5,2.8)--(4.4,2.8);
		\fill [pattern=north east lines,opacity=.2] (4.3,0)--(4.3,2)--(4.3,2) arc (0:90:.8)--(3.5,2.8)--(0,2.8)--(0,1.2)--(2.7,1.2)--(2.7,0)--cycle;
		\fill [pattern=north west lines,opacity=.2] (3.5,2) circle (.8);
		\draw (.5,2.3)		node[above,fill=white]{$s$}		node{$\times$} ;
		\draw (1.75,2.3)	node[above,fill=white]{$s'$}	node{$\times$};
		\draw (-.7,2.3)		node[left]	{$\mathcal L(s)=\mathcal L(s')$};
		\draw [->]				(-.7,2.3)--(-.2,2.3);
		\draw 					(0,2.3)--(3,2.3)--(3,0);		
		\draw [thick]			(0,2.8)--(3.5,2.8);
		\draw [thick]			(4.3,2)--(4.3,0);
		\draw [thick]			(4.3,2) arc (0:90:.8);
		\draw [thick]			(0,1.2)--(2.7,1.2)--(2.7,0);
		\draw (3.5,2)			node[fill=white,above]	{$t$} node {$\times$};
		\draw [dashed,thick]	(0,2)--(3.5,2)--(3.5,0);
	\end{tikzpicture}
	
	\caption{$s,s'\in\V(A(t),\rho)$ cannot be isolated from one another using elements in $B_\A(A(t),\rho)$.}
	\label{fig:aligned_jumps}
\end{figure}
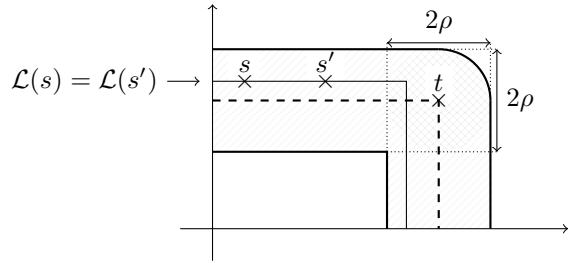

\begin{de}[Generic configuration]	\label{de:generic_configuration}
	For a given $A\in\A,$ an OJI function $h:\A\rightarrow\R$ is said to be \emph{in generic configuration in the victiny of $A$} if there exists $\rho_0>0$ such that for all for all $\rho\in(0,\rho_0),$ $s\in\Pi(h)\cap\V(A,\rho)$ and $\eps>0,$
	\[ \exists \underline A,\overline A\in B_\A(A,\rho) :\quad \underline A\subseteq \overline A \quad\text{ and }\quad \big|\Delta h(\overline A\setminus\underline A) - J_s(h)\big| \ls\eps. \]
\end{de}

Remark that they are OJI functions that are not in generic configuration in the victiny of $A.$ For instance, take the function defined for all $A\in\A$ by $h(A)=\myindic{s\in A}+\myindic{s'\in A}$ for $s$ and $s'$ as in Figure \ref{fig:aligned_jumps}.

\begin{theo}[Generic configuration for $X$ and $Y$]	\label{theo:generic_configuration}
	For all $A\in\A,$ the Poissonian parts of $X$ and $Y$ are almost surely in generic configuration in the victiny of $A.$
\end{theo}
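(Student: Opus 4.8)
The plan is to fix $A\in\A$ and prove that the Poissonian parts of $X$ and of $Y$ — respectively $N$ from \eqref{eq:levy-ito_decomposition_X} and $\int_\cdot f\,\text dN$ from \eqref{eq:levy-ito_decomposition} — are OJI functions almost surely (this is essentially built in, since $N^\eps$ and $\int_\cdot f\,\text dN^\eps$ already have the required explicit "sum of jumps minus compensator" form with continuous compensator $a(A,\eps)=m(A)\int_{\eps\ls|x|\ls1}x\,\nu(\text dx)$ resp. $\int_A f\,\text dm\int_{\eps\ls|x|\ls1}x\,\nu(\text dx)$, and the uniform convergence as $\eps\to0^+$ on $[\varnothing,A]$ is exactly the content of Theorem \ref{theo:levy-ito_decomposition} and Corollary \ref{cor:levy-ito_decomposition_integrable_case}), and then to verify Definition \ref{de:generic_configuration}: for small $\rho$, every jump point $s\in\Pi\cap\V(A,\rho)$ (resp. $s\in\Pi\cap\{f\ne0\}\cap\V(A,\rho)$, using $\Pi(Y)=\Pi\cap\{f\ne0\}$ from Corollary \ref{cor:jump_structure_Y}) can be captured \emph{alone} by an increment $\Delta h(\overline A\setminus\underline A)$ with $\underline A\subseteq\overline A$ both in $B_\A(A,\rho)$. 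Since $J_s(X)=\lim_n\Delta X(C_n(s))$ and $J_s(Y)=f(s)J_s(X)=\lim_n\Delta Y(C_n(s))$, and since $C_n(s)\in\C^\ell$ can be written $\overline A_n\setminus\underline A_n$ for $n$ large with $\underline A_n\subseteq\overline A_n$ in $\A_n$, it suffices to show that for $n$ large these two sets $\overline A_n,\underline A_n$ lie in $B_\A(A,\rho)$; then $\Delta h(\overline A_n\setminus\underline A_n)\to J_s(h)$ gives the $\eps$-approximation for $n$ large enough.

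The key step is therefore the following deterministic geometric claim: if $s\in\V(A,\rho)$, then for all $n$ large enough the two constitutive sets $\overline A_n,\underline A_n$ of the extremal representation of $C_n(s)$ both belong to $B_\A(A,\rho)$. First I would record that $s\in\V(A,\rho)$ means, by Definition \ref{de:divergence} and \eqref{eq:victiny}, that $\qd(s,A)<\rho$, i.e. there exists $A'\in B_\A(A,\rho)$ with $s\in A\triangle A'$; fix such $A'$ and set $\rho':=d_\A(A,A')<\rho$. The point $s$ then lies in $A\setminus A'$ or in $A'\setminus A$. Using the discretization machinery of Proposition \ref{prop:property_V}, more precisely \eqref{eq:symmetric_difference_discretized}--\eqref{eq:victiny_discretization_bound}, one gets that for all $n$ large, $s\in g_n(A'')\setminus g_n(A\cap A')$ for $A''\in\{A,A'\}$ with $\max\{d_\A(A,g_n(A)),d_\A(A,g_n(A')),d_\A(A,g_n(A\cap A'))\}<\rho$; by outer continuity these quantities converge to $0$, $\rho'$, and $\le\rho'$ respectively, so they are $<\rho$ for $n$ large. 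Now the pair $(\overline A_n,\underline A_n):=(g_n(A''),\, g_n(A''))$ — or rather, taking $C_n(s)$ directly, whichever of $g_n(A)\setminus g_n(A\cap A')$, $g_n(A')\setminus g_n(A\cap A')$ contains $s$ refines down to $C_n(s)$ since $C_n(s)$ is the \emph{smallest} element of $\C^\ell(\A_n)$ containing $s$ — has both endpoints within $d_\A$-distance $<\rho$ of $A$, which is what we need.

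The main obstacle I anticipate is making precise that the extremal representation $C_n(s)=A_n^0\setminus\bigcup_{i\ls j_n}A_n^i$ has \emph{all} its constitutive sets $A_n^0,A_n^1,\dots,A_n^{j_n}$ close to $A$, not merely $A_n^0$ and one chosen subset: the definition of generic configuration only asks for \emph{one} pair $\underline A\subseteq\overline A$, but $C_n(s)$ genuinely lies in $\C_{p-1}$ (finite dimension, Definition \ref{de:indexing_collection_finite_dimension}) rather than in $\C_0$, so $\Delta h(C_n(s))$ is not literally an increment of the form $\Delta h(\overline A\setminus\underline A)$ with a single pair. The fix is that we do not need $C_n(s)$ itself; we need \emph{some} set of the form $\overline A\setminus\underline A$ with $\underline A\subseteq\overline A$, $\overline A,\underline A\in B_\A(A,\rho)$, whose $\Delta h$-value is $\eps$-close to $J_s(h)$. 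Here I would exploit that $\C^\ell$ is a dissecting system and that $h$ has well-defined jumps, so $\Delta h(\overline A\setminus\underline A)\to J_s(h)$ as $\overline A\setminus\underline A$ shrinks nicely to $\{s\}$ while staying in $\V(A,\rho)$; concretely, take $\overline A=g_n(A'')$, $\underline A=g_n(A''\cap A'''$) for an auxiliary element $A'''$ chosen (via the TIP bijection and increasing geodesics, Proposition \ref{prop:geodesic}) so that $\overline A\setminus\underline A$ contains $s$, excludes all other jump points of modulus $\ge\eps$ in $\overline A$ (finitely many, by well-definedness), and both $\overline A,\underline A$ stay in $B_\A(A,\rho)$ by contractivity. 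Then $|\Delta h(\overline A\setminus\underline A)-J_s(h)|\le\eps$ as soon as $n$ is large enough. Finally, since $X$ has finitely many jumps of modulus $\ge\eps$ in any $A\in\A$ and $\Pi(Y)\subseteq\Pi(X)$, a single $\rho_0$ and countably-many-$n$ argument handles all $s$ simultaneously, and the "almost surely" is inherited from the a.s. Lévy–Itô decomposition and the a.s. identification $\Pi(X)=\Pi$, $\Pi(Y)=\Pi\cap\{f\ne0\}$.
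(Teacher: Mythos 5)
There is a genuine gap, and it sits at the heart of the theorem. Your argument is essentially deterministic: you claim that for each jump point $s\in\Pi\cap\V(A,\rho)$ one can choose $\underline A\subseteq\overline A$ in $B_\A(A,\rho)$ so that $\overline A\setminus\underline A$ ``shrinks nicely to $\{s\}$'' while excluding all other jump points of modulus $\gs\eps$. This is false. Because $\overline A$ and $\underline A$ are constrained to stay within $d_\A$-distance $\rho$ of $A$ (not of $A(s)$), the sets $\overline A\setminus\underline A$ containing $s$ cannot shrink to $\{s\}$: they shrink at best to a set $\mathcal L(s)=\bigcap_n\big(\overline A_n(s)\setminus\underline A_n(s)\big)$ which has $m$-measure zero but is in general much larger than a point (a whole ``slab boundary'' through $s$ when $\T=\R_+^2$). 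If a second jump point $s'$ with $|J_{s'}(h)|\gs\eps$ lies on $\mathcal L(s)$, then \emph{every} admissible increment $\Delta h(\overline A\setminus\underline A)$ capturing $s$ also captures $s'$, and no choice of auxiliary $A'''$ can repair this: the remark following Definition \ref{de:generic_configuration} gives the explicit counterexample $h=\myindic{s\in\cdot}+\myindic{s'\in\cdot}$ with $s,s'$ as in Figure \ref{fig:aligned_jumps}, an OJI function that is \emph{not} in generic configuration. So no deterministic argument can close the proof.

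The missing ingredient is probabilistic: one must show that almost surely no two points of the Poisson set $\Pi$ are aligned in this way, \ie $\Pi\cap\mathcal L(A,\rho,s)=\{s\}$ for every $s\in\Pi\cap\V(A,\rho)$ and every $\rho\in\Q_+^*$. This follows from $m(\mathcal L(s))=0$ together with the conditional independence of the Poisson points, since $\prob{s_j\in\mathcal L(s_i,A,\rho)}=0$ for $i\neq j$, and a countable union over pairs and over rational $\rho$ yields an almost sure event. One must then still verify (this is Lemma \ref{lem:generic_configuration_line}) that on this event $\Delta h\big(\overline A_n(s)\setminus\underline A_n(s)\big)\rightarrow J_s(h)$, which uses the OJI structure: the compensator term $\Delta a\big(\overline A_n(s)\setminus\underline A_n(s),\eps\big)$ vanishes in the limit by $d_m$-continuity of $a(\cdot,\eps)$ and $m(\mathcal L(s))=0$, while the jump sum reduces to $J_s(h)$ precisely because $s$ is the only jump point on $\mathcal L(s)$. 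Neither step appears in your proposal. A secondary inaccuracy: the constitutive sets of $C_n(s)$ are close to $A(s)$, not to $A$, and $d_\A(A,A(s))$ need not be small for $s\in\V(A,\rho)$ --- this is the very reason the divergence $\qd$ was introduced --- so the reduction to $C_n(s)$ in your first paragraph does not go through either.
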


Before proving this theorem, we need to introduce some notations in order to correctly define:
\begin{enumerate}[$\diamond$]
	\item The `boundary' --- later called $\mathcal L(s)$ --- on which $s$ (and $s'$) stands in Figure \ref{fig:aligned_jumps}.
	\item Approximating sequences $(\underline A_n(s))_{n\in\N}$ and $(\overline A_n(s))_{n\in\N}$ such that $\Delta h(\overline A_n(s)\setminus\underline A_n(s))\rightarrow J_s(h)$ as $n\rightarrow\infty.$ The sets $\overline A_n(s)\setminus\underline A_n(s)\in\C_0$ should be thought of as `thick' versions of $\mathcal L(s)$ converging to $\mathcal L(s)$ as $n\rightarrow\infty.$
\end{enumerate}

\medskip

Let $A\in\A,$ $\rho>0$ and $s\in\V(A,\rho)$ be fixed.
By Proposition \ref{prop:property_V}, we may write $\V(A,\rho)=\bigcup_{n\in\N}\V_n(A,\rho)$ where
\[ \forall n\in\N,\quad \V_n(A,\rho) ~= \bigcup_{\substack{\overline A,\underline A\in \A_n\cap B_\A(A,\rho): \\ \underline A \text{ maximal proper subset of } \overline A }} \hspace{-.2cm} \big(\overline A\setminus\underline A\big) \]
and the `maximal proper subset' condition is not a restriction since it just eliminates redundancy in the definition of $\V_n(A,\rho).$

From this expression, there exists a non-decreasing sequence $(\overline A_n(s)\setminus\underline A_n(s))_{n\gs n_0}$ such that for all $n\gs n_0,~$ $\underline A_n(s),\overline A_n(s)\in\A_n\cap B_\A(A,\rho),$ $\underline A_n(s)$ is a maximal proper subset of $\overline A_n(s)$ and $s\in\overline A_n(s)\setminus\underline A_n(s).$
Remark that the number of choices to define such sequences for all $s\in\V(A,\rho)$ can be made countable by using the (arbitrary) total order on the $\A_n$'s given in Definition \ref{de:indexing_collection}. 

\medskip

Denote, as intended,
\[ \mathcal L(s) \speq \mathcal L(A,\rho,s) \speq \bigcap_{n\in\N} \big(\overline A_n(s)\setminus\underline A_n(s)\big). \]
Since $d_m$ is compatible (Definition \ref{de:compatible_metric}), Proposition \ref{prop:weak_shrinking_mesh} ensures that $m(\mathcal L(s))=0.$

\begin{lem}	\label{lem:generic_configuration_line}
	A sufficient condition for an OJI function $h$ to be in generic configuration in the victiny of $A\in\A$ is that there exists $\rho_0>0$ such that for all $\rho\in(0,\rho_0)\cap\Q$ and $s\in\Pi(h)\cap\V(A,\rho),$ 
	\[ \Pi(h)\cap\mathcal L(A,\rho,s) \speq \{ s \}. \]
\end{lem}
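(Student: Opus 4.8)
The plan is to take, for a given $\rho\in(0,\rho_0)$, $s\in\Pi(h)\cap\V(A,\rho)$ and $\eps>0$, the sets $\underline A_n(s)$ and $\overline A_n(s)$ built just before the lemma as the required $\underline A$ and $\overline A$, for $n$ large enough. First I would reduce to $\rho\in\Q$: by (\ref{eq:prop_V_open_ball_1}) one has $\V(A,\rho)=\bigcup_{\rho'<\rho}\V(A,\rho')$, so there is a rational $\rho'\in(0,\rho)\subseteq(0,\rho_0)$ with $s\in\V(A,\rho')$, and since $B_\A(A,\rho')\subseteq B_\A(A,\rho)$ it suffices to exhibit $\underline A,\overline A\in B_\A(A,\rho')$. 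For this $\rho'$ the hypothesis gives $\Pi(h)\cap\mathcal L(A,\rho',s)=\{s\}$. Writing $C_n=\overline A_n(s)\setminus\underline A_n(s)\in\C_0$, we have $s\in C_n$, $C_n\supseteq C_{n+1}$, $\underline A_n(s),\overline A_n(s)\in\A_n\cap B_\A(A,\rho')$, $\bigcap_{n}C_n=\mathcal L(A,\rho',s)$ and $m(\mathcal L(A,\rho',s))=0$. So everything reduces to proving that $\Delta h(C_n)\longrightarrow J_s(h)$ as $n\to\infty$; once this is known, $|\Delta h(C_n)-J_s(h)|\ls\eps$ for $n$ large and $(\underline A,\overline A)=(\underline A_n(s),\overline A_n(s))$ works.

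To prove $\Delta h(C_n)\to J_s(h)$, I would first arrange the approximating sequences to be nested — the delicate step, see below — so that $\overline A_n(s)\searrow\overline A_\infty\in\A$ and $\underline A_n(s)\searrow\underline A_\infty\in\A$ (using that $\A$ is stable under countable intersections), and in particular $\overline A_n(s),\underline A_n(s)\in[\varnothing,A_{\max}]$ with $A_{\max}=\overline A_{n_0}(s)\in\A$. Then by additivity of $\Delta h$ (Proposition-Definition \ref{propde:increment_map_linear_extension}), $\Delta h(C_n)=h(\overline A_n(s))-h(\underline A_n(s))$; plugging in the OJI representation of $h$, which is uniform on $[\varnothing,A_{\max}]$, yields a quantity $\delta(\eps')\to0$ as $\eps'\to0^+$ such that for all $n$ and all $\eps'\in(0,1)$,
\[ \left| \Delta h(C_n) - \sum_{\substack{s'\in\Pi(h)\cap C_n \\ |J_{s'}(h)|\gs\eps'}} J_{s'}(h) + a(\overline A_n(s),\eps') - a(\underline A_n(s),\eps') \right| \;\ls\; 2\,\delta(\eps'). \]
Fix $\eps'\ls|J_s(h)|$ (legitimate since $s\in\Pi(h)$ forces $J_s(h)\neq0$). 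Because $h$ has well-defined jumps, $F_{\eps'}:=\{t\in A_{\max}:|J_t(h)|\gs\eps'\}$ is finite, so $\{s'\in\Pi(h)\cap C_n:|J_{s'}(h)|\gs\eps'\}=F_{\eps'}\cap C_n$ is a non-increasing sequence of subsets of the finite set $F_{\eps'}$, with intersection over $n$ equal to $F_{\eps'}\cap\mathcal L(A,\rho',s)=\{s\}$; hence it equals $\{s\}$ from some rank on, and the displayed sum equals $J_s(h)$ for all large $n$.

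It remains to handle the drift term $a(\overline A_n(s),\eps')-a(\underline A_n(s),\eps')$. Outer continuity of $d_m$ (Definition \ref{de:compatible_metric}) gives $d_m(\overline A_n(s),\overline A_\infty)\to0$ and $d_m(\underline A_n(s),\underline A_\infty)\to0$, while $d_m(\overline A_\infty,\underline A_\infty)=\lim_n d_m(\overline A_n(s),\underline A_n(s))=\lim_n m(C_n)=0$; joint continuity of $a$ on $(\A,d_m)\times(0,1)$ then forces $a(\overline A_n(s),\eps')-a(\underline A_n(s),\eps')\to a(\overline A_\infty,\eps')-a(\underline A_\infty,\eps')=0$. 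Combining the last two paragraphs, $\limsup_{n}|\Delta h(C_n)-J_s(h)|\ls 2\,\delta(\eps')$ for every admissible $\eps'$, and letting $\eps'\to0^+$ gives $\Delta h(C_n)\to J_s(h)$, which finishes the proof. The main obstacle I anticipate is precisely this drift term: one cannot get away with $m(C_n)\to0$ alone, because $a$ need not be uniformly continuous on balls of $(\A,d_m)$, so one must arrange $(\overline A_n(s))_n$ and $(\underline A_n(s))_n$ to be individually $d_m$-convergent, i.e. choose them nested, and verifying that a nested choice is compatible with the three constraints "$\underline A_n(s)$ maximal proper subset of $\overline A_n(s)$ in $\A_n$", "$s\in C_n$" and "$\overline A_n(s),\underline A_n(s)\in B_\A(A,\rho')$" is the genuinely combinatorial part of the argument (in the applications of Theorem \ref{theo:generic_configuration}, where $a(A,\eps')$ is proportional to $\int_A f\,\text dm$, the drift term is $c(\eps')\int_{C_n}f\,\text dm\to0$ and nesting is not needed).
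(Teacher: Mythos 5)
Your proof is correct and follows essentially the same route as the paper: expand $\Delta h(\overline A_n(s)\setminus\underline A_n(s))$ via the OJI representation, use the uniformity in $\eps'$ to interchange the two limits, observe that the truncated jump sum stabilizes to $J_s(h)$ because $\{t:|J_t(h)|\gs\eps'\}$ is finite and $\Pi(h)\cap\mathcal L(A,\rho,s)=\{s\}$, and kill the drift term by $d_m$-convergence and continuity of $a(\cdot,\eps')$. Your explicit reduction from arbitrary $\rho\in(0,\rho_0)$ to a rational $\rho'<\rho$ is a detail the paper leaves implicit, and it is handled correctly. The one point where you diverge is the obstacle you flag at the end, and it is in fact a non-issue: you do not need $(\underline A_n(s))_n$ to be nested or individually $d_m$-convergent to a separate limit. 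Since $d_m$ is assumed set-indexed compatible, Proposition \ref{prop:weak_shrinking_mesh} applies to it, giving $d_m(\underline A_n(s),\overline A_n(s))=m(C_n)\rightarrow0$ uniformly (as $\underline A_n(s)$ is a maximal proper subset of $\overline A_n(s)$ in $\A_n$); combined with $d_m(\overline A_n(s),\bigcap_k\overline A_k(s))\rightarrow0$ from outer continuity of the monotone sequence $(\overline A_n(s))_n$, the triangle inequality yields $d_m(\underline A_n(s),\bigcap_k\overline A_k(s))\rightarrow0$ as well, so both arguments of $a(\cdot,\eps')$ converge to the same point of $(\A,d_m)$ and the drift difference vanishes by plain (not uniform) continuity. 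This is exactly how the paper disposes of the drift term, so the ``genuinely combinatorial part'' you anticipate never has to be carried out.
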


\begin{proof}
	Let us fix $\rho\in(0,\rho_0)\cap\Q$ and $s\in\Pi(h).$
	Since $h$ is OJI, we may write
	\begin{equation} 	\label{eq:OJI_thick_line} 
		\Delta h\big( \overline A_n(s)\setminus\underline A_n(s) \big) ~=~ \lim_{\eps\rightarrow0^+} \Bigg[ \sum_{\substack{t\in\Pi(h)\cap( \overline A_n(s)\setminus\underline A_n(s)): \\ |J_t(h)|\gs\eps  }} \hspace{-.3cm} J_t(h) - \Delta a\big(\overline A_n(s)\setminus\underline A_n(s),\eps\big) \Bigg].  
	\end{equation}
	where the convergence as $\eps\rightarrow0^+$ happens uniformly in $n\gs n_0.$
	
	Since $\overline A_n(s)$ decreases to $\bigcap_{n\in\N}\overline A_n(s),$ we deduce from the outer continuity of $d_m$ that
	\[ d_m \Big( \overline A_n(s),\bigcap_{n\in\N}\overline A_n(s) \Big) \longrightarrow0 \spas n\rightarrow\infty. \]
	Moreover, the shrinking mesh property tells that $d_m(\underline A_n(s),\overline A_n(s))$ and hence
	\[ d_m \Big( \underline A_n(s),\bigcap_{n\in\N}\overline A_n(s) \Big) \longrightarrow0 \spas n\rightarrow\infty. \]
	
	Hence, due to the continuity of $a(.,\eps)$ with respect to $d_m$, taking $n\rightarrow\infty$ in (\ref{eq:OJI_thick_line}) and using $\Pi(h)\cap\mathcal L(s)=\{ s \}$ yields
	\[ \lim_{n\rightarrow\infty} \Delta h\big( \overline A_n(s)\setminus\underline A_n(s) \big) ~=~ \lim_{\eps\rightarrow0^+} \sum_{\substack{t\in\Pi(h)\cap\mathcal L(s): \\ |J_t(h)|\gs\eps  }} \hspace{-.3cm} J_t(h) ~=~ J_s(h).  \]
	\vspace{-.2cm}
	
	Hence $h$ is in generic configuration in the victiny of $A.$
\end{proof}

\begin{proof}[Proof of Theorem \ref{theo:generic_configuration}]
	We only prove it for $X,$ the proof for $Y$ is exactly the same.

	By the Lévy-decomposition (\ref{eq:levy-ito_decomposition_X}), the Poissonian part of $X$ is OJI.

	Fix $A\in\A$ and $\rho>0.$ 
	Denote $\Pi=\{ s_n : n\in\N \}$ and the event
	\[ \Omega_\rho \speq \Big\{ \exists s,s'\in\Pi\cap\V(A,\rho) : s\neq s' \text{ and } s'\in\mathcal L(s,A,\rho) \Big\}. \]
	Since $m(\mathcal L(s))=0$ for all $s\in\V(A,\rho),$
	\[ \prob{\Omega_\rho} ~\ls~ \sum_{i\neq j} \prob{s_j\in\mathcal L(s_i,A,\rho)} ~=~ 0. \]
	Hence $\Omega^*=\bigcap_{\rho\in\Q_+^*}\Omega_\rho^\complement$ is an event of probability one.
	Lemma \ref{lem:generic_configuration_line} then implies that for all $\omega\in\Omega^*,$ $X(\omega)$ is in generic configuration in the victiny of $A\in\A.$	
\end{proof}

\subsubsection{Bounding regularity with point-mass jumps}

As promised, the study of generic OJI functions yields a better bound than (\ref{eq:jaffard_lemma_naive}).

\begin{theo}	\label{theo:jaffard_lemma}
	Let $A\in\A,~$ $h:\A\rightarrow\R$ an OJI function in generic configuration in the victiny of $A$ and a sequence $(s_n)_{n\in\N}\in\Pi(h)^\N$ such that $\qd(s_n,A)\rightarrow0$ as $n\rightarrow\infty.$
	Then,
	\[ \alpha_h(A) ~\ls~ \liminf_{n\rightarrow\infty} \frac{\log|J_{s_n}(h)|}{\log \qd(s_n,A)}\vee0. \]
\end{theo}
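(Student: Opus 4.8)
The plan is to adapt Jaffard's argument from \cite{jaffard_old_1997} using the victiny to access the relevant jumps. Suppose $\alpha_h(A)>0$ (otherwise there is nothing to prove). Fix any $\alpha\in(0,\alpha_h(A))$; by the estimate (\ref{eq:holder_exponent_estimate}) combined with the $\C$-exponent estimate (\ref{eq:holder_C-exponent_estimate}) (valid by Proposition \ref{prop:holder_exponents_equivalence}), there is $\rho_\alpha>0$ so that $|\Delta h(C)|\ls d_\C(A,C)^\alpha$ for all $C\in\C_{p-1}\cap B_\C(A,\rho_\alpha)$. The goal is to feed into this estimate, for each large $n$, an increment set $C_n\in\C_0\subseteq\C_{p-1}$ with $d_\C(A,C_n)$ of order $\qd(s_n,A)$ and $\Delta h(C_n)$ close to $J_{s_n}(h)$, which will yield $|J_{s_n}(h)|\lesssim\qd(s_n,A)^\alpha$ and hence, taking logarithms and $\liminf$, the bound $\alpha\ls\liminf_n\frac{\log|J_{s_n}(h)|}{\log\qd(s_n,A)}$; letting $\alpha\to\alpha_h(A)^-$ finishes.

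\textbf{Key steps.} First I would unwind the definition of $\qd(s_n,A)$: for any $\rho>\qd(s_n,A)$ we have $s_n\in\V(A,\rho)$, so there exists $A'\in B_\A(A,\rho)$ with $s_n\in A\triangle A'$. Second, I would invoke the generic configuration hypothesis (Definition \ref{de:generic_configuration}): since $h$ is in generic configuration in the victiny of $A$, for $\rho$ small enough and $s_n\in\Pi(h)\cap\V(A,\rho)$, and for any $\eps>0$, there exist $\underline A,\overline A\in B_\A(A,\rho)$ with $\underline A\subseteq\overline A$ and $|\Delta h(\overline A\setminus\underline A)-J_{s_n}(h)|\ls\eps$. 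Set $C=\overline A\setminus\underline A\in\C_0$. Third, I would bound $d_\C(A,C)$: writing the extremal representation of $C$, its constituent sets are among $\overline A$ and (a subset of) $\underline A$, all lying in $B_\A(A,\rho)$, so $d_\C(A,C)\ls\rho$ directly from Definition \ref{de:d_C} (each $A_i$ in the representation is within $\rho$ of $A$, and $A$ itself is within $\rho$ of some $A_i$, e.g. $\overline A$). Hence for $\rho<\rho_\alpha$ we get $C\in\C_0\cap B_\C(A,\rho_\alpha)\subseteq\C_{p-1}\cap B_\C(A,\rho_\alpha)$, so $|J_{s_n}(h)|\ls|\Delta h(C)|+\eps\ls\rho^\alpha+\eps$. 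Letting $\eps\to0^+$ and then $\rho\to\qd(s_n,A)^+$ gives $|J_{s_n}(h)|\ls\qd(s_n,A)^\alpha$, provided $\qd(s_n,A)<\rho_\alpha$, which holds for all large $n$ since $\qd(s_n,A)\to0$.

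\textbf{Conclusion.} For $n$ large enough that $\qd(s_n,A)<\min\{\rho_\alpha,1\}$, we have $|J_{s_n}(h)|\ls\qd(s_n,A)^\alpha$; taking logarithms (both sides eventually in $(0,1)$ up to constants, so the inequality flips appropriately when dividing by $\log\qd(s_n,A)<0$) and passing to the $\liminf$ yields $\liminf_{n\to\infty}\frac{\log|J_{s_n}(h)|}{\log\qd(s_n,A)}\gs\alpha$. Since this holds for every $\alpha\in(0,\alpha_h(A))$, we obtain $\alpha_h(A)\ls\liminf_{n\to\infty}\frac{\log|J_{s_n}(h)|}{\log\qd(s_n,A)}$, and the $\vee\,0$ covers the trivial case $\alpha_h(A)=0$ as well as any issue with the sign of $J_{s_n}(h)$ being small.

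\textbf{Main obstacle.} The delicate point is the bookkeeping around the generic configuration step: one must ensure that the increment set $C=\overline A\setminus\underline A$ genuinely has $d_\C(A,C)$ controlled by $\rho$ rather than by something like $d_\A(A,A(s_n))$ (which is exactly the quantity the victiny is designed to avoid); this is where Definition \ref{de:d_C} together with the fact that \emph{all} constitutive sets $\overline A,\underline A$ are chosen inside $B_\A(A,\rho)$ is essential. A secondary subtlety is handling the order of limits ($\eps\to0$ from generic configuration, $\rho\to\qd(s_n,A)^+$, then $n\to\infty$, then $\alpha\to\alpha_h(A)^-$) cleanly, and making sure the logarithm manipulation is valid, i.e. that $|J_{s_n}(h)|$ and $\qd(s_n,A)$ are both eventually small enough for the $\log$'s to have the right signs; if $|J_{s_n}(h)|\gs1$ infinitely often the $\liminf$ is automatically $\gs\alpha$ (indeed the ratio is $\ls0<\alpha$ would be false — one treats this case separately, noting the $\liminf$ is then nonnegative or the relevant terms drop out), so no generality is lost.
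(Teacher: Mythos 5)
Your proposal is correct and follows essentially the same route as the paper's proof: fix $\alpha\in(0,\alpha_h(A))$, use the $\C_0$-version of the Hölder estimate (via Proposition \ref{prop:holder_exponents_equivalence}), invoke the generic configuration hypothesis to produce $\overline A\setminus\underline A$ with $\Delta h(\overline A\setminus\underline A)$ within $\eps$ of $J_{s_n}(h)$ and all constituent sets in $B_\A(A,\qd(s_n,A)+\eps)$, conclude $|J_{s_n}(h)|\ls\qd(s_n,A)^\alpha$, then take logarithms, the lower limit in $n$, and $\alpha\rightarrow\alpha_h(A)^-$. The only difference is cosmetic: the paper sets $\rho=\qd(s_n,A)+\eps$ and sends $\eps\rightarrow0^+$ in one step, whereas you keep $\rho$ and $\eps$ as separate parameters and make the bound $d_\C(A,\overline A\setminus\underline A)\ls\rho$ explicit, which the paper leaves implicit.
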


\begin{proof}	
	The case $\alpha_h(A)=0$ is trivial.
	Otherwise, take $\alpha\in(0,\alpha_h(A)).$
	Let us consider $\rho_{\alpha,0}>0$ such that the estimate (\ref{eq:holder_C-exponent_estimate}) holds for $k=0.$
	
	Without loss of generality, we may suppose that for all $n,$ $\qd(s_n,A)<\min\{1,\rho_0,\rho_{\alpha,0}\}$ where $\rho_0$ is the one of Definition \ref{de:generic_configuration}.
	
	Let $\eps>0$ and $n\in\N.$ Estimate (\ref{eq:holder_C-exponent_estimate}) yields
	\begin{equation} 	\label{eq:holder_estimate_qd_jaffard} 
	\forall \underline A,\overline A\in B_\A(\qd(s_n,A)+\eps),\quad \big| \Delta h\big(\overline A\setminus\underline A\big) \big| ~\ls~ \big( \qd(s_n,A)+\eps \big)^\alpha. 
	\end{equation}
	
	Since $s_n\in\Pi(h)\cap\V(A,\qd(s_n,A)+\eps)$ and $h$ is in generic configuration, we may find $\underline A$ and $\overline A$ in $B_\A(\qd(s_n,A)+\eps)$ such that
	\begin{equation}	\label{eq:generic_configuration_jaffard}
		\big|\Delta h(\overline A\setminus\underline A) - J_{s_n}(h)\big| \ls\eps.
	\end{equation}
	Combining (\ref{eq:holder_estimate_qd_jaffard}) and (\ref{eq:generic_configuration_jaffard}) and taking $\eps\rightarrow0^+$ yields $\big| J_{s_n}(h) \big| ~\ls~ \qd(s_n,A)^\alpha$ and thus
	\[ \alpha ~\ls~ \frac{\log|J_{s_n}(h)|}{\log\qd(s_n,A)}. \]
	The result follows from taking lower limit as $n\rightarrow\infty$ and then $\alpha\rightarrow\alpha_h(A)^-.$
\end{proof}

Following ideas from \cite{jaffard_multifractal_1999}, let us introduce for any given map $h:\A\rightarrow\R$ with well-defined jumps, measurable set $L\subseteq\T$ and $\delta>0,$
\begin{equation}	\label{eq:E^delta_j}
	\forall j\in\N,\quad E_{j|L}^\delta(h) ~= \bigcup_{\substack{s\in\Pi(h)\cap L: \\ |J_s(h)|\in\Gamma_j}} \V'\big( s,|J_s(h)|^\delta \big)
\end{equation}
where $\V'$ is the dual victiny given in (\ref{eq:dual_victiny}) and
\begin{equation}	\label{eq:Gamma_j}
	\forall j\in\N,\quad \Gamma_j \speq \left\{ x\in\R : 2^{-j}\ls|x|<2^{-(j-1)} \right\}.
\end{equation}

Let us also introduce
\begin{equation}	\label{eq:E^delta} 
	E_{|L}^\delta(h) \speq \limsup_{j\rightarrow\infty} E_{j|L}^\delta(h) ~=~ \bigcap_{k\in\N}\bigcup_{j\gs k}E_{j|L}^\delta(h).
\end{equation}

The set $L$ allows to select the jumps of $X$ falling into a specific region depending on the behavior of $f.$ It will be precised later on while proving the upper bound in Theorem \ref{theo:regularity_Y_poissonian}.

\medskip

Combined with Theorem \ref{theo:jaffard_lemma}, the following result yields an upper bound on the Hölder exponent (much like (11) in \cite{jaffard_multifractal_1999}).

\begin{prop}	\label{prop:jaffard_bound}
	If $h:\A\rightarrow\R$ has well-defined jumps and $A\in\A,$ then
	\[ A\in E_{|L}^\delta(h) \quad\Longrightarrow\quad \liminf_{\substack{s\in\Pi(h)\cap L: \\ \qd(s,A)\rightarrow0}} \frac{\log|J_s(h)|}{\log\qd(s,A)} ~\ls~ \frac{1}{\delta}. \]
\end{prop}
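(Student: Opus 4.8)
The plan is to unwind the definitions of $E_{|L}^\delta(h)$ and the dual victiny $\V'$ and to extract, from the assumption $A\in E_{|L}^\delta(h)$, a sequence of jump points $s_n\in\Pi(h)\cap L$ with $\qd(s_n,A)$ small and $|J_{s_n}(h)|$ controlled. Concretely, since $E_{|L}^\delta(h)=\limsup_j E_{j|L}^\delta(h)=\bigcap_k\bigcup_{j\gs k}E_{j|L}^\delta(h)$, the hypothesis $A\in E_{|L}^\delta(h)$ means there is a strictly increasing sequence of integers $(j_n)_{n\in\N}$ such that $A\in E_{j_n|L}^\delta(h)$ for every $n$. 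By the definition \eqref{eq:E^delta_j} of $E_{j|L}^\delta(h)$ as a union over $s\in\Pi(h)\cap L$ with $|J_s(h)|\in\Gamma_{j_n}$ of the dual victinies $\V'(s,|J_s(h)|^\delta)$, for each $n$ there exists $s_n\in\Pi(h)\cap L$ with $|J_{s_n}(h)|\in\Gamma_{j_n}$ and $A\in\V'(s_n,|J_{s_n}(h)|^\delta)$. Recalling \eqref{eq:dual_victiny}, $A\in\V'(s_n,|J_{s_n}(h)|^\delta)$ is exactly the statement $\qd(s_n,A)<|J_{s_n}(h)|^\delta$.

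Next I would check that $\qd(s_n,A)\to0$, so that the liminf in the statement is taken over a genuinely non-empty, converging sequence. Since $|J_{s_n}(h)|\in\Gamma_{j_n}$ we have $|J_{s_n}(h)|<2^{-(j_n-1)}$, and as $j_n\to\infty$ this forces $|J_{s_n}(h)|\to0$; combined with $\qd(s_n,A)<|J_{s_n}(h)|^\delta$ and $\delta>0$ this gives $\qd(s_n,A)\to0$. (One should note $|J_{s_n}(h)|\le 2^{-(j_n-1)}<1$ eventually, so raising to the power $\delta$ shrinks it; for small enough $n$ we may simply discard finitely many terms so that $|J_{s_n}(h)|<1$ throughout.)

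Now I would turn the inequality $\qd(s_n,A)<|J_{s_n}(h)|^\delta$ into a bound on the log-ratio. Taking logarithms, and using that $\log\qd(s_n,A)<0$ once $\qd(s_n,A)<1$ (true for $n$ large), the inequality $\log\qd(s_n,A)<\delta\log|J_{s_n}(h)|$ becomes, upon dividing by the negative quantity $\log\qd(s_n,A)$ and also using $\log|J_{s_n}(h)|<0$,
\[
\frac{\log|J_{s_n}(h)|}{\log\qd(s_n,A)} ~<~ \frac1\delta.
\]
Hence $\ds\liminf_{n\to\infty}\frac{\log|J_{s_n}(h)|}{\log\qd(s_n,A)}\ls\frac1\delta$, and since $(s_n)$ is a particular sequence in $\Pi(h)\cap L$ with $\qd(s_n,A)\to0$, the infimum defining $\ds\liminf_{s\in\Pi(h)\cap L,\ \qd(s,A)\to0}$ is at most this value, giving the claimed bound.

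The only genuinely delicate point — and the step I would be most careful about — is the sign bookkeeping: the quantities $\log\qd(s_n,A)$ and $\log|J_{s_n}(h)|$ are both negative for large $n$, so dividing an inequality by $\log\qd(s_n,A)$ reverses it, and one must make sure the finitely many early terms where positivity could fail are harmlessly discarded (they do not affect a liminf). Everything else is a direct unravelling of \eqref{eq:E^delta_j}, \eqref{eq:Gamma_j}, \eqref{eq:E^delta} and the definition \eqref{eq:dual_victiny} of $\V'$; no result beyond these definitions is needed.
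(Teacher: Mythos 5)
Your proposal is correct and follows essentially the same route as the paper's proof: extract from $A\in E_{|L}^\delta(h)=\limsup_j E_{j|L}^\delta(h)$ a sequence $s_n\in\Pi(h)\cap L$ with $|J_{s_n}(h)|\in\Gamma_{j_n}$ and $\qd(s_n,A)<|J_{s_n}(h)|^\delta,$ note that both quantities tend to $0,$ and take logarithms with the sign reversal. The careful sign bookkeeping you flag is exactly the (implicit) content of the paper's final step, so nothing is missing.
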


\begin{proof}
	Let $A\in E_{|L}^\delta(h).$
	Then there exists an increasing sequence $(j_k)_{k\in\N}$ in $\N$ and a sequence $(s_k)_{k\in\N}\in \lp\Pi(h)\cap L\rp^\N$ such that
	\[ \forall k\in\N,\quad 2^{-j_k}<|J_{s_k}(h)|\ls2^{-(j_k-1)} \quad\text{and}\quad \qd(s_k,A) \ls |J_{s_k}(h)|^\delta. \]
	So $\qd(s_k,A)\rightarrow 0$ and $|J_{s_k}(h)|\rightarrow0^+$ as $k\rightarrow\infty.$
	In particular, there exists $k_0$ such that 
	\[ \forall k\gs k_0,\quad \frac{\log|J_{s_k}(h)|}{\log\qd(s_k,A)} ~\ls~ \frac{1}{\delta}. \]
	The result follows from taking the lower limit in the above inequality.
\end{proof}

\section{Regularity of the process $Y=\ds\int_.f\,\text dX$} \label{sec:regularity_thm}

The goal of this section is to characterize the almost sure regularity of $Y$ --- as defined in Section \ref{sec:regularity_criterion} --- in terms of the behaviors of both $f$ and $X$.
In Section \ref{subsection:0-1_law}, we establish a 0-1 law of independent interest to simplify the proofs that follows in the subsequent parts. 
Section \ref{subsection:ptw_regularity} gives bounds on the Hölder regularity of $Y$. 
Section \ref{subsection:localized_regularity} does the same work for the $d_\T$-localized exponent.
Finally, Section \ref{subsection:examples} is devoted to some examples and applications of the main results.

\subsection{A 0-1 law}	\label{subsection:0-1_law}

Let us start by recalling the Lévy-Itô decomposition (\ref{eq:levy-ito_decomposition_X}) of $X$ given by
\[ \forall A\in\A,\quad X_A \speq bm(A) \+ \sigma B_A \+ N_A \]
where such a writing is uniquely characterized (distribution-wise) by the Lévy-Khintchine triplet $(b,\sigma^2,\nu)$ of $X.$ From it, we established a Lévy-Itô decomposition (Theorem \ref{theo:levy-ito_decomposition}) for $Y$ given by

\vspace{-1.1cm}
\begin{spacing}{2.1}
\[	
	\forall A\in\A,\quad Y_A \speq
	\left\{
	\begin{array}{ll}
		\ds b\int_A f\,\text dm \+ \sigma\int_A f\,\text dB \+ \int_A f\,\text dN			
		& \quad\text{ if } \ds\int_{|x|\ls1}|x|\nu(\text dx)=\infty, 	\\
		\ds \widetilde b\int_A f\,\text dm \+ \sigma\int_A f\,\text dB \+ \int_A f\,\text d\widetilde N	
		& \quad\text{ if } \ds\int_{|x|\ls1}|x|\nu(\text dx)<\infty.
	\end{array}
	\right. 
\]
\end{spacing}
\vspace{-0.4cm}

where $\widetilde N$ is the non-compensated version of $N$ defined in Corollary \ref{cor:levy-ito_decomposition_integrable_case} and $\widetilde b$ is modified accordingly.

\medskip

In the case where $\T=\R_+$ and $m$ is the Lebesgue measure, asking the regularity of the drift part of $Y$ (the first term) is the same as asking the regularity of a primitive of $f$. 
This problem has been entirely  dealt with through the use of a tool called the \emph{2-microlocal frontier} which characterizes how the regularity evolves when one takes fractional integrals and/or derivatives of $f$.
The 2-microlocal formalism dates back to \cite{mizohata_second_1986} and has seen a lot of developments throughout the years (see for instance \cite{jaffard_pointwise_1991, levy_vehel_2-microlocal_2004, meyer_wavelets_1998} and references therein).
In order to do the same in the set-indexed setting, one would need to develop an analog for the 2-microlocal frontier. 
Being an entirely deterministic endeavor, we chose to push it aside for this article. 

Namely, we are going to cancel the drifts in the expression of $Y$ (\ie take either $b=0$ or $\widetilde b=0$ depending on the case), giving the following simpler expressions:

\vspace{-1.1cm}
\begin{spacing}{2.1}
\begin{equation}	\label{eq:Y_simplified}	
	\forall A\in\A,\quad Y_A \speq
	\left\{
	\begin{array}{ll}
		\ds \sigma\int_A f\,\text dB \+ \int_A f\,\text dN			
		& \quad\text{ if } \ds\int_{|x|\ls1}|x|\nu(\text dx)=\infty, 	\\
		\ds \sigma\int_A f\,\text dB \+ \int_A f\,\text d\widetilde N	
		& \quad\text{ if } \ds\int_{|x|\ls1}|x|\nu(\text dx)<\infty.
	\end{array}
	\right. 
\end{equation}
\end{spacing}
\vspace{-0.4cm}

Hence we are going to express the regularity of $Y$ as given in (\ref{eq:Y_simplified}) in terms of $(\sigma^2,\nu)$ --- which characterizes the law of $X$ --- and $f$.

\medskip

The following 0-1 law, which happens to be Blumenthal's when $\T=\R_+$, is a first clue which states that the regularity of $Y$ at a fixed $A\in\A$ must be determinisitic.

\begin{theo}[Set-indexed 0-1 law]	\label{theo:0-1_law}
	Let $A\in\A$ and define
	\[ \forall\rho>0,\quad \F_{(A,\rho)} \speq \sigma\big( Y_A-Y_{A'} : 0<d_\A(A,A')<\rho \big) \spand \F_{A+} \speq \bigcap_{\rho>0}\F_{(A,\rho)}. \]
	If $m(\V(A,\rho))\rightarrow0$ as $\rho\rightarrow0$, then any event in $\F_{A+}$ has either probability 0 or 1.
\end{theo}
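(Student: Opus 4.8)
The plan is to mimic the classical proof of Blumenthal's 0-1 law, the key point being that $\F_{A+}$ should be shown to be independent of itself, hence trivial. The natural strategy is to approximate $\F_{A+}$-measurable events by events depending on increments $Y_A - Y_{A'}$ with $d_\A(A,A')$ \emph{bounded away from $0$}, use the independence of increments of $Y$ (Proposition \ref{prop:independent_increments}), and pass to the limit. The subtlety compared to the one-dimensional case is that ``increments around $A$'' here take the form $Y_A - Y_{A'} = \Delta Y_{A\setminus A'} - \Delta Y_{A'\setminus A}$, and the relevant sets $A\setminus A'$, $A'\setminus A$ are contained in the victiny $\V(A,\rho)$; this is precisely why the hypothesis $m(\V(A,\rho))\to 0$ is imposed.

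First I would fix the Lévy--Itô decomposition (\ref{eq:Y_simplified}) and note that it suffices to prove the statement for each of the two independent pieces $\sigma\int_. f\,\text dB$ and $\int_. f\,\text dN$ (or $\widetilde N$), since the tail $\sigma$-algebra of the sum is contained in the (independent) product of the tails — actually it is cleaner to argue directly with $Y$. The core step: let $\Lambda \in \F_{A+}$. For each $\rho>0$, the $\sigma$-algebra $\G_\rho := \sigma(Y_A - Y_{A'} : d_\A(A,A')\ge\rho)$ is generated by increments $\Delta Y_B$ for $B\in\B_m$ with $B\subseteq\T\setminus\V(A,\rho')$ for suitable $\rho'$ comparable to $\rho$ — more precisely, one writes $Y_A - Y_{A'} = \Delta Y_{A\setminus A'}-\Delta Y_{A'\setminus A}$ and observes $A\triangle A'\subseteq \V(A,d_\A(A,A'))$, so that once $d_\A(A,A')\ge\rho$ the contributing increments live over sets whose variance, by Lemma \ref{lem:propto_m}, is $\mathrm{Var}(\Delta Y_B) = (\sigma^2+\int x^2\nu)\,\|f\myindic{B}\|_{L^2(m)}^2$. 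The second moment of $\Delta Y_{A\triangle A'\cap \V(A,\rho)}$ is bounded by a constant times $\|f\myindic{\V(A,\rho)}\|_{L^2(m)}^2 \to 0$ as $\rho\to 0$, using $m(\V(A,\rho))\to 0$ and, if $f$ is merely locally $X$-integrable, dominated convergence on the sets $\T_n$ from Lemma \ref{lem:sigma-finite}. Hence $Y_A - Y_{A'}$ can be written as $(\text{a }\G_\rho\text{-measurable r.v.}) + (\text{an }L^2\text{-small correction})$, uniformly over $A'$ with $d_\A(A,A')<\rho_0$.

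From this $L^2$-approximation it follows that $\F_{(A,\rho_0)}$ is, up to $\myprob$-null sets, approximated by $\bigvee_{\rho>0}\G_\rho$; consequently $\F_{A+}\subseteq \bigcap_{\rho_0>0}\overline{\bigvee_\rho \G_\rho}$. Now $\G_\rho$ and the increments of $Y$ over $\V(A,\rho)$ are independent by Proposition \ref{prop:independent_increments} (pairwise disjoint Borel increments of $Y$ are independent), so in the limit $\F_{A+}$ is independent of $\sigma(\Delta Y_B : B\subseteq\V(A,\rho_0),\, B\in\B_m)$ for every $\rho_0$, and also independent of every $\G_\rho$. But $\F_{A+}\subseteq\F_{(A,\rho_0)}$ and $\F_{(A,\rho_0)}$ is generated (modulo null sets) by the union of those two families; therefore $\F_{A+}$ is independent of $\F_{A+}$, which forces $\prob{\Lambda}\in\{0,1\}$.

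The main obstacle I anticipate is making the ``$L^2$-small correction'' argument fully rigorous, i.e. showing cleanly that conditioning on $d_\A(A,A')\ge\rho$ really does confine the non-negligible part of the increment $Y_A-Y_{A'}$ to sets disjoint from a fixed neighborhood of the form $\V(A,\rho')$ — one must be careful that $A\setminus A'$ need not be disjoint from $\V(A,\rho)$ when $d_\A(A,A')$ is only slightly above $\rho$, so a two-scale argument (split at $\rho$ and at $2\rho$, say, using Proposition \ref{prop:property_V}(\ref{eq:prop_V_open_ball_2})) is needed. Once the variance estimate $\mathrm{Var}(\Delta Y_{B\cap\V(A,\rho)}) \ls C\,\|f\myindic{\V(A,\rho)}\|_{L^2(m)}^2 \to 0$ is in hand, the rest is the standard Blumenthal self-independence argument.
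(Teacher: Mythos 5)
Your overall strategy --- Blumenthal-style self-independence, obtained by splitting each increment $Y_A-Y_{A'}$ into a part supported outside the victiny $\V(A,\rho)$ and an $L^2$-small part supported inside it --- is the paper's, and the key estimate ($\|f\myindic{\V(A,\rho)}\|_{L^2(m)}\to0$ from $m(\V(A,\rho))\to0$ and dominated convergence) is the right one. But the assembly through $\G_\rho=\sigma(Y_A-Y_{A'}:d_\A(A,A')\gs\rho)$ contains a genuine error: it is false that $\G_\rho$ is generated by increments $\Delta Y_B$ with $B\subseteq\T\setminus\V(A,\rho')$, and false that $\G_\rho$ is independent of the increments of $Y$ over $\V(A,\rho)$. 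The difficulty is not confined to $d_\A(A,A')$ slightly above $\rho$, as your last paragraph suggests: for $\T=\R_+^2$ and $A'=A(t')$ with $t'=(t_1+1,t_2)$, the set $A'\setminus A$ contains points $s$ with $\qd(s,A)$ arbitrarily small, so $A\triangle A'$ meets $\V(A,\rho')$ for \emph{every} $\rho'>0$ however far $A'$ is from $A$. As a result, the step ``in the limit $\F_{A+}$ is independent of $\sigma(\Delta Y_B:B\subseteq\V(A,\rho_0),B\in\B_m)$'' does not follow --- and cannot be the route, since $\F_{A+}\subseteq\F_{(A,\rho)}\subseteq\sigma(\Delta Y_B:B\subseteq\V(A,\rho_0))$ for $\rho<\rho_0$, so that independence is essentially the conclusion being sought.

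The repair is the truncation you already describe, but aimed at the generators of $\F_{(A,\infty)}=\bigvee_{\rho>0}\F_{(A,\rho)}$ rather than at $\F_{(A,\rho_0)}$. Fix $A_1,\dots,A_k$ with $d_\A(A,A_j)>0$; by a monotone class argument it suffices to show that $\F_{A+}$ is independent of $(Y_A-Y_{A_j})_{j\ls k}$, since $\F_{A+}\subseteq\F_{(A,\infty)}$. For each $\rho>0$, every generator of $\F_{(A,\rho)}$ is a function of increments over subsets of $\V(A,\rho)$ (because $A\triangle A'\subseteq\V(A,\rho)$ whenever $d_\A(A,A')<\rho$), so the truncated variable $\Delta Y_{A\setminus(A_j\cup\V(A,\rho))}-\Delta Y_{A_j\setminus(A\cup\V(A,\rho))}$, being an increment over a set disjoint from $\V(A,\rho)$, is independent of $\F_{(A,\rho)}\supseteq\F_{A+}$ by Proposition \ref{prop:independent_increments}. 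Letting $\rho\to0$, these truncations converge in $L^2(\Omega)$ to $Y_A-Y_{A_j}$ by continuity of $\mathbf X$ (Theorem \ref{theo:integral_wrt_levy}) together with $\|f\myindic{\V(A,\rho)}\|_{L(X)}\to0$ --- note that one should control the $L^1(m)$ part of the $L(X)$-norm as well, not only the variance, unless the drift vanishes. Independence passes to $L^2$-limits, so $\F_{A+}$ is independent of $\F_{(A,\infty)}$, hence of itself. This is exactly the proof given in the paper.
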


\begin{proof}
	Define the $\sigma$-algebra
	\[ \F_{(A,\infty)} \speq \bigvee_{\rho>0}\F_{(A,\rho)}. \]
	Since $\F_{A+}\subseteq\F_{(A,\infty)},$ it is enough to prove that $\F_{A+}$ is independent from $\F_{(A,\infty)}.$
	
	First, remark that the family of cylinders 
	\[ \Big\{ Y_A-Y_{A_1}\in B_1,...,Y_A-Y_{A_k}\in B_k \Big\} \]
	where $k\in\N$, $A_1,...,A_k\in\A$, $B_1,...,B_k\in\B(\R)$ and $d_\A(A,A_j)>0$ for all $j\in\llbracket1,k\rrbracket$ is a $\pi$-system that generates $\F_{(A,\infty)}.$
	So by a monotone class argument, it is enough to show that $\F_{A+}$ is independent from $Y_A-Y_{A_1},...,Y_A-Y_{A_k}$ for some fixed $A_1,...,A_k\in\A$ such that $d_\A(A,A_j)>0$ for all $j\in\llbracket1,k\rrbracket.$ 
	
	Let $\rho>0$. Since $\F_{A+}\subseteq\F_{(A,\rho)}$ and $Y$ has independent increment (Proposition \ref{prop:independent_increments}), $\F_{A+}$ is independent from the random variables
	\[ \Delta Y_{A\setminus(A_j\cup\V(A,\rho))} - \Delta Y_{A_j\setminus(A\cup\V(A,\rho))} \]
	for all $j\in\llbracket1,k\rrbracket.$
	Since $m(\V(A,\rho))\rightarrow0$ as $\rho\rightarrow0$, $\F_{A+}$ is actually independent from the random variables
	\[ \lim_{\rho\rightarrow0} \big[\Delta Y_{A\setminus(A_j\cup\V(A,\rho))} - \Delta Y_{A_j\setminus(A\cup\V(A,\rho))}\big] \speq Y_A-Y_{A_j} \]
	for all $j\in\llbracket1,k\rrbracket$ where the limit holds in $L^2(\Omega)$ according to Theorem \ref{theo:integral_wrt_levy}.
	The result follows.
\end{proof}

A stronger condition than $m(\V(A,\rho))\rightarrow0$ as $\rho\rightarrow0$ is usually true in our setting and describes the local geometry around $A\in\A$. The speed at which the convergence holds actually influences the final regularity (see Corollaries \ref{cor:regularity_Y_poissonian_=} and \ref{cor:loc_regularity_Y_poissonian_=}).

\medskip

As an application of the 0-1 law, the following proposition ensures that both terms in (\ref{eq:Y_simplified}) may be treated separately.

\begin{prop}	\label{prop:separate_exponents}
	Let $A\in\A.$ If  $m(\V(A,\rho))\rightarrow0$ as $\rho\rightarrow0$, then
	\[ \alpha_Y(A) \speq \alpha_{\sigma\int_.f\,\text dB}(A) \wedge \alpha_{\int_.f\,\text dN}(A)  \quad\text{a.s.} \]
	The same holds if one replaces $N$ by $\widetilde N$ and/or all the Hölder exponents $\alpha_.(A)$ by the corresponding $d_\T$-localized exponents $\alpha_{.,d_\T}(A)$.
\end{prop}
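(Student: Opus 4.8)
The plan is to prove the statement by combining the Lévy-Itô decomposition of $Y$ (Theorem \ref{theo:levy-ito_decomposition} or Corollary \ref{cor:levy-ito_decomposition_integrable_case}, which after cancelling the drift gives (\ref{eq:Y_simplified})) with the 0-1 law (Theorem \ref{theo:0-1_law}) and the independence of the Gaussian and Poissonian parts. The key point is that both of these are set-indexed processes with independent increments, so the general strategy should work uniformly for $\alpha_.(A)$ and for $\alpha_{.,d_\T}(A)$, and for $N$ or $\widetilde N$. Write $G = \sigma\int_.f\,\text dB$ and $P = \int_.f\,\text dN$ (or $\int_.f\,\text d\widetilde N$), so that $Y = G + P$ with $G$ and $P$ independent.

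First I would record the elementary deterministic inequality: for any two maps $g,h:\A\rightarrow\R$ and any $A$, one has $\alpha_{g+h}(A) \gs \alpha_g(A)\wedge\alpha_h(A)$, with equality whenever $\alpha_g(A)\neq\alpha_h(A)$. This follows directly from (\ref{eq:holder_exponent}): if $\alpha$ is below both exponents then the estimate (\ref{eq:holder_exponent_estimate}) holds for $g$ and $h$ separately and hence for $g+h$ by the triangle inequality, giving $\alpha\ls\alpha_{g+h}(A)$; and if say $\alpha_g(A)<\alpha_h(A)$ and $\alpha_g(A)<\beta<\alpha_{g+h}(A)\wedge\alpha_h(A)$ then $g = (g+h) - h$ would satisfy the estimate at exponent $\beta$, contradicting $\beta>\alpha_g(A)$. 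The same argument works verbatim for $\alpha_{.,d_\T}$ using (\ref{eq:localized_exponent}) and the linearity of $\Delta(\cdot)$, since $\Delta(g+h) = \Delta g + \Delta h$ on $\C(u)$ and the constraint sets ($C\in\C_{p-1}\cap B_\C(A,\rho)$ with $\mathrm{diam}(C)<\rho$) do not depend on the function. So the only thing that can fail the claimed identity is the case $\alpha_G(A) = \alpha_P(A)$, where a priori one only gets $\gs$.

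Next I would invoke the 0-1 law to upgrade this. Under the hypothesis $m(\V(A,\rho))\rightarrow0$, Theorem \ref{theo:0-1_law} applies to $Y$; moreover $G$ and $P$ are themselves $L^2(\Omega)$ set-indexed Lévy-type processes with independent increments (the Gaussian one from (\ref{eq:gaussian_part}), the Poissonian one from (\ref{eq:poisson_part})), and the same hypothesis gives that the events $\{\alpha_G(A)\gs c\}$, $\{\alpha_P(A)\gs c\}$, $\{\alpha_Y(A)\gs c\}$ all lie in the respective germ $\sigma$-algebras $\F_{A+}$, hence each has probability $0$ or $1$. Consequently there are deterministic constants $a_G, a_P, a_Y \in[0,\infty]$ with $\alpha_G(A) = a_G$, $\alpha_P(A) = a_P$, $\alpha_Y(A) = a_Y$ almost surely (one shows an a.s.-constant nonnegative random variable equals a constant by taking the supremum of $c$ with $\prob{\cdot\gs c}=1$; I would spell this out briefly). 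The deterministic inequality above gives $a_Y\gs a_G\wedge a_P$ and $a_Y = a_G\wedge a_P$ whenever $a_G\neq a_P$. To handle $a_G = a_P =: a$: if $a_Y > a$, then on a probability-one event both $G$ and $P$ have exponent exactly $a$ at $A$ while $G+P$ has exponent $>a$; but then for $\beta$ strictly between $a$ and $a_Y$, the estimate (\ref{eq:holder_exponent_estimate}) holds for $Y=G+P$ at $\beta$, and subtracting the independent processes we would need cancellation of the "bad" increments of $G$ and $P$ — which cannot happen almost surely because $G$ and $P$ are independent. I would make this rigorous via a conditioning argument: condition on $P$; then for a.e. realization $P=\varphi$ the process $A'\mapsto G_{A'}+\varphi(A')$ must have exponent $a_Y>a$ at $A$, forcing $\alpha_G(A)\gs a_Y$ on a positive-probability (hence probability-one) set by independence, contradicting $a_G=a<a_Y$. (Symmetric roles of $G,P$ are not needed; one direction suffices.)

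The main obstacle I anticipate is precisely this last equality case $\alpha_G(A) = \alpha_P(A)$: the subadditivity-type inequality is free, but ruling out a "coincidental cancellation" that raises the exponent of the sum requires genuinely using independence, not merely the 0-1 law. The cleanest route is the conditioning argument sketched above — fix a regular conditional distribution of $G$ given $P$, use Fubini/Tonelli to transfer the a.s.-statement about $Y$ into an a.s.-statement about $G$ shifted by a fixed function, and exploit that adding a deterministic function of exponent $\le a$ cannot increase the exponent of $G$ beyond $a_G = a$. For the $d_\T$-localized exponent the argument is formally identical, replacing increments $Y_A - Y_{A'}$ by $\Delta Y_C$ and open balls $B_\A$ by $\{C\in\C_{p-1}\cap B_\C(A,\rho):\mathrm{diam}(C)<\rho\}$ throughout; the germ $\sigma$-algebra for the localized exponent is contained in the same $\F_{A+}$ (it is generated by increments $\Delta Y_C$ with $C$ of arbitrarily small diameter and $d_\C(A,C)$ arbitrarily small, which are measurable with respect to increments $Y_A - Y_{A'}$ for $A'$ close to $A$), so Theorem \ref{theo:0-1_law} still yields the requisite a.s.-constancy.
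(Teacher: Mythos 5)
Your overall architecture --- the deterministic inequality $\alpha_{g+h}(A)\gs\alpha_g(A)\wedge\alpha_h(A)$ with equality whenever the two exponents differ, followed by the 0-1 law to turn all three exponents into deterministic constants, and the observation that the same scheme covers $\widetilde N$ and the $d_\T$-localized exponent --- is exactly the paper's. The gap is in the only step that carries real content, the equality case $a_G=a_P=a<a_Y$. You condition on $P$ and conclude that for a.e.\ realization $P=\varphi$ the shifted process $G+\varphi$ has exponent $a_Y$ at $A$, and that this ``forces $\alpha_G(A)\gs a_Y$''. It does not: writing $G=(G+\varphi)-\varphi$ and applying your own deterministic inequality only gives $\alpha_G(A)\gs a_Y\wedge\alpha_\varphi(A)$, and $\alpha_\varphi(A)=a_P=a<a_Y$ for a.e.\ $\varphi$, so you recover $\alpha_G(A)\gs a$ --- no contradiction. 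The auxiliary claim you lean on, that adding a deterministic function of exponent $\ls a$ cannot raise the exponent of $G$ beyond $a$, is precisely the coincidental-cancellation statement that needs to be excluded; for an individual sample path it is simply false (a deterministic $\varphi$ with $\alpha_\varphi(A)=a$ can perfectly well raise the exponent of a particular path $G(\omega)$ with $\alpha_{G(\omega)}(A)=a$), and the conditioning by itself does nothing to rule out this happening for a.e.\ $\omega$ simultaneously.

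The paper closes this gap with an ingredient you never invoke: the symmetry of the Gaussian part. Since $\alpha:=(\alpha_Y+\alpha_B)/2>\alpha_N$, one extracts an $N$-measurable sequence $(A'_n)$ with $d_\A(A,A'_n)\to0$ along which the normalized Poissonian increments tend to $+\infty$ on an event $\Omega_N$ of positive probability; since $\alpha<\alpha_Y$, the normalized Gaussian increments along the same sequence must tend to $-\infty$ on $\Omega_N$ to compensate. But $B$ is symmetric and independent of $N$, and $(A'_n)$ depends only on $N$, so replacing $B$ by $-B$ shows those Gaussian increments also tend to $+\infty$ almost everywhere on $\Omega_N$ --- a contradiction because $\prob{\Omega_N}>0$. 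If you want to keep your conditioning framework you must inject this conditional symmetry (or some equivalent use of the joint law, e.g.\ comparing $G+\varphi_1$ and $G+\varphi_2$ for two independent realizations and bounding the exponent of $\varphi_1-\varphi_2$ from above via its jumps); as written, the argument does not go through.
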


\begin{proof}
	The other proofs being similar, we only prove it for $N$ and $\alpha_Y$. 
	
	Let $A\in\A.$	
	Since $\alpha_Y(A)$ is $\F_{A+}$-measurable, it is deterministic according to Theorem \ref{theo:0-1_law}. Replacing $Y$ by $\sigma\ds\int_.f\,\text dB$ (resp. $\ds\int_.f\text dN$) yields that $\alpha_{\sigma\int_.f\text dB}(A)$ (resp. $\alpha_{\int_.f\,\text dN}(A)$) is also constant. 
	Thus, there exist $\alpha_Y,\alpha_B,\alpha_N\in\R_+\cup\{\infty\}$ such that the event
	\[ \Omega^* \speq \Big\{ \alpha_Y(A)=\alpha_Y \Big\} \cap \Big\{ \alpha_{\sigma\int_.f\,\text dB}(A)=\alpha_B \Big\} \cap \Big\{ \alpha_{\int_.f\,\text dN}(A)=\alpha_N \Big\} \]
	happens with probability one.
	
	It is a classical (deterministic) result that 
	$\alpha_Y \gs \alpha_B\wedge\alpha_N$
	always holds and that there is equality whenever $\alpha_B \neq \alpha_N.$
	Suppose that $\alpha_Y>\alpha_B=\alpha_N$ and consider $\alpha=(\alpha_Y+\alpha_B)/2.$
	Since $\alpha>\alpha_N,$ for all $\omega\in\Omega^*,$ there exists a sequence $(A'_n(\omega))_{n\in\N}\in\A^\N$ such that $d_\A(A,A_n'(\omega))\rightarrow0^+$ as $n\rightarrow\infty$ and
	\[ \lim_{n\rightarrow\infty}~ \frac{1}{d_\A(A,A'_n(\omega))^\alpha}\left| \int_A f\,\text dN(\omega)-\int_{A'_n(\omega)}f\,\text dN(\omega) \right| \speq +\infty. \]
	Instead of extracting a subsequence and potentially replacing $Y$ by $-Y$, we might as well suppose that there exists an event $\Omega_N\subseteq\Omega^*$ of positive probability such that for all $\omega\in\Omega_N,$
	\begin{equation}	\label{eq:div_N} 
		\lim_{n\rightarrow\infty}~ \frac{1}{d_\A(A,A'_n(\omega))^\alpha}\left[ \int_A f\,\text dN(\omega)-\int_{A'_n(\omega)}f\,\text dN(\omega) \right] \speq +\infty.
	\end{equation}
	
	Let $\omega\in\Omega_N.$
	Since $\alpha<\alpha_Y$ and $\omega\in\Omega^*,$ (\ref{eq:div_N}) implies that
	\[ \lim_{n\rightarrow\infty}~ \frac{1}{d_\A(A,A'_n(\omega))^\alpha}\left[ \sigma\int_A f\,\text dB(\omega)-\sigma\int_{A'_n(\omega)} f\,\text dB(\omega) \right] \speq -\infty \]
	in order to compensate for the divergence.
	
	Since $B$ is symmetric and independent from $N$ and the sequence $(A'_n)_{n\in\N}$ only depends on $N,$ we may replace $B$ by $-B$ in the previous relation and obtain that for $\myprob$-almost every $\omega\in\Omega_N,$ 
	\[ \lim_{n\rightarrow\infty}~ \frac{1}{d_\A(A,A'_n(\omega))^\alpha}\left[ \sigma\int_A f\,\text dB(\omega)-\sigma\int_{A'_n(\omega)} f\,\text dB(\omega) \right] ~=~+\infty \]
	which is a contradiction since $\prob{\Omega_N}>0.$
	Hence $\alpha_Y=\alpha_N=\alpha_B.$
\end{proof}

\subsection{Hölder regularity}	\label{subsection:ptw_regularity}

As explained in Proposition \ref{prop:separate_exponents}, treating separately the two cases $\nu=0$ and $\sigma^2=0$ is enough to obtain a complete characterization of $Y$ apart from the drift, which is always supposed to be zero.

\subsubsection{The Gaussian part}

The case where $\nu=0$ has already been treated at great lengths in the litterature. 
For the set-indexed case, \cite[Corollary 5.3]{herbin_local_2016} ensures that under some entropic condition similar to Dudley's, for all $A\in\A,$ the following holds with probability one:
\begin{equation}	\label{eq:Y_gaussian_regularity}
	\alpha_Y(A) \speq \frac{1}{2}\alpha_{\sigma\int_{A\triangle.} f^2\,\text dm}(A).
\end{equation}

As already pointed out in Section \ref{subsection:0-1_law}, the Hölder exponent for $A'\mapsto\ds\int_{A\triangle A'}f^2\,\text dm$ cannot be deduced solely from the Hölder exponent of $f$. 
The knowledge of some kind of 2-microlocal frontier is required, hence (\ref{eq:Y_gaussian_regularity}) cannot be readily improved.
More precise results for the one-dimensional gaussian case concerning the 2-microlocal frontier especially adapted to our exponent are given in \cite{balanca_2-microlocal_2012}.

\subsubsection{The Poissonian part}

In this part, let us suppose that $Y$ is \emph{purely Poissonian}, \ie that we take $\sigma^2=0$ in the Lévy-Itô decomposition (\ref{eq:Y_simplified}).
The first study of Hölder regularity of a purely Poissonian Lévy process happened in \cite{blumenthal_sample_1961} where Blumenthal and Getoor determined the value of the pointwise Hölder exponent $\alpha_X(0)$ as defined by (\ref{eq:holder_exponent}). 
As explained in Section \ref{subsection:ptw_exponent}, there seem not to exist any natural extension of polynomials to the set-indexed setting, hence our choice to substract by hand the `polynomial part' in the same way as \cite{blumenthal_sample_1961}.
The authors also introduced the so-called \emph{Blumenthal-Getoor exponent}:

\begin{equation}	\label{eq:blumenthal-getoor_exponent}
	\beta \speq \inf\left\{ \delta>0 : \int_{|x|\ls1}|x|^\delta\nu(\text dx)<\infty \right\}.
\end{equation}

Remark that since $\nu$ is a Lévy measure, $\beta\in[0,2].$
For $\T=\R_+,$ Blumenthal and Getoor proved in particular that $\alpha_Y(0)=1/\beta$ almost surely together with the convention $1/0=+\infty$.
This result has been extended in \cite{balanca_fine_2014, jaffard_multifractal_1999, pruitt_growth_1981} in much greater detail.
Our goal is to extend those results in the case of the integral process $Y$, which to our knowledge has not been done even in $\R_+,$ and for possibly different spaces than $\R_+$.

\medskip

In the following, let us fix $A\in\A.$

Recall that the divergence $\qd$ and the victiny $\V(A,\rho)$ have been given in Definition \ref{de:divergence}. 

Theorem \ref{theo:regularity_Y_poissonian} will tell us that the Hölder exponent of $Y$ at $A$ is governed by the regularity of both $f$ and $X.$ Since $X$ has stationary increments, the exponent $\beta$ and some information about the victiny of $A$ will be enough to understand the regularity of $X$ (Corollary \ref{cor:regularity_Y_poissonian_=}). 
However for $Y,$ we need to know more about the behavior of $f$ in the victiny of $A.$ 
That is the reason why $L_{f,\alpha}(A)$ and $L_{f,\alpha}(A,\rho)$ are introduced below and correspond to the `irregular part' of $f.$
As for the sets $\overline R_{f}(A)$ and $\underline R_{f}(A),$ they determine the proportion of the victiny where $f$ is indeed irregular. 

Thus, we define for all $\alpha\gs0$ and $\rho>0,$

\vspace{-.5cm}
\begin{spacing}{1.8}
\[\begin{array}{ccc}
	L_{f,\alpha}(A) 
	& = & \Big\{ s\in\T : |f(s)|>\qd(s,A)^\alpha \Big\},	\\
	L_{f,\alpha}(A,\rho) 
	& = & \V(A,\rho)\cap L_{f,\alpha}(A),					\\
	\vspace{.2cm} L_{f,\alpha}^\complement(A,\rho) 
	& = & \V(A,\rho)\setminus L_{f,\alpha}(A),				\\
	\vspace{.2cm} \overline R_f(A)
	& = & \ds\left\{ (\alpha,q)\in\R_+^2 ~:~ \liminf_{\rho\rightarrow0^+}~ \frac{m(L_{f,\alpha}(A,\rho))}{\rho^q}>0 \right\}, 			\\
	\underline R_f(A)
	& = & \ds\left\{ (\alpha,q,q')\in\R_+^3 ~:~ \limsup_{\rho\rightarrow0^+}~ \left[ \frac{m(L_{f,\alpha}(A,\rho))}{\rho^q} + \frac{m(L_{f,\alpha}^\complement(A,\rho))}{\rho^{q'}}  \right] <\infty \right\}.
\end{array}\]
\end{spacing}
\vspace{-.2cm}

It is not too surprising that we take the irregularity of $f$ into account only through the measure $m$ since the integral with respect to $X$ does not differentiate between $m$-a.e. equal functions.

\begin{theo}	\label{theo:regularity_Y_poissonian}
	Let $A\in\A.$
	Suppose that $\sigma^2=0$ and $f$ is bounded in the victiny of $A$ (\ie $\sup_{s\in\V(A,\rho_0)}|f(s)|<\infty$ for some $\rho_0>0$).
	Then the following holds with probability one:
	\[ \sup_{(\alpha,q,q')\in\underline R_f(A)}\min\left\{ \frac{q}{\beta},\frac{q'}{\beta}+\alpha \right\} ~\ls~ \alpha_Y(A) ~\ls~ \inf_{(\alpha,q)\in\overline R_f(A)} \left\{\frac{q}{\beta}+\alpha\right\} \]
	with the conventions that $\inf\varnothing=1/0=+\infty.$
\end{theo}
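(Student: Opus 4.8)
\medskip

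The plan is to prove the upper and lower bounds separately, using different tools from the earlier sections.

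\medskip

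\textbf{The upper bound.} I would fix $(\alpha,q)\in\overline R_f(A)$ and aim to show $\alpha_Y(A)\ls q/\beta+\alpha$ almost surely. By Proposition \ref{prop:separate_exponents} (since $\sigma^2=0$, there is no Gaussian part to worry about), $\alpha_Y(A)$ is the Hölder exponent of the purely Poissonian process, which by Corollary \ref{cor:jump_structure_Y} has point-mass jumps $J_s(Y)=f(s)J_s(X)$ at the points $s\in\Pi$. By Theorem \ref{theo:generic_configuration}, this process is almost surely in generic configuration in the victiny of $A$, so Theorem \ref{theo:jaffard_lemma} applies: it suffices to exhibit a sequence $(s_n)_{n\in\N}\in\Pi(Y)^\N$ with $\qd(s_n,A)\to0$ and $\log|f(s_n)J_{s_n}(X)|/\log\qd(s_n,A)$ bounded above by $q/\beta+\alpha$ in the limit. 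To produce such a sequence I would restrict attention to jumps landing in $L=L_{f,\alpha}(A)$, where by definition $|f(s)|>\qd(s,A)^\alpha$, and use Proposition \ref{prop:jaffard_bound}: if I can show $A\in E_{|L}^\delta(X)$ for every $\delta>1/\beta$, then $\liminf_{s\in\Pi(X)\cap L,\,\qd(s,A)\to0}\log|J_s(X)|/\log\qd(s,A)\ls\delta$, and combining this with $|f(s)|>\qd(s,A)^\alpha$ and the definition of $\qd$ on such jumps yields the claimed bound on $\alpha_Y(A)$ via Theorem \ref{theo:jaffard_lemma}. The key probabilistic input — and the crux of the upper bound — is a Borel--Cantelli argument showing $A\in E_{|L}^\delta(X)$ a.s.: one counts, via the Poisson random measure $\widetilde\Pi$ of intensity $m\otimes\nu$, the expected number of jumps $s\in\Pi\cap L_{f,\alpha}(A,\rho)$ with $|J_s(X)|\in\Gamma_j$ whose dual victiny $\V'(s,|J_s(X)|^\delta)$ contains $A$; this expectation is comparable to $m(L_{f,\alpha}(A,2^{-j\delta}))\cdot\nu(\Gamma_j)$, and since $(\alpha,q)\in\overline R_f(A)$ forces $m(L_{f,\alpha}(A,\rho))\gtrsim\rho^q$ along a sequence $\rho\to0$ while $\nu(\Gamma_j)\approx 2^{j\beta'}$ for any $\beta'<\beta$ (by definition of the Blumenthal--Getoor exponent), the relevant series diverges whenever $q\delta<\beta'$, i.e. for $\delta<\beta'/q$; letting $\beta'\uparrow\beta$ and noting the events are independent (disjoint regions of $\widetilde\Pi$), the second Borel--Cantelli lemma gives $A\in E_{|L}^\delta(X)$ for all $\delta<\beta/q$, hence $\liminf\ls q/\beta$ in the relevant ratio and thus $\alpha_Y(A)\ls q/\beta+\alpha$. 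Taking the infimum over $(\alpha,q)\in\overline R_f(A)$ finishes the upper bound.

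\medskip

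\textbf{The lower bound.} Here I would fix $(\alpha,q,q')\in\underline R_f(A)$ and prove $\alpha_Y(A)\gs\min\{q/\beta,\,q'/\beta+\alpha\}$ a.s. I would split the increment $Y_A-Y_{A'}=\Delta Y_{A\setminus A'}-\Delta Y_{A'\setminus A}$ according to whether the jumps of $X$ fall in the ``big'' region $L_{f,\alpha}(A,\rho)$ (where $f$ is large, contributing the $q/\beta$ term) or the ``small'' region $L_{f,\alpha}^\complement(A,\rho)$ (where $|f(s)|\ls\qd(s,A)^\alpha$, contributing the $q'/\beta+\alpha$ term). For each region, using the discretization $\V_n(A,\rho)$ of the victiny from Proposition \ref{prop:property_V}, the boundedness of $f$ in the victiny, and the fact that the Poissonian part of $Y$ restricted to these regions has independent increments (Proposition \ref{prop:independent_increments}), I would apply the set-indexed Doob/Cairoli-type maximal inequality (Theorem \ref{theo:cairoli_inequality}, alluded to after Proposition \ref{prop:property_V}) to control $\sup_{A'\in B_\A(A,\rho)}|\Delta Y_{(A\triangle A')\cap L}|$. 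The $L^p$-norm of this supremum is estimated using Lemma \ref{lem:propto_m}-type moment bounds for $\Delta Y$ over sets of $m$-measure at most $m(L_{f,\alpha}(A,\rho))$ resp. $\rho^{q'/\text{something}}$-weighted contributions, together with the bound $|f|\ls\qd(\cdot,A)^\alpha$ on the small region; the condition $(\alpha,q,q')\in\underline R_f(A)$ bounds $m(L_{f,\alpha}(A,\rho))\lesssim\rho^q$ and $m(L_{f,\alpha}^\complement(A,\rho))\lesssim\rho^{q'}$, and combined with the Blumenthal--Getoor estimate $\int_{|x|\ls1}|x|^\delta\nu(dx)<\infty$ for $\delta>\beta$, a standard chaining/dyadic argument over $\rho=2^{-n}$ plus Borel--Cantelli yields $|Y_A-Y_{A'}|\ls d_\A(A,A')^{\gamma}$ for every $\gamma<\min\{q/\beta,q'/\beta+\alpha\}$, a.s. Taking the supremum over $(\alpha,q,q')\in\underline R_f(A)$ gives the lower bound.

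\medskip

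\textbf{Main obstacle.} The hardest part is the lower bound, specifically the uniform (in $A'\in B_\A(A,\rho)$) control of the Poissonian increment via a maximal inequality in the set-indexed setting: one must carefully isolate the jumps in the victiny using only sets $A'$ close to $A$, handle the ``aligned jumps'' phenomenon (Figure \ref{fig:aligned_jumps}) that prevents separating individual jumps, and get the right power of $\rho$ from the interplay between $\nu$ near $0$, the measure of the irregular region of $f$, and the size $|f|\ls\qd^\alpha$ on the complementary region — all while the finite-dimensionality hypothesis (Definition \ref{de:indexing_collection_finite_dimension}) is what makes the maximal inequality and the uniform-in-$\C^\ell$ convergence available. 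The upper bound is more routine once the Borel--Cantelli computation with the Poisson random measure is set up correctly.
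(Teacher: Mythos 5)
Your upper bound follows the paper's route almost exactly: generic configuration plus Theorem \ref{theo:jaffard_lemma}, the jump structure $J_s(Y)=f(s)J_s(X)$, restriction to $L=L_{f,\alpha}(A)$, and a Borel--Cantelli argument on the events $\{A\in E_{j|L}^\delta(X)\}$ whose probabilities are computed exactly from the Poisson structure. (The paper applies the first Borel--Cantelli lemma to the complements, whose probabilities $\exp[-\nu_{j_k}m(L\cap\V(A,2^{-(j_k+1)\delta}))]$ are summable along the subsequence where $\nu_{j_k}\gs 2^{j_k\gamma}$; your second-Borel--Cantelli variant using independence across the disjoint annuli $\Gamma_j$ also works. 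Note the subsequence is genuinely needed since $\beta$ is only a $\limsup$ of $\log_2\nu_j/j$.)

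The lower bound has a genuine gap: your maximal-inequality strategy only covers the case $\beta\gs1$. When $\beta<1$, the drift-free decomposition (\ref{eq:Y_simplified}) uses the \emph{non-compensated} sum $\int f\,\text d\widetilde N=\lim_\eps\sum_{|J_t(X)|\gs\eps}f(t)J_t(X)$, whose increments are not centered, so the orthosubmartingale structure behind Theorem \ref{theo:cairoli_inequality} is lost. Even setting that aside, Doob/Cairoli requires $\gamma>1$ (the constant $(\gamma/(\gamma-1))^{\gamma p}$ blows up as $\gamma\to1^+$), while sharpness of the exponent in the Borel--Cantelli computation requires taking $\gamma$ arbitrarily close to $\beta$; these two constraints are incompatible when $\beta<1$. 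The paper handles this case by a different mechanism: it introduces the increasing processes $Z_{|L}$ and $(Z_{|L})^\eta$ of (\ref{eq:Z_L})--(\ref{eq:Z^eta}), controls $\sup_{A'\in B_\A(A,2^{-j})}\Delta(Z_{|L})^{j/\delta}_{A\triangle A'}$ via an exponential Markov/Laplace-transform bound (Lemma \ref{lem:balanca_lemma}), shows $A\notin E_{|L}^\delta(X)$ a.s.\ for $\delta>\beta/q$ so that only small jumps contribute (Lemma \ref{lem:regularity_Z_L}), and then splits $|Y_A-Y_{A'}|\ls\rho^\alpha(\Delta Z_{|L_{f,\alpha}^\complement})_{A\triangle A'}+\kappa_f(\Delta Z_{|L_{f,\alpha}})_{A\triangle A'}$. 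Without some substitute for this argument your proof does not establish the lower bound for $\beta<1$. A secondary, smaller point: for $\beta\gs1$ the moment bound $\esp{|\Delta Y_B|^\gamma}\ls\kappa\|f\myindic{B}\|_{L^\gamma(m)}^\gamma$ for non-integer $\gamma\in(\beta,2]$ is not a consequence of the first/second moment computation of Lemma \ref{lem:propto_m}; the paper gets it from the Orlicz-space continuity of the stochastic integral (estimates (\ref{eq:orlicz_control}) and Lemma \ref{lem:orlicz_lebesgue_control}).
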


We recall that $\A$ is supposed to be finite dimensional as well (Definition \ref{de:indexing_collection_finite_dimension}). We will comment and apply this result in Section \ref{subsection:examples}.

\bigskip
\paragraph{Proof of the upper bound}

The upper bound is very similar in spirit as \cite{jaffard_multifractal_1999} and the key step is the covering argument given by Proposition \ref{prop:covering_argument}.

Remark that if $\beta=0,$ there is nothing to prove. 
So let us suppose in the following that $\beta>0.$

\medskip

For now, let us consider any measurable set $L\subseteq\T$ of finite positive $m$-measure and denote for all $j\in\N,$
	
\vspace{-1.1cm}
\begin{spacing}{1.5}
\begin{align}
	\Pi_{j|L}	& \speq L\cap \Big\{ s\in\Pi : J_s(X)\in\Gamma_j \Big\}	\\
	\nu_j		& \speq 	\nu(\Gamma_j)		
\end{align}
\end{spacing}
\vspace{-.3cm}
	
where $\Gamma_j = \big\{ x\in\R : 2^{-j}\ls|x|<2^{-(j-1)} \big\}$ has been introduced in (\ref{eq:Gamma_j}).	
	
\begin{lem}	\label{lem:nu_j_estimates}
	Fix $\gamma<\beta.$ 
	There exists an increasing sequence $(j_k)_{k\in\N}$ in $\N$ such that
	\begin{align}
		\label{eq:nu_j_upper_estimate}
		\nu_j \speq \bigO\big( 8^j \big) \spas j\rightarrow\infty	\\
		\label{eq:nu_j_lower_estimate}
		2^{j_k\gamma} \speq \bigO\big(\nu_{j_k}\big) \spas k\rightarrow\infty
	\end{align}
\end{lem}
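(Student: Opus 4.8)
The plan is to treat the two estimates separately, the first being a one-line consequence of the Lévy-measure condition and the second requiring a short comparison argument with an auxiliary exponent.

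For \eqref{eq:nu_j_upper_estimate} I would use only that $\nu$ is a Lévy measure, so that $c:=\int_{|x|\ls1}x^2\,\nu(\text dx)<\infty$. Since every $x\in\Gamma_j$ satisfies $|x|\gs2^{-j}$, hence $x^2\gs2^{-2j}\gs8^{-j}$, one gets at once $8^{-j}\nu_j\ls\int_{\Gamma_j}x^2\,\nu(\text dx)\ls c$, i.e. $\nu_j\ls c\,8^j$. (The same line in fact gives the sharper $\nu_j=\bigO(4^j)$, but $8^j$ is all that is used afterwards, and this step needs no subsequence extraction.)

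For \eqref{eq:nu_j_lower_estimate} the idea is to exploit that $\gamma$ lies strictly below the Blumenthal--Getoor exponent through an intermediate exponent. First I would fix $\gamma'$ with $\gamma<\gamma'<\beta$, which is possible since $\beta>0$ (the case $\beta=0$ having already been discarded). The map $\delta\mapsto\int_{|x|\ls1}|x|^\delta\nu(\text dx)$ is non-increasing, so the definition of $\beta$ together with $\gamma'<\beta$ forces $\int_{|x|\ls1}|x|^{\gamma'}\nu(\text dx)=+\infty$. Splitting $\{0<|x|\ls1\}$ into $\{|x|=1\}$ and the disjoint slices $\Gamma_j$ ($j\gs1$) and discarding the finite term $\nu(\{|x|=1\})$, I obtain $\sum_{j\gs1}\int_{\Gamma_j}|x|^{\gamma'}\nu(\text dx)=+\infty$; bounding $|x|^{\gamma'}\ls2^{\gamma'}2^{-j\gamma'}$ on $\Gamma_j$ then yields $\sum_{j\gs1}2^{-j\gamma'}\nu_j=+\infty$.

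The crux --- and the only step calling for a little care --- is to pass from this divergent series to a pointwise lower bound along a subsequence. Here I would argue by contradiction: if $M:=\sup_{j\gs1}2^{-j\gamma}\nu_j$ were finite, then $2^{-j\gamma'}\nu_j=2^{-j(\gamma'-\gamma)}\cdot2^{-j\gamma}\nu_j\ls M\,2^{-j(\gamma'-\gamma)}$, and summing the convergent geometric series (legitimate because $\gamma'>\gamma$) would contradict $\sum_{j\gs1}2^{-j\gamma'}\nu_j=+\infty$. The same comparison applied to any tail $\{j\gs n\}$ shows $\sup_{j\gs n}2^{-j\gamma}\nu_j=+\infty$ for every $n$, so one may recursively select an increasing sequence $(j_k)_{k\in\N}$ with $2^{-j_k\gamma}\nu_{j_k}\gs1$, i.e. $2^{j_k\gamma}\ls\nu_{j_k}$ for all $k$, which is exactly \eqref{eq:nu_j_lower_estimate}. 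I do not expect a serious obstacle: the first estimate is immediate from the Lévy condition, and the second reduces to the standard observation that a divergent series with geometric weights forces the suitably rescaled sequence to be unbounded on every tail.
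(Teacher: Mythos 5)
Your proof is correct, but it takes a more hands-on route than the paper. The paper's proof runs everything through the single identity $\beta=\limsup_{j\rightarrow\infty}\frac{\log_2\nu_j}{j}$, obtained by applying the Cauchy--Hadamard formula to the equivalence between the convergence of $\int_{|x|\ls1}|x|^\delta\nu(\text dx)$ and that of $\sum_j\nu_j2^{-j\delta}$; the upper estimate then follows from $\beta\ls2<3$ and the lower one from $\gamma<\beta=\limsup$. You instead get the upper estimate directly from $\int x^2\,\nu(\text dx)<\infty$ (which even yields the sharper $\bigO(4^j)$), and for the lower estimate you essentially re-derive the inequality $\limsup_j\frac{\log_2\nu_j}{j}\gs\gamma'$ by hand: the intermediate exponent $\gamma'\in(\gamma,\beta)$, the divergence of $\sum_j2^{-j\gamma'}\nu_j$, and the tail-comparison contradiction together constitute an elementary proof of one half of the Cauchy--Hadamard identity. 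Both arguments are sound; yours is more self-contained, the paper's is shorter. Two cosmetic points: you should choose $\gamma'$ in $(\max\{\gamma,0\},\beta)$ --- which is possible precisely because $\beta>0$ --- so that the definition (\ref{eq:blumenthal-getoor_exponent}) of $\beta$ actually forces $\int_{|x|\ls1}|x|^{\gamma'}\nu(\text dx)=+\infty$; and your bound $\int_{\Gamma_j}x^2\,\nu(\text dx)\ls c$ with $c=\int_{|x|\ls1}x^2\,\nu(\text dx)$ only holds for $j\gs1$, which is harmless for an asymptotic statement (and one could just as well take $c=\int_\R x^2\,\nu(\text dx)$, finite here since $X$ is $L^2$).
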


\begin{proof}
	Since $\beta>0$, the convergence of $\ds\int_{|x|\ls1}|x|^\gamma\nu(\text dx)$ is equivalent to the convergence of $\ds\sum_{j=0}^\infty\nu_j2^{-j\gamma}.$ 
	The Cauchy-Hadamard formula for the radius of convergence of power series then gives
	\begin{equation}	\label{eq:blumenthal-getoor_jaffard}	 
		\beta \speq \limsup_{j\rightarrow\infty} \frac{\log_2\nu_j}{j} 
	\end{equation}
	which is a relation that was already noted in \cite{jaffard_multifractal_1999}.
	The estimate (\ref{eq:nu_j_upper_estimate}) (resp. (\ref{eq:nu_j_lower_estimate})) then follows from this formula and the fact that $\beta<3$ (resp. $\gamma<\beta$).
\end{proof}
	
Recall that the random sets $E_{j|L}^\delta(h)$ and $E_{|L}^\delta(h)$ have been introduced in (\ref{eq:E^delta_j}) and (\ref{eq:E^delta}) respectively.

\begin{lem}	\label{lem:computation_borel_0-1}
	Let $A\in\A,$ then for all $\delta>0$ and $j\in\N,$
	\[ \exp\left[ -\nu_jm\big( L\cap\V(A,2^{-j\delta}) \big) \right] ~\ls~ \prob{A\notin E_{j|L}^\delta(X)} ~\ls~ \exp\left[ -\nu_jm\big( L\cap\V(A,2^{-(j+1)\delta}) \big) \right].  \]
\end{lem}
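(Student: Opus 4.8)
The plan is to compute $\prob{A \notin E_{j|L}^\delta(X)}$ directly from the Poisson structure of the jumps. Recall $E_{j|L}^\delta(X) = \bigcup_{s \in \Pi \cap L,\, J_s(X) \in \Gamma_j} \V'(s, |J_s(X)|^\delta)$, and by the definition of the dual victiny (\ref{eq:dual_victiny}) together with the symmetry failure discussion, $A \in \V'(s,\rho)$ is equivalent to $\qd(s,A) < \rho$, i.e. $s \in \V(A,\rho)$ by (\ref{eq:victiny}). Hence $A \in E_{j|L}^\delta(X)$ if and only if there exists a jump point $s \in \Pi \cap L$ with $J_s(X) \in \Gamma_j$ and $s \in \V(A, |J_s(X)|^\delta)$. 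Equivalently, writing $\widetilde\Pi$ for the Poisson random set on $\T \times \R$ of intensity $m \otimes \nu$ from Section~\ref{subsection:levy-ito_decomposition} (so that $\Pi(X) = \Pi$ is its projection and $J_s(X)$ is the second coordinate), the event $\{A \notin E_{j|L}^\delta(X)\}$ is exactly the event that $\widetilde\Pi$ has no point in the region
\[
	R_j \speq \big\{ (s,x) \in \T \times \R \;:\; s \in L,\ x \in \Gamma_j,\ s \in \V(A, |x|^\delta) \big\}.
\]

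First I would verify that $R_j$ is measurable (it is, since $\V(A,\cdot)$ behaves like an open ball, cf.\ (\ref{eq:prop_V_open_ball_1}), so $\{(s,x) : \qd(s,A) < |x|^\delta\}$ is measurable as $\qd(\cdot,A)$ is measurable and $x \mapsto |x|^\delta$ is continuous) and compute its $(m\otimes\nu)$-measure by Fubini, integrating in $s$ first:
\[
	(m\otimes\nu)(R_j) \speq \int_{\Gamma_j} m\big( L \cap \V(A, |x|^\delta) \big)\, \nu(\text dx).
\]
Since $X$ is supported on a Poisson random set, $\prob{A \notin E_{j|L}^\delta(X)} = \prob{\widetilde\Pi \cap R_j = \varnothing} = \exp[-(m\otimes\nu)(R_j)]$ provided this measure is finite (which holds because $m(L) < \infty$, $\V(A,\cdot) \subseteq \V(A, 2^{-(j-1)\delta})$ on $\Gamma_j$, and $\nu_j = \nu(\Gamma_j) < \infty$). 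Then the two-sided estimate follows by monotonicity of $\rho \mapsto m(L \cap \V(A,\rho))$: for $x \in \Gamma_j$ we have $2^{-j} \ls |x| < 2^{-(j-1)}$, hence $2^{-j\delta} \ls |x|^\delta < 2^{-(j-1)\delta}$, so
\[
	m\big( L \cap \V(A, 2^{-(j+1)\delta}) \big) \ls m\big( L \cap \V(A, |x|^\delta) \big) \ls m\big( L \cap \V(A, 2^{-j\delta}) \big)
\]
— wait, I need to match the exponents in the statement: the statement pairs the lower bound on the probability with $\V(A,2^{-j\delta})$ and the upper bound with $\V(A,2^{-(j+1)\delta})$. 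Since $|x|^\delta \in [2^{-j\delta}, 2^{-(j-1)\delta})$, we get $m(L\cap\V(A,2^{-j\delta})) \ls m(L\cap\V(A,|x|^\delta))$, so $(m\otimes\nu)(R_j) \gs \nu_j\, m(L\cap\V(A,2^{-j\delta}))$, giving the stated lower bound $\prob{A \notin E_{j|L}^\delta(X)} = e^{-(m\otimes\nu)(R_j)} \ls \exp[-\nu_j m(L\cap\V(A,2^{-j\delta}))]$. For the upper bound on the probability I would reindex or simply note $|x|^\delta < 2^{-(j-1)\delta}$, which unfortunately points the wrong way; so instead I observe that $\Gamma_j \subseteq \{|x| \gs 2^{-j}\}$ and use that shifting $j$ by one absorbs the gap, i.e. for $x \in \Gamma_j$ one has $|x|^\delta \gs 2^{-j\delta} \gs 2^{-(j+1)\delta}$ trivially, so $m(L\cap\V(A,|x|^\delta)) \gs m(L\cap\V(A,2^{-(j+1)\delta}))$ as well — this gives nothing new. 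The clean route: the upper bound in the statement is $\prob{\cdots} \ls \exp[-\nu_j m(L\cap\V(A,2^{-(j+1)\delta}))]$, which needs $(m\otimes\nu)(R_j) \gs \nu_j m(L\cap\V(A,2^{-(j+1)\delta}))$; since $|x|^\delta \gs 2^{-j\delta} \gs 2^{-(j+1)\delta}$ for $x \in \Gamma_j$, monotonicity gives $m(L\cap\V(A,|x|^\delta)) \gs m(L\cap\V(A,2^{-(j+1)\delta}))$, hence the desired inequality after integrating over $\Gamma_j$. (In fact the exponent $j+1$ in the statement is merely a safe-looking choice; $j$ would do, but $j+1$ also works a fortiori, which is presumably why the authors wrote it — possibly to leave room for a half-open vs.\ closed subtlety at the endpoint $2^{-(j-1)}$ of $\Gamma_j$ or to align with later telescoping.)

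The only genuine point requiring care — the main (minor) obstacle — is justifying the identity $\prob{\widetilde\Pi \cap R_j = \varnothing} = \exp[-(m\otimes\nu)(R_j)]$: this is the defining void-probability property of a Poisson random set with $\sigma$-finite intensity, valid here because $(m\otimes\nu)(R_j) < \infty$, so I would cite the standard reference already invoked in Section~\ref{subsection:levy-ito_decomposition} (Kingman \cite{kingman_poisson_1992}). Everything else is Fubini plus monotonicity of $\rho \mapsto m(L\cap\V(A,\rho))$, which is immediate from $\V(A,\rho) \subseteq \V(A,\rho')$ for $\rho \ls \rho'$ (a consequence of Definition~\ref{de:divergence}, or of (\ref{eq:prop_V_open_ball_1})).
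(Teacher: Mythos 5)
Your overall strategy --- identifying $\{A\notin E_{j|L}^\delta(X)\}$ with the void event of the marked Poisson process $\widetilde\Pi$ on the region $R_j$, then computing $\prob{A\notin E_{j|L}^\delta(X)}=\exp[-(m\otimes\nu)(R_j)]$ exactly via Fubini --- is sound and in fact sharper than the paper's argument. The paper first replaces the random radius $|J_s(X)|^\delta$ by a deterministic one and only then computes a void probability of the unmarked process $\Pi_{j|L}$ (written as $\{s_1,\dots,s_{M_j}\}$ with $M_j$ Poisson, conditioning on $M_j$); your route gives the exact value in one stroke, and your derivation of the stated upper bound is correct, since $|x|^\delta\gs 2^{-j\delta}\gs 2^{-(j+1)\delta}$ on $\Gamma_j$ forces $(m\otimes\nu)(R_j)\gs\nu_j\,m\big(L\cap\V(A,2^{-(j+1)\delta})\big)$. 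You are also right that the exponent $j+1$ could be replaced by $j$ there.

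The genuine gap is the lower bound, and your write-up betrays the confusion: you derive $(m\otimes\nu)(R_j)\gs\nu_j\,m\big(L\cap\V(A,2^{-j\delta})\big)$, hence $\prob{A\notin E_{j|L}^\delta(X)}\ls\exp\big[-\nu_j m\big(L\cap\V(A,2^{-j\delta})\big)\big]$, and call this ``the stated lower bound'' --- but the statement asserts the \emph{reverse} inequality $\prob{A\notin E_{j|L}^\delta(X)}\gs\exp\big[-\nu_j m\big(L\cap\V(A,2^{-j\delta})\big)\big]$, which you never establish. Worse, your own exact formula shows it cannot be established along these lines: since $m\big(L\cap\V(A,|x|^\delta)\big)\gs m\big(L\cap\V(A,2^{-j\delta})\big)$ for every $x\in\Gamma_j$, the claimed lower bound could only hold with equality, which fails in general. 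What monotonicity actually gives, using $|x|^\delta<2^{-(j-1)\delta}$ on $\Gamma_j$, is $\exp\big[-\nu_j m\big(L\cap\V(A,2^{-(j-1)\delta})\big)\big]\ls\prob{A\notin E_{j|L}^\delta(X)}$; this is also what the paper's ``proven in exactly the same way'' produces from its upper-bound argument, so the radius $2^{-j\delta}$ in the displayed lower bound of Lemma \ref{lem:computation_borel_0-1} appears to be an off-by-one in the statement itself (harmless downstream: Lemma \ref{lem:regularity_Z_L} only needs $\prob{A\in E_{j|L}^\delta(X)}=\bigO\big(\nu_j 2^{-j\delta q}\big)$, which survives the index shift up to a constant $2^{\delta q}$). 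To complete your proof you should establish the lower bound with radius $2^{-(j-1)\delta}$ and note the discrepancy; as written, your ``lower bound'' paragraph proves nothing in the required direction.
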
 
	
\begin{proof}
	We only prove the upper bound, the lower bound being proven in exactly the same way.
	
	Fix $\delta>0$ and $j\in\N.$ 
	Then, by definition of $\V$ and $\V',$
	
	\vspace{-.7cm}
	\begin{spacing}{2}
	\[\begin{array}{rcl}
		\prob{A\notin E_{j|L}^\delta(X)}
		& \speq	& \prob{ \forall s\in\Pi_{j|L},~ A\notin\V'(s,|J_s(X)|^\delta) }	\\
		& ~\ls~	&  \prob{ \forall s\in\Pi_{j|L},~ A\notin\V'(s,2^{-(j+1)\delta}) }	\\
		& = 	& \prob{ \forall s\in\Pi_{j|L},~ s\notin\V(A,2^{-(j+1)\delta}) }.
	\end{array}\]
	\end{spacing}
	\vspace{-.5cm}	
	
	Since $\Pi_{j|L}$ is a Poisson random set of intensity measure $\nu_jm(L\cap.)$, we may write $\Pi_{j|L}=\{ s_1,...,s_{M_j} \}$ where $M_j$ is a Poisson random variable of intensity $\nu_jm(L)$ and the $s_i$'s are iid variables of distribution $m(L\cap.)/m(L)$ independent from $M_j.$
	Hence, conditioning with respect to $M_j$ yields
	
	\vspace{-.7cm}
	\begin{spacing}{2}
	\[\begin{array}{rcl}
		\prob{ \forall s\in\Pi_{j|L},~ s\notin\V(A,2^{-(j+1)\delta}) }
		& \speq	& \esp{ \prob{ \forall i\in\llbracket1,M_j\rrbracket,~s_i\notin\V(A,2^{-(j+1)\delta}) ~|~ M_j} } \\
		& = 	& \esp{ \prob{ s_1\notin\V(A,2^{-(j+1)\delta}) }^{M_j} } 			\\
		& = 	& \exp\left[ \nu_jm(L)\lp \prob{s_1\notin\V(A,2^{-(j+1)\delta})}-1 \rp \right]	\\
		& = 	& \exp\left[ -\nu_j m\big( L\cap\V(A,2^{-(j+1)\delta}) \big) \right].
	\end{array}\]
	\end{spacing}
	\vspace{-.7cm}
	
	The result follows.
\end{proof}

\begin{prop}	\label{prop:covering_argument}
	Let $A\in\A$ and suppose that there exists $q>0$ such that
	\begin{equation}	\label{eq:lower_estimate_L}
		\rho^q ~=~ \bigO\big( m(L\cap\V(A,\rho)) \big) \quad\text{as}\quad \rho\rightarrow0.
	\end{equation}	
	Then for all $\delta<\beta/q,$ $A$ belongs to $E^\delta_{|L}(X)$ with probability one.
\end{prop}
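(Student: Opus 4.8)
The plan is to use a Borel--Cantelli argument on the events $\{A\notin E_{j|L}^\delta(X)\}$, whose probabilities are controlled by Lemma \ref{lem:computation_borel_0-1}. First I would fix $\delta<\beta/q$ and choose an auxiliary exponent $\gamma$ with $\delta q<\gamma<\beta$, which is possible since $\delta q<\beta$. By Lemma \ref{lem:nu_j_estimates} there is an increasing sequence $(j_k)_{k\in\N}$ in $\N$ along which $2^{j_k\gamma}=\bigO(\nu_{j_k})$ as $k\to\infty$; this is the sub-sequence of ``fat'' frequency scales on which the Poissonian activity is large enough to force a covering.

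Next I would combine this with the hypothesis (\ref{eq:lower_estimate_L}) and the upper bound of Lemma \ref{lem:computation_borel_0-1}. For each $k$,
\[
	\prob{A\notin E_{j_k|L}^\delta(X)} ~\ls~ \exp\Big[ -\nu_{j_k}\, m\big( L\cap\V(A,2^{-(j_k+1)\delta}) \big) \Big].
\]
Using (\ref{eq:lower_estimate_L}) with $\rho=2^{-(j_k+1)\delta}$, there is a constant $c>0$ (and $k$ large) such that $m(L\cap\V(A,2^{-(j_k+1)\delta}))\gs c\,2^{-(j_k+1)\delta q}$, so the right-hand side is at most
\[
	\exp\Big[ -c\, 2^{-\delta q}\, \nu_{j_k}\, 2^{-j_k\delta q} \Big].
\]
Now invoke $2^{j_k\gamma}=\bigO(\nu_{j_k})$: there is $c'>0$ with $\nu_{j_k}\gs c'\,2^{j_k\gamma}$ for $k$ large, whence $\nu_{j_k}2^{-j_k\delta q}\gs c'\,2^{j_k(\gamma-\delta q)}$. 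Since $\gamma-\delta q>0$, this quantity grows geometrically in $k$, so
\[
	\sum_{k\in\N} \prob{A\notin E_{j_k|L}^\delta(X)} ~\ls~ \sum_{k\in\N} \exp\Big[ -c''\, 2^{j_k(\gamma-\delta q)} \Big] ~<~ \infty.
\]

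Finally, by the Borel--Cantelli lemma, with probability one $A\in E_{j_k|L}^\delta(X)$ for all $k$ large enough; in particular $A$ lies in infinitely many of the sets $E_{j|L}^\delta(X)$, which by the definition (\ref{eq:E^delta}) of $E_{|L}^\delta(X)=\limsup_{j\to\infty}E_{j|L}^\delta(X)$ means precisely that $A\in E_{|L}^\delta(X)$ almost surely. The only delicate point is bookkeeping the exponents so that the Poissonian lower bound $\nu_{j_k}\gtrsim 2^{j_k\gamma}$ beats the geometric decay $2^{-j_k\delta q}$ coming from the shrinking victiny; the strict inequalities $\delta q<\gamma<\beta$ are exactly what make the exponent $\gamma-\delta q$ positive, so there is no real obstacle once $\gamma$ is inserted between $\delta q$ and $\beta$. (Note the upper estimate $\nu_j=\bigO(8^j)$ from Lemma \ref{lem:nu_j_estimates} is not needed here; it is used for the lower-bound direction of the theorem.)
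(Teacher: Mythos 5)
Your proof is correct and follows essentially the same route as the paper's: the upper bound of Lemma \ref{lem:computation_borel_0-1} combined with the hypothesis (\ref{eq:lower_estimate_L}) gives $\prob{A\notin E_{j|L}^\delta(X)}\ls\exp(-\kappa\nu_j 2^{-j\delta q})$, and inserting $\gamma\in(\delta q,\beta)$ together with the subsequence $(j_k)$ from Lemma \ref{lem:nu_j_estimates} makes the series summable, so Borel--Cantelli yields $A\in\limsup_j E_{j|L}^\delta(X)$ almost surely. Your closing remark that the bound $\nu_j=\bigO(8^j)$ is not needed here is also accurate.
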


\begin{proof}
	Fix $\delta<\beta/q$ and $A\in\A.$
	Then, by (\ref{eq:lower_estimate_L}) and Lemma \ref{lem:computation_borel_0-1}, there exists $\kappa>0$ such that
	\[ \forall j\in\N,\quad \prob{A\notin E_{j|L}^\delta(X)} ~\ls~ \exp\lp -\kappa\nu_j2^{-j\delta q} \rp.  \]
	Hence, taking $\gamma\in(\delta q,\beta)$ and using the sequence $(j_k)_{k\in\N}$ of Lemma \ref{lem:nu_j_estimates}, we get
	\[ \forall k\in\N,\quad \prob{A\notin E_{j_k|L}^\delta(X)} ~\ls~ \exp\lp -\kappa2^{j_k(\gamma-\delta q)} \rp  \]
	which is a convergent series. 
	The result follows from Borel-Cantelli lemma. 
\end{proof}

\begin{prop}	\label{prop:upper_bound_Y_poissonian}
	For all $(\alpha,q)\in\overline R_f(A),$ we have
	\[ \alpha_Y(A) ~\ls~ \frac{q}{\beta}+\alpha \quad\text{a.s.}  \]
\end{prop}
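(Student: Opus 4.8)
The plan is to combine the covering argument (Proposition \ref{prop:covering_argument}) with the jump-based upper bound on the Hölder exponent (Proposition \ref{prop:jaffard_bound} together with Theorem \ref{theo:jaffard_lemma}), applied to the Poissonian part of $Y$, which is OJI and, by Theorem \ref{theo:generic_configuration}, almost surely in generic configuration in the victiny of $A$. The idea is that the jumps of $Y$ are $J_s(Y)=f(s)J_s(X)$ (Corollary \ref{cor:jump_structure_Y}), so a jump of $X$ of size $|J_s(X)|\in\Gamma_j$ located at a point $s$ where $f$ is large --- namely $s\in L_{f,\alpha}(A)$, so $|f(s)|>\qd(s,A)^\alpha$ --- produces a jump of $Y$ of size at least $\qd(s,A)^\alpha 2^{-j}$, and such points $s$ are plentiful in the victiny of $A$ precisely because $(\alpha,q)\in\overline R_f(A)$ forces $m(L_{f,\alpha}(A,\rho))\gtrsim\rho^q$ for $\rho$ small.

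First I would fix $(\alpha,q)\in\overline R_f(A)$ and set $L=L_{f,\alpha}(A)$; by definition of $\overline R_f(A)$ and since $L_{f,\alpha}(A,\rho)=\V(A,\rho)\cap L$, the hypothesis $\rho^q=\bigO(m(L\cap\V(A,\rho)))$ of Proposition \ref{prop:covering_argument} holds. (If the $\liminf$ defining membership in $\overline R_f(A)$ is $+\infty$ rather than merely positive, the bound $\rho^q=\bigO(m(L\cap\V(A,\rho)))$ still holds trivially.) Then for every $\delta<\beta/q$, Proposition \ref{prop:covering_argument} gives that $A\in E_{|L}^\delta(X)$ almost surely. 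Here I must be slightly careful: $E_{|L}^\delta$ was defined relative to a map with well-defined jumps, and I want to read it through $Y$ rather than $X$; but $\Pi(Y)=\Pi\cap\{f\neq0\}$ and on $L=L_{f,\alpha}(A)$ one has $f\neq0$, so the jump points of $X$ in $L$ are exactly the jump points of $Y$ in $L$, and I will phrase the covering in terms of $X$'s jumps as the statement does.

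Next, apply Proposition \ref{prop:jaffard_bound} with $h$ the Poissonian part of $Y$: on the almost sure event where $A\in E_{|L}^\delta(X)$, there is a sequence $(s_k)$ in $\Pi\cap L$ with $\qd(s_k,A)\to0$, $|J_{s_k}(X)|\in\Gamma_{j_k}$ for some $j_k\to\infty$, and $\qd(s_k,A)\ls|J_{s_k}(X)|^\delta$. On this sequence I estimate $|J_{s_k}(Y)|=|f(s_k)||J_{s_k}(X)|>\qd(s_k,A)^\alpha|J_{s_k}(X)|\gs\qd(s_k,A)^\alpha\qd(s_k,A)^{1/\delta}=\qd(s_k,A)^{\alpha+1/\delta}$, using $s_k\in L_{f,\alpha}(A)$ for the first inequality and $\qd(s_k,A)\ls|J_{s_k}(X)|^\delta$ for the second. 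Taking logarithms, dividing by $\log\qd(s_k,A)<0$ (true for $k$ large since $\qd(s_k,A)\to0$) and passing to the $\liminf$ yields
\[ \liminf_{k\rightarrow\infty}\frac{\log|J_{s_k}(Y)|}{\log\qd(s_k,A)} ~\ls~ \alpha+\frac1\delta. \]
Then Theorem \ref{theo:jaffard_lemma}, applied to the Poissonian part of $Y$ (OJI, generically configured in the victiny of $A$ by Theorem \ref{theo:generic_configuration}) with this sequence $(s_k)$, gives $\alpha_{\int_.f\,\text dN}(A)\ls\alpha+1/\delta$ almost surely. Since $\sigma^2=0$, $Y$ equals its Poissonian part, so $\alpha_Y(A)\ls\alpha+1/\delta$ almost surely for every rational $\delta<\beta/q$; letting $\delta\uparrow\beta/q$ gives $\alpha_Y(A)\ls\alpha+q/\beta$ almost surely. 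Finally I intersect over a countable dense set of such $\delta$ to keep this on a single almost sure event, and note the $\beta=0$ case was dispatched before the lemmas and the convention $q/\beta=+\infty$ makes the claim vacuous there.

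The main obstacle is bookkeeping rather than conceptual: making sure the generic-configuration hypothesis of Theorem \ref{theo:jaffard_lemma} is available (it is, by Theorem \ref{theo:generic_configuration}, on an almost sure event not depending on the sequence), that the sequence $(s_k)$ extracted from $A\in E_{|L}^\delta(X)$ genuinely satisfies $\qd(s_k,A)\to0$ (built into the definition of $\V'$ and $E^\delta$), and that the boundedness of $f$ in the victiny of $A$ — only used elsewhere for the lower bound — is not needed here. One should also double-check that replacing $J_s(X)$ by $J_s(Y)=f(s)J_s(X)$ is legitimate in Theorem \ref{theo:jaffard_lemma}: the theorem is stated for an arbitrary OJI $h$ in generic configuration and an arbitrary sequence in $\Pi(h)$ with $\qd(s_n,A)\to0$, and $s_k\in\Pi\cap L\subseteq\Pi(Y)$ since $f(s_k)\neq0$, so the hypotheses are met. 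Everything else is a short computation with logarithms and a limit.
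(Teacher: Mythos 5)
Your proof is correct and takes essentially the same route as the paper: both apply the covering argument (Proposition \ref{prop:covering_argument}) with $L=L_{f,\alpha}(A)$, use the jump structure $J_s(Y)=f(s)J_s(X)$ from Corollary \ref{cor:jump_structure_Y}, and conclude via Theorem \ref{theo:jaffard_lemma} applied to the generically configured Poissonian part, letting $\delta\uparrow\beta/q$. The only difference is presentational --- you unfold Proposition \ref{prop:jaffard_bound} by extracting the sequence $(s_k)$ explicitly and multiplying the two lower bounds on $|f(s_k)|$ and $|J_{s_k}(X)|$, whereas the paper splits the logarithm of $|J_s(Y)|$ into a sum and bounds the two $\liminf$/$\limsup$ terms separately.
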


\begin{proof}
	The following holds with probabiliy one.
	
	\vspace{-.7cm}
	\begin{spacing}{2}
	\[\begin{array}{rcll}
		\alpha_Y(A)
		& ~\ls~	& \ds\liminf_{\substack{s\in\Pi(Y): \\ \qd(s,A)\rightarrow0}} \frac{\log|J_s(Y)|}{\log\qd(s,A)} 	
		& \text{by Theorem \ref{theo:jaffard_lemma},}		\\
		& \speq	&  \ds\liminf_{\substack{s\in\Pi(Y): \\ \qd(s,A)\rightarrow0}} \left[ \frac{\log|J_s(X)|}{\log\qd(s,A)} + \frac{\log|f(s)|}{\log\qd(s,A)} \right]
		& \text{by Corollary \ref{cor:jump_structure_Y},}	\\
		& \ls 	& \underset{\ls1/\delta}{\underbrace{\ds\liminf_{\substack{s\in\Pi(Y)\cap L_{f,\alpha}(A): \\ \qd(s,A)\rightarrow0}} \frac{\log|J_s(X)|}{\log\qd(s,A)}}} ~+~ \underset{\ls\alpha}{\underbrace{\limsup_{\substack{s\in L_{f,\alpha}(A): \\ \qd(s,A)\rightarrow0}} \frac{\log|f(s)|}{\log\qd(s,A)}}}				
	\end{array}\]
	\end{spacing}
	\vspace{-.5cm}
	
	where the last two inequalities are due to Propositions \ref{prop:jaffard_bound} and \ref{prop:covering_argument} and the definition of $L_{f,\alpha}(A)$.
\end{proof}

The upper bound of Theorem \ref{theo:regularity_Y_poissonian} may be readily deduced from Proposition \ref{prop:upper_bound_Y_poissonian} by taking a relevent sequence converging to the claimed upper bound.

\medskip

The covering argument could be modified to give an upper bound which would hold for all $A\in\A$ with probability one. It would basically only require suitable assumptions in order to be able to apply \cite[Theorem 2]{hoffmann-jorgensen_covering_1973}.
However, since such a modification would make the assumptions much heavier, we chose against it. Moreover, this improvement cannot be carried over to the lower bound argument due to the multifractal nature of the regularity (see \cite{jaffard_multifractal_1999}).

\bigskip
\paragraph{Proof of the lower bound when $\beta\gs1$}

For this part of the proof, we will require some concentration in the form of a set-indexed version of Cairoli's inequality (Theorem \ref{theo:cairoli_inequality}), which is a multiparameter generalization of Doob's maximal inequality.

The key step in proving it is to discretize the set-indexed martingale $Y$ on $\A_n,$ then use the fact that $\A_n$ embeds itself into $\N^p$ for some integer $p$ and finally apply already known results about multiparameter martingales. 
This is what Lemma \ref{lem:cairoli_inequality} contains.

\begin{lem}[Discrete set-indexed maximal inequality]	\label{lem:cairoli_inequality}
	For all $\gamma>1,$ there exists a constant $\kappa_{p,\gamma}>0$ such that for all $n\in\N$ and $\rho>0,$	
	\[ \esp{ \sup_{A'\in \A_n\cap B_\A(A,\rho)} \left| Y_A-Y_{A'} \right|^\gamma } ~\ls~ \kappa_{p,\gamma} \lp \esp{ \left| \Delta Y_{\V_n(A,\rho)} \right|^\gamma } \+ \esp{ \left| \Delta Y_{A\setminus\underline V_n(A,\rho)} \right|^\gamma } \rp \]
	where $\V_n(A,\rho)=\overline V_n(A,\rho)\setminus\underline V_n(A,\rho)$ has been introduced in (\ref{eq:V_n}).
\end{lem}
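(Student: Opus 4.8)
The plan is to reduce the set-indexed maximal inequality to a classical multiparameter (Cairoli-type) maximal inequality by exploiting the finite poset dimension of $\A_n$. First I would fix $n\in\N$, $A\in\A$ and $\rho>0$, and describe the combinatorial structure of $\A_n\cap B_\A(A,\rho)$: by Definition \ref{de:indexing_collection_finite_dimension}, $\A_n$ has poset dimension $\ls p$, so there is an order embedding $\phi:\A_n\hookrightarrow\N^p$, and the restriction of $\phi$ to $\A_n\cap B_\A(A,\rho)$ realizes this finite subsemilattice as a finite subset of a box in $\N^p$. The maximal and minimal elements of $\A_n\cap B_\A(A,\rho)$ (for $\subseteq$) appearing in the definition \eqref{eq:V_n} of $\V_n(A,\rho)=\overline V_n(A,\rho)\setminus\underline V_n(A,\rho)$ bracket all the $A'$ under consideration, in the sense that for each such $A'$ one has $\underline V_n(A,\rho)\cap\phi^{-1}(\cdots)\subseteq A'\subseteq\overline V_n(A,\rho)$ after intersecting with $A$.

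Next I would set up the martingale structure. For $A'\in\A_n\cap B_\A(A,\rho)$, write $Y_A-Y_{A'}=\Delta Y_{A\setminus A'}-\Delta Y_{A'\setminus A}$ using the additive extension of Proposition-Definition \ref{propde:increment_map_linear_extension} and the extension \eqref{eq:Delta_Y_on_borelian} of $\Delta Y$ to $\B_m$. Because $Y$ has independent increments (Proposition \ref{prop:independent_increments}) and $\Delta Y$ is centered once the drift is removed (as in \eqref{eq:Y_simplified}, so $\esp{\Delta Y_B}=0$ for all $B\in\B_m$), the processes $A'\mapsto\Delta Y_{A\cap A'}$ indexed over the finite lattice $\A_n\cap B_\A(A,\rho)$, transported via $\phi$, form a $p$-parameter martingale with respect to the natural $p$-dimensional filtration generated by the disjoint pieces of $\overline V_n(A,\rho)$. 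One then applies the classical multiparameter maximal inequality (Cairoli's inequality, the $L^\gamma$ version of Wichura's inequality from \cite{wichura_inequalities_1969}, as used in the proof of Theorem \ref{theo:levy-ito_decomposition}), which gives a constant $\kappa_{p,\gamma}$ depending only on $p$ and $\gamma>1$ such that the $L^\gamma$-norm of the supremum of the martingale over the box is controlled by the $L^\gamma$-norm of its terminal value. Splitting $Y_A-Y_{A'}$ into the "upper" contribution $\Delta Y_{(A'\setminus A)\,\cap\,\overline V_n(A,\rho)}$, which is dominated by $\Delta Y_{\V_n(A,\rho)}$, and the "lower" contribution $\Delta Y_{A\setminus A'}$, which is dominated by $\Delta Y_{A\setminus\underline V_n(A,\rho)}$, and bounding each supremum separately, yields the stated inequality after possibly enlarging $\kappa_{p,\gamma}$.

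I expect the main obstacle to be bookkeeping: making precise the claim that every increment $Y_A-Y_{A'}$ for $A'\in\A_n\cap B_\A(A,\rho)$ decomposes, coordinate by coordinate in the $\N^p$-embedding, into pieces that are measurable with respect to a product filtration and sandwiched between $\Delta Y_{A\setminus\underline V_n(A,\rho)}$ and $\Delta Y_{\V_n(A,\rho)}$. The poset dimension hypothesis is exactly what lets one choose a $p$-dimensional grid refining $\A_n\cap B_\A(A,\rho)$, so that the atoms of $\C^\ell(\A_n)$ lying inside $\overline V_n(A,\rho)$ become the "cells" of a $p$-parameter array; one must check that the SHAPE condition and the extremal-representation machinery (Definition \ref{de:extremal_representation_C}, with $\C^\ell\subseteq\C_{p-1}$) guarantee that each $A\cap A'$ is a down-set in this grid, hence a genuine stopping region for the multiparameter filtration. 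Once that identification is in place, invoking Wichura's/Cairoli's inequality is routine, and the two error terms on the right-hand side arise naturally from the two halves of the symmetric difference $A\triangle A'$.
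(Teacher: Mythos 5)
Your proposal is correct and follows essentially the same route as the paper: use the poset-dimension hypothesis to order-embed $\A_n\cap B_\A(A,\rho)$ into a box of $\N^p$, observe that independence of increments turns the relevant family into a nonnegative orthosubmartingale, invoke Cairoli's multiparameter maximal inequality to control the supremum by the terminal value, and use a triangle inequality to produce the term $\Delta Y_{A\setminus\underline V_n(A,\rho)}$. The only (cosmetic) difference is that the paper anchors a single orthosubmartingale at $\underline V_n(A,\rho)$, namely $M_j=\big|\Delta Y_{(\bigcup_{i\pre j}\phi^{-1}(i))\setminus\underline V_n(A,\rho)}\big|$, whose terminal value is exactly $\big|\Delta Y_{\V_n(A,\rho)}\big|$; this avoids your two-sided split along $A\triangle A'$ and the extra conditioning step needed to compare the two terminal values $\Delta Y_{\overline V_n(A,\rho)\setminus A}$ and $\Delta Y_{(A\cap\overline V_n(A,\rho))\setminus\underline V_n(A,\rho)}$ with $\Delta Y_{\V_n(A,\rho)}$, a comparison that set inclusion alone does not give pointwise.
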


\begin{rems} \ 
	\begin{enumerate}[$\diamond$]
		\item Since $\V_n(A,\rho)$ (resp. $A\setminus\underline V_n(A,\rho)$) belongs to $\C(u)$ (resp. $\C_0$), Proposition \ref{propde:increment_map_linear_extension} ensures that $\Delta Y_{\V_n(A,\rho)}$ (resp. $\Delta Y_{A\setminus\underline V_n(A,\rho)}$) is a well-defined expression.
		Such a point also holds for the expression $\Delta Y_{A\setminus\underline V(A,\rho)}$ appearing later in Theorem \ref{theo:cairoli_inequality}.
 
		\item As it will made clearer in the proof, such a result holds for more general processes than $Y$ and are closely linked to a class of $\N^p$-indexed processes called \emph{orthosubmartingales} as defined in \cite[p.16-17]{khoshnevisan_multiparameter_2002}. For more information about set-indexed martingales, we refer to \cite{ivanoff_set-indexed_1999}.
			
		\item Generalizing \cite[Theorem 3.3]{walsh_convergence_1979} might lead to a weak $L^1$ version of such inequality, but we do not need it here.
	\end{enumerate}
\end{rems}

\begin{proof}
	Fix $\gamma>1$, $n\in\N$ and $\rho>0$.
	Since $\A_n$ is of poset dimension smaller than $p$ (see Section \ref{subsection:indexing_collection_finite_dim}), there exists an order embedding $\phi:\A_n\cap B_\A(A,\rho)\hookrightarrow\llbracket0,k\rrbracket$ for some $k\in\N^p$ where $\N^p$ is endowed with the usual componentwise partial order denoted here by $\pre.$ 
	Denote by $I$ the range of $\phi$ and define
	\[ \forall j\in\llbracket0,k\rrbracket,\quad M_j ~=~ \left| \Delta Y_{\bigcup_{i\in I:i\pre j}\phi^{-1}(i) \setminus \underline V_n(A,\rho)} \right|. \]
	Checking that $M=\{ M_j:j\in\llbracket0,k\rrbracket \}$ is a non-negative orthosubmartingale in the sense of \cite[p.16-17]{khoshnevisan_multiparameter_2002} is straightforward. Using \cite[Theorem 2.3.1]{khoshnevisan_multiparameter_2002} leads to
	\begin{equation}	\label{eq:cairoli_1}
		\esp{\sup_{j\pre k} M_j^\gamma} ~\ls~ \lp\frac{\gamma}{\gamma-1}\rp^{\gamma p} \esp{M_k^\gamma}.
	\end{equation}
	By definition of $M$, we have
	\begin{equation}	\label{eq:cairoli_2}
		M_k \speq \big|\Delta Y_{\overline V_n(A,\rho) \setminus \underline V_n(A,\rho)}\big| \speq \big|\Delta Y_{\V_n(A,\rho)}\big|.
	\end{equation}
	Moreover, 
	
	\vspace{-.8cm}
	\begin{spacing}{1.5}
	\[\begin{array}{rcl}
		\ds\sup_{A'\in\A_n\cap B_\A(A,\rho)} |Y_A-Y_{A'}|
		& ~\ls~	& |Y_A-Y_{\underline V_n(A,\rho)}| \+ \ds\sup_{A'\in\A_n\cap B_\A(A,\rho)} |Y_{A'}-Y_{\underline V_n(A,\rho)}| 	\\
		& \speq	& |\Delta Y_{A\setminus\underline V_n(A,\rho)}| \+ \ds\sup_{j\pre k} M_j.
	\end{array}\]
	\end{spacing}
	\vspace{-.5cm}
	
	Hence
	\begin{equation}	\label{eq:cairoli_3}
		\esp{ \sup_{A'\in \A_n\cap B_\A(A,\rho)} \left| Y_A-Y_{A'} \right|^\gamma } ~\ls~ 2^\gamma \lp \esp{ |\Delta Y_{A\setminus\underline V_n(A,\rho)}|^\gamma } \+ \esp{ \sup_{j\pre k} M_j^\gamma } \rp.
	\end{equation}
	The result follows from combining (\ref{eq:cairoli_1}), (\ref{eq:cairoli_2}) and (\ref{eq:cairoli_3}).
	
\end{proof}

In the following theorem, since $\V(A,\rho)$ does not necessarily belong to $\C(u),$ the expression $\Delta Y_{\V(A,\rho)}$ in what follows should be understood in the same sense as (\ref{eq:Delta_Y_on_borelian}), \ie
\[ \Delta Y_{\V(A,\rho)} ~=~ \int_\T f\myindic{\V(A,\rho)}\,\text dX. \]		

\begin{theo}[Set-indexed maximal inequality]	\label{theo:cairoli_inequality}
	For all $\gamma>1,$ there exists a constant $\kappa_{p,\gamma}>0$ such that for all $\rho>0,$	
	\[ \esp{ \sup_{A'\in B_\A(A,\rho)} \left| Y_A-Y_{A'} \right|^\gamma } ~\ls~ \kappa_{p,\gamma} \left[ \esp{ \left| \Delta Y_{\V(A,\rho)} \right|^\gamma } + \esp{ \left| \Delta Y_{A\setminus\underline V(A,\rho)} \right|^\gamma } \right] \]
	where $\underline V(A,\rho) = \bigcap_{n\in\N} \underline V_n(A,\rho).$
\end{theo}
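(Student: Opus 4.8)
The plan is to pass to the limit $n\to\infty$ in the discrete maximal inequality of Lemma \ref{lem:cairoli_inequality}, controlling the three ingredients that appear there. First I would fix $\gamma>1$, $A\in\A$ and $\rho>0$, and recall from Definition \ref{de:indexing_collection} that $\A_n$ increases with $n$ and that every $A'\in\A$ is the decreasing intersection $\bigcap_n g_n(A')$. Applying Lemma \ref{lem:cairoli_inequality} at level $n$ gives
\[
	\esp{ \sup_{A'\in\A_n\cap B_\A(A,\rho)} |Y_A-Y_{A'}|^\gamma } ~\ls~ \kappa_{p,\gamma}\lp \esp{|\Delta Y_{\V_n(A,\rho)}|^\gamma} \+ \esp{|\Delta Y_{A\setminus\underline V_n(A,\rho)}|^\gamma} \rp,
\]
so there are three things to pass to the limit: the left-hand supremum, and the two right-hand expectations.

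For the left-hand side I would argue that $\sup_{A'\in\A_n\cap B_\A(A,\rho)}|Y_A-Y_{A'}|^\gamma$ increases to $\sup_{A'\in B_\A(A,\rho)}|Y_A-Y_{A'}|^\gamma$ almost surely (or at least that the $\liminf$ dominates it), so that monotone convergence / Fatou applies. The point is that for any $A'\in B_\A(A,\rho)$, contractivity of $d_\A$ and the separability-from-above approximation $g_n(A')\to A'$ together with outer continuity of $d_\A$ show that $g_n(A')\in B_\A(A,\rho)$ eventually and $Y_{g_n(A')}\to Y_{A'}$ in $L^2(\Omega)$ by Theorem \ref{theo:integral_wrt_levy} (since $m(g_n(A')\triangle A')\to 0$); passing to a subsequence we get a.s.\ convergence, which lets us bound the continuous supremum by $\liminf_n$ of the discrete ones. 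Hence by Fatou,
\[
	\esp{ \sup_{A'\in B_\A(A,\rho)} |Y_A-Y_{A'}|^\gamma } ~\ls~ \liminf_{n\to\infty}\esp{ \sup_{A'\in\A_n\cap B_\A(A,\rho)} |Y_A-Y_{A'}|^\gamma }.
\]
I must also handle the mild subtlety that $B_\A(A,\rho)$ is an \emph{open} ball, which is harmless since elements close to the boundary are approximated from inside.

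For the two right-hand terms I would use the $L^2$-continuity of the integral once more, now combined with the fact that $\Delta Y$ extends to all of $\B_m$ via $\Delta Y_B=\int_\T f\myindic{B}\,\text dX$ (equation (\ref{eq:Delta_Y_on_borelian})). By Proposition \ref{prop:property_V}, $\V(A,\rho)=\bigcup_n\V_n(A,\rho)$ with $\V_n$ non-decreasing (after noting $\overline V_n$ increases and $\underline V_n$ decreases with $n$ up to the usual care), so $m(\V_n(A,\rho)\triangle\V(A,\rho))\to0$ and therefore $\Delta Y_{\V_n(A,\rho)}\to\Delta Y_{\V(A,\rho)}$ in $L^2(\Omega)$; similarly $\underline V_n(A,\rho)$ decreases to $\underline V(A,\rho)$ so $\Delta Y_{A\setminus\underline V_n(A,\rho)}\to\Delta Y_{A\setminus\underline V(A,\rho)}$ in $L^2(\Omega)$. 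Since we only need the inequality and these are $L^2$ (hence in probability) limits, Fatou again gives $\esp{|\Delta Y_{\V(A,\rho)}|^\gamma}\ls\liminf_n\esp{|\Delta Y_{\V_n(A,\rho)}|^\gamma}$ and likewise for the other term. Assembling the three limits yields the claimed inequality with the same constant $\kappa_{p,\gamma}$.

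The main obstacle I anticipate is the interchange of limits for $\gamma$ possibly larger than $2$: the convergences above are only in $L^2(\Omega)$, not in $L^\gamma$, so one cannot pass to the limit \emph{inside} the $\gamma$-th moment directly, and Fatou only gives a one-sided bound — which, fortunately, is exactly the direction we need. A secondary technical point is verifying that $\V_n(A,\rho)$ is genuinely monotone in $n$ and that its symmetric difference with $\V(A,\rho)$ has vanishing $m$-measure; this should follow from (\ref{eq:V=U_nV_n}) in Proposition \ref{prop:property_V} together with $\sigma$-finiteness (Lemma \ref{lem:sigma-finite}) and the compatibility of $d_m$, but it requires a careful bookkeeping of maximal versus minimal elements of $\A_n\cap B_\A(A,\rho)$ as $n$ grows. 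Once these monotonicity and measure-convergence facts are in place, the proof is a routine limiting argument on top of Lemma \ref{lem:cairoli_inequality}.
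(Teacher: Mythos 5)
Your overall strategy --- pass to the limit $n\to\infty$ in Lemma \ref{lem:cairoli_inequality} --- is exactly the paper's (its proof is a one-liner invoking the discrete inequality together with outer-continuity of the sample paths of $Y$). But there is a genuine gap in your treatment of the right-hand side. After combining Fatou on the left with the discrete inequality, you are left with $\esp{\sup_{A'\in B_\A(A,\rho)}|Y_A-Y_{A'}|^\gamma} \ls \kappa_{p,\gamma}\liminf_n\big(\esp{|\Delta Y_{\V_n(A,\rho)}|^\gamma}+\esp{|\Delta Y_{A\setminus\underline V_n(A,\rho)}|^\gamma}\big)$, so what you need is that this $\liminf$ is \emph{at most} the corresponding continuous quantities. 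The Fatou inequality you invoke, $\esp{|\Delta Y_{\V(A,\rho)}|^\gamma}\ls\liminf_n\esp{|\Delta Y_{\V_n(A,\rho)}|^\gamma}$, points in the opposite direction and is useless here. To close the argument you must upgrade the $L^2(\Omega)$ convergence $\Delta Y_{\V_n(A,\rho)}\to\Delta Y_{\V(A,\rho)}$ to convergence (or domination) of the $\gamma$-th moments: for $1<\gamma\ls2$ this is automatic since $L^2(\Omega)$ convergence implies $L^\gamma(\Omega)$ convergence on a probability space, but for $\gamma>2$ it is not, and one needs either a uniform integrability argument or a conditional Jensen argument exploiting the independence of $\Delta Y_{\V_n(A,\rho)}$ and $\Delta Y_{\V(A,\rho)\setminus\V_n(A,\rho)}$ (which requires centering the increments first). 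Note that the paper only ever applies the theorem with $\gamma\in(\beta,2]$.

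A secondary issue is on the left-hand side: extracting, for each fixed $A'$, a subsequence along which $Y_{g_n(A')}\to Y_{A'}$ almost surely produces a null set (and a subsequence) depending on $A'$, while the supremum runs over uncountably many $A'$. This is precisely what the paper's appeal to a.s.\ outer-continuity of the sample paths (guaranteed by the L\'evy--It\^o decomposition (\ref{eq:levy-ito_decomposition}), where the convergence is a.s.\ uniform on $[\varnothing,A]$) is there to fix: with an outer-continuous version, $Y_{g_n(A')}(\omega)\to Y_{A'}(\omega)$ holds simultaneously for all $A'$ on a single full-measure event, and the bound of the continuous supremum by the $\liminf$ of the discrete ones becomes a genuine pathwise statement rather than a statement valid only $A'$-by-$A'$.
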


\begin{proof}
	Just like the one-dimensional setting, obtaining continuous versions of martingale inequalities is a consequence of their discrete counterparts (here, Lemma \ref{lem:cairoli_inequality}) and outer-continuity of sample paths (ensured for $Y$ by its Lévy-Itô decomposition (\ref{eq:levy-ito_decomposition})).
\end{proof}

This maximal inequality encourages us to understand the $L^\gamma$-norm of variables like $\Delta Y_U = \mathbf X(f\myindic{U})$.
In this endeavor, we will need to dabble a bit with Orlicz spaces to obtain relevant estimates.

For $\gamma\in(1,2],$ let $\Phi_\gamma$ be the Orlicz function defined by
	\[\begin{array}{cccc}
	\Phi_\gamma : 	
	& \R 	& \longrightarrow 	& \R_+ 		\\
	& u 	& \longmapsto 		& \ds\int_\R \lp |xu|^2\wedge|xu|^\gamma \rp\nu(\text dx).
	\end{array}\]

Denote $\|.\|_{\Phi_\gamma}$ the associated \emph{Luxemburg norm} defined for all measurable map $g:\T\rightarrow\R$ by
\begin{equation} \label{eq:luxemburg_norm}
	\| g \|_{\Phi_\gamma} = \inf\left\{ c>0 ~:~ \int_\T \Phi_\gamma(c^{-1}g(s))m(\text ds)\ls1 \right\}. 
\end{equation}

We refer to \cite{harjulehto_orlicz_2019} for a modern exposition of the general theory of (generalized) Orlicz spaces. 
However, we will only be using the fact that $\|.\|_{\Phi_\gamma}$ induces a (quasi)norm on the linear space $L^{\Phi_\gamma}$ of all measurable maps $g:\T\rightarrow\R$ (where two $m$-a.e. maps are identified) such that $\|g\|_{\Phi_\gamma}<\infty.$

\medskip

More specifically of interest to us is that \cite[Theorem 3.3]{rajput_spectral_1989} proves that the stochastic integral
\[\begin{array}{ccc}
	L^{\Phi_\gamma}	& \longrightarrow 	& L^\gamma(\Omega) 	\\
	g 				& \longmapsto 		& \ds\int_\T g\,\text dX
\end{array}\]
is continuous.
In particular, there exists a constant $\kappa_{\Phi_\gamma}>0$ such that:
\begin{equation}	\label{eq:orlicz_control}
	\forall g\in L(X),\quad  \|\mathbf X(g)\|_{L^\gamma(\Omega)} ~=~ \esp{\left| \int_{\T} g\,\text dX \right|^\gamma}^{1/\gamma} ~\ls~ \kappa_{\Phi_\gamma}\|g\|_{\Phi_\gamma}.
\end{equation}

The following lemma will simplify further use of the Luxemburg norm.

\begin{lem}	\label{lem:orlicz_lebesgue_control}
	For any $\gamma\in(1,2]$ and measurable map $g:\T\rightarrow\R,$
	\[ \|g\|_{\Phi_\gamma} ~\ls~ \left[ \int_\R|x|^\gamma\nu(\text dx) \right]^{1/\gamma}\|g\|_{L^\gamma(m)}. \]
\end{lem}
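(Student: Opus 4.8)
The plan is to bound the Luxemburg norm $\|g\|_{\Phi_\gamma}$ directly from its definition (\ref{eq:luxemburg_norm}) by exhibiting an admissible constant $c$. First I would dispose of the trivial cases: if $\|g\|_{L^\gamma(m)}=0$ then $g=0$ $m$-a.e.\ and both sides vanish; if $\int_\R|x|^\gamma\nu(\text dx)=\infty$ the right-hand side is $+\infty$ (with the convention $0\times\infty=0$ already handled) and there is nothing to prove; and if $\|g\|_{L^\gamma(m)}=\infty$ the inequality is again vacuous. So I may assume $0<\|g\|_{L^\gamma(m)}<\infty$ and $I_\nu:=\int_\R|x|^\gamma\nu(\text dx)<\infty$.

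The key estimate is a pointwise bound on $\Phi_\gamma$: for every real $u$,
\[
	\Phi_\gamma(u) \speq \int_\R\big( |xu|^2\wedge|xu|^\gamma \big)\nu(\text dx) ~\ls~ \int_\R |xu|^\gamma\nu(\text dx) \speq |u|^\gamma I_\nu,
\]
since $a^2\wedge a^\gamma\ls a^\gamma$ for all $a\gs0$. Now set $c = I_\nu^{1/\gamma}\,\|g\|_{L^\gamma(m)}$, which is a positive finite number under our standing assumptions. Plugging $c^{-1}g$ into the integral defining the Luxemburg norm and using the pointwise bound,
\[
	\int_\T \Phi_\gamma\big(c^{-1}g(s)\big)\,m(\text ds) ~\ls~ I_\nu\int_\T \frac{|g(s)|^\gamma}{c^\gamma}\,m(\text ds) \speq \frac{I_\nu\,\|g\|_{L^\gamma(m)}^\gamma}{c^\gamma} \speq 1.
\]
Hence $c$ belongs to the set over which the infimum in (\ref{eq:luxemburg_norm}) is taken, so $\|g\|_{\Phi_\gamma}\ls c = \big[\int_\R|x|^\gamma\nu(\text dx)\big]^{1/\gamma}\|g\|_{L^\gamma(m)}$, which is exactly the claimed inequality.

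There is essentially no genuine obstacle here; the statement is a soft comparison between two norms and the whole content is the elementary inequality $t^2\wedge t^\gamma\ls t^\gamma$ together with the positive homogeneity of degree $\gamma$ of both $u\mapsto\Phi_\gamma$-type integrands under the chosen scaling. The only mild care needed is bookkeeping of the degenerate cases ($g=0$, or $\int|x|^\gamma\nu(\text dx)$ or $\|g\|_{L^\gamma(m)}$ infinite) so that the convention $0\times\infty=0$ is respected consistently with how $L(X)$ and these norms were defined earlier; none of this requires any real work.
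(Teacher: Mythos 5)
Your proof is correct and follows essentially the same route as the paper: a pointwise bound $\Phi_\gamma(u)\ls |u|^\gamma\int_\R|x|^\gamma\nu(\text dx)$ followed by substitution of the explicit constant $c$ into the Luxemburg-norm definition. The paper obtains that same pointwise bound by splitting the integral over $\{|xg(s)|\ls c\}$ and $\{|xg(s)|>c\}$, which is just a longer way of writing your one-line inequality $a^2\wedge a^\gamma\ls a^\gamma$, so the two arguments coincide in substance.
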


\begin{proof}
	In the formulation of the $L^\gamma$-space as an Orlicz space, we have
	\[ \| g \|_{L^\gamma(m)} ~=~ \inf\left\{ c>0 ~:~ \int_\T c^{-\gamma}|g(s)|^\gamma m(\text ds)\ls1 \right\}. \]
	Comparing this norm with (\ref{eq:luxemburg_norm}), it follows that it is enough to prove the following:
	\begin{equation} 	\label{eq:CNS_norms} 
		\forall c>0,\quad \int_\T\Phi(c^{-1}g(s))m(\text ds) ~\ls~ c^{-\gamma} \int_\T |g(s)|^\gamma m(\text ds) \int_\R|x|^\gamma\nu(\text dx). 
	\end{equation}
	
	Let $c>0$ and $s\in\T.$ Then,
	
	\vspace{-1.1cm}
	\begin{spacing}{2}
	\[\begin{array}{rcl}
		\Phi(c^{-1}g(s)) 
		& \speq	& \ds\int_{|xg(s)|\ls c} |c^{-1}xg(s)|^2 \nu(\text dx) \+ \int_{|xg(s)|>c} |c^{-1}xg(s)|^\gamma\nu(\text dx) 	\\
		& = 	& \ds c^{-2} \int_{|xg(s)|\ls c} |xg(s)|^{2-\gamma}|xg(s)|^\gamma \nu(\text dx) \+ c^{-\gamma} \int_{|xg(s)|<c}|xg(s)|^\gamma \nu(\text dx)						\\
		& ~\ls~	& \ds c^{-2}c^{2-\gamma} \int_{|xg(s)|\ls c} |xg(s)|^\gamma\nu(\text dx) \+ c^{-\gamma} \int_{|xg(s)|> c} |xg(s)|^\gamma\nu(\text dx) 		\\
		& = 	& \ds c^{-\gamma}|g(s)|^\gamma \int_\R|x|^\gamma\nu(\text dx).
	\end{array}\]
	\end{spacing}
	\vspace{-0.5cm}
	
	Integrating with respect to $m$ yields (\ref{eq:CNS_norms}), from which the result follows.
\end{proof}

Recall that we supposed $\beta\gs1.$
From (\ref{eq:orlicz_control}) and Lemma \ref{lem:orlicz_lebesgue_control} follows that for all $\gamma\in(\beta,2]$ (or $\gamma=2$ if $\beta=2$), there exists a finite constant $\kappa_\gamma>0$ such that
\begin{equation}	\label{eq:control_Delta_Y}
	\forall B\in\B_m,\quad \esp{|\Delta Y_B|^\gamma} ~\ls~ \kappa_\gamma\|f\myindic{B}\|_{L^\gamma(m)}^\gamma
\end{equation}
where $\Delta Y_B = \mathbf X(f\myindic{B}).$

\medskip

We are now ready to proceed to the lower bound itself.

\begin{prop}	\label{prop:regularity_Y_beta>1}
	Suppose that the hypotheses of Theorem \ref{theo:regularity_Y_poissonian} and $\beta\gs1$ hold.  
	
	Then, for all $(\alpha,q,q')\in\underline R_f(A),$
	\[ \alpha_Y(A) ~\gs~ \min\left\{ \frac{q}{\beta},\frac{q'}{\beta}+\alpha \right\} \quad\text{a.s.} \]
\end{prop}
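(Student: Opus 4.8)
The plan is to exploit the set-indexed maximal inequality (Theorem~\ref{theo:cairoli_inequality}) to turn a pointwise regularity estimate into a summable tail bound along a geometric sequence of radii, then invoke Borel--Cantelli. Fix $(\alpha,q,q')\in\underline R_f(A)$ and set $\beta_0=\min\{q/\beta,\,q'/\beta+\alpha\}$; we want to show $\alpha_Y(A)\gs\beta_0$ almost surely, i.e. that for every $\eta>0$ the estimate $\sup_{A'\in B_\A(A,\rho)}|Y_A-Y_{A'}|\ls\rho^{\beta_0-\eta}$ holds for all small $\rho$. First I would pick $\gamma\in(\beta,2]$ close enough to $\beta$, apply Theorem~\ref{theo:cairoli_inequality} with this $\gamma$, and then bound the two right-hand terms $\esp{|\Delta Y_{\V(A,\rho)}|^\gamma}$ and $\esp{|\Delta Y_{A\setminus\underline V(A,\rho)}|^\gamma}$ using the Orlicz control~\eqref{eq:control_Delta_Y}, which reduces everything to estimating $\|f\myindic{\V(A,\rho)}\|_{L^\gamma(m)}^\gamma$ (the second term is of the same nature and in fact smaller since $A\setminus\underline V(A,\rho)\subseteq\V(A,\rho)$ up to an $m$-null set).

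The crucial step is the $L^\gamma(m)$-estimate of $f$ on the victiny. Here I would split the victiny along the set $L_{f,\alpha}(A)$: on $L_{f,\alpha}(A,\rho)$, since $f$ is bounded in the victiny (hypothesis of Theorem~\ref{theo:regularity_Y_poissonian}), one has $|f|\ls C$, so the contribution is $\ls C^\gamma m(L_{f,\alpha}(A,\rho))=\bigO(\rho^q)$ by the definition of $\underline R_f(A)$; on $L_{f,\alpha}^\complement(A,\rho)$, every point $s$ satisfies $|f(s)|\ls\qd(s,A)^\alpha\ls\rho^\alpha$ (as $s\in\V(A,\rho)$ forces $\qd(s,A)<\rho$), so the contribution is $\ls\rho^{\gamma\alpha}m(L_{f,\alpha}^\complement(A,\rho))=\bigO(\rho^{\gamma\alpha+q'})$. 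Combining, $\esp{|\Delta Y_{\V(A,\rho)}|^\gamma}=\bigO(\rho^q+\rho^{\gamma\alpha+q'})=\bigO(\rho^{\gamma\beta_0'})$ where $\beta_0'=\min\{q/\gamma,\,q'/\gamma+\alpha\}$, which tends to $\beta_0$ as $\gamma\downarrow\beta$. Then with $\rho_n=2^{-n}$, Markov's inequality gives $\prob{\sup_{A'\in B_\A(A,\rho_n)}|Y_A-Y_{A'}|>\rho_n^{\beta_0'-\eta}}\ls\kappa\,\rho_n^{\gamma\beta_0'}/\rho_n^{\gamma(\beta_0'-\eta)}=\kappa\,2^{-n\gamma\eta}$, a summable series; Borel--Cantelli yields that almost surely $\sup_{A'\in B_\A(A,\rho_n)}|Y_A-Y_{A'}|\ls\rho_n^{\beta_0'-\eta}$ eventually. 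A standard monotonicity/interpolation argument over dyadic radii (using that $B_\A(A,\rho)\subseteq B_\A(A,\rho_n)$ for $\rho_{n+1}\ls\rho\ls\rho_n$) upgrades this to all $\rho\to0^+$, giving $\alpha_Y(A)\gs\beta_0'-\eta$; letting $\gamma\downarrow\beta$ and $\eta\downarrow0$ along countable sequences concludes.

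The main obstacle I anticipate is the bookkeeping around the victiny terms: one must check that $\Delta Y_{\V(A,\rho)}$ is legitimately defined via~\eqref{eq:Delta_Y_on_borelian} (this needs $f\myindic{\V(A,\rho)}\in L(X)$, which follows from $f$ being bounded on the victiny, hence in $L^\gamma(m)$ there, together with $m(\V(A,\rho))<\infty$ by Proposition~\ref{prop:property_V} and $\sigma$-finiteness), and that the $\underline R_f(A)$ condition indeed controls both $m(L_{f,\alpha}(A,\rho))$ and $m(L_{f,\alpha}^\complement(A,\rho))$ simultaneously with a single $\limsup$. One subtlety is that the definition of $\underline R_f(A)$ uses $\limsup_{\rho\to0}$, so the $\bigO$-bounds hold for $\rho$ small, which is exactly what the dyadic Borel--Cantelli argument needs. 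Another minor point is ensuring the exchange of $\gamma\downarrow\beta$ is harmless: since $\gamma>\beta$ must be strict when $\beta<2$, one takes a decreasing sequence $\gamma_k\downarrow\beta$ and a countable family of $\eta$'s, intersecting the corresponding probability-one events.
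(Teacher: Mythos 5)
Your proposal is correct and follows essentially the same route as the paper: the set-indexed maximal inequality of Theorem \ref{theo:cairoli_inequality} combined with the Orlicz control (\ref{eq:control_Delta_Y}), the splitting of the victiny along $L_{f,\alpha}(A)$ to get the two estimates $\bigO(\rho^q)$ and $\bigO(\rho^{\gamma\alpha+q'})$, then Markov and Borel--Cantelli along dyadic radii with $\gamma\downarrow\beta$. The only cosmetic difference is that you keep both terms and take the minimum of the exponents directly, whereas the paper first reduces to the case $q'/\beta+\alpha\ls q/\beta$ so that a single term dominates; both are equivalent.
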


\begin{proof}
	Fix $(\alpha,q,q')\in\underline R_f(A).$ 
	We will only prove the result in the case $q'/\beta+\alpha\ls q/\beta$. 
	The second case is proven in exactly the same fashion, one just has to replace $q'$ by $q$ and take $\alpha=0$ in the following. 
	
	Let $\delta>\beta/q'$ and $\eta=1/\delta+\alpha$.
	By Borel-Cantelli, it is enough to prove that
	\begin{equation}	\label{eq:CS_lower_bound}
		\sum_{j=1}^\infty\, \prob{ \sup_{d_\A(A,A')<2^{-j}} |Y_A-Y_{A'}| > 2^{-j\eta} } ~<~ \infty.
	\end{equation}
	Fix $\gamma\in(\beta,2]$ (or $\gamma=2$ if $\beta=2$).
	By Theorem \ref{theo:cairoli_inequality}, for all $j\in\N,$
	\begin{equation}	\label{eq:cairoli_applied}
		\prob{ \sup_{d_\A(A,A')<2^{-j}} |Y_A-Y_{A'}| > 2^{-j\eta} } ~\ls~ \kappa_{p,\gamma} 2^{j\eta\gamma} \left[ \esp{ \left| \Delta Y_{\V(A,2^{-j})} \right|^\gamma } + \esp{ \left| \Delta Y_{A\setminus\underline V(A,2^{-j})} \right|^\gamma } \right].
	\end{equation}

	
	Since $A\setminus\underline V(A,2^{-j})\subseteq\V(A,2^{-j}),$ it follows from (\ref{eq:control_Delta_Y}) and (\ref{eq:cairoli_applied}) that
	
	\begin{equation}	\label{eq:control_by_f_gamma}
		\prob{ \sup_{d_\A(A,A')<2^{-j}} |Y_A-Y_{A'}| > 2^{-j\eta} } \speq \bigO\lp2^{j\eta\gamma}\|f\myindic{\V(A,2^{-j})}\|_{L^\gamma(m)}^\gamma\rp \spas j\rightarrow\infty.
	\end{equation}
	
	On the one hand, since $(\alpha,q,q')\in\underline R_f(A),$ we have $m(L_{f,\alpha}(A,\rho))=\bigO(\rho^q)$ as $\rho\rightarrow0$. Moreover, $f$ is also bounded in the victiny of $A$.
	Thus
	\begin{equation}	\label{eq:estimate_f_0}
		\|f\myindic{L_{f,\alpha}(A,\rho)}\|_{L^\gamma(m)}^\gamma \speq \bigO\lp\rho^{q}\rp \spas \rho\rightarrow0.
	\end{equation}
	
	On the other hand, using $(\alpha,q,q')\in\underline R_f(A)$ once more yields $m(L_{f,\alpha}^\complement(A,\rho))=\bigO(\rho^{q'})$ as $\rho\rightarrow0$.
	Hence
	\begin{equation}	\label{eq:estimate_f_1}
		\|f\myindic{L_{f,\alpha}^\complement(A,\rho)}\|_{L^\gamma(m)}^\gamma \speq \bigO\lp\rho^{q'+\alpha\gamma}\rp \spas \rho\rightarrow0.
	\end{equation}
	Since we supposed that $q'/\beta+\alpha\ls q/\beta$ and if we take $\gamma$ close enough to $\beta$, the estimates (\ref{eq:estimate_f_0}) and (\ref{eq:estimate_f_1}) give together
	\begin{equation}	\label{eq:estimate_f_0+1}
		\|f\myindic{\V(A,\rho)}\|_{L^\gamma(m)}^\gamma \speq \bigO\lp\rho^{q'+\alpha\gamma}\rp \spas \rho\rightarrow0.
	\end{equation}
	Combining (\ref{eq:control_by_f_gamma}) and (\ref{eq:estimate_f_0+1}) yields
	\begin{equation}	\label{eq:CNS_ls1_borel-cantelli_estimate}
		\prob{ \sup_{d_\A(A,A')<2^{-j}} |Y_A-Y_{A'}| > 2^{-j\eta} } \speq \bigO\lp2^{-j(q'+(\alpha-\eta)\gamma)}\rp \spas j\rightarrow\infty.
	\end{equation}
	Since 
	\[ q'+(\alpha-\eta)\gamma \speq q'-\frac{\gamma}{\delta} ~\longrightarrow~ q'-\frac{\beta}{\delta}  \spas \gamma\rightarrow\beta^+ \]
	and $q'-\beta/\delta>0,$ (\ref{eq:CNS_ls1_borel-cantelli_estimate}) proves that (\ref{eq:CS_lower_bound}) holds.
	The result follows.
\end{proof}

\bigskip
\paragraph{Proof of the lower bound when $\beta<1$}

For this part of the proof, we suppose that $\beta<1$.
In particular, $\ds\int_{|x|\ls1}|x|\nu(\text dx)<\infty$ so that (\ref{eq:Y_simplified}) gives the following expression:
\begin{equation}	\label{eq:Y_integrable_case}
	\forall A\in\A,\quad Y_A \speq \lim_{\eps\rightarrow0^+} \bigg[ \sum_{\substack{t\in\Pi\cap A: \\ |J_t(X)|\gs\eps}} f(t)J_t(X) \bigg]
\end{equation}
where the limit as $\eps\rightarrow0^+$ happens almost surely uniformly in $[\varnothing,A]$ for all $A\in\A.$

The problems in trying to copy the proof for $\beta\gs1$ is that $Y$ is not a (orthosub)martingale anymore (as mentioned in the proof of Lemma \ref{lem:cairoli_inequality}). 
Instead, we introduce the set-indexed process $Z_{|L}$ given by
\begin{equation}	\label{eq:Z_L}
	\forall A\in\A,\quad (Z_{|L})_A \speq \lim_{\eps\rightarrow0^+} \bigg[ \sum_{\substack{t\in\Pi\cap L\cap A: \\ |J_t(X)|\gs\eps}} |J_t(X)| \bigg]
\end{equation}
where $L\subseteq\T$ is once more a free parameter set that will be chosen later on. Remark that $Z_{|L}$ is also a set-indexed Lévy process, but with respect to $m(L\cap.)$ instead of $m$ as `reference measure'.

We first establish a lower bound on the regularity of $Z_{|L}$ and then deduce one for the regularity of $Y$.

\medskip

The following method is inspired from \cite{balanca_fine_2014}. 
Denote for all $\eta\gs0,$ the set-indexed process $(Z_{|L})^\eta$ given by
\begin{equation}	\label{eq:Z^eta}
	\forall A\in\A,\quad (Z_{|L})_A^\eta ~=~ \lim_{\eps\rightarrow0^+} \bigg[ \sum_{\substack{t\in\Pi\cap L\cap A: \\ \eps\ls|J_t(X)|< 2^{-\eta}}} |J_t(X)| \bigg]
\end{equation}

\begin{lem}	\label{lem:balanca_lemma}
	Let $\A\in\A$ and suppose that there exists $q>0$ such that
	\begin{equation}	\label{eq:estimate_L}
		m(L\cap\V(A,\rho)) ~=~ \bigO\big( \rho^q \big) \qquad\text{as}\qquad \rho\rightarrow0.
	\end{equation}
	Then for all $\delta>\beta/q,$ there exists a constant $\kappa_\delta>0$ such that
	\[ \forall j\in\N,\quad \prob{ \sup_{A'\in B_\A(A,2^{-j})} \Delta (Z_{|L})_{A\triangle A'}^{j/\delta} \gs j2^{-j/\delta} } ~\ls~ \kappa_\delta\, e^{-j}. \]
\end{lem}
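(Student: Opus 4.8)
The plan is to apply the exponential bound of Lemma \ref{lem:computation_borel_0-1} after rewriting the event in terms of the dual victiny, then sum a geometric series. First I would note that $(Z_{|L})^{j/\delta}$ is, by construction (\ref{eq:Z^eta}), a set-indexed Lévy process built only from jumps of size $<2^{-j/\delta}$ lying in $L$, so its total increment over the victiny $\V(A,2^{-j})$ is at most the number of such jumps times $2^{-j/\delta}$. Precisely, writing $\Delta(Z_{|L})^{j/\delta}_{A\triangle A'} \ls \Delta(Z_{|L})^{j/\delta}_{\V(A,2^{-j})}$ for every $A'\in B_\A(A,2^{-j})$ — since $A\triangle A'\subseteq\V(A,2^{-j})$ by Definition \ref{de:divergence} — the supremum over $A'$ is controlled by a single random variable, and the event $\{\sup_{A'} \Delta(Z_{|L})^{j/\delta}_{A\triangle A'} \gs j2^{-j/\delta}\}$ is contained in $\{M_j\gs j\}$ where $M_j$ is the number of jumps of $X$ in $L\cap\V(A,2^{-j})$ of size in $\bigcup_{\ell\gs j}\Gamma_{\lceil\ell\delta/\,\rceil}$ — more cleanly, of size $<2^{-j/\delta}$. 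This $M_j$ is a Poisson random variable of parameter $\lambda_j := \nu\big(\{|x|<2^{-j/\delta}\}\big)\, m\big(L\cap\V(A,2^{-j})\big)$.

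The key estimate is then a Poisson tail bound: for a Poisson variable $M$ with mean $\lambda$, $\prob{M\gs j}\ls e^{-j}$ as soon as $j$ is large compared to $\lambda$ (e.g. via $\prob{M\gs j}\ls e^{-\lambda}(e\lambda/j)^{j}$, which is $\ls e^{-j}$ once $\lambda\ls j/e^2$). So it suffices to show $\lambda_j = o(j)$, in fact $\lambda_j$ bounded or decaying. Using hypothesis (\ref{eq:estimate_L}), $m(L\cap\V(A,2^{-j})) = \bigO(2^{-jq})$. For the $\nu$-factor, the Blumenthal–Getoor relation (\ref{eq:blumenthal-getoor_jaffard}) gives, for any $\gamma\in(\beta,\delta q)$ — which is a non-empty interval precisely because $\delta>\beta/q$ — that $\nu_k = \bigO(2^{k\gamma})$, hence $\nu\big(\{|x|<2^{-j/\delta}\}\big) = \sum_{k> j/\delta}\nu_k = \bigO\big(2^{(j/\delta)\gamma}\big)$ since $\gamma<\delta q$ keeps the geometric tail summable — wait, one needs $\gamma$ such that the series $\sum_{k>j/\delta}\nu_k 2^{0}$ converges, i.e. one should instead bound $\nu(\{|x|<2^{-j/\delta}\}) = \bigO(2^{-(j/\delta)\gamma'})$ for some $\gamma'$; this follows from $\int_{|x|\ls1}|x|^{\gamma'}\nu(dx)<\infty$ for $\gamma'>\beta$ by Markov's inequality applied to $\nu$. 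Combining, $\lambda_j = \bigO\big(2^{-j(\gamma'/\delta + q)}\big)\to0$, which is far better than needed, so $\prob{M_j\gs j}\ls \kappa_\delta e^{-j}$ for a suitable constant.

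Assembling: choose $\gamma'\in(\beta,\delta q)$ (possible since $\delta q>\beta$), apply Markov to $\nu$ to get $\nu(\{|x|<t\})=\bigO(t^{\gamma'})$, combine with (\ref{eq:estimate_L}) to bound $\lambda_j$, then invoke the Poisson tail bound. The main obstacle I expect is purely bookkeeping: matching the truncation level $2^{-j/\delta}$ appearing in $(Z_{|L})^{j/\delta}$ with the radius $2^{-j}$ of the ball $B_\A(A,2^{-j})$ and making sure the domination $\Delta(Z_{|L})^{j/\delta}_{A\triangle A'}\ls \#\{\text{small jumps in }\V(A,2^{-j})\}\cdot 2^{-j/\delta}$ is legitimate — this uses that $(Z_{|L})^{j/\delta}$ is a sum of \emph{positive} terms (jump sizes $|J_t(X)|$), so its increment over any set is monotone in the set and bounded by the worst case, a feature special to $Z_{|L}$ and the reason it was introduced in place of $Y$ when $\beta<1$. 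No martingale machinery is needed here, unlike the $\beta\gs1$ case.
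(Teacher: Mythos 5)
There is a genuine gap, and it sits exactly where you flagged your own unease. Your reduction replaces the \emph{sum} $\Delta(Z_{|L})^{j/\delta}_{\V(A,2^{-j})}=\sum|J_t(X)|$ by the \emph{count} $M_j$ of jumps of size $<2^{-j/\delta}$ in $L\cap\V(A,2^{-j})$. The containment $\{\text{sum}\gs j2^{-j/\delta}\}\subseteq\{M_j\gs j\}$ is correct but useless: when $\nu$ is an infinite measure (the generic situation for $\beta>0$, e.g. $\nu(\text dx)=|x|^{-1-\beta}\myindic{|x|\ls1}\text dx$), the parameter $\lambda_j=\nu(\{0<|x|<2^{-j/\delta}\})\,m(L\cap\V(A,2^{-j}))$ is infinite, $M_j=\infty$ almost surely, and $\prob{M_j\gs j}=1$. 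Your attempted rescue --- bounding $\nu(\{|x|<t\})=\bigO(t^{\gamma'})$ by ``Markov's inequality applied to $\nu$'' --- inverts the inequality: Markov with $\int|x|^{\gamma'}\nu(\text dx)<\infty$ controls the tail $\nu(\{|x|>t\})\ls t^{-\gamma'}\int|x|^{\gamma'}\nu(\text dx)$, not the mass near the origin, which for a Lévy measure is typically infinite. Equivalently, in your dyadic decomposition $\sum_{k>j/\delta}\nu_k$, the relation (\ref{eq:blumenthal-getoor_jaffard}) gives $\nu_k\approx2^{k\beta}$, so the series diverges.

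The fix is to keep working with the sum rather than the count, which is what the paper does: apply the exponential Chebyshev inequality $\prob{S\gs j2^{-j/\delta}}\ls e^{-j}\,\esp{\exp(2^{j/\delta}S)}$ to $S=\Delta(Z_{|L})^{j/\delta}_{\V(A,2^{-j})}$, and compute the Laplace transform exactly via the Poisson exponential formula. The resulting exponent involves $m(L\cap\V(A,2^{-j}))\ds\int_{|x|\ls2^{-j/\delta}}\big(e^{2^{j/\delta}|x|}-1\big)\nu(\text dx)$, which is \emph{finite} because $e^{u}-1\ls2u$ on $[0,1]$ reduces it to $\int_{|x|\ls2^{-j/\delta}}|x|\nu(\text dx)<\infty$ (here the hypothesis $\beta<1$ is used in an essential way); interpolating $|x|=|x|^{1-\gamma}|x|^{\gamma}$ with $\gamma\in(\beta,1\wedge\delta q)$ then shows the exponent is $\bigO\big(2^{-j(q-\gamma/\delta)}\big)\rightarrow0$, so the Laplace factor is bounded and the claimed bound $\kappa_\delta e^{-j}$ follows. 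Your first step (domination of the supremum by the single variable over the victiny, using positivity of $Z_{|L}$) is correct and identical to the paper's; it is only the tail estimate that needs to be run on the sum, not the number, of small jumps.
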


\begin{proof}
	The proof is inspired from \cite[Lemma 2.1]{balanca_fine_2014}, but we chose to give the details for the sake of completeness.
	Fix $j\in\N.$ Then, since $Z$ is non-negative, 
	\[ \forall A'\in B_\A(A,2^{-j}),\quad \Delta (Z_{|L})_{A\triangle A'}^{j/\delta} ~\ls~ \Delta (Z_{|L})_{\V(A,2^{-j})}^{j/\delta}. \]
	
	Hence, by Markov's inequality,
	\[ \prob{ \sup_{A'\in B_\A(A,2^{-j})} \Delta (Z_{|L})_{A\triangle A'}^{j/\delta} \gs j2^{-j/\delta} } ~\ls~ e^{-j}\esp{ \exp\lp 2^{j/\delta}\Delta (Z_{|L})_{\V(A,2^{-j})}^{j/\delta} \rp }. \]
	
	So we just need to prove that $\esp{ \exp\lp 2^{j/\delta}\Delta (Z_{|L})_{\V(A,2^{-j})}^{j/\delta} \rp }=\bigO(1)$ as $j\rightarrow\infty$ to conclude.
	
	Using (\ref{eq:Z^eta}), we may compute the Laplace transform of $\Delta (Z_{|L})_{\V(A,2^{-j})}^{j/\delta}$ and get
	
	\[ \esp{ \exp\lp 2^{j/\delta}\Delta (Z_{|L})_{\V(A,2^{-j})}^{j/\delta} \rp } \speq \exp\lp 2^{j/\delta} m(L\cap \V(A,2^{-j})) \int_{|x|\ls2^{-j/\delta}} \lp e^{|x|}-1 \rp\nu(\text dx) \rp.  \]	

	
	Hence, due to (\ref{eq:estimate_L}) and the fact that $e^{|x|}-1\ls2|x|$ for $|x|\ls1,$ there exists a constant $\kappa>0$ such that
	\[ \esp{ \exp\lp 2^{j/\delta}\Delta (Z_{|L})_{\V(A,2^{-j})}^{j/\delta} \rp } ~\ls~ \exp\lp \kappa 2^{-j(q-1/\delta)}\int_{|x|\ls2^{-j/\delta}} |x|\nu(\text dx) \rp. \]
	Taking $\gamma\in(\beta,1\wedge \delta q)$ gives	
	
	\vspace{-1cm}
	\begin{spacing}{2.5}
	\[\begin{array}{rcl}
		\esp{ \exp\lp 2^{j/\delta}\Delta (Z_{|L})_{\V(A,2^{-j})}^{j/\delta} \rp }
		& ~\ls~	& \ds\exp\lp \kappa 2^{-j(q-1/\delta)}\int_{|x|\ls2^{-j/\delta}} |x|^{1-\gamma}|x|^\gamma\nu(\text dx) \rp			\\ 
		& \ls	& \ds\exp\lp \kappa 2^{-j(q-1/\delta+(1-\gamma)/\delta)}\int_{|x|\ls2^{-j/\delta}} |x|^\gamma\nu(\text dx) \rp	\\
		& \ls	& \ds\exp\lp \kappa 2^{-j(q-\gamma/\delta)}\int_{|x|\ls1} |x|^\gamma\nu(\text dx) \rp.
	\end{array}\]
	\end{spacing}
	\vspace{-.7cm}
	
	Since $q-\gamma/\delta\rightarrow q-\beta/\delta$ as $\gamma\rightarrow\beta^+$ and $q-\beta/\delta>0$, we may find $\gamma>\beta$ showing that the above expression is bounded as $j\rightarrow\infty$.
	The result follows.
\end{proof}

\begin{lem}	\label{lem:regularity_Z_L}
	Let $A\in\A$ and suppose that (\ref{eq:estimate_L}) holds.
	Then, the following holds with probability one: for all $\delta>\beta/q$, there exists $\rho_\delta>0$ such that
	\[ \forall\rho\in(0,\rho_\delta),\,\forall A'\in B_\A(A,\rho),\quad (\Delta Z_{|L})_{A\triangle A'} ~\ls~ \rho^{1/\delta}. \]
\end{lem}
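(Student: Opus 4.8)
The plan is to deduce Lemma~\ref{lem:regularity_Z_L} from the quantitative tail bound in Lemma~\ref{lem:balanca_lemma} via a Borel--Cantelli argument along the dyadic scales, much as in the passage from a summable tail estimate to an almost sure modulus of continuity in the one-dimensional setting. First I would fix a countable dense set of exponents: let $(\delta_k)_{k\in\N}$ be a sequence decreasing to $\beta/q$, so that it suffices to prove the statement for each $\delta_k$ and then take the intersection of the corresponding full-probability events. For a fixed $\delta>\beta/q$, Lemma~\ref{lem:balanca_lemma} gives a constant $\kappa_\delta>0$ with
\[ \sum_{j\in\N} \prob{ \sup_{A'\in B_\A(A,2^{-j})} \Delta (Z_{|L})_{A\triangle A'}^{j/\delta} \gs j2^{-j/\delta} } ~\ls~ \kappa_\delta\sum_{j\in\N} e^{-j} ~<~ \infty, \]
so Borel--Cantelli yields an almost sure integer $j_\delta$ such that for all $j\gs j_\delta$ one has $\sup_{A'\in B_\A(A,2^{-j})} \Delta (Z_{|L})_{A\triangle A'}^{j/\delta} < j2^{-j/\delta}$.

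The second step is to upgrade this dyadic control to a control at every scale $\rho$, and to remove the truncation level $j/\delta$ appearing in $(Z_{|L})^{j/\delta}$ so as to recover the untruncated increment $\Delta (Z_{|L})_{A\triangle A'}$. For the scale, given small $\rho$ pick $j$ with $2^{-(j+1)}\ls\rho<2^{-j}$; then $B_\A(A,\rho)\subseteq B_\A(A,2^{-j})$ and $2^{-j/\delta}\ls2^{1/\delta}\rho^{1/\delta}$, so the dyadic bound gives $\Delta (Z_{|L})^{j/\delta}_{A\triangle A'}\ls Cj\rho^{1/\delta}$ for $A'\in B_\A(A,\rho)$; absorbing the polynomial factor $j\approx\log_2(1/\rho)$ into a slightly worse exponent (i.e. replacing $\delta$ by some $\delta'\in(\beta/q,\delta)$ from our sequence, which kills the logarithm since $\rho^{1/\delta'-1/\delta}\log(1/\rho)\to0$) gives $\Delta(Z_{|L})^{j/\delta}_{A\triangle A'}\ls\rho^{1/\delta}$ for $\rho$ small. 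To handle the truncation, I would observe that $Z_{|L}$ is monotone in the sense that $\Delta (Z_{|L})_{A\triangle A'} = \Delta (Z_{|L})^{\eta}_{A\triangle A'} + \text{(contribution of jumps of size}\ \gs 2^{-\eta})$, and the big-jump contribution is eventually zero near $A$: since $Z_{|L}$ has well-defined jumps (Definition~\ref{de:well-defined_jumps}), for any fixed threshold there are only finitely many jumps of $X$ of that size in a bounded region, none of which lies exactly at points of the shrinking victinies once $\rho$ is small enough (here one uses that $m(\V(A,\rho))\to0$ and the Poissonian structure, as in the proof of Theorem~\ref{theo:generic_configuration}). Matching $\eta=j/\delta$ with the scale $\rho\approx2^{-j}$, the threshold $2^{-j/\delta}\to0$, so one must be slightly careful: the relevant statement is that for $\rho$ small, $\V(A,\rho)$ contains no jump of $X$ of size $\gs\rho^{1/\delta}$ \emph{that is not already counted}, which again follows from a Borel--Cantelli estimate on $\prob{\exists s\in\Pi\cap\V(A,2^{-j}):|J_s(X)|\gs2^{-j/\delta}}\ls\nu(\{|x|\gs2^{-j/\delta}\})\,m(\V(A,2^{-j}))$, summable in $j$ by the same computation with $\nu_j=\bigO(8^j)$ and $m(\V(A,2^{-j}))=\bigO(2^{-jq})$ once $\delta>\beta/q$ (so $2^{j/\delta\cdot\beta}2^{-jq}$ is summable).

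Assembling the two steps: on the intersection over $k$ of the full-probability events from Borel--Cantelli, for each $\delta=\delta_k>\beta/q$ there is $\rho_\delta>0$ such that for all $\rho\in(0,\rho_\delta)$ and all $A'\in B_\A(A,\rho)$ we get $\Delta (Z_{|L})_{A\triangle A'}\ls\rho^{1/\delta}$, which is exactly the claim; a general $\delta>\beta/q$ is handled by choosing $\delta_k<\delta$ and noting $\rho^{1/\delta_k}\ls\rho^{1/\delta}$ for $\rho<1$. I expect the main obstacle to be the bookkeeping in step two: cleanly justifying that the truncated process $(Z_{|L})^{j/\delta}$ with a \emph{varying} truncation level $2^{-j/\delta}\to0$ still captures the full increment $\Delta(Z_{|L})_{A\triangle A'}$ near $A$, i.e. controlling the jumps whose size sits between $\rho^{1/\delta}$ and a fixed constant. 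The resolution is the summability of $\nu(\{|x|\gs2^{-j/\delta}\})\,m(\V(A,2^{-j}))$ noted above, which holds precisely in the regime $\delta>\beta/q$ that we are assuming, so there is no loss; but this is the point where the hypothesis $\delta>\beta/q$ (rather than $\delta\gs\beta/q$) and the estimate~(\ref{eq:estimate_L}) are genuinely used, and it must be written with care.
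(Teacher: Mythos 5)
Your proposal is correct and follows essentially the same route as the paper: Borel--Cantelli applied to the tail bound of Lemma~\ref{lem:balanca_lemma} for the truncated process along dyadic scales, followed by a second Borel--Cantelli argument showing that almost surely no jump of size $\gs2^{-j/\delta}$ falls in $L\cap\V(A,2^{-j})$ for large $j$ (the paper packages this second step as ``$A\notin E^\delta_{|L}(X)$ a.s.'' via Lemma~\ref{lem:computation_borel_0-1}, which is the same Poisson first-moment computation you carry out directly). One small correction: the summability of $\nu(\{|x|\gs2^{-j/\delta}\})\,m(L\cap\V(A,2^{-j}))$ requires $\nu_{j'}=\bigO(2^{j'\gamma})$ for some $\gamma\in(\beta,\delta q)$, i.e. the Cauchy--Hadamard identity (\ref{eq:blumenthal-getoor_jaffard}), rather than the crude bound $\nu_j=\bigO(8^j)$ of (\ref{eq:nu_j_upper_estimate}) that you cite --- your displayed condition ``$2^{j\beta/\delta}2^{-jq}$ summable'' is the right one, but it does not follow from the $\bigO(8^j)$ estimate unless $\delta>3/q$.
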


\begin{proof}
	Let us fix $\delta>\beta/q.$
	It is enough to prove that the following holds with probability one: there exists $k\in\N$ such that 
	\[ \forall j\gs k,\,\forall A'\in B_\A(A,2^{-j}),\quad (\Delta Z_{|L})_{A\triangle A'} ~\ls~ 2^{-j/\delta}. \]
	According to Lemma \ref{lem:balanca_lemma}, this is already true if we replace $(\Delta Z_{|L})$ by $(\Delta Z_{|L})^{j/\delta}$.
	Thus, it is enough to prove that the following holds with probability one: there exists $k\in\N$ such that for all $j\gs k$ and $A'\in B_\A(A,2^{-j}),\,$ $(\Delta Z_{|L})_{A\triangle A'} = (Z_{|L})_{A\triangle A'}^{j/\delta}.$
	In other words, we want to show that $A\notin E^\delta_{|L}(X)$ almost surely.
	By Lemma \ref{lem:computation_borel_0-1}, we have for all $j\in\N,$
	
	\vspace{-.5cm}
	\begin{spacing}{1.5}
	\[\begin{array}{rcll}
		\prob{A\in E_{j|L}^\delta(X)}
		& ~\ls~	& 1-\exp\left[ -\nu_jm\big( L\cap\V(A,2^{-j\delta}) \big) \right] 	\\
		& \ls	& \nu_jm\big( L\cap\V(A,2^{-j\delta}) \big)
		& ~\text{ since } 1-e^{-x}\ls x, 	\\
		& \speq	& \bigO\big( \nu_j2^{-j\delta q} \big)
		& ~\text{ by (\ref{eq:estimate_L}).}
	\end{array}\]
	\end{spacing}
	\vspace{-.2cm}
	
	Taking $\gamma\in(\beta,\delta q)$ and using (\ref{eq:blumenthal-getoor_jaffard}) yields
	\[ \prob{A\in E_{j|L}^\delta(X)} \speq \bigO\big( 2^{-j(\delta q-\gamma)} \big) \spas j\rightarrow\infty \]
	which then is a convergent series. 
	Thus, by Borel-Cantelli, $A\notin E_{|L}^\delta(X)$ with probability one.
	The result follows.
\end{proof}

\begin{prop}	\label{prop:lower_bound_Y_poissonian}
	Suppose that the hypotheses of Theorem \ref{theo:regularity_Y_poissonian} and $\beta<1$ hold. 
	
	Then, for all $(\alpha,q,q')\in\underline R_f(A),$
	\[ \alpha_Y(A) ~\gs~ \min\left\{ \frac{q}{\beta},\frac{q'}{\beta}+\alpha \right\} \quad\text{a.s.} \]
\end{prop}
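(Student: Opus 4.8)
The plan is to mirror the structure of the proof of Proposition \ref{prop:regularity_Y_beta>1} but with the martingale machinery replaced by the pathwise control obtained in Lemmas \ref{lem:balanca_lemma}--\ref{lem:regularity_Z_L} applied to the non-negative subordinator-like process $Z_{|L}$. Fix $(\alpha,q,q')\in\underline R_f(A)$. As in Proposition \ref{prop:regularity_Y_beta>1}, I would treat the case $q'/\beta+\alpha\ls q/\beta$ in detail, the other case being obtained by setting $\alpha=0$ and replacing $q'$ by $q$. Set $\delta>\beta/q'$ and $\eta=1/\delta+\alpha$; the goal is to show $\alpha_Y(A)\gs\eta$ almost surely, which, after taking $\delta\to(\beta/q')^+$, gives $\alpha_Y(A)\gs q'/\beta+\alpha$, and hence (combining with the symmetric argument for $q$) the claimed lower bound.

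First I would split the jump set according to the size of $|f|$ relative to $\qd(\cdot,A)^\alpha$, i.e.\ use $L=L_{f,\alpha}(A)$ and its complement in the victiny. Since $\beta<1$, $Y$ has the pure-jump representation (\ref{eq:Y_integrable_case}), so for $A'\in B_\A(A,\rho)$ one has the pathwise bound
\[
	|Y_A-Y_{A'}| \speq \big| \Delta Y_{A\triangle A'} \big| ~\ls~ \sum_{\substack{t\in\Pi\cap(A\triangle A'): \\ t\in\{f\neq0\}}} |f(t)|\,|J_t(X)| ~\ls~ \sup_{s\in\V(A,\rho_0)}|f(s)| \cdot (\Delta Z_{|L_{f,\alpha}(A)})_{A\triangle A'} \+ \Big( \sup_{\qd(s,A)<\rho}|f(s)|/\qd(s,A)^{-\alpha}\text{-type term} \Big),
\]
so more cleanly: on the part where $t\in L_{f,\alpha}(A)$, bound $|f(t)|$ by the (finite) victiny-sup of $|f|$ and use $Z_{|L_{f,\alpha}(A)}$; on the complementary part, bound $|f(t)|\ls\qd(t,A)^\alpha\ls\rho^\alpha$ (for $t\in\V(A,\rho)$, using Proposition \ref{prop:property_qd}) and use $Z_{|L_{f,\alpha}^\complement(A)}$. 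This gives, for $A'\in B_\A(A,\rho)$,
\[
	|Y_A-Y_{A'}| ~\ls~ \kappa\,(\Delta Z_{|L_{f,\alpha}(A)})_{\V(A,\rho)} \+ \rho^\alpha\,(\Delta Z_{|L_{f,\alpha}^\complement(A)})_{\V(A,\rho)}.
\]
Then I would apply Lemma \ref{lem:regularity_Z_L} to each of the two sets $L=L_{f,\alpha}(A)$ and $L=L_{f,\alpha}^\complement(A)$: since $(\alpha,q,q')\in\underline R_f(A)$ means $m(L_{f,\alpha}(A,\rho))=\bigO(\rho^q)$ and $m(L_{f,\alpha}^\complement(A,\rho))=\bigO(\rho^{q'})$, the hypothesis (\ref{eq:estimate_L}) holds with exponent $q$ (resp.\ $q'$). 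Hence almost surely there is $\rho_\delta>0$ with $(\Delta Z_{|L_{f,\alpha}(A)})_{\V(A,\rho)}\ls\rho^{1/\delta'}$ for any $\delta'>\beta/q$ and $(\Delta Z_{|L_{f,\alpha}^\complement(A)})_{\V(A,\rho)}\ls\rho^{1/\delta}$ for our chosen $\delta>\beta/q'$, for $\rho$ small.

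Plugging these in, $|Y_A-Y_{A'}| \ls \kappa\rho^{1/\delta'} + \rho^{\alpha+1/\delta}$ for $A'\in B_\A(A,\rho)$ and $\rho$ small. Choosing $\delta'$ close enough to $\beta/q$ so that $1/\delta' \gs 1/\delta+\alpha=\eta$ — which is possible precisely because $q/\beta\gs q'/\beta+\alpha$, the case assumption — both terms are $\bigO(\rho^\eta)$, so $|Y_A-Y_{A'}|\ls\rho^\eta$ on a neighbourhood of $A$, giving $\alpha_Y(A)\gs\eta$ by (\ref{eq:holder_exponent_estimate}). Taking $\delta\to(\beta/q')^+$ (hence $\eta\to q'/\beta+\alpha$) and running the mirrored argument (with $\alpha=0$, $q'\rightsquigarrow q$, and bounding everything by $Z_{|\T}$ on the whole victiny) for the other case yields $\alpha_Y(A)\gs\min\{q/\beta,\,q'/\beta+\alpha\}$ a.s., as desired.

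The main obstacle I anticipate is the clean bookkeeping of the two-region decomposition together with the victiny geometry: one must be careful that $\V(A,\rho)$ does not lie in $\C(u)$, so $\Delta Z_{|L}$ on $\V(A,\rho)$ should be read in the extended sense of (\ref{eq:Z_L})/(\ref{eq:Delta_Y_on_borelian}), and that Lemma \ref{lem:regularity_Z_L} really does control $\sup_{A'\in B_\A(A,\rho)}(\Delta Z_{|L})_{A\triangle A'}$ uniformly and not just a single increment — which it does, since $A\triangle A'\subseteq\V(A,\rho)$ and $Z_{|L}$ is non-negative and additive. A secondary point is matching the two powers of $\rho$: the inequality $\alpha + 1/\delta$ versus $1/\delta'$ forces the case split exactly along $q'/\beta+\alpha$ vs $q/\beta$, which is why the statement takes the form of a minimum, and the $\delta'\to(\beta/q)^+$, $\delta\to(\beta/q')^+$ limits must be taken in a single a.s.\ event, handled as usual by taking a countable sequence of $\delta$'s decreasing to the respective thresholds.
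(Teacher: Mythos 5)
Your proposal is correct and follows essentially the same route as the paper: split the jumps of $Y$ over $L_{f,\alpha}(A)$ (where $|f|$ is bounded by the victiny-sup) and its complement (where $|f(s)|\ls\qd(s,A)^\alpha\ls\rho^\alpha$), dominate each piece pathwise by the non-negative process $\Delta Z_{|L}$, and apply Lemma \ref{lem:regularity_Z_L} with exponents $q$ and $q'$ respectively. The only cosmetic difference is that the paper skips your explicit case split on $q'/\beta+\alpha$ vs.\ $q/\beta$, since the bound $|Y_A-Y_{A'}|\ls\rho^{\alpha+q'/\beta-\eps}+\kappa_f\,\rho^{q/\beta-\eps}$ already yields the minimum directly.
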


\begin{proof}
	Fix $(\alpha,q,q')\in\underline R_f(A).$ 
	By definition of $L_{f,\alpha}^\complement(A,\rho),$ we have
	\begin{equation}	\label{eq:estimate_f_0}
		\forall s\in L_{f,\alpha}^\complement(A,\rho),\quad |f(s)|~\ls~\rho^{\alpha}
	\end{equation}
	for all $\rho>0$ small enough.
	
	Likewise, since $f$ is bounded in the victiny of $A$, there exists $\kappa_f>0$ such that
	\begin{equation}	\label{eq:estimate_f_1}
		\forall s\in L_{f,\alpha}(A,\rho),\quad |f(s)|~\ls~\kappa_f
	\end{equation}
	for all $\rho>0$ small enough.
	
	Combining those estimates on $f$ with the expression (\ref{eq:Y_integrable_case}) of $Y$ yields for all $\rho>0$ small enough
	\[ \forall A'\in B_\A(A,\rho),\quad |Y_A-Y_{A'}| ~\ls~ \rho^\alpha (\Delta Z_{|L_{f,\alpha}(A)^\complement})_{A\triangle A'} \+ \kappa_f (\Delta Z_{|L_{f,\alpha}(A)})_{A\triangle A'}.  \]
	
	Since $(\alpha,q,q')\in\underline R_f(A),$ we may apply Lemma \ref{lem:regularity_Z_L} to both $L=L_{f,\alpha}$ and $L=L_{f,\alpha}^\complement$ (for which (\ref{eq:estimate_L}) holds if we replace $q$ by $q'$). 
	Thus, the following holds with probability one: for all $\eps>0,$ there exists $\rho_0>0$ such that
	\[ \forall\rho<\rho_0,\,\forall A'\in B_\A(A,\rho),\quad |Y_A-Y_{A'}| ~\ls~ \rho^{\alpha+q'/\beta-\eps} \+ \kappa_f\,\rho^{q/\beta-\eps}.  \]
	The result follows immediately.	
\end{proof}

Just as for the upper bound and Proposition \ref{prop:upper_bound_Y_poissonian}, the lower bound of Theorem \ref{theo:regularity_Y_poissonian} is deduced from Proposition \ref{prop:lower_bound_Y_poissonian} by taking a relevent subsequence.

\subsection{$d_\T$-localized regularity}	\label{subsection:localized_regularity}

\subsubsection{The Gaussian part}

Using the same method as in \cite{herbin_local_2016} and under the same entropic conditions, one is able to determine the regularity of $Y$ in the case where $\nu=0$. Namely, for all $A\in\A,$ the following holds with probability one:

\begin{equation}	\label{eq:Y_gaussian_loc_regularity}
	\alpha_{Y,d_\T}(A) \speq \frac{1}{2}\alpha_{\sigma\int_.f^2\,\text dm,d_\T}(A).
\end{equation}

\subsubsection{The Poissonian part}

Similarly to Section \ref{subsection:ptw_regularity}, we define for all $A=A(t)\in\A,$ $\alpha\gs0$ and $\rho>0,$

\vspace{-.5cm}
\begin{spacing}{1.8}
\[\begin{array}{ccc}
	L_{f,\alpha}'(A) 
	& \speq & \Big\{ s\in\T : |f(s)|>d_\T(s,t)^\alpha \Big\},	\\
	L_{f,\alpha}'(A,\rho) 
	& = 	& B_\T(t,\rho)\cap L_{f,\alpha}'(A),				\\
	\vspace{.2cm} L_{f,\alpha}'^\complement(A,\rho) 
	& = 	& B_\T(t,\rho)\setminus L_{f,\alpha}'(A),			\\
	\vspace{.2cm} \overline R_f'(A)
	& = 	& \ds\left\{ (\alpha,q)\in\R_+^2 ~:~ \liminf_{\rho\rightarrow0^+}~ \frac{m(L_{f,\alpha}'(A,\rho))}{\rho^q}>0 \right\}, \\
	\underline R_f'(A)
	& = 	& \ds\left\{ (\alpha,q,q')\in\R_+^3 ~:~ \limsup_{\rho\rightarrow0^+}~ \left[ \frac{m(L_{f,\alpha}'(A,\rho))}{\rho^q} + \frac{m(L_{f,\alpha}'^\complement(A,\rho))}{\rho^{q'}} \right] <\infty \right\}.
\end{array}\]
\end{spacing}
\vspace{-.5cm}

\begin{theo}	\label{theo:loc_regularity_Y_poissonian}
	Let $A\in\A.$
	Suppose that $\sigma^2=0$ and that $f$ is bounded in the neighborhood of $A.$
	Then, with probability one,
	\[ \sup_{(\alpha,q,q')\in\underline R_f'(A)}\min\left\{ \frac{q}{\beta},\frac{q'}{\beta}+\alpha \right\} ~\ls~ \alpha_{Y,d_\T}(A) ~\ls~ \inf_{(\alpha,q)\in\overline R_f'(A)} \left\{\frac{q}{\beta}+\alpha\right\} \]
	with the conventions that $\inf\varnothing=1/0=+\infty.$
\end{theo}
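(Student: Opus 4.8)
The plan is to follow the exact same scheme as for the Hölder exponent (Theorem \ref{theo:regularity_Y_poissonian}), but in the simpler setting of the $d_\T$-localized exponent where the victiny $\V(A,\rho)$ is replaced by the genuine metric ball $B_\T(t,\rho)$. The first reduction is that, by Proposition \ref{prop:separate_exponents}, the $d_\T$-localized exponent of $Y$ splits as the minimum of the localized exponents of the Gaussian and Poissonian parts; since we have set $\sigma^2=0$, only the Poissonian part $\int_.f\,\text dN$ (or $\int_.f\,\text d\widetilde N$) remains. As before, if $\beta=0$ the upper bound is trivial, so assume $\beta>0$.

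For the \textbf{upper bound}, I would replay the covering argument. First, a localized analogue of Theorem \ref{theo:jaffard_lemma} (Jaffard's jump bound) is needed: for an OJI function $h$ in generic configuration \emph{in the neighborhood} of $A=A(t)$ and a sequence $(s_n)$ in $\Pi(h)$ with $d_\T(s_n,t)\to0$, one has $\alpha_{h,d_\T}(A)\ls\liminf_n \log|J_{s_n}(h)|/\log d_\T(s_n,t)\vee0$. This follows from the estimate (\ref{eq:localized_exponent_estimate}) exactly as Theorem \ref{theo:jaffard_lemma} follows from (\ref{eq:holder_C-exponent_estimate}): a jump at $s_n$ can be captured by a $\C_{p-1}$-increment $C\subseteq B_\T(t,d_\T(s_n,t)+\eps)$ of small diameter, because $d_\T(s_n,t)$ small forces the thick line $\mathcal L(s_n)$ (or rather its approximating $C_k$-sets) to sit inside a small metric ball. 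Then I would run the same Borel–Cantelli / Poisson-counting argument as in Lemma \ref{lem:computation_borel_0-1} and Proposition \ref{prop:covering_argument}, but with $\V'(s,|J_s(X)|^\delta)$ replaced by a ``$d_\T$-ball dual victiny'' $\{A': d_\T(s,\mathrm{tip}(A'))<|J_s(X)|^\delta\}$ and with the lower estimate on $m(L\cap\V(A,\rho))$ replaced by one on $m(L\cap B_\T(t,\rho))$. With $L=L'_{f,\alpha}(A)$, for $(\alpha,q)\in\overline R_f'(A)$ we get $m(L'_{f,\alpha}(A,\rho))\gtrsim\rho^q$, hence for $\delta<\beta/q$ the point $A$ lies in the corresponding $\limsup$ set a.s., yielding $\alpha_{Y,d_\T}(A)\ls q/\beta+\alpha$, and then taking the infimum gives the upper bound.

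For the \textbf{lower bound}, split into $\beta\gs1$ and $\beta<1$ exactly as before. When $\beta\gs1$: the set-indexed maximal inequality (Theorem \ref{theo:cairoli_inequality}) must be localized. Since $\alpha_{Y,d_\T}$ involves $\Delta Y_C$ for $C\in\C_{p-1}$ of diameter $<\rho$, I would bound $\sup\{|\Delta Y_C|: C\in\C_{p-1}\cap B_\C(A,\rho),\,C\subseteq B_\T(t,\rho)\}$ by the martingale-maximal machinery of Lemma \ref{lem:cairoli_inequality}–Theorem \ref{theo:cairoli_inequality} applied on a ball (the relevant ``big increment'' now is $\Delta Y_{B_\T(t,\rho)\cap\,\text{(suitable }\C(u)\text{ set)}}$, which is $\ls\|f\myindic{B_\T(t,\rho)}\|^\gamma_{L^\gamma(m)}$ via (\ref{eq:control_Delta_Y})). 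Then the estimates (\ref{eq:estimate_f_0})–(\ref{eq:estimate_f_0+1}) go through verbatim with $\V(A,\rho)$ replaced by $B_\T(t,\rho)$ and $\underline R_f$ by $\underline R_f'$, giving the Borel–Cantelli summability and hence $\alpha_{Y,d_\T}(A)\gs\min\{q/\beta,q'/\beta+\alpha\}$. When $\beta<1$: use the representation (\ref{eq:Y_integrable_case}) and the auxiliary process $Z_{|L}$ of (\ref{eq:Z_L}); Lemmas \ref{lem:balanca_lemma} and \ref{lem:regularity_Z_L} are stated with $\V(A,\rho)$ but their proofs only use the estimate $m(L\cap\V(A,2^{-j}))=\bigO(\rho^q)$, which we simply replace by $m(L\cap B_\T(t,2^{-j}))=\bigO(\rho^q)$; then Proposition \ref{prop:lower_bound_Y_poissonian} carries over by applying the localized Lemma \ref{lem:regularity_Z_L} to $L=L'_{f,\alpha}(A)$ and $L=L'^\complement_{f,\alpha}(A)$. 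Taking a relevant sequence $(\alpha,q,q')\in\underline R'_f(A)$ converging to the supremum finishes the lower bound.

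The main obstacle I expect is verifying that the generic-configuration machinery and the maximal inequality, both of which were engineered around the victiny $\V(A,\rho)$, really do localize cleanly to $B_\T(t,\rho)$. Concretely: in Theorem \ref{theo:cairoli_inequality} the ``boundary term'' $\Delta Y_{A\setminus\underline V(A,\rho)}$ must be replaced by an analogous term adapted to $B_\T(t,\rho)$ while keeping the orthosubmartingale structure on $\A_n\cap B_\A(A,\rho)$ intact — one has to check that restricting attention to $C\subseteq B_\T(t,\rho)$ still corresponds to a sub-poset of $\llbracket0,k\rrbracket$ on which Khoshnevisan's multiparameter Doob inequality applies. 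This is the place where one must be careful; everything else is a mechanical transcription of Section \ref{subsection:ptw_regularity} with $\qd(\cdot,A)\leadsto d_\T(\cdot,t)$, $\V\leadsto B_\T$, $\overline R_f\leadsto\overline R_f'$, $\underline R_f\leadsto\underline R_f'$, and the localized estimate (\ref{eq:localized_exponent_estimate}) in place of (\ref{eq:holder_C-exponent_estimate}). Since $B_\T(t,\rho)$ is ``smaller'' than $\V(A,\rho)$ (indeed $\qd(s,A)\ls d_\A(A(s),A)$ in general, and the localized exponent dominates the Hölder exponent by Proposition \ref{prop:localized_exponent_properties}(3)), the localized case is genuinely the easier of the two, as the authors anticipate in the text.
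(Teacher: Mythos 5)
Your proposal follows essentially the same route as the paper's own (sketched) proof: reduce to the Poissonian part via Proposition \ref{prop:separate_exponents}, rerun the jump-based upper bound with $\qd(\cdot,A)$ replaced by $d_\T(\cdot,t)$ (recovering the bound of (\ref{eq:jaffard_lemma_naive})), and rerun the two lower-bound arguments with $\V(A,\rho)$ replaced by $B_\T(t,\rho)$ and $\underline R_f$ by $\underline R_f'$. The one point you flag as delicate --- localizing the maximal inequality to increments $C\subseteq B_\T(t,\rho)$ --- is resolved in the paper not by restricting the index poset but by introducing the localized process $Y_{(A',\rho)}=\Delta Y_{A'\cap B_\T(t,\rho)}$, which keeps independent increments and the full orthosubmartingale structure on $\A_n\cap B_\A(A,\rho)$; the supremum over small-diameter $C\in\C^\ell$ is then controlled, via the inclusion--exclusion formula (\ref{eq:inclusion-exclusion_formula}) and contractivity, by $2^p\sup_{A'\in B_\A(A,(p+1)\rho)}|Y_{(A,\rho)}-Y_{(A',\rho)}|$, to which Theorem \ref{theo:cairoli_inequality} applies verbatim with $\Delta Y_{(\V(A,\rho),\rho)}=\Delta Y_{B_\T(t,\rho)}$.
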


Such a result should be compared to Theorem \ref{theo:regularity_Y_poissonian}. It constitutes an adequate counterpart to the fact that the previous result must take non-local information into account. Here, we clearly see that only properties and behaviors around $t$ are considered.

\begin{proof}[Sketch of proof]
Such a result is similar to Theorem \ref{theo:regularity_Y_poissonian} and its proof may be done using similar ideas. 
We will only focus on highlighting the few differences that arise when applying the same method.

\medskip

For the upper bound, the key is proving an estimate using jumps similar to the one in Theorem \ref{theo:jaffard_lemma}. It so turns out that the upper bound coming from this approach is the left-hand side of (\ref{eq:jaffard_lemma_naive}), which should not come as a surprise since the $d_\T$-localized exponent only takes into account what happens in the neighborhood of $t.$ The rest of the computation is the same once one has replaced $\V(A,\rho)$ by $B_\T(t,\rho).$

\medskip

As for the lower bound, it is somewhat more involved.
For all $A=A(t)\in\A$ and $\rho>0,$ we introduce a localized version $Y_{(.,\rho)}=\{ Y_{(A',\rho)}:A'\in\A \}$ of $Y$ around $A$ as follows:
\[ \forall A'\in\A,\quad Y_{(A',\rho)} \speq \Delta Y_{A'\cap B_\T(t,\rho)}. \]
The set-indexed processes $Y_{(.,\rho)}$ still have independent increments, and so the martingale arguments developed above will still apply. Let us show that.

\medskip

For the case where $\beta\gs1,$ the key argument lies in Proposition \ref{prop:regularity_Y_beta>1}. We claim that, up to some inconsequential constants, we can replace the probability in (\ref{eq:CS_lower_bound}) by
\[ \prob{ \sup_{C\in\C^\ell\cap B_\C(A,2^{-j}) ~:~ C\subseteq B_\T(t,2^{-j})} |\Delta Y_C| > 2^{-j\eta} } \]
and the rest of the proof would still follow once one replaces $\V(A,2^{-j})$ by $B_\T(t,2^{-j}).$

Indeed, for all $\rho>0,$

\vspace{-.3cm}
\begin{spacing}{1.4}
\[\begin{array}{rcl}
	\ds\sup_{\substack{ C\in\C^\ell\cap B_\C(A,\rho): \\ C\subseteq B_\T(t,\rho)}} |\Delta Y_C|
	& ~\ls~	& \ds\sup_{C\in\C^\ell\cap B_\C(A,\rho)} |\Delta Y_{(C,\rho)}|	
	\vspace{-.5cm} \\		
	& & \hspace{2.8cm} \text{where } \Delta Y_{(.,\rho)} \text{ is the increment map of } Y_{(,.\rho)},								\\
	& \ls 	& \ds\sup_{C\in\C_{p-1}\cap B_\C(A,\rho)} |\Delta Y_{(C,\rho)}| 
	\vspace{-.2cm} \\
	& & \hspace{2.8cm} \text{since } \C^\ell\subseteq\C_{p-1} \text{ where } p=\dim\A,			\\
	& \ls	& \ds 2^p \sup_{A_0,...,A_p\in B_\A(A,\rho)} |Y_{(A,\rho)}-Y_{(A_0\cap...\cap A_p,\rho)}|	\\
	& & \hspace{2.8cm} \text{by the inclusion-exclusion formula (\ref{eq:inclusion-exclusion_formula}).}
\end{array}\]
\end{spacing}
\vspace{-.2cm}

Moreover, for all $A_0,...,A_p\in B_\A(A,\rho),$
\[\begin{array}{rcll}
	d_\A(A,A_0\cap...\cap A_p)
	& ~\ls~	& d_\A(A,A_0\cap A) + d_\A(A_0\cap A,A_0\cap...\cap A_p) 	\\
	& \ls 	& d_\A(A,A_0) + d_\A(A,A_1\cap...\cap A_p)
	& \text{by contractivity,}	\\
	& <		& \rho + d_\A(A,A_1\cap...\cap A_p).
\end{array}\]

Thus, by induction, we deduce that
\[ \sup_{\substack{ C\in\C^\ell\cap B_\C(A,\rho): \\ C\subseteq B_\T(t,\rho)}} |\Delta Y_C| ~\ls~ 2^p \sup_{A'\in B_\A(A,(p+1)\rho)} |Y_{(A,\rho)}-Y_{(A',\rho)}|. \]
Hence for all $j\in\N,$
\begin{equation*}
\begin{array}{l}
	\ds\prob{ \sup_{C\in\C^\ell\cap B_\C(A,2^{-j}) ~:~ C\subseteq B_\T(t,2^{-j})} |\Delta Y_C| > 2^{-j\eta} }	\\
	\hspace{5cm} \ls~ \ds \prob{ \sup_{A'\in B_\A(A,(p+1)2^{-j})} |Y_{(A,2^{-j})}-Y_{(A',2^{-j})}| > 2^{-p}2^{-j\eta} }
\end{array}
\end{equation*}

which proves our claim, since Theorem \ref{theo:cairoli_inequality} still applies to $Y_{(.,2^{-j})}$ and $\Delta Y_{(\V(A,2^{-j}),2^{-j})}=\Delta Y_{B_\T(t,2^{-j})}.$

\medskip

For the case when $\beta<1,$ the trick of introducing the localized process $Y_{(.,\rho)}$ works in a similar fashion.

\end{proof}

\subsection{Examples and applications}	\label{subsection:examples}

In the following, we give some simple criteria when applying Theorems \ref{theo:regularity_Y_poissonian} and \ref{theo:loc_regularity_Y_poissonian}. 
We also give an example showing that the inequality is not always sharp.

\subsubsection{Cases of equality}

We will see that the local geometry of the victiny (resp. open ball) around $A\in\A$ plays a crucial role in order to determine the Hölder exponent (resp. the $d_\T$-localized exponent) of $Y$ at $A.$

\begin{cor}		\label{cor:regularity_Y_poissonian_=}
	Let $A\in\A.$ 
	Suppose that the following hypotheses hold:
	\begin{enumerate}[(i)]
		\item $\sigma^2=0.$
		\item There exists $q_\V>0$ such that for all $\eps>0$, there exists $\rho_{\V,\eps}>0$ such that:
			\begin{equation}	\label{eq:victiny_estimate}
				\forall\rho\in(0,\rho_{\V,\eps}),\quad \rho^{q_\V+\eps} ~\ls~ m(\V(A,\rho)) ~\ls~ \rho^{q_\V-\eps}.
			\end{equation}
		
		\item There exists $\alpha\gs0$ such that for all $\eps>0$, there exists $\rho_{\alpha,\eps}>0$ such that:
			\begin{equation}	\label{eq:f_estimate}
				\forall s\in\V(A,\rho_{\alpha,\eps}),\quad \qd(s,A)^{\alpha+\eps} ~\ls~ |f(s)| ~\ls~ \qd(s,A)^{\alpha-\eps}.
			\end{equation}
	\end{enumerate}
	
	Then, with probability one,
	\[ \alpha_X(A) \speq \frac{q_\V}{\beta} \qquad\text{and}\qquad \alpha_Y(A) \speq \frac{q_\V}{\beta}+\alpha. \]
\end{cor}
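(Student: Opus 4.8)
The plan is to derive this as a corollary of Theorems \ref{theo:regularity_Y_poissonian} and \ref{theo:regularity_Y_poissonian} (for the $X$-statement, take $f\equiv1$, for which $\alpha=0$). The whole point is to show that hypotheses (ii) and (iii) force the supremum over $\underline R_f(A)$ and the infimum over $\overline R_f(A)$ to coincide, so I must translate the pointwise estimates on $m(\V(A,\rho))$ and on $|f|$ into membership statements for the parameter sets $\overline R_f(A)$ and $\underline R_f(A)$ defined just before Theorem \ref{theo:regularity_Y_poissonian}.

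First I would analyze $L_{f,\alpha'}(A,\rho)=\V(A,\rho)\cap\{|f(s)|>\qd(s,A)^{\alpha'}\}$. Fix a small $\eps>0$ and work with radii below $\rho_{\alpha,\eps}$. By (\ref{eq:f_estimate}), for $s\in\V(A,\rho)$ we have $\qd(s,A)^{\alpha+\eps}\ls|f(s)|\ls\qd(s,A)^{\alpha-\eps}$, and since $\qd(s,A)<\rho<1$, the lower bound gives $|f(s)|\gs\qd(s,A)^{\alpha+\eps}\gs\qd(s,A)^{\alpha'}$ whenever $\alpha'\gs\alpha+\eps$; hence $L_{f,\alpha'}(A,\rho)=\V(A,\rho)$ for such $\alpha'$, so $L_{f,\alpha'}^\complement(A,\rho)=\varnothing$. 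Symmetrically, for $\alpha'\ls\alpha-\eps$ the upper bound in (\ref{eq:f_estimate}) gives $|f(s)|\ls\qd(s,A)^{\alpha-\eps}\ls\qd(s,A)^{\alpha'}$, so $L_{f,\alpha'}(A,\rho)=\varnothing$. Combining with (\ref{eq:victiny_estimate}), for $\alpha'=\alpha+\eps$ and any $q>q_\V+\eps$ we get $m(L_{f,\alpha'}(A,\rho))/\rho^q\gs\rho^{q_\V+\eps-q}\to\infty$, so in particular $\liminf>0$, i.e. $(\alpha+\eps,\,q_\V+2\eps)\in\overline R_f(A)$ (taking $q=q_\V+2\eps$, say), while for the same $\alpha'$ and the exponent pair with $q=q_\V+2\eps$, $q'$ arbitrary (the complement set is empty, contributing $0$), one checks $(\alpha+\eps,\,q_\V+2\eps,\,q')\in\underline R_f(A)$ using $m(\V(A,\rho))\ls\rho^{q_\V-\eps}$; also for $\alpha'=\alpha-\eps$ the set $L_{f,\alpha'}(A,\rho)$ is empty, which only helps the lower bound but does not produce a better upper bound, so the relevant choice for $\overline R_f$ is $\alpha'=\alpha+\eps$.

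Next I would feed these into the two inequalities of Theorem \ref{theo:regularity_Y_poissonian}. The upper bound gives, with probability one,
\[ \alpha_Y(A)~\ls~\inf_{(\alpha',q)\in\overline R_f(A)}\Big\{\frac{q}{\beta}+\alpha'\Big\}~\ls~\frac{q_\V+2\eps}{\beta}+\alpha+\eps. \]
The lower bound gives, with probability one,
\[ \alpha_Y(A)~\gs~\sup_{(\alpha',q,q')\in\underline R_f(A)}\min\Big\{\frac{q}{\beta},\,\frac{q'}{\beta}+\alpha'\Big\}~\gs~\min\Big\{\frac{q_\V}{\beta},\,+\infty\Big\}=\frac{q_\V}{\beta}+\alpha-(\text{error}), \]
where I must be a little careful: using $\alpha'=\alpha+\eps$ makes $L_{f,\alpha'}(A,\rho)=\V(A,\rho)$ with $m\ls\rho^{q_\V-\eps}$, so $(\alpha+\eps,q_\V-2\eps,q')\in\underline R_f(A)$ for any $q'$ (complement empty), giving $\alpha_Y(A)\gs\min\{(q_\V-2\eps)/\beta,\infty\}=(q_\V-2\eps)/\beta$. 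To also recover the $+\alpha$ I instead take $\alpha'=\alpha-\eps$: then $L_{f,\alpha'}(A,\rho)=\varnothing$, so $m(L_{f,\alpha'}(A,\rho))=0\ls\rho^q$ for any $q$, and $L_{f,\alpha'}^\complement(A,\rho)=\V(A,\rho)$ with $m\ls\rho^{q_\V-\eps}$; hence $(\alpha-\eps,\,q,\,q_\V-2\eps)\in\underline R_f(A)$ for every $q$, and letting $q\to\infty$ in $\min\{q/\beta,\,(q_\V-2\eps)/\beta+\alpha-\eps\}$ yields $\alpha_Y(A)\gs(q_\V-2\eps)/\beta+\alpha-\eps$. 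Finally, letting $\eps\to0^+$ in both bounds gives $\alpha_Y(A)=q_\V/\beta+\alpha$ a.s.; the $X$-statement is the special case $f\equiv1$, $\alpha=0$ (then (\ref{eq:f_estimate}) holds trivially). Since $\beta$ could be $0$, I should note that then $q_\V/\beta=+\infty$ by the stated convention and both bounds are $+\infty$, consistent with $\beta=0$ in Theorem \ref{theo:regularity_Y_poissonian}.

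The main obstacle I anticipate is purely bookkeeping rather than conceptual: one must juggle two different choices of the auxiliary exponent $\alpha'$ (namely $\alpha\pm\eps$) to extract respectively the $q_\V/\beta$ term and the additive $\alpha$ term from the single $\min\{q/\beta,\,q'/\beta+\alpha'\}$ in the lower bound, and verify in each case that the relevant $\liminf$/$\limsup$ conditions defining $\overline R_f(A)$ and $\underline R_f(A)$ are genuinely met for all sufficiently small radii (which is exactly what the uniform-in-$\eps$ thresholds $\rho_{\V,\eps}$ and $\rho_{\alpha,\eps}$ in (ii)--(iii) guarantee). Once the set-membership claims are pinned down, the passage to the limit $\eps\to0$ is immediate.
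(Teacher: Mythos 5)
Your proposal is correct and follows essentially the same route as the paper's proof: reduce to Theorem \ref{theo:regularity_Y_poissonian}, observe that (\ref{eq:f_estimate}) forces $L_{f,\alpha+\eps}(A,\rho)=\V(A,\rho)$ and $L_{f,\alpha-\eps}(A,\rho)=\varnothing$, read off memberships in $\overline R_f(A)$ and $\underline R_f(A)$ from (\ref{eq:victiny_estimate}), and let $\eps\rightarrow0^+$ (the $X$-statement being the case $f\equiv1$, $\alpha=0$). The only blemish is the transient claim that $(\alpha+\eps,\,q_\V+2\eps,\,q')\in\underline R_f(A)$ --- the required $\limsup$ is infinite for that choice of $q$ --- but this is superseded by your later, correct use of $q=q_\V-2\eps$ and of $\alpha'=\alpha-\eps$, which is exactly what the paper does.
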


\begin{proof}
	The case of $X$ is just the particular case when $f=1,$ for which the estimate (\ref{eq:f_estimate}) works with $\alpha=0$.
	
	Using (\ref{eq:f_estimate}), we get
	\[ \forall\eps>0,\,\forall\rho\in(0,\rho_{\alpha,\eps}),\quad L_{f,\alpha+\eps}(\rho) \speq \V(A,\rho) \quad\text{and}\quad L_{f,\alpha-\eps/2}(\rho) \speq \varnothing. \]  
	Hence, according to (\ref{eq:victiny_estimate}) and Theorem \ref{theo:regularity_Y_poissonian}, for all $\eps>0,$ the following holds with probability one:
	\[ \min\left\{ \frac{\eps^{-1}}{\beta},\,\frac{q_\V-\eps}{\beta}+\alpha-\frac{\eps}{2} \right\} ~\ls~ \alpha_Y(A) ~\ls~ \frac{q_\V+\eps}{\beta}+\alpha+\eps. \]
	The result follows from taking $\eps\rightarrow0^+$ along a subsequence.
\end{proof}

Likewise, Theorem \ref{theo:loc_regularity_Y_poissonian} yields the following result. The proof is exactly the same.

\begin{cor}		\label{cor:loc_regularity_Y_poissonian_=}
	Let $A=A(t)\in\A.$ 
	Suppose that the following hypotheses hold:
	\begin{enumerate}[(i)]
		\item $\sigma^2=0.$
		\item There exists $q_B>0$ such that for all $\eps>0$, there exists $\rho_{\V,\eps}>0$ such that:
			\begin{equation}	\label{eq:ball_estimate}
				\forall\rho\in(0,\rho_{\V,\eps}),\quad \rho^{q_B+\eps} ~\ls~ m(B_\T(t,\rho)) ~\ls~ \rho^{q_B-\eps}.
			\end{equation}
		
		\item There exists $\alpha\gs0$ such that for all $\eps>0$, there exists $\rho_{\alpha,\eps}>0$ such that:
			\begin{equation}	\label{eq:f_loc_estimate}
				\forall s\in B_\T(t,\rho_{\alpha,\eps}),\quad d_\T(s,t)^{\alpha+\eps} ~\ls~ |f(s)| ~\ls~ d_\T(s,t)^{\alpha-\eps}.
			\end{equation}
	\end{enumerate}

	Then, with probability one,
	\[ \alpha_{X,d_\T}(A) \speq \frac{q_B}{\beta} \qquad\text{and}\qquad \alpha_{Y,d_\T}(A) \speq \frac{q_B}{\beta}+\alpha. \]
\end{cor}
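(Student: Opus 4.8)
The plan is to mirror the proof of Corollary \ref{cor:regularity_Y_poissonian_=} verbatim, simply swapping the divergence $\qd$ for the metric $d_\T$, the victiny $\V(A,\rho)$ for the ball $B_\T(t,\rho)$, the sets $L_{f,\alpha}$ for their primed analogs $L'_{f,\alpha}$, and the regularity sets $\overline R_f(A),\underline R_f(A)$ for $\overline R'_f(A),\underline R'_f(A)$. The underlying input is Theorem \ref{theo:loc_regularity_Y_poissonian} in place of Theorem \ref{theo:regularity_Y_poissonian}.

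\begin{proof}
	The case of $X$ is simply the particular case $f\equiv1$, for which the estimate (\ref{eq:f_loc_estimate}) holds with $\alpha=0$; so it suffices to treat $Y$.

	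Using (\ref{eq:f_loc_estimate}), for all $\eps>0$ and all $\rho\in(0,\rho_{\alpha,\eps})$ we have, by definition of $L'_{f,\alpha}(A,\rho)$,
	\[ L'_{f,\alpha+\eps}(A,\rho) \speq B_\T(t,\rho) \qquad\text{and}\qquad L'_{f,\alpha-\eps/2}(A,\rho) \speq \varnothing. \]
	Indeed, $|f(s)|\ls d_\T(s,t)^{\alpha-\eps/2} < d_\T(s,t)^{\alpha-\eps/2}\cdot(\text{nothing})$ --- more precisely, $|f(s)|\ls d_\T(s,t)^{\alpha-\eps}\ls d_\T(s,t)^{\alpha-\eps/2}$ for $d_\T(s,t)<1$, so no $s\in B_\T(t,\rho)$ lies in $L'_{f,\alpha-\eps/2}(A)$ once $\rho<1$; and $|f(s)|\gs d_\T(s,t)^{\alpha+\eps}=d_\T(s,t)^{(\alpha+\eps)}$ forces every $s\in B_\T(t,\rho)$ into $L'_{f,\alpha+\eps}(A)$. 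Combining this with the two-sided bound (\ref{eq:ball_estimate}) on $m(B_\T(t,\rho))$, one checks that for every $\eps>0$,
	\[ \Big(\alpha+\eps,\, q_B+\eps\Big) \in \overline R'_f(A) \qquad\text{and}\qquad \Big(\alpha-\tfrac{\eps}{2},\, q_B+\eps,\, \eps^{-1}\Big) \in \underline R'_f(A), \]
	the first since $m(L'_{f,\alpha+\eps}(A,\rho))=m(B_\T(t,\rho))\gs\rho^{q_B+\eps}$, the second since $m(L'^\complement_{f,\alpha-\eps/2}(A,\rho))=0$ while $m(L'_{f,\alpha-\eps/2}(A,\rho))=m(B_\T(t,\rho))\ls\rho^{q_B-\eps}\ls\rho^{q_B+\eps}$ for $\rho$ small.

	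Applying Theorem \ref{theo:loc_regularity_Y_poissonian} with these two points, the following holds with probability one for every fixed $\eps>0$:
	\[ \min\left\{ \frac{\eps^{-1}}{\beta},\ \frac{q_B+\eps}{\beta}+\alpha-\frac{\eps}{2} \right\} ~\ls~ \alpha_{Y,d_\T}(A) ~\ls~ \frac{q_B+\eps}{\beta}+\alpha+\eps. \]
	Intersecting these almost sure events over a sequence $\eps\rightarrow0^+$ and letting $\eps\rightarrow0^+$ yields $\alpha_{Y,d_\T}(A)=q_B/\beta+\alpha$ almost surely.
\end{proof}

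The proof is entirely routine given the machinery already built; the one point demanding a little care is making sure the inclusions $L'_{f,\alpha+\eps}(A,\rho)=B_\T(t,\rho)$ and $L'_{f,\alpha-\eps/2}(A,\rho)=\varnothing$ are legitimate, which hinges on restricting to $\rho<1$ so that the maps $\rho\mapsto\rho^{\alpha\pm\eps}$ are monotone in the exponent --- exactly as in Corollary \ref{cor:regularity_Y_poissonian_=}. No genuine obstacle arises here: all the analytic work is absorbed into Theorem \ref{theo:loc_regularity_Y_poissonian}, whose proof (sketched above) is where the real content lies.
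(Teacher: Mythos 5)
Your overall strategy is exactly the paper's: mirror the proof of Corollary \ref{cor:regularity_Y_poissonian_=}, replacing $\V(A,\rho)$, $L_{f,\alpha}$, $\overline R_f$, $\underline R_f$ by $B_\T(t,\rho)$, $L'_{f,\alpha}$, $\overline R_f'$, $\underline R_f'$, and feed the resulting points into Theorem \ref{theo:loc_regularity_Y_poissonian}. The treatment of $X$ and of $\overline R_f'(A)$ is fine. However, the verification that your triple lies in $\underline R_f'(A)$ is wrong as written. Since $L'_{f,\alpha-\eps/2}(A,\rho)=\varnothing$, it is $m(L'_{f,\alpha-\eps/2}(A,\rho))$ that vanishes while $m(L_{f,\alpha-\eps/2}'^{\complement}(A,\rho))=m(B_\T(t,\rho))$ --- you have the two swapped, so the large exponent $\eps^{-1}$ must sit in the $q$-slot, not the $q'$-slot. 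Moreover the exponent attached to the full ball must be $q_B-\eps$, not $q_B+\eps$: the chain $m(B_\T(t,\rho))\ls\rho^{q_B-\eps}\ls\rho^{q_B+\eps}$ is backwards for $\rho<1$ (powers of $\rho\in(0,1)$ decrease as the exponent increases), and in fact $m(B_\T(t,\rho))/\rho^{q_B+\eps}$ can be as large as $\rho^{-2\eps}$, so the $\limsup$ defining $\underline R_f'(A)$ is infinite for your choice $q'=q_B+\eps$. The same inversion appears in your justification of $L'_{f,\alpha-\eps/2}(A,\rho)=\varnothing$: for $d_\T(s,t)<1$ one has $d_\T(s,t)^{\alpha-\eps}\gs d_\T(s,t)^{\alpha-\eps/2}$, so the bound $|f(s)|\ls d_\T(s,t)^{\alpha-\eps}$ does not yield what you need; the clean fix is to invoke hypothesis (iii) at level $\eps/2$, which gives $|f(s)|\ls d_\T(s,t)^{\alpha-\eps/2}$ directly.

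With these repairs the correct point is $(\alpha-\eps/2,\,\eps^{-1},\,q_B-\eps)\in\underline R_f'(A)$, Theorem \ref{theo:loc_regularity_Y_poissonian} yields the lower bound $\min\big\{\eps^{-1}/\beta,\,(q_B-\eps)/\beta+\alpha-\eps/2\big\}$, and letting $\eps\rightarrow0^+$ along a subsequence gives the stated equality --- which is precisely the computation in the paper's proof of Corollary \ref{cor:regularity_Y_poissonian_=}, to which the paper reduces this corollary. So the route is the intended one, but the membership verification needs the corrections above before the argument is sound.
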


The condition on $f$ for Corollary \ref{cor:loc_regularity_Y_poissonian_=} is actually equivalent to say that $f(t)=0$ and $\alpha_f(t)=\underline\alpha_f(t)=\alpha$ where $\alpha_f(t)$ is the pointwise Hölder exponent as given in (\ref{eq:holder_exponent}) and $\underline\alpha_f(t)$ is the \emph{pointwise Hölder subexponent} given by
\[
	\underline\alpha_f(t) \speq \inf\left\{ \alpha\gs0 \,:\, \liminf_{\rho\rightarrow0^+} \inf_{s\in B_\T(t,\rho)} \frac{|f(s)-f(t)|}{d_\T(s,t)^\alpha}>0 \right\}
\]

whenever it is defined. A slightly modified exponent of this kind has already been introduced in \cite{herbin_almost_2014} to study the local Hausdorff dimension of trajectories of Gaussian processes. We also remark one could also express the estimate on the local behavior of the victiny (or the ball) with an exponent-like vocabulary.

\medskip

As a nice consequence to the previous corollaries, we recover a fact that has already been observed in \cite{balanca_2-microlocal_2012, herbin_stochastic_2009} and many others, namely that even in the context of a stochastic integral, integrating still regularizes in some sense.

\begin{cor}[$Y$ is more regular than $X$]	\label{cor:more_regular}
	Let $A=A(t)\in\A$. 
	If the estimate (\ref{eq:victiny_estimate}) holds, then with probability one,
	\[ \alpha_Y(A) ~\gs~ \alpha_X(A). \]
	Similarly, if the estimate (\ref{eq:ball_estimate}) holds, then with probability one,
	\[ \alpha_{Y,d_\T}(A) ~\gs~ \alpha_{X,d_\T}(A) \+ \alpha_f(t)\myindic{f(t)=0}. \]
\end{cor}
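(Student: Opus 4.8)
The plan is to deduce both inequalities from the two-sided bounds already established, namely Theorem \ref{theo:regularity_Y_poissonian} (via its specialization Corollary \ref{cor:regularity_Y_poissonian_=}) for the Hölder exponent and Theorem \ref{theo:loc_regularity_Y_poissonian} (via Corollary \ref{cor:loc_regularity_Y_poissonian_=}) for the $d_\T$-localized exponent. The key observation is that the hypothesis on $f$ in those corollaries --- the two-sided estimate \eqref{eq:f_estimate} (resp. \eqref{eq:f_loc_estimate}) with exponent $\alpha\gs0$ --- need not be assumed: what we actually need is only a \emph{lower} control on $\alpha_Y(A)$ in terms of $\alpha_X(A)$, and that follows from comparing the relevant sets $\overline R$ and $\underline R$ directly. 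So first I would avoid invoking the case-of-equality corollaries as black boxes and instead argue at the level of $\overline R_f(A)$, $\underline R_f(A)$ and their analogs.

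For the first inequality, I would argue as follows. The estimate \eqref{eq:victiny_estimate} gives a $q_\V>0$ with $\rho^{q_\V+\eps}\ls m(\V(A,\rho))\ls\rho^{q_\V-\eps}$ for small $\rho$; applying Corollary \ref{cor:regularity_Y_poissonian_=} with $f\equiv1$ (so $\alpha=0$) gives $\alpha_X(A)=q_\V/\beta$ a.s. Now for the actual $f$: since $(0,q)\in\underline R_f(A)$ for every $q$ with $m(\V(A,\rho))=\bigO(\rho^q)$ --- because with $\alpha=0$ one has $L_{f,0}(A,\rho)\cup L_{f,0}^\complement(A,\rho)=\V(A,\rho)$, so $m(L_{f,0}(A,\rho))+m(L_{f,0}^\complement(A,\rho))=m(\V(A,\rho))$ --- we get in particular $(0,q_\V-\eps,q_\V-\eps)\in\underline R_f(A)$ for every $\eps>0$ by the upper bound in \eqref{eq:victiny_estimate}. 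Plugging this triple into the lower bound of Theorem \ref{theo:regularity_Y_poissonian} yields $\alpha_Y(A)\gs(q_\V-\eps)/\beta$ a.s. for each $\eps>0$, hence $\alpha_Y(A)\gs q_\V/\beta=\alpha_X(A)$ a.s. after taking $\eps\to0^+$ along a sequence. Note this requires $f$ bounded in the victiny of $A$, which should be added as a standing hypothesis or is implied by the estimate on $|f|$ present in the corollary's context; I would state it explicitly.

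For the second inequality, the localized case, the same mechanism applies with $B_\T(t,\rho)$ in place of $\V(A,\rho)$ and $d_\T(s,t)$ in place of $\qd(s,A)$, but now I must extract the extra term $\alpha_f(t)\myindic{f(t)=0}$. The point is: if $f(t)\neq0$ then $\alpha_f(t)\myindic{f(t)=0}=0$ and the bare inequality $\alpha_{Y,d_\T}(A)\gs\alpha_{X,d_\T}(A)$ follows exactly as above (take $\alpha=0$, use \eqref{eq:ball_estimate} and the lower bound of Theorem \ref{theo:loc_regularity_Y_poissonian} with triple $(0,q_B-\eps,q_B-\eps)\in\underline R_f'(A)$). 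If $f(t)=0$ and $\alpha_f(t)=\alpha<\infty$, then for any $\alpha'<\alpha$ the estimate \eqref{eq:holder_exponent_estimate} applied to $f$ at $t$ gives $|f(s)|=|f(s)-f(t)|\ls d_\T(s,t)^{\alpha'}$ for $s$ near $t$, so $L_{f,\alpha'}'(A,\rho)=\varnothing$ for small $\rho$, whence $L_{f,\alpha'}'^\complement(A,\rho)=B_\T(t,\rho)$ and $(\alpha',q_B-\eps,q_B-\eps)\in\underline R_f'(A)$; the lower bound of Theorem \ref{theo:loc_regularity_Y_poissonian} then gives $\alpha_{Y,d_\T}(A)\gs\min\{(q_B-\eps)/\beta,\,(q_B-\eps)/\beta+\alpha'\}=(q_B-\eps)/\beta+\alpha'$, and letting $\eps\to0^+$, $\alpha'\to\alpha^-$ yields $\alpha_{Y,d_\T}(A)\gs q_B/\beta+\alpha=\alpha_{X,d_\T}(A)+\alpha_f(t)$ after identifying $\alpha_{X,d_\T}(A)=q_B/\beta$ via Corollary \ref{cor:loc_regularity_Y_poissonian_=} applied with $f\equiv1$. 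The case $\alpha_f(t)=+\infty$ is handled by letting $\alpha'\to\infty$, giving $\alpha_{Y,d_\T}(A)=+\infty$.

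The main obstacle I anticipate is bookkeeping rather than conceptual: making sure that the $\eps$-slack in \eqref{eq:victiny_estimate}/\eqref{eq:ball_estimate} combines correctly with the $\eps$-slack coming from the definition of $\alpha_f(t)$ (an infimum/supremum over exponents) without circular dependence of one $\eps$ on another, and being careful that all the a.s. statements for varying $\eps,\alpha'$ are intersected over a \emph{countable} set of parameters so that the final inequality holds on a single probability-one event. I would therefore fix a sequence $\eps_k\downarrow0$ (and, in the localized case, $\alpha_k'\uparrow\alpha_f(t)$) at the outset and only take limits along those sequences. I would also double-check that the boundedness hypothesis on $f$ near $A$ is exactly what Theorems \ref{theo:regularity_Y_poissonian} and \ref{theo:loc_regularity_Y_poissonian} require, and state it among the hypotheses of the corollary if it is not already implicit.
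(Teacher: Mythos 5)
Your overall route is the paper's: identify $\alpha_X(A)$ (resp. $\alpha_{X,d_\T}(A)$) as $q_\V/\beta$ (resp. $q_B/\beta$) via the case-of-equality corollaries with $f\equiv1$, and then feed suitable triples into the lower bounds of Theorems \ref{theo:regularity_Y_poissonian} and \ref{theo:loc_regularity_Y_poissonian}. The first inequality is handled correctly. But the key step of the second inequality fails as written: from $L'_{f,\alpha'}(A,\rho)=\varnothing$ you pick the triple $(\alpha',q_B-\eps,q_B-\eps)\in\underline R_f'(A)$ and claim
\[ \min\left\{ \frac{q_B-\eps}{\beta},\ \frac{q_B-\eps}{\beta}+\alpha' \right\} \speq \frac{q_B-\eps}{\beta}+\alpha', \]
which is false for $\alpha'>0$: the minimum of $x$ and $x+\alpha'$ is $x$, so your bound collapses to $(q_B-\eps)/\beta$ and the extra term $\alpha_f(t)\myindic{f(t)=0}$ is lost entirely. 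The repair is exactly what the paper does: since $L'_{f,\alpha'}(A,\rho)=\varnothing$ for small $\rho$, the first ratio in the definition of $\underline R_f'(A)$ vanishes for \emph{every} $q$, so $(\alpha',q,q_B-\eps)\in\underline R_f'(A)$ for all $q>0$; taking $q$ large enough that $q/\beta\gs (q_B-\eps)/\beta+\alpha'$ makes the minimum equal to $(q_B-\eps)/\beta+\alpha'$, and then $\eps\rightarrow0^+$, $\alpha'\rightarrow\alpha_f(t)^-$ along countable sequences gives the claim.

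Two smaller omissions. First, Theorems \ref{theo:regularity_Y_poissonian} and \ref{theo:loc_regularity_Y_poissonian} assume $\sigma^2=0$, whereas the corollary does not; the paper first disposes of the Gaussian contribution via Proposition \ref{prop:separate_exponents} together with (\ref{eq:Y_gaussian_loc_regularity}) before reducing to the purely Poissonian case, and your argument should include that reduction. Second, your concern about boundedness of $f$ near $A$ is legitimate (the two theorems do require it), and your instinct to intersect the almost sure events over a countable family of parameters $(\eps_k,\alpha'_k)$ is exactly right.
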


Stating a better result for $\alpha_Y(A)$ similar to $\alpha_{Y,d_\T}(A)$ is quite straightforward, but would require introducing another exponent for $f$ considering $\qd$ instead of $d_\T.$ We chose against it since the $d_\T$-localized exponent already illustrates our point.

\begin{proof}
	Let $A=A(t)\in\A$.
	We only prove the result for the $d_\T$-localized exponent, the Hölder exponent being easier.
	 
	If $\nu=0,$ then Proposition \ref{prop:separate_exponents} and (\ref{eq:Y_gaussian_loc_regularity}) immediately yield the result.
	So we might as well suppose that $\sigma^2=0.$
	According to Theorem \ref{theo:loc_regularity_Y_poissonian}, it is enough to prove that with probability one,
	\begin{equation}	\label{eq:CS_more_regular} 
		\sup_{(\alpha,q,q')\in\underline R_f'(A)}\min  \left\{ \frac{q}{\beta},\,\frac{q'}{\beta}+\alpha \right\} ~\gs~\alpha_{X,d_\T}(A) \+ \alpha_f(t)\myindic{f(t)=0}. 
	\end{equation}
	
	Suppose that $f(t)\neq0$ or $\alpha_f(t)=0$ so that $\alpha_f(t)\myindic{f(t)=0}=0.$
	Then, remark that according to (\ref{eq:victiny_estimate}), for all $\eps>0$ and $(\alpha,q,q')\in\underline R_f'(A),$ we may always consider that both $q$ and $q'$ are greater than $q_B-\eps$ in the left-hand side of (\ref{eq:CS_more_regular}).
	In particular,
	\begin{equation}	\label{eq:CS_more_regular_X}
		\sup_{(\alpha,q,q')\in\underline R_f'(A)}\min  \left\{ \frac{q}{\beta},\,\frac{q'}{\beta}+\alpha \right\} ~\gs~ \frac{q_B}{\beta}.
	\end{equation}
	According to Corollary \ref{cor:loc_regularity_Y_poissonian_=},\, $q_B/\beta=\alpha_{X,d_\T}(A)$ almost surely. 
	Hence (\ref{eq:CS_more_regular}) follows from (\ref{eq:CS_more_regular_X}) in this case. 
	
	\medskip
	
	Now, suppose that $f(t)=0$ and $\alpha_f(t)>0$. Then, for all $\alpha\in(0,\alpha_f(t))$ and small enough $\rho>0,\,$ $L_{f,\alpha}'(A,\rho)=\varnothing.$ In particular, $(\alpha,q,q_B-\eps)\in\underline R_f'(A)$ for all $q,\eps>0.$ 
	Thus
	\[ \sup_{(\alpha,q,q')\in\underline R_f'(A)}\min  \left\{ \frac{q}{\beta},\,\frac{q'}{\beta}+\alpha \right\} ~\gs~ \min\left\{ \frac{q}{\beta},\, \frac{q_B-\eps}{\beta}+\alpha \right\}. \]	
	Taking $q\rightarrow+\infty$, $\eps\rightarrow0^+$ and $\alpha\rightarrow\alpha_f(t)^+$ yields
	\begin{equation}	\label{eq:CS_more_regular_f}
		\sup_{(\alpha,q,q')\in\underline R_f'(A)}\min  \left\{ \frac{q}{\beta},\,\frac{q'}{\beta}+\alpha \right\} ~\gs~ \frac{q_B}{\beta} + \alpha_f(t).
	\end{equation}
	Hence (\ref{eq:CS_more_regular}) follows from (\ref{eq:CS_more_regular_f}) in this case.
\end{proof}

We proceed to apply those results to a multiparameter Lévy process in order to show that various behaviors start to appear when $p>1.$
The coming example should be compared to the case $p=1$ where it was proven in \cite{blumenthal_sample_1961} that almost sure regularity is $1/\beta.$

\begin{ex}[Set-indexed Lévy process for $\T=\R_+^p$]
	Suppose that $\T=\R_+^p$ is endowed with its usual indexing collection $\A$ as given in Examples \ref{exs:fundamental_continuous}, $m$ is the Lebesgue measure and $d_\T=\|.-.\|$ is any distance induced by a norm on $\R^p$.
	
	Let also $X$ be a purely Poissonian set-indexed Lévy process on $\A$ and $t\in\R_+^p.$ 
	According to Corollaries \ref{cor:regularity_Y_poissonian_=} and \ref{cor:loc_regularity_Y_poissonian_=} which hold for $q_\V=1$ and $q_B=p$ respectively, the following holds with probability one:
	\[ \alpha_X(A(t)) \speq \left\{\begin{array}{ll}
				1/\beta	& \text{if } t\neq0, \\
				p/\beta & \text{if } t=0
			\end{array}\right.
		\spand \alpha_{X,\|.-.\|}(A(t)) \speq p/\beta.
	\]
		
	When $d_\A=d_m$ is taken instead, the values of $\alpha_X(A(t))$ and $\alpha_{X,d_\T}(A(t))$ do not change as long as $t=(t_1,...,t_p)$ is such that $t_1...t_p\neq0$ since $d_\T$ stays equivalent to $\|.-.\|$ on any compact set away from the coordinate hyperplanes (see \cite[Lemma 3.1]{herbin_almost_2014}).
	
	Let us now consider $t$ in such an hyperplane.
	If $p=1,$ there is nothing much to say and we recover the result of  \cite{blumenthal_sample_1961}, \ie $\alpha_X(0)=1/\beta$ almost surely.
	However, when $p>1,$ we have $m(B_\T(t,\rho))=\infty$ for all $\rho>0,$ so an argument based on Borel-Cantelli ensures that the following event holds with probability one:
	\[ \Omega^* \speq \bigcup_{\eps>0}\bigcap_{\rho>0} \big\{ \exists s\in\Pi\cap B_\T(t,\rho):|J_s(X)|\gs\eps \big\}. \]
	This means that there are sequences of macroscopic jumps converging to $0.$ Applying the estimate (\ref{eq:jaffard_lemma_naive}) yields $\alpha_X(t)=\alpha_{X,d_\T}(t)=0$ almost surely.
\end{ex}

\subsubsection{The one-dimensional case $\T=\R_+$}

When $\T=\R_+,$ both exponents are reduced to the usual pointwise one, yielding the following result.

\begin{cor}[Hölder regularity when $\T=\R_+$]	\label{cor:regularity_Y_1d}
	Suppose that $m$ is the usual Lebesgue measure on $\R_+$ and that $\sigma^2=0.$
	Then, for all $t\in\R_+,$ the following holds with probability one:
	\begin{equation}	\label{eq:regularity_Y_1d} 
		\sup_{(\alpha,q,q')\in\underline R_f(t)} \min\left\{ \frac{q}{\beta},\,\frac{q'}{\beta}+\alpha \right\} ~\ls~ \alpha_Y(t) ~\ls~ \inf_{(\alpha,q)\in\overline R_f(t)} \left\{ \frac{q}{\beta}+\alpha \right\}.
	\end{equation}
	
	Moreover, if there exists $\alpha\gs0$ such that for all $\eps>0,$ there exists $\rho_{\alpha,\eps}>0$ such that
	\[ \forall s\in(t-\rho_{\alpha,\eps},t+\rho_{\alpha,\eps}),\quad |s-t|^{\alpha+\eps} ~\ls~ |f(s)| ~\ls~ |s-t|^{\alpha-\eps}. \]
	then, the following holds with probability one:
	\[ \alpha_Y(A) \speq \frac{1}{\beta}+\alpha. \]
\end{cor}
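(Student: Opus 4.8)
\textbf{Proof proposal for Corollary \ref{cor:regularity_Y_1d}.}

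The plan is to derive the corollary from Theorems \ref{theo:regularity_Y_poissonian} and \ref{theo:loc_regularity_Y_poissonian} by specializing to $\T=\R_+$, where all the geometric quantities collapse to their one-dimensional counterparts. First I would observe that when $\T=\R_+$ with the Lebesgue measure and the usual indexing collection $\A=\{[0,t]:t\in\R_+\}\cup\{\varnothing\}$, one has $\dim\A=1$, the metric $d_\A$ (hence $d_\T$) is just the Euclidean distance, and crucially the victiny degenerates: $\V(A(t),\rho)=(t-\rho,t+\rho)$ up to boundary points, so that $\qd(s,A(t))=|s-t|=d_\T(s,t)$ (this is already noted right after Definition \ref{de:divergence} / in Example \ref{ex:balls_qd}). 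Consequently $L_{f,\alpha}(A(t))=L_{f,\alpha}'(A(t))$, $L_{f,\alpha}(A(t),\rho)=L_{f,\alpha}'(A(t),\rho)$, and $\overline R_f(A(t))=\overline R_f'(A(t))$, $\underline R_f(A(t))=\underline R_f'(A(t))$. Also both $\alpha_Y(A(t))$ and $\alpha_{Y,d_\T}(A(t))$ reduce to the classical pointwise Hölder exponent $\alpha_Y(t)$ of $\R_+\ni s\mapsto Y_{[0,s]}$, since $B_\A(A(t),\rho)$ corresponds to $(t-\rho,t+\rho)$ and elements of $\C_{p-1}=\C_0$ of small diameter near $A(t)$ are just small intervals around $t$. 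Feeding $\sigma^2=0$ and these identifications into Theorem \ref{theo:regularity_Y_poissonian} (boundedness of $f$ near $t$ is automatic from the two-sided estimate in the second part, and may be assumed in the first part as a hypothesis already present) yields (\ref{eq:regularity_Y_1d}) directly.

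For the second assertion, I would invoke Corollary \ref{cor:regularity_Y_poissonian_=}. Its hypothesis (ii) holds with $q_\V=1$, since $m(\V(A(t),\rho))=2\rho$ (or $\rho$ if $t=0$) so that $\rho^{1+\eps}\ls m(\V(A(t),\rho))\ls\rho^{1-\eps}$ for $\rho$ small; hypothesis (iii) is exactly the two-sided bound $|s-t|^{\alpha+\eps}\ls|f(s)|\ls|s-t|^{\alpha-\eps}$ assumed in the statement, rewritten via $\qd(s,A(t))=|s-t|$. Corollary \ref{cor:regularity_Y_poissonian_=} then gives $\alpha_Y(A(t))=q_\V/\beta+\alpha=1/\beta+\alpha$ with probability one, which is the claimed equality (with the convention $1/0=+\infty$ when $\beta=0$, i.e. $\nu$ has finite variation of all orders near $0$, consistent with the conventions in Theorem \ref{theo:regularity_Y_poissonian}).

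I do not anticipate a serious obstacle here; the work is bookkeeping, checking that the one-dimensional specializations of $\qd$, $\V$, $d_\A$, $d_\T$, $\C_0$, and the exponents all match, and that the hypotheses of the quoted corollaries are met. The only mildly delicate point is the behaviour at the boundary point $t=0$, where $\V(A(0),\rho)=[0,\rho)$ and the constant $2$ in $m(\V)=2\rho$ becomes $1$; but this affects neither $q_\V=1$ nor any of the exponent computations, so the conclusion is unchanged. One should also remark, as the paper does elsewhere, that in the one-dimensional case the distinction between $\alpha_Y$ and $\alpha_{Y,d_\T}$ disappears precisely because the victiny and the metric ball around $A(t)$ have comparable size, which is exactly what fails for $p>1$; this is what makes the one-dimensional statement so clean.
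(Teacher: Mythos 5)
Your proposal is correct and matches what the paper intends: Corollary \ref{cor:regularity_Y_1d} is stated without proof as the one-dimensional specialization of Theorem \ref{theo:regularity_Y_poissonian} and Corollary \ref{cor:regularity_Y_poissonian_=}, using exactly the identifications you make ($\qd(s,A(t))=d_\T(s,t)=|s-t|$, $\V(A(t),\rho)=(t-\rho,t+\rho)\cap\R_+$, $q_\V=1$, and the collapse of the two exponents and of the sets $\overline R_f,\underline R_f$ onto their primed counterparts). Your side remarks on the implicit boundedness hypothesis and on the boundary point $t=0$ are sensible and do not affect the conclusion.
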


Remark that in this case, by a similar argument to Corollary \ref{cor:more_regular}, we may always take $q\wedge q'=1$ in the left-hand side of (\ref{eq:regularity_Y_1d}) and $q\gs1$ in its right-hand side, simplifying the expression in practical applications.

\begin{exs} 
	We finally address the simplest case and another one where the upper and lower bounds do not coincide.
	\begin{enumerate}[$\diamond$]
		\item Suppose that $f(s)=s^\alpha$ for all $s\in\R_+$.
		Consider $t\in\R_+.$
		According to Corollary \ref{cor:regularity_Y_1d}, the following holds with probability one:
		\[ \alpha_Y(t) ~=~ \frac{1}{\beta}+\alpha\myindic{t=0}. \]
		 
		\item Let $q\gs1$.
		Consider a Borel set $E\subseteq\R_+$ such that
		\[ 0 ~<~ \liminf_{\rho\rightarrow0^+} \frac{m(E\cap[0,\rho])}{\rho^q} ~\ls~ \limsup_{\rho\rightarrow0^+} \frac{m(E\cap[0,\rho])}{\rho^q} ~<~ \infty. \]
		For instance, the set
		\[ E ~=~ \bigcup_{j\in\N} \lp 2^{-j}-2^{-jq},2^{-j} \right] \]
		works fine.
		Then consider $0\ls\alpha<\alpha'$ and define the function $f(s)=s^\alpha\myindic{E}+s^{\alpha'}\myindic{E^\complement}$ for all $s\in\R_+.$
		
		Applying Corollary \ref{cor:regularity_Y_1d} yields
		\[ \min\left\{ \frac{q}{\beta},\frac{1}{\beta}+\alpha \right\} ~\ls~ \alpha_Y(0) ~\ls~ \min\left\{ \frac{q}{\beta}+\alpha,\, \frac{1}{\beta}+\alpha' \right\} \quad\text{a.s.} \]
		which is not an equality for a large choice of $q,\alpha,\alpha'$ and $\beta.$
	\end{enumerate}
\end{exs}

\bibliographystyle{plain}



\end{document}